\documentclass[11pt]{article}

\usepackage{preamble}
\title{Multipoint connectivity in the branching interlacement process}
\author{Louis Vanhaelewyn \thanks{Université Lyon 1, Centrale Lyon, INSA Lyon, Université Jean Monnet, CNRS, ICJ UMR5208, 69622 Villeurbanne, France.\\ \textit{email:} vanhaelewyn@math.univ-lyon1.fr}}

\begin{document}

\maketitle

\abstract{
  We consider the branching interlacement model introduced by Zhu as an analog of Sznitman's random interlacement for branching random walks. We show that two points of the interlacement are connected via at most $\lceil d/4 \rceil$ trajectories of the interlacement, using a different proof than Procaccia and Zhang. This upper bound is sharp, in the sense that almost surely there exist two points not connected by $\lceil d/4\rceil - 1$ trajectories. We extend this result by proving that $k$ points of the interlacement are connected via at most $\lceil d(k-1)/4\rceil -(k-2)$ trajectories, and that this bound is also sharp. \\

  \noindent\textit{Keywords and phrases. Branching Interlacements, tree-indexed random walk, connectivity.} \\
  MSC 2020 \textit{subject classifications.} 60K35, 60J80, 60D05
}

\section{Introduction}

Although the random interlacement model was orignally introduced in connection with the study of the trace left by a simple random walk on a torus, it also appeared to be an interesting object to study in its own. The interlacement can be seen either as the support of a Poisson point process on the set of doubly infinite trajectories on $\mathbb{Z}^d$, or as the subset of $\mathbb{Z}^d$ formed by the union of the ranges of these trajectories. In his seminal paper \cite{sznitman2010}, Sznitman was already interested in the existence of an infinite cluster in the vacant set, which led to many questions linked to percolation, see e.g.~\cite{dcgrst2022}, \cite{prevost2025}. Nonetheless, the model also shows strong connections with other random models such as the Gaussian Free Field or the loop soup model (\cite{sznitman2012}, \cite{lupu2014}). For our purpose, the structure of the interlacement seen as a graph is also interesting. Indeed, we can consider the graph whose vertices are trajectories of the interlacement, which are neighbours if they intersect. This has been studied by \cite{pt2011}, \cite{cp2012}, \cite{rs2011}, and \cite{rs2013} for example.

Recently, Zhu in \cite{zhu2016a} and \cite{zhu2016b} extended the result of Le Gall and Lin from \cite{ll2016} on the infinite invariant tree, and developed a theory of capacity for random walks indexed by this tree. Eventually, he also introduced in \cite{zhu2018} a counterpart of the random interlacement model in the branching random walk setting: the branching interlacement. Similar questions as for Sznitman's model have already been explored, either in the field of percolation (\cite{schapira2025}) or on the graph structure of the model (\cite{pz2016}).

Throughout this article, we will be interested in the connectivity properties of the branching interlacement model. Simultaneously, R\'ath and Sapozhnikov \cite{rs2011}, and Procaccia and Tykesson \cite{pt2011} proved that the diameter of the random interlacement is $\lceil d/2\rceil - 1$. This result was translated to the branching random interlacement by Procaccia and Zhang \cite{pz2016}, who showed that the diameter is $\lceil d/4\rceil-1$ using the concept of stochastic dimension as in \cite{pt2011}. In this paper, we extend their result to a more general class of branching random walks by developing an analogue of the tools introduced by R\'ath and Sapozhnikov for the branching random walk case. Afterwards we extend this result to the connection of $k$ points of the interlacement.

For completeness, let us state the theorem proved by R\'ath and Sapozhnikov, and Procaccia and Tykesson.

\begin{theorem}[R\'ath and Sapozhnikov \cite{rs2012} -- Procaccia and Tykesson \cite{pt2011}]
  Let $d\geq 3$. Denote by $G$ the graph whose vertices are trajectories of the interlacement, which are neighbours if they intersect each other, and denote by $\mathrm{diam}(G)$ its diameter. Then, almost surely,
  \[
    \mathrm{diam}(G) = \left\lceil \frac{d}{2} \right\rceil - 1.
  \]
\end{theorem}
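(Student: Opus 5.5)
We prove the two inequalities separately, with $s=\lceil d/2\rceil$. For the upper bound we show that any two trajectories are connected through a chain of at most $s-1$ intermediate trajectories. For the lower bound we show that, almost surely, some pair of trajectories needs all $s-1$ of them.

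\textbf{Upper bound.} Fix two trajectories $w_0,w_1$ of the interlacement. Let $A_1=\mathrm{range}(w_0)$, and let $A_{j+1}$ be the union of the ranges of all trajectories hitting $A_j$. The key estimate, in the style of R\'ath and Sapozhnikov, is a growth property. If $A$ is "$m$-dimensional" at scale $R$, meaning that for every $x$ in a box of size $R$ we have $P_x[H_A<\infty]\gtrsim R^{m-(d-2)}\wedge 1$, then the set of trajectories hitting $A$ is $(m+2)\wedge d$-dimensional at scale $R$.

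We prove this by a second-moment argument on the capacity. The number of interlacement trajectories hitting $A\cap B(R)$ is Poisson with mean $u\,\mathrm{cap}(A\cap B(R))\gtrsim R^{m-2}$. Each such trajectory, started from its hitting point, adds a $2$-dimensional piece, since a random walk range is $2$-dimensional. Using Green function estimates, we show that the union $U$ satisfies $\mathrm{cap}(U\cap B(R))\gtrsim R^{(m+2)\wedge d-2}$ with probability at least $1-e^{-cR^\delta}$.

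Starting from $m=2$ for a single walk and iterating $s-1$ times, we reach dimension $d$ after $s-1$ steps. By then the hitting probability of the $(s-1)$-th layer from the range of $w_1$ is bounded below uniformly on every scale. We make this uniform on dyadic scales $R_k=2^k$ using the stretched-exponential bounds and Borel--Cantelli. Then $w_1$, which returns to annuli at all scales, a.s.\ intersects the layer, because there are infinitely many independent-enough attempts. Since the interlacement has countably many trajectories, a countable union of null sets gives the bound for all pairs simultaneously. So $\mathrm{diam}(G)\le s-1$.

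\textbf{Lower bound.} Count chains with a first-moment estimate. Take two trajectories passing near $0$ and near a far point $x$ with $|x|=N$. A chain $w_0\to w_1'\to\dots\to w'_{s-2}\to w_1$ of $s-2$ intermediate trajectories corresponds to a path of intersections. By the Poisson structure, the expected number of such chains is at most a sum over intersection points $y_1,\dots,y_{s-1}$ of products of Green functions, $\prod_i g(y_{i-1},y_i)$, with one further factor of order $|\cdot|^{2-d}$ per intersection. Under the standard "dimension" heuristic, a chain of $s-2$ intermediate walks is a set of dimension at most $2(s-1)<d$ when $d$ is even. For $d$ odd we have $2(s-1)=d-1<d$ as well. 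The probability that it is hit by $w_1$ started at distance $N$ therefore decays like $N^{2(s-1)-d}\to0$, up to logarithms in borderline cases. We check this by the iterated-convolution bound $\sum_y |y|^{2-d}|x-y|^{a-d}\lesssim |x|^{a+2-d}$ for $a+2<d$.

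Using the ergodicity of the interlacement under translations, or a direct construction, we can choose two trajectories far apart with positive probability. The event that some pair is at distance $\ge s-1$ is translation invariant, so it has probability $1$.

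\textbf{Main obstacle.} The main obstacle is the upper bound's dimension-increment lemma. We need the increment to hold at all scales with summable failure probabilities, and the sets produced must stay "spread", not just large in capacity, so that the iteration can continue. I would follow R\'ath--Sapozhnikov's notion of $(R,m)$-regular sets and verify it inductively via variance bounds on the number of walks hitting subboxes.
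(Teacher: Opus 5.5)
\textbf{Overall.} The paper only cites this theorem. Its proof of the branching analogue (Theorem~\ref{thm:2_points_branching}) follows the two-part scheme you propose: R\'ath--Sapozhnikov trace sets for the upper bound, and for the lower bound a Palm--Mecke first moment with iterated convolutions plus a far-apart-balls argument (Lemmas~\ref{lem:product_decomposition} and~\ref{lem:2_points_lower_bound_probability}). However, your chain lengths are off by one, and in the lower bound this makes the key estimate false.

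\textbf{Lower bound.} With $s=\lceil d/2\rceil$, you must rule out graph distance $\le s-2$. That means chains of at most $s-1$ trajectories in total, i.e.\ at most $s-3$ intermediate ones. You instead try to show that chains $w_0\to w_1'\to\dots\to w_{s-2}'\to w_1$ are rare. These have length $s-1$, which is exactly what the upper bound produces for every pair, so their probability cannot decay.

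The rate $N^{2(s-1)-d}$ comes from a wrong hitting exponent. A random walk started at distance $N$ from a set of dimension $a$ at scale $N$ hits it with probability of order $N^{(a\wedge(d-2))-(d-2)}$, not $N^{a-d}$. Since $2(s-1)\ge d-2$, the set $w_0\cup w_1'\cup\dots\cup w_{s-2}'$ is hit with probability bounded below.

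The same problem shows up in your first-moment formula. Palm--Mecke gives exactly one factor $\nu(S(y_i,y_{i+1}))\lesssim g(y_i,y_{i+1})$ per trajectory, and no extra factor per intersection point. With $s$ trajectories you get $s$ factors $\|\cdot\|^{2-d}$, and the convolution bound you quote fails at the last step because $2s\ge d$.

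The correct computation uses $n+1\le s-1$ trajectories and $n$ intersection points:
\[
\sum_{y_1,\dots,y_n}\prod_{i=0}^{n}g(y_i,y_{i+1})\lesssim\|x-y\|^{2(n+1)-d}\le\|x-y\|^{-1}.
\]
This is valid since $2(s-1)\le d-1$, and it is where your exponent $2(s-1)-d$ legitimately belongs. The far-apart argument, or ergodicity, then finishes.

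\textbf{Upper bound.} The same miscount appears here, together with inconsistent exponents. Under your own definition, an $m$-dimensional set has capacity of order $R^{m}$ in a box of size $R$. So the Poisson mean is $\asymp R^{m}$, not $R^{m-2}$, and the new capacity is $R^{(m+2)\wedge(d-2)}$. A walk hits such a set with probability bounded below as soon as $m\ge d-2$, so reaching dimension $d$ is unnecessary.

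Since $A_j$ is $2j$-dimensional, saturation already occurs at $A_{s-1}$, after $s-2$ iterations. This uses $s-2$ intermediate trajectories and gives distance $s-1$. Iterating $s-1$ times, with ``at most $s-1$ intermediate trajectories'' as you describe, only yields $\mathrm{diam}(G)\le s$.

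\textbf{Stretched-exponential bound.} The bound $1-e^{-cR^{\delta}}$ for the capacity event cannot hold at the first layer with a fixed constant. With probability bounded below, the trajectory spends fewer than $\epsilon R^2$ steps in $\mathrm{B}(R)$. So plain Borel--Cantelli over dyadic scales is not available.

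R\'ath--Sapozhnikov, and the paper's adaptation, avoid this. They use only first and second moments of the capacity together with Paley--Zygmund. They restrict the added trajectories to those avoiding $\mathrm{B}(r_k)$ along super-exponentially growing scales ($r^{d-2}\le\varepsilon R$ in the random walk case). They then conclude with the conditional Borel--Cantelli lemma (Lemma~\ref{lem:bc_cond}).
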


Another formulation of this result is that almost surely one can connect two points of the interlacement with $\lceil d/2 \rceil$ trajectories, and find two points of the interlacement not connected by $\lceil d/2 \rceil -1$ trajectories. This result has been extended by Lacoin and Tykesson who answered the question on the number of trajectories needed to connect $k$ points. The theorem is the following.

\begin{theorem}[Lacoin and Tykesson \cite{lt2013}]
  Let $d \geq 5$, and $k \geq 3$. Then almost surely, for every $k$ points of the interlacement, there exist $\lceil d(k-1)/2\rceil - (k-2)$ trajectories of the interlacement that connect them.
  Furthermore, almost surely, there exist $k$ points of the interlacement that are not connected by $\lceil d(k-1)/2\rceil - (k-2) -1$ trajectories.
\end{theorem}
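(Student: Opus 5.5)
This is the Lacoin--Tykesson theorem for the classical random interlacement. I would prove it with the R\'ath--Sapozhnikov approach, which uses capacity and hitting estimates. The basic quantity is the following. Write $\mathcal{I}_s$ for the set of points reachable from a fixed point $x$ using at most $s$ trajectories. Then
\[
  \mathrm{cap}\bigl(\mathcal{I}_s \cap B(x,R)\bigr) \gtrsim R^{\min(2s,\,d-2)}
\]
with high probability. This holds because each new trajectory has dimension $2$, so $s$ chained trajectories behave like a set of dimension $2s$ until that dimension saturates at $d-2$. For two points, the two sets of chained trajectories are independent given the starting trajectories. Such sets of dimensions $2s_1$ and $2s_2$ are hit by a common trajectory once $2s_1+2s_2 > d-2$, or they intersect directly when $2s_1+2s_2>d$. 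This recovers $\lceil d/2\rceil$.

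\textbf{Upper bound for $k$ points.} Fix points $x_1,\dots,x_k$ of the interlacement. I would build a connecting ``tree'' of trajectories. From each $x_i$, grow a chain of trajectories of dimension $2s_i$, capped at $d-2$. Then join all the chains through extra central trajectories. The simplest central configuration is one trajectory $w$ hitting all $k$ sets. Given sets $A_1,\dots,A_k$ of dimensions $a_i$, a single interlacement trajectory hits all of them with positive probability on every scale once
\[
  \sum_i (d-a_i) < 2(k-1)+(d-2).
\]
This is the codimension count: the trajectory is a $2$-dimensional object, and one Poisson intensity parameter is available. Hitting the sets $A_i$ one after another costs codimensions that add up. Optimizing the total number $\sum s_i + 1$ under $a_i = 2s_i$ gives exactly $\lceil d(k-1)/2\rceil-(k-2)$, up to parity adjustments. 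In those cases one allows $w$ to be replaced by direct intersections between two chains, or lets one $s_i$ absorb the rounding. The key steps in order are:
\begin{enumerate}
\item a capacity lower bound for $s$ chained trajectories inside boxes of dyadic scales;
\item a multi-set hitting estimate for a single trajectory, via the second moment of the number of trajectories hitting all $A_i$ inside $B(0,R)$;
\item independence across dyadic scales, which upgrades positive probability to almost sure success, followed by a countable union over the $k$-tuples.
\end{enumerate}

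\textbf{Lower bound.} I would exhibit $k$ points that are badly placed. Take $k$ trajectories that are pairwise at distance about $R\to\infty$, and pick one point on each. Any connecting configuration must contain a subtree of trajectories linking $k$ far-apart regions. A first-moment bound shows that the expected number of such configurations with $N$ trajectories tends to zero when $N < \lceil d(k-1)/2\rceil-(k-2)$. To get this, sum over tree shapes, over the numbers $m$ of trajectories, and over the intersection points. Each additional intersection point costs a factor of order $R^{2-d}$ times a volume $R^{d}$, and $k$ fixed endpoints contribute $R^{-(d-2)}$ each. Borel--Cantelli along a sequence $R_n$, together with ergodicity of the interlacement, then gives almost sure existence of such $k$-tuples.

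\textbf{Main obstacle.} The hardest step will be the second-moment estimate for a trajectory hitting $k$ random sets simultaneously. It requires controlling the correlations between the hits and the dependence of the sets $A_i$ on the already used trajectories. I would first handle the case $k=3$ by decomposing on which of the sets $A_i$ is hit first.
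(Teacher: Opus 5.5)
The paper does not prove this statement; it quotes it from Lacoin--Tykesson. The natural comparison is therefore its proof of the branching analogue, Theorem~\ref{thm:k_points_branching}, which follows the same strategy. Your architecture matches that proof: R\'ath--Sapozhnikov clusters and Borel--Cantelli over scales for the upper bound, a first moment over tree configurations for the lower bound.

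There is, however, a genuine gap in the lower bound. The first moment you describe, summed over all tree shapes with $N<\lceil d(k-1)/2\rceil-(k-2)$ trajectories and over all intersection points, does not tend to zero and can be infinite. Take $d=5$, $k=3$ (so $N=3$ must be excluded) and the configuration $w_1\ni x_1,x_2,y_1$, $w_2\ni y_1,y_2$, $w_3\ni y_2,x_3$. By the Mecke formula its expected number is at least of order
\[
  g(x_1,x_2)\sum_{y_1,y_2} g(x_2,y_1)\,g(y_1,y_2)\,g(y_2,x_3)=g(x_1,x_2)\,(g*g*g)(x_3-x_2)=\infty,
\]
because $g*g*g$ diverges for $d\le 6$. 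A chain of $\lceil d/2\rceil$ trajectories between two points has infinite expected multiplicity, with intersection points escaping to infinity. Your count ``each intersection point costs $R^{2-d}\cdot R^{d}$'' ignores this. That count is also off by one: $N$ trajectories give $N+k-2$ Green factors and $N-1$ free points, hence $R^{2(N-1)-(k-1)(d-2)}$, i.e.\ $k-1$ endpoint factors, not $k$.

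The missing idea is an induction on $k$. If some $k'<k$ of the points are already linked by fewer than $\lceil d(k'-1)/2\rceil-(k'-2)$ trajectories, bound that event by the $k'$-point estimate, with the two-point chain bound as base case. In the example, $x_1,x_2$ are linked by a single trajectory, with probability $\lesssim g(x_1,x_2)$. Only the remaining balanced trees are summed over intersection points, and for those you must still prove convergence and decay in $\min_{i\ne j}\|x_i-x_j\|$. In the paper this is condition $(*)$ together with the cut/merge reduction of Lemma~\ref{lem:upper_bound_A_star}. That step, not the multi-set hitting estimate, is the hard part.

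As written, the upper bound does not reach the sharp count either. Your hitting criterion is misstated. About $R^{a_1}$ trajectories meet $A_1$, and each meets $A_i$ with probability $\asymp R^{a_i-(d-2)}$. So the condition is $\sum_i a_i\ge (k-1)(d-2)$, i.e.\ $\sum_i(d-a_i)\le d+2(k-1)$, with equality allowed. For $k=2$ your version reads $a_1+a_2>d$, which contradicts your own two-point criterion; that criterion itself needs $\ge$, e.g.\ for $d=6$.

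With the correct criterion, a single hub costs $\lceil (k-1)(d-2)/2\rceil+1$ trajectories, exactly the target, but only if every chain is unsaturated, $2s_i\le d-2$. For odd $d$, a chain reaches dimension $d-2$ only with $(d-1)/2$ trajectories. For $k>d-2$ some chains must saturate, and from $k=d$ on this costs an extra trajectory. For $d=5$, $k=5$ the target is $7$, but $\sum_i\min(2s_i,3)\ge 12$ forces two chains of length $2$, i.e.\ $8$ trajectories. Neither of your parity fixes repairs this.

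You need several hubs: overlapping groups of three walks, consecutive groups sharing one walk, each joined by one trajectory and two chains with $k_1+k_2=d-3$. This is the classical counterpart of the paper's overlapping groups of five (Lemma~\ref{lem:t_connection}); for $d=5$ it means one trajectory through $X_1,X_2,X_3$ and one through $X_3,X_4,X_5$.

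Two smaller points:
\begin{itemize}
\item No second moment over trajectories is needed for the multi-set hitting. Given the clusters, which are built from independent thinned interlacements, the number of good trajectories is Poisson. Its mean is bounded below by sequential hitting with exit times (Lemma~\ref{lem:t_A_tau_alphaR} in the branching case).
\item Dyadic scales are too dense. Discarding trajectories through $\mathrm{B}(r)$ costs a term of order $u\,r^{d-2}R^2$ (classical analogue of Lemma~\ref{lem:lower_bound_psi_r_inf}), which forces super-exponential scales as in \eqref{eq:def_rk_Rk}.
\end{itemize}
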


\subsection{Statement of results}
We consider here the branching interlacement setting, and get an analog of the result of R\'ath, Sapozhnikov, Procaccia and Tykesson.

\begin{theorem}
  \label{thm:2_points_branching}
  For $d\geq 5$. Let
  \begin{equation}
    \label{def:s_d}
    s_d = \left\lceil\frac{d}{4}\right\rceil - 1.
  \end{equation}
  Assume that the reproduction law is a mean $1$ distribution with finite third moment, and that the jump law of the walk is centered, has finite range and its support does not generate any strict subgroup of $\mathbb{Z}^d$. Then,
  \[
    \mathrm{diam}(G) = s_d.
  \]
\end{theorem}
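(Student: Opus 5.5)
We prove the two inequalities $\mathrm{diam}(G)\le s_d$ and $\mathrm{diam}(G)\ge s_d$ separately, following the scheme of R\'ath and Sapozhnikov adapted to tree-indexed walks. The basic heuristic is that the range of a critical branching random walk indexed by the infinite invariant tree is four-dimensional. The ranges of the trajectories in the interlacement at intensity $u$ behave like independent random sets of dimension $4$, and two such sets intersect in $\mathbb{Z}^d$ with a cost governed by $d-4$. Concretely, a chain of $s$ trajectories started near a point $x$ covers a set whose capacity in a ball of radius $R$ grows like $R^{\min(4s,\,d-2)}$ up to logarithms. Two such chains started at $x$ and $y$ meet as soon as $4(s_x+s_y)\ge d$. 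Throughout, we use the branching capacity of Zhu and the facts that hitting probabilities of a set $A$ by a tree-indexed walk are comparable to $\mathrm{BCap}(A)/|z|^{d-2}$, and that $\mathrm{BCap}(B_R)\asymp R^{d-4}$. The moment and jump assumptions are used exactly for these Green-function and capacity estimates.

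\emph{Upper bound.} Fix two trajectories $w_1,w_2$ of the interlacement and explore iteratively. Let $A_1$ be the range of $w_1$ restricted to a ball $B(R)$. The set $A_{j+1}$ is the union of $A_j$ with the ranges, inside $B(R)$, of all interlacement trajectories hitting $A_j$.

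The key lemma states that with probability tending to $1$, uniformly in the starting configuration, $\mathrm{BCap}(A_j\cap B(R))\ge c R^{\min(4j,\,d-4)}/\log^C R$. The proof has two ingredients. First, the number of trajectories hitting $A_j$ is Poisson with mean $u\,\mathrm{BCap}(A_j)$. Second, the branching capacity of a union of many independent tree-indexed ranges is additive up to a constant; this follows from a second-moment computation on the intersection of two ranges, which in turn needs the finite third moment.

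Applying the lemma with $j=\lceil d/8\rceil$-type splittings from both sides, we obtain sets $A^{(1)}_{j_1}$ and $A^{(2)}_{j_2}$ with $j_1+j_2=\lceil d/4\rceil-1$. The probability that a single independent trajectory hits both of them is bounded below by $\mathrm{BCap}(A^{(1)})\mathrm{BCap}(A^{(2)})/R^{d-4}$ times a constant, which is large. Independence across dyadic scales $R=2^n$ then upgrades ``positive probability'' to ``almost surely for all pairs'' via Borel--Cantelli, with a union bound over the countably many pairs. This gives a path of length $j_1+j_2+1-1=s_d$ in $G$.

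\emph{Lower bound.} We show that almost surely there exist two trajectories at distance at least $s_d$. Take $w_1$ through $0$ and $w_2$ through a point $x$ with $|x|\to\infty$, and bound the expected number of chains of $s=s_d-1$ intermediate trajectories linking them. A chain $w_1\to v_1\to\cdots\to v_{s}\to w_2$ requires intersection points $z_0,\dots,z_s$. Using the two-point estimate that the range of a tree-indexed walk from near $a$ hits near $b$ with probability $\asymp |a-b|^{4-d}$ (the Green function of the range is $|\cdot|^{2-d}$ convolved with the $|\cdot|^{2-d}$ tree factor), the expected number of chains is bounded by an iterated convolution $\sum_{z_i}\prod |z_i-z_{i+1}|^{4-d}$. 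When $4(s+1)<d$, that is $s+1\le s_d$, this sum decays like $|x|^{4(s+1)-d}\to 0$. Hence $P(\mathrm{dist}(w_1,w_2)\le s)\to 0$. Since the configuration contains trajectories through points arbitrarily far apart, a limiting argument (or ergodicity of the interlacement under translations) yields that almost surely some pair is not connected by $s_d-1$ intermediate trajectories.

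\emph{Main obstacle.} The hardest step is the capacity growth lemma in the upper bound. One needs a lower bound on the branching capacity of a union of independent tree-indexed ranges, which requires precise control of the mutual intersections of infinite-tree ranges. This is where the third moment is needed, and where the lack of a Markov property along the range makes the Rath--Sapozhnikov approach delicate. I would first attempt it via the variational characterization $\mathrm{BCap}(A)\ge (\sum_{a,b\in A}\mu(a)\mu(b)G_B(a,b))^{-1}$ with $\mu$ the normalized occupation measure. Here $G_B(a,b)\asymp|a-b|^{4-d}$ is the branching Green function.
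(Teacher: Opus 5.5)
Your lower bound is essentially the paper's argument, but the upper bound has a genuine gap. Your scheme always joins two grown sets $A_{j_1}$, $A_{j_2}$, with $j_1,j_2\ge 1$, through one extra trajectory, so the shortest path it can produce has length $2$. For $5\le d\le 8$ the theorem asserts $s_d=1$, i.e.\ that any two trajectories of the interlacement intersect, and nothing in your plan proves this.

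Nor does it follow by letting one walk hit the range of the other at a single scale. At $d=8$ the sum $\sum_{z}G(x,z)G(z)$ in the energy estimate diverges logarithmically; this is exactly why Lemma~\ref{lem:phi_capacity_bound}(ii) needs $d\ge 9$. The range of one walk in $\mathrm{B}(R)$ then has branching capacity only of order $R^4/\log R$, and the per-scale hitting probability degenerates. The paper treats $5\le d\le 8$ separately (Proposition~\ref{prop:2_connection_as}) by a Kochen--Stone second-moment argument on the events $\{y\in\mathcal{T}^{x_1}\cap\widetilde{\mathcal{T}}^{x_2}\}$, summed over an annulus.

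The logarithms you allow in your key lemma cause the same problem in $d\ge 9$ whenever $4\mid d$. Then $4s_d=d-4$, so with $j_1+j_2=s_d$ the expected number of connecting trajectories is of order $R^{4(j_1+j_2)-(d-4)}=R^0$: bounded below, not ``large''. Any factor $\log^{-C}R$ therefore destroys the uniform lower bound. You need three things:
\begin{itemize}
\item the log-free estimate $\gtrsim\min(NR^4,R^{d-4})$ for the expected capacity of $N$ pieces of ranges (the paper gets it from the energy criterion of Asselah--Schapira--Sousi, with a measure carried by the adjoint trees hanging off spine times in $[R^2/2,R^2]$);
\item a matching second-moment bound;
\item Paley--Zygmund.
\end{itemize}
A uniform positive probability suffices, and without logarithmic losses you should not expect probability tending to $1$.

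Second, the step from positive probability at one scale to an almost sure statement is not ``independence across dyadic scales''. The range of $w_1$ and the interlacement trajectories you use are shared by nested balls, and a tree-indexed walk has no Markov property along its range. This decoupling is a substantial part of the paper's proof:
\begin{itemize}
\item it uses only trajectories avoiding $\mathrm{B}(r)$, which costs $O(ur^{d-4}R^4)$ in capacity (Lemma~\ref{lem:lower_bound_psi_r_inf}) and forces $r^{d-4}\le\varepsilon R$; this is incompatible with $r=R/2$, hence the super-exponential scales \eqref{eq:def_rk_Rk};
\item it builds $A^{(1)}$ from the Kesten subtree rooted at the spine vertex $\xi_{\tau_R}$;
\item it counts only hits of $X^*$ at vertices with $n_u<\tau_{R^2}$;
\item it concludes with the conditional Borel--Cantelli Lemma~\ref{lem:bc_cond}.
\end{itemize}
With these ingredients, your two-sided growth plus one connecting trajectory is a legitimate alternative for $d\ge 9$; it is the $n=2$ case of the scheme the paper uses for $k$ points. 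The paper instead grows one side up to saturation, $\mathrm{BCap}(A^{(s_d)})\asymp R^{d-4}$, and lets the other walk hit it directly.

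In the lower bound, fix the count. On $\{\mathrm{diam}(G)\le s_d-1\}$ the chains have at most $s_d$ trajectories, i.e.\ at most $s_d-1$ intersection points, which gives $\sum_{n\le s_d-1}\|x\|^{4(n+1)-d}\lesssim\|x\|^{-1}$. Chains with $s_d-1$ intermediate \emph{trajectories}, as you wrote, would give the non-decaying exponent $4(s_d+1)-d$. Also, the exponents relevant for infinite-tree walks (hitting probabilities, saturation of capacity) are $d-4$, not $d-2$.
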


This result has been proved in a less general setting by Procaccia and Zhang in \cite{pz2016}. They stated their theorem for the geometric reproduction law, and for the case where the steps of the walk are chosen uniformly over the unit basis of $\mathbb{Z}^d$. Similarly to Lacoin and Tykesson \cite{lt2013}, we can also address the question of the connection of $k$ points of the interlacement, for general $k \geq 3$.

\begin{theorem}
  \label{thm:k_points_branching}
  For $d \geq 9$, and $k \geq 3$. Let
  \begin{equation}
    \label{def:n_k_d}
    n(k,d) = \left\lceil\frac{d(k-1)}{4}\right\rceil - (k-2).
  \end{equation}
  Assume that the reproduction law is a mean $1$ distribution with finite $2\lceil d(k-1)/4\rceil$  moment, and that the jump law is centered, has finite range and its support does not generate any strict subgroup of $\mathbb{Z}^d$. Then almost surely, for every $k$ points of the interlacement, there exist $n(k,d)$ trajectories of the interlacement that connect them. Futhermore, almost surely, there exist $k$ points of the interlacement that are not connected using less than $n(k,d)$ trajectories.
\end{theorem}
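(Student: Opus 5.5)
The plan follows Lacoin and Tykesson, with the branching random walk playing the role of simple random walk. The basic fact is that the range of a critical tree-indexed walk (an infinite branch of Zhu's invariant tree) is $4$-dimensional, whereas a simple random walk path is $2$-dimensional. Concretely, for an $n$-fold chain of interlacement trajectories started at a point $x$, the iterated range $R^{(n)}(x)$ (points reachable using $n$ trajectories) has "stochastic dimension" $\min(4n,d)$. That is, for $4n<d$,
\[
\mathbb{P}\big(y\in R^{(n)}(x)\big)\asymp |x-y|^{4n-d}.
\]
I would first establish this two-sided bound with the tools developed for Theorem~\ref{thm:2_points_branching}: the Green function of the tree-indexed walk decays like $|x|^{4-d}$ under the finite-variance and finite-range assumptions, and the branching capacity of a ball of radius $r$ is of order $r^{d-4}$.

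\textbf{Upper bound (connecting $k$ points).} Fix $k$ points $x_1,\dots,x_k$ on trajectories $w_1,\dots,w_k$. I would seek a "star" configuration: a tree in the trajectory graph with $k$ leaves $w_i$. If branch $i$ uses $m_i$ further trajectories, then $R^{(m_i)}(w_i)$ is a set of dimension $\min(4(m_i+1),d)$. Joining them at one common extra trajectory $w_0$ costs $1+\sum m_i$ trajectories beyond the $w_i$'s.

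The dimension count says the branches meet a common trajectory as soon as
\[
\sum_i \big(d-4(m_i+1)\big) + (d-4) < d(k-1) ,
\]
with a suitable reading in the borderline cases. Here the term $d-4$ comes from the codimension deficit of the extra trajectory $w_0$. Optimising over integers gives a total of $\lceil d(k-1)/4\rceil-(k-2)$ trajectories, counting the $w_i$'s as in the statement.

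To make this rigorous I would use a second-moment argument on dyadic scales. Fix scale $N$ and count the number $Z_N$ of trajectory-tuples $(w_0,\text{chains})$ that realise the connection inside a box of size $N$. Show $\mathbb{E}Z_N\to\infty$ and $\mathbb{E}Z_N^2\le C(\mathbb{E}Z_N)^2$; this gives a uniform positive probability at each scale. Independence across well-separated scales (the Poisson structure plus localisation of trajectories) then upgrades this to probability one.

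The second moment involves products of up to $2\lceil d(k-1)/4\rceil$ Green functions along a tree-indexed walk. This is why I expect the finite $2\lceil d(k-1)/4\rceil$-th moment of the offspring law to be needed: moments of the local time of the tree-indexed walk are controlled by factorial moments of the reproduction law. The condition $d\ge 9$ ensures the relevant diagram sums converge, i.e.\ that $n(k,d)$ exceeds the trivial bound.

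\textbf{Lower bound (sharpness).} I would show that with positive probability, and hence almost surely by ergodicity of the branching interlacement under translations, there exist $k$ points that are not connected with $n(k,d)-1$ trajectories. Choose the $k$ points on distinct trajectories placed at mutual distance $\asymp L$. Then bound, by a union bound over the combinatorial shapes of trees with fewer trajectories, the probability of any connecting tree.

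For each shape, the first-moment estimate uses the upper half of the Green-function bound. Integrating the meeting points over $\mathbb{Z}^d$ gives a contribution
\[
\lesssim L^{\,4(n(k,d)-1) - d(k-1) + \text{(slack)}} ,
\]
which tends to $0$ because $4(n(k,d)-1+k-2)<d(k-1)$. Care is needed at junction vertices of degree greater than $3$ and at trajectories used twice. I would handle these by noting that such degenerate configurations only decrease the effective dimension.

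\textbf{Main obstacle.} The hardest step is the second-moment estimate in the upper bound. Two chains may share pieces of the same branching trajectory, and the tree-indexed walk has long-range correlations along its spine. Bounding the resulting overlap terms requires uniform estimates on hitting probabilities of unions of $4$-dimensional sets, and the branching capacity is not subadditive in a clean way.

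I would first try to reduce everything to one-point and two-point Green-function bounds via a Rath--Sapozhnikov-type "capacity of iterated ranges" lemma. That lemma states that the branching capacity of $R^{(n)}$ restricted to a ball of radius $r$ is at least $c\,r^{\min(4n,d)-4}$ with high probability. Once it is available, the meeting step reduces to a single hitting estimate, and the second moment is replaced by conditioning successively on the chains.
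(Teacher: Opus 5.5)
Your upper-bound construction has a genuine gap: a single hub trajectory $w_0$ cannot reach $n(k,d)$ in general.

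At scale $R$, a branch of depth $m_i+1$ lies in a ball of radius $O(R)$, so its branching capacity is at most of order $R^{\min(4(m_i+1),d-4)}$ (cf.\ Lemma~\ref{lem:bcap_moments_bound}). Hence the expected number of interlacement trajectories at scale $R$ hitting all $k$ branches is of order $R^{(d-4)-\sum_i\delta_i}$, where $\delta_i=(d-4)-\min(4(m_i+1),d-4)$. The hub condition is therefore $\sum_i\delta_i\le d-4$. Your displayed inequality is not equivalent to this: for $d=9$, $k=3$ it rejects $m_i\equiv 0$, although one hub does join three walks there.

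For $d\equiv 0 \pmod 4$ the star does give $n(k,d)$. Otherwise every unsaturated branch has $\delta_i\equiv d\not\equiv 0 \pmod 4$, and saturating a branch overshoots, so the waste grows linearly in $k$. Concretely, take $d=9$, $k=7$. A branch with $m_i=0$ has $\delta_i=1$, so at least two branches need $m_i=1$, and the star uses $7+1+2=10$ trajectories, whereas $n(7,9)=9$. With nine trajectories the expected number of hubs at scale $R$ is $R^{5-6}=R^{-1}$, which is summable over dyadic scales. So no second-moment or multiscale argument can give an almost sure connection.

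The missing idea is to use several hubs. The paper splits the walks into overlapping groups, consecutive groups sharing one walk. It joins each group by a single trajectory $\gamma$ that starts from the depth-one visible set of one member and hits visible sets $A^{(k_i)}$ grown around the other members (Lemma~\ref{lem:t_connection_partial}). There, hitting several sets with one branching walk is handled along the spine by Lemma~\ref{lem:t_A_tau_alphaR}. The positive probability per scale is then upgraded by the conditional Borel--Cantelli lemma. For $d=9$, $k=7$, one hub joins $X_1,\dots,X_5$ and another joins $X_5,X_6,X_7$, giving nine trajectories.

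Any grouping must respect the same saturation bookkeeping: a depth $s$ with $4s>d-4$ yields capacity $R^{d-4}$, not $R^{4s}$. This also affects the paper's choice of the $k_i$, which only imposes $4k_i\le d$. For $d=10$, its groups of five give an expected number of connecting trajectories of order $R^{-2}$ at scale $R$, whereas two groups of three sharing a walk do connect five walks with two extra trajectories.

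Your lower-bound outline has the same architecture as the paper's: Palm--Mecke, a union over connection shapes, and summation over intersection points. The correct exponent is $4(n+k-2)-d(k-1)$, with $n$ the total number of trajectories; your display drops the $4(k-2)$. The real problem is your claim that summing out the meeting points gives this exponent for every shape. That is false, because for some shapes the sum is infinite.

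Take $d=9$, $k=3$ and three trajectories, with $x_1,x_2\in w_1$ and a chain $w_1,w_2,w_3$ with $x_3\in w_3$. After summing out $y_2\in w_2\cap w_3$, the summand in $y_1\in w_1\cap w_2$ decays only like $\|y_1\|^{(4-d)+(8-d)}=\|y_1\|^{-6}$ in $\mathbb{Z}^9$. So the first moment diverges, even though the event is very unlikely, since it puts $x_1,x_2$ on one trajectory. This shape is not degenerate. The trouble is that the sub-collection $\{x_1,x_2\}$ is joined by fewer than $n(2,d)$ trajectories.

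The paper excludes such shapes through condition $(*)$. It bounds them by induction on $k$, using the $k'$-point estimate of Proposition~\ref{prop:lower_bound_k} and discarding the rest of the configuration. Only for shapes satisfying $(*)$ does it sum out vertices, reducing the weighted tree by cut and merge steps. The bookkeeping of Lemma~\ref{lem:bound_z_delta}, together with an $\varepsilon/k$ loss on the external edges, guarantees two things: every merge satisfies $\alpha,\beta<d<\alpha+\beta$, as Lemma~\ref{lem:magic_ineq} requires, and the final exponent is at least $1-\varepsilon$.

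You also need Lemma~\ref{lem:n_points_connection}. A branching trajectory through $d_i\ge 3$ labels is not controlled by a product of two-point Green functions, but by a sum over trees with additional branching vertices and edge weights $\|u-v\|^{2-d}$. This, and not a second moment in the upper bound, is where the moment of order $2\lceil d(k-1)/4\rceil$ is needed; the upper bound uses only a third moment.
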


Note that for $d \in \{5,\dots,8\}$, $s_d = 1$, meaning that almost surely any two trajectories intersect, hence $k$ points are connected using at most $k$ trajectories.

Note also that Theorem \ref{thm:k_points_branching} implies Theorem \ref{thm:2_points_branching}, but we state and prove the two successively for pedagogical reasons.

As we outline below, the proof strategy of Theorems \ref{thm:2_points_branching} and \ref{thm:k_points_branching} are similar to the random walk case, but the branching structure of the trajectories in our case induces important new difficulties, especially for the proof of Theorem \ref{thm:k_points_branching}.

\subsection{Sketch of proof}
\subsubsection{Heuristics for Theorem \ref{thm:2_points_branching}}
When $d$ is between $5$ and $8$, the theorem boils down to proving that in this case two branching random walks indexed by an infinite critical tree almost surely intersect. The proof is similar to the one of Lawler in \cite{lawler80}, and relies on a second moment method to bound from below the probability that two branching random walks intersect at a given point. We conclude using a version of the Borel-Cantelli lemma due to Kochen and Stone.

The proof in higher dimension is the most interesting and delicate part. The upper bound is proved by extending the concept of trace sets of R\'ath and Sapozhnikov to the branching random walk case. We recursively build a cluster of $\mathbb{Z}^d$ from a branching random walk with a scale parameter. We start by setting the cluster to a certain part of the walk. Then, we add pieces of trajectories that intersect the cluster, bounded in space and time relatively to the parameter. The branching capacity of the aggregate increases until step $s_d$ where a saturation phenomenon appears. At that point the cluster typically fills a ball, and the probability to be hit by an independent branching random walk is lower bounded by a constant. Altogether with the Borel-Cantelli lemma and by increasing the scale parameter, we show that this walk almost surely hits an aggregate of this form. With this result, we observe that considering two random walks of the interlacement, the first one almost surely hits the $s_d$-cluster of the other, therefore they are at graph distance $s_d$ from each other.

The lower bound is proved by contradiction. Suppose that, with probability greater than a constant, every two points of the interlacement need less than $s_d$ walks to be connected. Fix two points $x,y \in \mathbb{Z}^d$ of the interlacement and observe that there exist $s_d$ trajectories, such that two successive trajectories intersect at a point $z_i$, and $x$ and $y$ belong to the first and last trajectories. This creates a ``chain'' of random walk trajectories from $x$ to $y$ with ``links'' $z_i$. Each walk has approximately a probability $\|z_i - z_{i+1}\|^{4-d}$ to realise this event. We show that consequently the probability that $x$ and $y$ are connected with less than $n(k,d)$ trajectories is upper bounded by a constant times $\|x-y\|^{-1}$. Taking two points sufficiently far from each other yields a contradiction.

\subsubsection{Heuristics for Theorem \ref{thm:k_points_branching}}
The proof of the upper bound relies on the tools developed in Section \ref{sec:diameter}. Given $k$ points of the interlacement, we want to find $n(k,d)$ trajectories connecting those points. The key idea is to form overlapping groups of $5$ trajectories, then grow $5$ clusters around each of them, and find a walk of the interlacement that connects the clusters. To do so, we prove that we can connect $5$ walks with $d - 8$ trajectories. Then the walks remaining from the division by $5$ are connected in a similar way. Eventually, we find $n(k,d)$ trajectories connecting the points.

Proving the lower bound is much harder and is actually the most innovative part of this paper. Fix $k$ points $x_1,\dots,x_k$ of the interlacement and suppose that they are connected by $n < n(k,d)$ trajectories. Then there exists a graph $G$ with $k$ leaves $x_1,\dots,x_k$ and $n-1$ internal nodes $y_1,\dots,y_{n-1}$ indexed by elements of $\mathbb{Z}^d$, in which two points are neighbours if there exists a trajectory of the interlacement that connects them. Whereas this graph was a simple line in the two points case, we now have to deal with a generic graph with $k$ leaves and $n+k-1$ vertices. Moreover, to better reproduce the behavior of a branching random walk, we have to add ``branching'' vertices $z_1,\dots,z_m$ that appear naturally when asking a branching walk to connect $3$ points or more (see Section \ref{sec:n_pts_connection_walk}). This construction leads to a complex graph $G(\overline{x},\overline{y},\overline{z})$ with vertices in $\mathbb{Z}^d$. We reduce the graph into a single edge with two operations on graphs decreasing the number of vertices and applied successively. At the end, we get
\[
  \P(x_1,\dots,x_k \text{ connected by } n \text{ trajectories}) \lesssim \max_{i\neq j} \|x_i-x_j\|^{-1+\varepsilon},
\]
for some arbitrarily small constant $\varepsilon > 0$. This allows us to conclude by the same final argument as in the two points case.

\subsection{Structure of the article}
The article is structured as follows. In Section \ref{sec:def}, we introduce the definitions and notations relative to the infinite critical Galton-Watson trees (see Section \ref{sec:gw}) and the branching random walks (see Section \ref{sec:brw}). In particular, we present some of the results of Zhu, among which the branching capacity and the hitting probability for branching random walks. In the meantime, we develop results on the connection of $n$ points by branching random walks (see Section \ref{sec:n_pts_connection_walk}). Finally, we formally introduce the branching interlacement model (see Section \ref{sec:branching_interlacements}).

The paper is then divided into two non-independent parts, Section \ref{sec:diameter} proves Theorem \ref{thm:2_points_branching} and develop analogs of the results of R\'ath and Sapozhnikov in \cite{rs2011}, and Section \ref{sec:k_points_connection} proves Theorem \ref{thm:k_points_branching}, using the tools of Section \ref{sec:diameter}.

\section{Definitions, notations and known results}
\label{sec:def}
\subsection{Notations and a useful result}
\label{sec:notations}
For a radius $r > 0$, and a point $x \in \mathbb{Z}^d$, we denote by $\mathrm{B}(x,r)$ the (open) ball of radius $r$ centered at $x$ that is the set $\mathrm{B}(x,r) = \{y \in \mathbb{Z}^d, \|x - y\| < r \}$, where $\| \cdot \|$ is the Euclidean norm. In particular we write $\mathrm{B}(r)$ for $\mathrm{B}(0,r)$, the ball centered at $0$. For a set $K \subset \mathbb{Z}^d$ and a point $x \in \mathbb{Z}^d$ the distance from $x$ to $K$ is defined by $\mathrm{d}(x,K) = \inf_{y\in K} \|x-y\|$, and the diameter of $K$, when $K$ is finite, is defined by $\mathrm{diam}(K) = \max_{x,y\in K} \|x-y\| + 1$.

For two probability measures $\P$ and $\mathbb{Q}$ on the spaces $(\Omega_1, \mathcal{F}_1)$ and $(\Omega_2, \mathcal{F}_2)$ respectively, we denote by $\P \otimes \mathbb{Q}$ the product measure of $\P$ and $\mathbb{Q}$ defined on the space $(\Omega_1\times\Omega_2, \mathcal{F}_1\otimes\mathcal{F}_2)$, where $\mathcal{F}_1\otimes\mathcal{F}_2$ is the product $\sigma-$algebra of $\mathcal{F}_1$ and $\mathcal{F}_2$.

If $T$ is a rooted tree, and $u$ is a vertex of $T$ different from the root, we denote by $\parent{u}$ the parent of $u$ in $T$, that is the vertex visited just before $u$ in the geodesic from the root to $u$. For two vertices $u,v$, we write $u\wedge v$ for the greatest common ancestor of $u$ and $v$. If $u$ is a vertex, we denote by $\nu_u$ the number of children of $u$ in $T$. Furthermore, we denote by $E(T)$ the set of edges of $T$.

If $f$ and $g$ are two real functions, we write $f \lesssim g$ (resp.~$f \gtrsim g$) if there exists a constant $C$ that does not depend on the parameters, such that, $f \leq Cg$ (resp.~$f \geq Cg$). We write $f \asymp g$ if $f \lesssim g$ and $g \lesssim f$. If $f$ and $g$ are real functions defined on $\mathbb{Z}^d$, we denote by $f * g$ the convolution of $f$ and $g$, that is for $x\in\mathbb{Z}^d$, $f*g(x) = \sum_{y\in\mathbb{Z}^d} f(y) g(x-y)$.

We introduce here an inequality which will be useful for estimating convolutions of functions of the form $x \mapsto \|x\|^a$ (see e.g. \cite[Proposition 1.7]{hhs2003} for a proof).
\begin{lemma}
  \label{lem:magic_ineq}
  Let $d \geq 2$, $a \geq b > 0$. If $f,g : \mathbb{Z}^d \to \mathbb{R}$ are such that $|f(x)| \lesssim \|x\|^{-a}$ and $|g(x)| \lesssim \|x\|^{-b}$, then there exists a constant $C = C(a,b,d)$ such that for all $x \in \mathbb{Z}^d$,
  \[
    |(f*g)(x)| \leq
    \begin{cases}
      C\|x\|^{d-(a+b)} & \text{if } a < d \text{ and } a + b > d,\\
      C\|x\|^{-b} & \text{if } a > d.
    \end{cases}
  \]
\end{lemma}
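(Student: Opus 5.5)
The approach is to split $\mathbb{Z}^d$ dyadically relative to the two scales $\|y\|$ and $\|x-y\|$ and estimate the contribution of each region directly. Throughout we may assume $\|x\|\ge 2$, since small $\|x\|$ is absorbed into the constant: $|f*g(x)|\le \sum_y |f(y)||g(x-y)|$ and $\sum_y \|y\|^{-a}\|x-y\|^{-b}$ is bounded uniformly in $x$ when $a+b>d$. (This bound follows, for instance, from Hölder's inequality when both exponents exceed $d/2$; in general it follows from the decomposition below, with the usual convention that $\|0\|^{-a}$ is read as $1$.) We then bound $|f*g(x)|\lesssim \sum_y \|y\|^{-a}\|x-y\|^{-b}$ and split the sum over $y$ into three regions:
\[
A_1=\{\|y\|\le \|x\|/2\},\qquad A_2=\{\|x-y\|\le \|x\|/2\},\qquad A_3=\mathbb{Z}^d\setminus(A_1\cup A_2).
\]

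On $A_1$ we have $\|x-y\|\ge \|x\|/2$, so the contribution is at most $\lesssim \|x\|^{-b}\sum_{\|y\|\le\|x\|/2}\|y\|^{-a}$. Counting lattice points in shells, $\sum_{\|y\|\le R}\|y\|^{-a}\lesssim R^{d-a}$ if $a<d$, and it is $O(1)$ if $a>d$. This gives $\|x\|^{d-a-b}$ in the first case and $\|x\|^{-b}$ in the second case.

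On $A_2$ we have $\|y\|\ge\|x\|/2$, and symmetrically we get $\lesssim\|x\|^{-a}\sum_{\|z\|\le\|x\|/2}\|z\|^{-b}$. Since $b\le a$, when $a<d$ we also have $b<d$, and this is $\lesssim \|x\|^{d-a-b}$. When $a>d$ there are three subcases:
\begin{itemize}
\item if $b<d$, we get $\|x\|^{d-a-b}\le\|x\|^{-b}$, because $a>d$;
\item if $b=d$, we get $\|x\|^{-a}\log\|x\|\lesssim\|x\|^{-b}$;
\item if $b>d$, we get $\|x\|^{-a}\le\|x\|^{-b}$.
\end{itemize}

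On $A_3$ both $\|y\|$ and $\|x-y\|$ are at least $\|x\|/2$, and $\|x-y\|\le\|y\|+\|x\|\le 3\|y\|$, so $\|x-y\|\asymp\|y\|$. The contribution is therefore $\lesssim\sum_{\|y\|\ge\|x\|/2}\|y\|^{-a-b}\lesssim\|x\|^{d-a-b}$, using $a+b>d$ (which holds in both cases). In the second case this is again $\le\|x\|^{-b}$.

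Adding the three regions gives the claim, with a constant depending only on $a,b,d$ and on the constants implicit in the hypotheses on $f$ and $g$. The only mildly delicate point is the borderline $b=d$ in the second case, where a logarithm appears and must be absorbed using the strict inequality $a>d$. Everything else is elementary shell counting.
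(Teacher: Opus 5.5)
Your argument is correct: the split into $\|y\|\le\|x\|/2$, $\|x-y\|\le\|x\|/2$ and the remainder, together with shell counting, gives both bounds, and you correctly absorb the logarithm at $b=d$ using $a>d$. The paper does not prove this lemma itself but cites \cite[Proposition 1.7]{hhs2003}; your decomposition is the standard argument for that kind of result, so you have in effect written out the proof the paper omits.
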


\subsection{Infinite critical Galton-Watson trees}
\label{sec:gw}
We define here two infinite random trees, which can both be seen as limits of critical Galton-Watson trees conditioned to be large. Let $\mu$ be a probability measure over $\mathbb{N}$ with mean $1$ and positive variance $\sigma^2$, and denote by $\mu_{sb}$ the size biased distribution of $\mu$, defined by $\mu_{sb}(i) = i\mu(i)$, for $i \geq 1$. \\
The \textit{infinite invariant tree} is a plane tree constructed as follows:
\begin{itemize}
\item the root generates $i$ children with probability $\mu(i-1)$, the first one is \textit{special}, others are \textit{normal},
\item \textit{special} vertices generate $i$ children with probability $\mu_{sb}(i)$, one of them is chosen uniformly at random to be special,
\item \textit{normal} vertices generate $i$ children with probability $\mu(i)$, each of them is normal.
\end{itemize}
This construction is due to Le Gall and Lin \cite{ll2016}, Zhu \cite{zhu2018} and Bai and Wan \cite{bw2022}. If we forget the planar structure of the tree we get the infinite tree described by Aldous in \cite{aldous1991}, which can be seen as the Benjamini-Schramm limit of a Galton-Watson tree conditioned to have size $n$ when $n$ goes to infinity.

\noindent We will also consider \textit{Kesten's tree}, which is constructed as the infinite invariant tree, except that the root generates $i$ children with probability $\mu_{sb}(i)$. In the following, an \textit{infinite critical tree} refers to either a Kesten's tree or an infinite invariant tree.

In these two trees the set of special vertices (together with the root) forms a semi-infinite line called the spine, on which types of critical Galton-Watson trees are attached. 

One important remark is that if we truncate an infinite critical tree by cutting the edge between two nearest neighbor vertices on the spine, the remaining infinite component is a Kesten's tree.

The vertices of an infinite critical tree $\mathcal{T}$ can be labeled by elements of $\mathbb{Z}$ using a depth-first search counter-clockwise from infinity (see Figure \ref{fig:labelling}). Vertices with negative labels are called the \textit{past} of $\mathcal{T}$, denoted $\mathcal{T}_-$, and vertices with non-negative labels are called the \textit{future} of $\mathcal{T}$, denoted $\mathcal{T}_+$. The law of the resulting labeled tree is invariant by the shift transformation $\theta : k \mapsto k+1$ and its inverse, hence the name of the tree, see \cite{zhu2018} or \cite{bw2022} for a full proof.

Denote by $\widetilde{\mathcal{T}}_c$ the Galton-Watson tree with reproduction law $\mu$. Let $\widetilde{\mathcal{T}}_c$ be the \textit{adjoint critical tree}, defined as a Galton-Watson tree where each node has the same reproduction law $\mu$, except the root which generates offspring with law $\widetilde{\mu}$, where $\widetilde{\mu}(i) = \sum_{j \geq i+1} \mu(j)$. We can decompose $\mathcal{T}_-$ into a spine of special vertices on which are attached independent adjoint critical trees, except if $\mathcal{T}$ is an infinite invariant tree, then there is no tree attached to the root. We will denote by $\widetilde{\mathcal{T}}_c^{(n)}$ the adjoint critical tree attached to the $n^{\text{th}}$ node of the spine from the origin in the past.

\begin{figure}[H]
  \centering
  \begin{tikzpicture}
    \node[dot=3] (0) at (0.000000,0.000000) {};
    \node[below] (0) at (0,0) {\scriptsize $0$};
    \node[dot=3] (0) at (0.923880,0.382683) {};
    \node[above] (0) at (0.923880,0.382683) {\scriptsize $1$};
    \draw (0.000000,0.000000) -- (0.923880,0.382683);
    \node[dot=3] (0) at (1.904665,0.577774) {};
    \node[above right] (0) at (1.904665,0.577774) {\scriptsize $2$};
    \draw (0.923880,0.382683) -- (1.904665,0.577774);
    \node[dot=3] (0) at (1.755349,0.938254) {};
    \node[above] (0) at (1.755349,0.938254) {\scriptsize $3$};
    \draw (0.923880,0.382683) -- (1.755349,0.938254);
    \node[dot=3] (0) at (0.382683,0.923880) {};
    \node[right] (0) at (0.382683,0.923880) {\scriptsize $4$};
    \draw (0.000000,0.000000) -- (0.382683,0.923880);
    \node[dot=3] (0) at (0.000000,1.000000) {};
    \draw (0.000000,1.000000) -- (0.000000,0.000000);
    \node[above right] (0) at (0.000000,1.000000) {\scriptsize $\!\!-5$};
    \node[dot=3] (0) at (-0.382683,1.923880) {};
    \node[above left] (0) at (-0.382683,1.923880) {\scriptsize $-4$};
    \draw (0.000000,1.000000) -- (-0.382683,1.923880);
    \node[dot=3] (0) at (-0.923880,1.382683) {};
    \node[above] (0) at (-0.923880,1.382683) {\scriptsize $-3$};
    \draw (0.000000,1.000000) -- (-0.923880,1.382683);
    \node[dot=3] (0) at (-1.755349,1.938254) {};
    \node[above] (0) at (-1.755349,1.938254) {\scriptsize $-2$};
    \draw (-0.923880,1.382683) -- (-1.755349,1.938254);
    \node[dot=3] (0) at (-1.904665,1.577774) {};
    \node[above left] (0) at (-1.904665,1.577774) {\scriptsize $-1$};
    \draw (-0.923880,1.382683) -- (-1.904665,1.577774);
    \node[dot=3] (0) at (0.000000,2.000000) {};
    \node[below right] (0) at (0.000000,2.000000) {\scriptsize $-7$};
    \draw (0.000000,2.000000) -- (0.000000,1.000000); 
    \node[dot=3] (0) at (-0.707107,2.707107) {};
    \node[above] (0) at (-0.707107,2.707107) {\scriptsize $-6$};
    \draw (0.000000,2.000000) -- (-0.707107,2.707107);
    \node[dot=3] (0) at (0.965926,2.258819) {};
    \node[above] (0) at (0.965926,2.258819) {\scriptsize $5$};
    \draw (0.000000,2.000000) -- (0.965926,2.258819);
    \node[dot=3] (0) at (0.707107,2.707107) {};
    \node[above] (0) at (0.707107,2.707107) {\scriptsize $6$};
    \draw (0.000000,2.000000) -- (0.707107,2.707107);
    \node[dot=3] (0) at (0.258819,2.965926) {};
    \node[above] (0) at (0.258819,2.965926) {\scriptsize $7$};
    \draw (0.000000,2.000000) -- (0.258819,2.965926);
    \draw[thick,dotted] (0.000000,2.000000) -- (0.000000,3.000000);
  \end{tikzpicture}
  \caption{Labeling of an infinite invariant tree.}
  \label{fig:labelling}
\end{figure}

For an infinite critical tree, denote by $(\xi_i)_{i \geq 1}$ the vertices of the spine ordered by a depth-first search from the origin. Then for a vertex $u$ in $\mathcal{T}$ we introduce the notation $n_u$, for the index of the node $\xi_{n_u}$ on the spine that is the nearest vertex from $u$ located on the spine (see Figure \ref{fig:spine_decomposition}).

\begin{figure}[H]
  \centering
  \begin{tikzpicture}
    \begin{scope}[rotate=-90]
      \draw[thick,dotted] (0.000000,-1.000000) -- (0.000000,0.000000);
      \node[dot=3] (0) at (0.000000,0.000000) {};
      \node (0) at (-0.2000000,-0.30000) {\scriptsize $\xi_k$};
      \node[dot=3] (0) at (-0.707107,0.707107) {};
      \draw (0.000000,0.000000) -- (-0.707107,0.707107);
      \node[dot=3] (0) at (0.923880,0.382683) {};
      \draw (0.000000,0.000000) -- (0.923880,0.382683);
      \node[dot=3] (0) at (0.382683,0.923880) {};
      \draw (0.000000,0.000000) -- (0.382683,0.923880);
      \node[dot=3] (0) at (0.000000,1.500000) {};
      \node (0) at (-0.200000,1.150000) {\scriptsize $\xi_{k+1}$};
      \draw (0.000000,1.500000) -- (0.000000,0.000000);
      \node[dot=3] (0) at (-0.258819,2.465926) {};
      \draw (0.000000,1.500000) -- (-0.258819,2.465926);
      \node[dot=3] (0) at (-0.707107,2.307107) {};
      \draw (0.000000,1.500000) -- (-0.707107,2.307107);
      \node[dot=3] (0) at (-0.965926,1.758819) {};
      \draw (0.000000,1.500000) -- (-0.965926,1.758819);
      \node[dot=3] (0) at (0.923880,1.882683) {};
      \draw (0.000000,1.500000) -- (0.923880,1.882683);
      \node[dot=3] (0) at (0.382683,2.423880) {};
      \draw (0.000000,1.500000) -- (0.382683,2.423880);
      \node[dot=3] (0) at (0.000000,3.000000) {};
      \node (0) at (-0.20000,3.600000) {\scriptsize $\xi_{k+2}$};
      \draw (0.000000,3.000000) -- (0.000000,1.500000);
      \node[dot=3] (0) at (-0.707107,3.707107) {};
      \draw (0.000000,3.000000) -- (-0.707107,3.707107);
      \node[dot=3] (0) at (0.923880,3.382683) {};
      \draw (0.000000,3.000000) -- (0.923880,3.382683);
      \node[dot=3] (0) at (0.382683,3.923880) {};
      \draw[very thick] (0.000000,3.000000) -- (0.382683,3.923880);
      \node[dot=3] (0) at (0.938254,4.755349) {};
      \draw (0.382683,3.923880) -- (0.938254,4.755349);
      \node[dot=3] (0) at (0.577774,4.904665) {};
      \node[right] (0) at (0.577774,4.904665) {$u$};
      \draw[very thick] (0.382683,3.923880) -- (0.577774,4.904665);
      \draw[thick,dotted] (0.000000,3.000000) -- (0.000000,4.5000000);
    \end{scope}
  \end{tikzpicture}
  \caption{Example of a spine decomposition and nearest vertex on the spine ($n_u = k+2$).}
  \label{fig:spine_decomposition}
\end{figure}

\subsection{Tree-indexed random walks}
\label{sec:brw}
A \textit{random walk} with jump distribution $\theta$ on $\mathbb{Z}^d$ indexed by a tree $T$ with root $\rho$, is a family $(X_u)_{u\in T}$ of $\mathbb{Z}^d$-valued random variables indexed by the set of vertices of $T$, such that the variables $(X_u - X_{\parent{u}})_{\substack{u\in T\backslash\{\rho\}}}$ are independent, and follow the same law $\theta$.

The random walks we are interested in are the one indexed by a random tree $\mathcal{T}$ (a critical Galton-Watson or an infinite critical tree), where the law $\theta$ satisfies some conditions described later. For $x \in \mathbb{Z}^d$ denote by $X^x$ the branching random walk started from $x$, i.e. such that if $\rho$ is the root of $\mathcal{T}$, $X_\rho^x = x$. If no point is specified we assume that the walk starts from the origin.

We introduce the following notation for the range of a random walk $X$ indexed by a tree $\mathcal{T}$,
\[
  \mathcal{T}^x = \{X_u^x : u \in \mathcal{T}\},
\]
and we also write sometimes $\mathrm{Range}(X)$ for the range of the $\mathbb{Z}^d-$valued random process.
We omit the index $x$ if the walk starts from the origin, or if the starting point is already specified.

If $\mathcal{T}$ is an infinite critical tree, we will sometimes refer to $X(k)$ as the position of the walk at the vertex labeled $k \in \mathbb{Z}$ in the tree. Furthermore, we introduce the following notations for ranges of the branching random walk,
\[
  \mathcal{T}^x = \{X^x(k) : k \in \mathbb{Z}\}, \quad \mathcal{T}_+^x = \{X^x(k) : k \geq 0\}, \quad \mathcal{T}_-^x = \{X^x(k) : k < 0\},
\]
i.e. $\mathcal{T}^x$ denotes the range of the whole walk, while $\mathcal{T}_+^x$ and $\mathcal{T}_-^x$ denote the ranges in the future and the past respectively. 

We denote by $\P_x$ (respectively $\overline{\P}_x$) the law of a random walk indexed by an infinite invariant (respectively Kesten's) tree started at point $x$. Similarly we introduce $\E_x$ and $\overline{\E}_x$ for the respective expectation. We omit the index $x$ when it start from the origin.

In this whole work we assume the following hypotheses on $\theta$, which will allow us to apply the results of Zhu from \cite{zhu2016a} \cite{zhu2016b} \cite{zhu2018}, that we state later: \\[-1.75em]
\begin{itemize}  \setlength\itemsep{0em}
\item it has finite range,
\item it is centered (with zero mean),
\item it is not supported on any strict subgroup of $\mathbb{Z}^d$.
\end{itemize}

\noindent This random walk is transient in dimension $d \geq 5$ (see for example \cite{ll2016}), which allows us to define the following function introduced by Zhu in \cite{zhu2016b}.

\begin{definition}[Green's function]
  The \textit{Green's function} of the branching random walk $(X_u)_{u\in\mathcal{T}}$ is defined for $x \in \mathbb{Z}^d$ by
  \[
    G(x) = \E\Bigg[\sum_{u\in\mathcal{T}_-} \indic{X_u = x}\Bigg].
  \]
  Furthermore, we define $G(x,y) = G(y-x)$, for $x,y \in \mathbb{Z}^d$.
\end{definition}
\noindent As a direct consequence of the Markov inequality, one has for any $x,y \in \mathbb{Z}^d$,
\begin{equation}
  \label{eq:hit_T_past}
  \P_x(\mathcal{T}_- \text{ hits } y) \leq G(x,y).
\end{equation}
We also consider the Green's function of a standard random walk $(S_n)_{n\geq 0}$ with jump distribution $\theta$, which is defined by
\[
  g(x) = \E\Bigg[\sum_{n\geq 0} \indic{S_n = x}\Bigg].
\]
Similarly we define for $x,y \in\mathbb{Z}^d$, $g(x,y) = g(y-x)$. One important property is that (see e.g. \cite{ass2023})
\begin{equation}
  \label{eq:G_g_convolution}
  G(x,y) \asymp g * g (x,y) = \sum_{z\in\mathbb{Z}} g(x,z)g(y,z),
\end{equation}
from which we can deduce the following asymptotic expansion at infinity:
\begin{equation}
  \label{eq:estim_green}
  G(x,y) = G(y-x) \asymp \frac{1}{1+\|x-y\|^{d-4}}.
\end{equation}
Note also that by the Markov property and the definition of $\mathcal{T}^x$,
\begin{equation}
  \label{eq:hit_T}
  \P(\mathcal{T}^{x} \text{ hits } y) \leq \P(\mathcal{T}_-^x \text{ hits } y) + \P(\mathcal{T}_+^x \text{ hits } y) \leq 2G(x,y) + g(x,y) \lesssim G(x,y).
\end{equation}
The Green's function of the critical adjoint tree is equal to
\begin{equation}
  \label{eq:green_adjoint}
  \E_x\Bigg[\sum_{\ u\in\widetilde{\mathcal{T}}_c} \indic{X_u = y}\Bigg] = \delta_{x,y} + \frac{\sigma^2}{2}\left(g(x,y) - \delta_{x,y}\right) \asymp g(x,y).
\end{equation}

\begin{theorem}[\cite{zhu2016a}, \cite{zhu2016b}]
  \label{thm:bound_hitting_probability}
  Let $\varepsilon > 0$. There exist positive constants $c_1, c_2$, such that for any nonempty $K \subset \mathbb{Z}^d$ and any $x \in \mathbb{Z}^d$, with $\mathrm{d}(x,K) \geq \varepsilon \cdot \mathrm{diam}(K)$, one has,
  \begin{equation}
    \label{thm:bound_hitting_probability_infinite}
    c_1\frac{\mathrm{BCap}(K)}{\mathrm{d}(x,K)^{d-4}} \leq \P(\mathcal{T}_-^x \cap K \neq \emptyset) \leq c_2\frac{\mathrm{BCap}(K)}{\mathrm{d}(x,K)^{d-4}}.
  \end{equation}
  \begin{equation}
    \label{thm:bound_hitting_probability_critical}
    c_1\frac{\mathrm{BCap}(K)}{\mathrm{d}(x,K)^{d-2}} \leq \P(\mathcal{T}_c^x \cap K \neq \emptyset) \leq c_2\frac{\mathrm{BCap}(K)}{\mathrm{d}(x,K)^{d-2}}.
  \end{equation}
\end{theorem}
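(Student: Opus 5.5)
This is Zhu's theorem, which the paper takes from \cite{zhu2016a} and \cite{zhu2016b}. The natural route is a last-exit decomposition for the tree-indexed walk, combined with the definition of branching capacity as a sum of escape probabilities. I treat \eqref{thm:bound_hitting_probability_infinite} first and then explain the changes for the critical tree.

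\textbf{Step 1: last-exit decomposition.} On the event $\mathcal{T}_-^x\cap K\neq\emptyset$, I single out a distinguished visit. The candidate is the vertex of $\mathcal{T}_-$ with the largest (closest to $0$) negative label among those mapped into $K$. Suppose this visit is at $y\in K$ at vertex $u$. Using the shift invariance of the infinite invariant tree under $\theta$, I re-root at $u$. This identifies the probability that the distinguished visit occurs at $y$ with the quantity
\[
  \sum_{\text{labels}} \P_y(\text{tree re-rooted at } u \text{ avoids } K \text{ in its past, and its remaining part reaches } x).
\]
Summing over the position of $u$ relative to the original root, the factor ``reaches $x$'' is governed by a Green's-type function, while the avoidance factor is $\mathrm{Es}_K(y)$. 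The result should be the identity
\[
  \P(\mathcal{T}_-^x\cap K\neq\emptyset)=\sum_{y\in K}\widetilde G(x,y)\,\mathrm{Es}_K(y),
\]
with $\widetilde G\asymp G$. This is the analogue of the classical formula $\P(\text{hit }K)=\sum_y g(x,y)e_K(y)$.

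\textbf{Step 2: comparing the Green's function across $K$.} Since $\mathrm{BCap}(K)=\sum_{y\in K}\mathrm{Es}_K(y)$, it suffices to show that $\widetilde G(x,y)\asymp \mathrm{d}(x,K)^{4-d}$ uniformly in $y\in K$. This follows from \eqref{eq:estim_green}: the hypothesis $\mathrm{d}(x,K)\ge\varepsilon\,\mathrm{diam}(K)$ gives
\[
  \mathrm{d}(x,K)\le\|x-y\|\le(1+\varepsilon^{-1})\,\mathrm{d}(x,K).
\]
Both inequalities in \eqref{thm:bound_hitting_probability_infinite} then follow, with constants depending only on $\varepsilon$.

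\textbf{Step 3: the critical tree.} For \eqref{thm:bound_hitting_probability_critical}, the same decomposition applies to a critical Galton–Watson tree. Conditioning on the vertex of the distinguished visit produces a spine from the root to that vertex. The expected number of tree vertices mapped to $y$ is $\asymp g(x,y)\asymp\|x-y\|^{2-d}$, by \eqref{eq:green_adjoint}. The weights that appear are again escape-type probabilities, now for an adjoint or Kesten-type tree. These are comparable to $\mathrm{Es}_K(y)$ up to constants, since pruning or adding independent finite subtrees changes avoidance probabilities only by bounded factors. Combining this with $g\asymp\|\cdot\|^{2-d}$ gives the $\mathrm{d}(x,K)^{d-2}$ bound.

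\textbf{Main obstacle.} The hard step is Step 1. Unlike a path, a tree has no canonical ``last visit''. The escape event for the re-rooted tree mixes the past and future of the relabeled tree, and the subtrees hanging off the path from $u$ to the root are not independent of the choice of $u$. I would first try ordering by the depth-first labels and using the invariance of $\mathcal{T}$ under $\theta^{-1}$. This should yield an exact identity in which the escape probability is with respect to $\mathcal{T}_-$ only, which is how $\mathrm{Es}_K$ is defined. If an exact identity fails, I would settle for two-sided inequalities:
\begin{itemize}
  \item an upper bound from a union bound over the first visit, adding independent subtrees, which only helps;
  \item a lower bound from a second-moment argument on the number of visits weighted by equilibrium-type measures, controlled by the finite-variance assumption.
\end{itemize}
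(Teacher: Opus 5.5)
The paper does not prove this statement; it is quoted from Zhu. So your plan has to stand on its own, and its central step does not.

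\textbf{Step 1.} Take your distinguished visit, the negative label $-j$ closest to $0$. Re-rooting there turns ``no visit to $K$ strictly between $-j$ and $0$'' into avoidance of $K$ by labels $1,\dots,j-1$. That is part of the \emph{future} of the re-rooted tree, so $\mathrm{Es}_K(y)$, which is defined through $\mathcal{T}_-$, never appears.

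The choice that works is the \emph{smallest} label. Since $\mathcal{T}_-^x$ visits $K$ only finitely often (the expected number of visits is $\sum_{y\in K}G(x,y)<\infty$), this minimum exists, and shift invariance gives the exact identity
\[
  \mathbb{P}(\mathcal{T}_-^x\cap K\neq\varnothing)=\sum_{y\in K}\mathbb{E}_y\Big[\mathbf{1}\{\mathcal{T}_-^y\cap K=\varnothing\}\,\#\{j\geq 1: X(j)=x\}\Big].
\]
This is not $\sum_{y}\widetilde G(x,y)\,\mathrm{Es}_K(y)$ for any deterministic kernel $\widetilde G$. The past and the future of the invariant tree hang off the same spine, so the escape event and the number of future visits to $x$ are correlated. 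The factorized identity you assert in Step 1 is therefore false.

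Showing that these expectations are $\asymp \mathrm{Es}_K(y)\,\mathrm{d}(x,K)^{4-d}$, at least after summing over $y\in K$, is the real work in Zhu's proof. You must:
\begin{itemize}
\item cut the spine at a suitable exit time;
\item separate the early part of the escape event from the late visits to $x$;
\item show that the constraint on the rest of the spine costs only a constant factor.
\end{itemize}
The last point is where $\mathrm{d}(x,K)\geq\varepsilon\,\mathrm{diam}(K)$ is genuinely needed. In your plan that hypothesis only enters the Green's function comparison of Step 2, which is trivial precisely because the decorrelation has been assumed.

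\textbf{Step 3.} The critical case has the same gap. For $\mathcal{T}_c$ the right decomposition is again by the first visit in depth-first order. The vertices preceding it are its ancestors together with the subtrees to their left, i.e.\ adjoint critical trees. After re-rooting you get exactly the past of an invariant tree started at $y$, cut after $n$ spine steps with the $n$-th spine vertex at $x$, summed over $n$.

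What separates these weights from $\mathrm{Es}_K(y)$ is not a family of ``finite subtrees''. It is the whole infinite tail of the spine started at $x$. The principle that adding or pruning independent subtrees changes avoidance probabilities by bounded factors is false in general: a subtree rooted at a point of $K$ makes avoidance impossible. By the Markov property at the $n$-th spine vertex, the full-escape weight equals the truncated one times $\mathbb{P}_x(\mathcal{T}_-^x\cap K=\varnothing)$. Consequently:
\begin{itemize}
\item the upper bound needs a uniform positive lower bound on this avoidance probability under $\mathrm{d}(x,K)\geq\varepsilon\,\mathrm{diam}(K)$;
\item the monotonicity in your first fallback bullet (more subtrees make avoidance harder) only serves the lower bound;
\item both directions still need the same decorrelation between escape and the spine's visits to $x$.
\end{itemize}

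\textbf{Second-moment fallback.} This gives a lower bound in terms of $\mathcal{E}(\rho)^{-1}$ for probability measures $\rho$ on $K$. Turning it into $\mathrm{BCap}(K)$ requires a measure with $\mathcal{E}(\rho)\lesssim\mathrm{BCap}(K)^{-1}$. For $\rho=\widetilde e_K$ this means bounding $\sum_{y\in K}G(z,y)e_K(y)$ by a constant for $z\in K$. That is the same decorrelation again: take $x=z\in K$ in the identity above.
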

\noindent The same result holds when replacing $\mathcal{T}_-$ by $\mathcal{T}_+$ in \eqref{thm:bound_hitting_probability_infinite}.

\begin{definition}[Equilibrium measure and branching capacity]
  Let $K$ be a finite subset of $\mathbb{Z}^d$. The \textbf{equilibrium measure} of $K$ is the measure defined by
  \[
    e_K(x) = \indic{x \in K}\ \P(\mathcal{T}_-^x \cap K = \emptyset),\quad x \in \mathbb{Z}^d.
  \]
  The \textbf{branching capacity} of $K$ is defined by
  \[
    \mathrm{BCap}(K) = \sum_{x \in K} e_K(x).
  \]
  We can normalize the equilibrium measure and define a probability measure $\widetilde{e}_K(x)$ by
  \[
    \widetilde{e}_K(x) = \frac{e_K(x)}{\mathrm{BCap}(K)},\quad x \in \mathbb{Z}^d.
  \]
\end{definition}

From this we can define the law of a random walk indexed by an infinite invariant tree, starting from a point chosen according to $\widetilde{e}_K$ and conditioned to not hit $K$ on negative times. This will be useful later for the construction of the branching interlacement.
\begin{definition}
  For $K \subset \mathbb{Z}^d$, we define a probability measure $\widetilde{P}_{\widetilde{e}_K}$ by
  \[
    \widetilde{P}_{\widetilde{e}_K}(\,\cdot\,) = \sum_{x \in K}\widetilde{e}_K(x)\ \P(\,\cdot \mid \mathcal{T}_-^x \cap K = \emptyset),
  \]
  where $\mathbb{P}$ is the law of $\mathcal{T}^x$.
\end{definition}

\begin{proposition}[Branching capacity of a ball \cite{zhu2016a}]
  There exist constants $c, C > 0$, such that for every $R > 0$,
  \label{res:bcap_ball}
  \begin{equation}
    \label{eq:bcap_ball}
    cR^{d-4} \leq \mathrm{BCap}(\mathrm{B}(R)) \leq C R^{d-4}.
  \end{equation}
\end{proposition}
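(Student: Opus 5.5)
The plan is to prove the two bounds in \eqref{eq:bcap_ball} by testing the definition of $\mathrm{BCap}$ against a point far from the ball. Theorem~\ref{thm:bound_hitting_probability} compares hitting probabilities of $K$ with $\mathrm{BCap}(K)$, so it suffices to show that the probability for $\mathcal{T}_-^x$ to hit $\mathrm{B}(R)$ is of order $1$ when $\mathrm{d}(x,\mathrm{B}(R)) \asymp R$.

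\textbf{Setup.} Fix $K=\mathrm{B}(R)$, so that $\mathrm{diam}(K)\asymp R$, and choose $x$ with $\|x\|=2R$. Then $\mathrm{d}(x,K)\asymp R$ and $\mathrm{d}(x,K)\geq \varepsilon\,\mathrm{diam}(K)$ for a fixed $\varepsilon>0$. Theorem~\ref{thm:bound_hitting_probability} gives
\[
c_1\frac{\mathrm{BCap}(K)}{R^{d-4}}\lesssim \P(\mathcal{T}_-^x\cap K\neq\emptyset)\lesssim c_2\frac{\mathrm{BCap}(K)}{R^{d-4}} .
\]

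\textbf{Upper bound.} The probability in the middle is at most $1$. The left inequality then gives $\mathrm{BCap}(K)\lesssim R^{d-4}$ directly.

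\textbf{Lower bound.} Here one needs $\P(\mathcal{T}_-^x\cap \mathrm{B}(R)\neq\emptyset)\geq c>0$ uniformly in $R$. I would use a second moment argument on
\[
Z=\sum_{u\in\mathcal{T}_-}\indic{X^x_u\in \mathrm{B}(R)},
\]
restricted to vertices whose nearest spine index satisfies $n_u\le R^2$, to keep the second moment finite. The two moments are estimated as follows.
\begin{itemize}
\item By the spine decomposition and \eqref{eq:green_adjoint}, the truncated first moment satisfies $\E[Z]\gtrsim \sum_{y\in \mathrm{B}(R)} R^{4-d}\asymp R^4$. The spine reaches distance of order $R$ in time $R^2$, and each adjoint tree contributes a sum of $g$.
\item The second moment splits according to the common ancestor $u\wedge v$. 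The dominant term is a spine point $w$, followed by two Green-type factors: one adjoint-tree $g$ for the branch off the spine, then the convolution $g*g*g$ into the ball. Using Lemma~\ref{lem:magic_ineq} with $G\asymp g*g$, this gives $\E[Z^2]\lesssim R^8$. The computation is the same as showing that the range of a critical tree-indexed walk in a ball of size $R$ has $R^4$ points with variance of order $R^8$.
\end{itemize}
Paley–Zygmund then gives $\P(Z>0)\geq \E[Z]^2/\E[Z^2]\gtrsim 1$. Since $\{Z>0\}\subset\{\mathcal{T}_-^x\cap K\neq\emptyset\}$, the hitting probability is bounded below, and the right inequality yields $\mathrm{BCap}(K)\gtrsim R^{d-4}$.

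\textbf{Main obstacle.} The hard step is bounding the second moment: sums over pairs in a critical tree involve third-order branching moments and nested convolutions. A cruder route would replace the whole lower bound by a variational characterization of $\mathrm{BCap}$, but that is not stated in the paper. So I would carry out the spine/adjoint-tree decomposition carefully, using finite variance of $\mu$ together with Lemma~\ref{lem:magic_ineq} (valid since $d\ge5$).
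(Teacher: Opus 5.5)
Your argument is correct, and it takes a genuinely different route from the paper, which gives no proof at all and simply imports the estimate from Zhu.

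You derive it from the two-sided hitting estimate of Theorem~\ref{thm:bound_hitting_probability}, evaluated at a point at distance $\asymp R$ from $\mathrm{B}(R)$. The upper bound then costs nothing, and the lower bound reduces to a uniform lower bound on $\P(\mathcal{T}_-^x\cap\mathrm{B}(R)\neq\emptyset)$, which you get by the second moment method. This is legitimate here because Theorem~\ref{thm:bound_hitting_probability} is quoted as a black box. Like the statement itself, your argument only makes sense for $R\geq 1$: for $R<1$, $\mathrm{B}(R)=\{0\}$ has positive constant capacity and the upper bound fails.

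What your route buys is a short reduction of both bounds to that hitting estimate. The price is leaning on the heavier of the two imported results. Three remarks on the lower bound follow.

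\begin{itemize}
\item \emph{The second moment bound is already in the paper.} Since $Z\leq\sum_{u\in\mathcal{T}_-}\indic{X^x_u\in\mathrm{B}(x,4R)}$, which has the law of $J_X(4R)$, Lemma~\ref{lem:bound_z_moments}(ii) gives $\E[Z^2]\lesssim R^8$. That lemma uses only Green function estimates, so there is no circularity. The untruncated quantity already has second moment of order $R^8$, so your truncation $n_u\leq R^2$ is harmless but not what keeps things finite.

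\item \emph{Finite variance is not enough for this step, contrary to what you write.} Pairs $u,v$ in the same adjoint tree that split at its root contribute a factor $\sum_i i(i-1)\widetilde{\mu}(i)\asymp\sum_j j^3\mu(j)$. You therefore need the finite third moment assumed in Lemma~\ref{lem:bound_z_moments}, which is fine in the paper's setting.

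\item \emph{The variational route you set aside is available in the paper.} The energy lower bound of Asselah, Schapira and Sousi quoted in \eqref{eq:lower_bound_energy} (used in the proof of Lemma~\ref{lem:phi_capacity_bound}) can be applied to the uniform probability measure $\rho$ on $\mathrm{B}(R)$. This gives $\mathcal{E}(\rho)=|\mathrm{B}(R)|^{-2}\sum_{x,y\in\mathrm{B}(R)}G(x,y)\lesssim R^{4-d}$, hence $\mathrm{BCap}(\mathrm{B}(R))\gtrsim R^{d-4}$ in one line. It uses the same third moment hypothesis and no hitting estimate. The upper bound would still need something like your first step.
\end{itemize}
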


If we denote by $(S_n)_{n\in\mathbb{N}}$ the simple random walk induced by the spine of the infinite critical tree, we can define
\begin{equation}
  \label{eq:exit_time_br}
  \tau_R = \inf\{n \in \mathbb{N} : S_n \notin \mathrm{B}(R)\},
\end{equation}
that is the exit time of the ball of radius $R$ by the walk $(S_n)_{n\in\mathbb{N}}$. 

\subsubsection{Connection of $n$ points by a branching random walk}
\label{sec:n_pts_connection_walk}
In this section, we prove two lemmas regarding the connection probability of $n$ points by a branching random walk, which extend the simple upper bound from \eqref{eq:hit_T_past} for $2$ points. More precisely we prove two upper bounds, first when the walk is indexed by a critical Galton-Watson tree and then by an infinite critical tree.

\begin{definition}[Generated tree, see Figure \ref{fig:generated_tree}]
  Let $T$ be a finite planar tree with more than $n$ vertices, and $v_1,\dots,v_n$ distinct vertices of $T$. The \textbf{tree generated by $v_1,\dots,v_n$ in $T$} is the tree $T\langle v_1,\dots,v_n\rangle$ with vertex set
  \[
    \{v_1,\dots,v_n\} \cup \{v_i \wedge v_j, i \neq j\},
  \]
  and where two vertices are connected by an edge if the geodesic between the two in $T$ does not contain any other vertex of $T\langle v_1,\dots,v_n\rangle$.
\end{definition}

\begin{figure}[H]
  \centering
  \begin{tikzpicture}[scale=2.3]
    \begin{scope}
      \coordinate (1) at (0,0);
      \coordinate (2) at (-0.5,0.5);
      \coordinate (3) at (-1,0.75);
      \coordinate (4) at (-1.25, 1);
      \coordinate (5) at (-1, 1.1);
      \coordinate (6) at (-0.25,1);
      \coordinate (7) at (-0.5, 1.2);
      \coordinate (8) at (-0.25,1.4);
      \coordinate (9) at (-0.1,1.7);
      \coordinate (10) at (0,1.2);
      \coordinate (11) at (0.4,0.4);
      \coordinate (12) at (0.2,0.7);
      \coordinate (13) at (0.6,0.7);
      \coordinate (14) at (0.4,1);
      \coordinate (15) at (0.9,0.8);
      \coordinate (16) at (0.9,1.1);

      \node at (7) [circle,fill,inner sep=1.5pt] {};
      \node at (9) [circle,fill,inner sep=1.5pt] {};
      \node at (12) [circle,fill,inner sep=1.5pt] {};
      \node at (15) [circle,fill,inner sep=1.5pt] {};
      \node at (16) [circle,fill,inner sep=1.5pt] {};

      \node at (1) [below left] {\small$1$};
      \node at (2) [below left] {\small$2$};
      \node at (3) [below left] {\small$3$};
      \node at (4) [above] {\small$4$};
      \node at (5) [above] {\small$5$};
      \node at (6) [below right] {\small$6$};
      \node at (7) [above left] {\small$7$};
      \node at (8) [left] {\small$8$};
      \node at (9) [above right] {\small$9$};
      \node at (10) [above] {\small$10$};
      \node at (11) [below right] {\small$11$};
      \node at (12) [above] {\small$12$};
      \node at (13) [below right] {\small$13$};
      \node at (14) [above] {\small$14$};
      \node at (15) [below right] {\small$15$};
      \node at (16) [above] {\small$16$};
      
      \draw (1) -- (2);
      \draw (2) -- (3);
      \draw (3) -- (4);
      \draw (3) -- (5);
      \draw (2) -- (6);
      \draw (6) -- (7);
      \draw (6) -- (8);
      \draw (8) -- (9);
      \draw (6) -- (10);
      \draw (1) -- (11);
      \draw (11) -- (12);
      \draw (11) -- (13);
      \draw (13) -- (14);
      \draw (13) -- (15);
      \draw (15) -- (16);
    \end{scope}
    \begin{scope}[xshift=2.5cm, scale=0.8]
      \coordinate (1) at (0,0);
      \coordinate (6) at (-0.5, 0.5);
      \coordinate (7) at (-0.8, 0.8);
      \coordinate (9) at (-0.2, 0.8);
      \node at (7) [circle,fill,inner sep=1.5pt] {};
      \node at (9) [circle,fill,inner sep=1.5pt] {};
      \node at (12) [circle,fill,inner sep=1.5pt] {};
      \node at (15) [circle,fill,inner sep=1.5pt] {};
      \node at (16) [circle,fill,inner sep=1.5pt] {};

      \coordinate (11) at (0.5, 0.5);
      \coordinate (12) at (0.2, 0.8);
      \coordinate (15) at (0.8, 0.8);
      \coordinate (16) at (0.8, 1.1);

      \node at (1) [below left] {\small$1$};
      \node at (6) [below left] {\small$6$};
      \node at (7) [above left] {\small$7$};
      \node at (9) [above right] {\small$9$};
      \node at (11) [below right] {\small$11$};
      \node at (12) [above] {\small$12$};
      \node at (15) [right] {\small$15$};
      \node at (16) [above] {\small$16$};

      \node at (7) [circle,fill,inner sep=1.5pt] {};
      \node at (9) [circle,fill,inner sep=1.5pt] {};
      \node at (12) [circle,fill,inner sep=1.5pt] {};
      \node at (15) [circle,fill,inner sep=1.5pt] {};
      \node at (16) [circle,fill,inner sep=1.5pt] {};
      
      \draw (1) -- (6);
      \draw (6) -- (7);
      \draw (6) -- (9);
      \draw (1) -- (11);
      \draw (11) -- (12);
      \draw (11) -- (15);
      \draw (15) -- (16);
    \end{scope}
  \end{tikzpicture}
  \caption{Example of a tree $T$ and the generated tree $T\langle 7,9,12,15,16\rangle$}
  \label{fig:generated_tree}
\end{figure}

\begin{definition}[Multipoint connection following a tree]
  \label{def:connects_following_critical}
  Let $X$ be a random walk indexed by a critical tree $\mathcal{T}_c$. Let $x_0,\dots,x_n \in \mathbb{Z}^d$, and $T$ be a planar tree with vertex set $\{x_0,\dots,x_n\}$, rooted at $x_0$. We say that $X$ \textbf{connects} $x_0,\dots,x_n$ \textbf{following} $T$, if $X_\varnothing = x_0$, and there exist $v_1,\dots,v_n \in \mathcal{T}_c$ such that $X_{v_i} = x_i$, and $\mathcal{T}_c\langle \varnothing,v_1,\dots,v_n\rangle$ is isomorphic to $T$, with $v_i$ mapped to $x_i$.
\end{definition}

Note that in the previous definition the vertex set of $\mathcal{T}_c\langle \varnothing,v_1,\dots,v_n\rangle$ is exactly $\{\varnothing,v_1,\dots,v_n\}$.

\begin{lemma}
  \label{lem:connection_critical}
  Let $X$ be a random walk indexed by a critical tree $\mathcal{T}_c$, $x_0,\dots,x_n \in \mathbb{Z}^d$, and $T$ a rooted planar tree with vertex set $\{x_0,\dots,x_n\}$, and rooted at $x_0$. If $\mu$ has a finite moment of order $n$, then
  \[
    \P_{x_0}\big(X \text{ connects } x_0,\dots,x_n \text{ following } T\big) \lesssim \prod_{\{x_i,x_j\} \in E(T)} \frac{1}{1+\|x_i - x_j\|^{d-2}}.
  \]
\end{lemma}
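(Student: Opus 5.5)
The plan is a first-moment (union bound) argument over the embedded tree, organised by induction on the shape of $T$. On the event in the statement there are $v_1,\dots,v_n\in\mathcal{T}_c$ with $X_{v_i}=x_i$ and $\mathcal{T}_c\langle\varnothing,v_1,\dots,v_n\rangle\cong T$. So the probability is at most
\[
  \E_{x_0}\Big[\#\big\{(v_1,\dots,v_n)\in\mathcal{T}_c^n : X_{v_i}=x_i,\ \mathcal{T}_c\langle\varnothing,v_1,\dots,v_n\rangle\cong T\big\}\Big].
\]
I would decompose this expectation along the edges of $T$, and induct on the number of vertices $n$.

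\textbf{Base case and single edges.} Each edge $\{x_i,x_j\}$ of $T$, with $x_i$ the parent of $x_j$, corresponds in $\mathcal{T}_c$ to a geodesic from $v_i$ to $v_j$ whose interior avoids the other marked vertices. Since $\mu$ has mean $1$, the expected number of descendants of $v_i$ at generation $m$ below it is $1$. Summing over $m$ with the random-walk transition kernel gives the expected number of descendants of $v_i$ sitting at $x_j$:
\[
  \sum_{m\geq0}\P(S_m=x_j-x_i)=g(x_i,x_j)\asymp \bigl(1+\|x_i-x_j\|^{d-2}\bigr)^{-1},
\]
by the standard random-walk Green estimate.

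\textbf{Induction step: many-to-few at branch points.} When $x_i$ has $r\geq2$ children in $T$, the geodesics to the subtrees of its children must leave $v_i$ through \emph{distinct} children of $v_i$ in $\mathcal{T}_c$. Conditional on the offspring number $\nu_{v_i}=\ell$, the number of ordered choices of distinct children is at most $\ell^r$. Below each chosen child the subtree is an independent critical Galton--Watson tree, and this lets me factorise. Doing this at every branch point, the expectation becomes a product over edges of Green's functions $g(x_i,x_j)$. The price is a factor $\E[\nu^{r}]$, or a size-biased analogue $\E[\nu^{r}]$-type term, at each branch vertex, together with a factor for the expected number of ancestors along each path. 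Branch degrees satisfy $r\le n$, so the finite $n$-th moment of $\mu$ makes every such factor a constant. Any vertex that is an interior point of a path in $T$ only requires the walk to pass through it, which is again a single sum of Green's functions. More precisely, I would peel off the leaf subtrees and apply the Markov property at $v_i$ recursively, using independence of the subtrees rooted at distinct children.

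\textbf{Main obstacle.} The delicate point is bookkeeping the planar/isomorphism constraint and the branch points. The union bound must count each configuration with at most a constant multiplicity, and the generated-tree vertices $v_i\wedge v_j$ must coincide exactly with the prescribed $x$'s, which forbids extra branchings. I would handle this by summing over ``at most'' configurations, dropping the avoidance constraints: this only increases the bound. The remaining risk is that branch points in $\mathcal{T}_c$ occurring strictly between marked vertices are simply ignored. That is legitimate precisely because the definition requires the branch vertices to be among $v_0,\dots,v_n$.
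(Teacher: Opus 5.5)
Your proposal is correct and takes essentially the same route as the paper. The paper also uses a first-moment count organised recursively: it decomposes $T$ at the root into the subtrees $T_1,\dots,T_k$, pays $\sum_{m\ge k}\mu(m)\binom{m}{k}\le \sum_m \mu(m)m^n$ for choosing distinct children, obtains one Green's function $g_*(x_{i_j}-x_0)$ per edge from the mean-one many-to-one formula, and applies induction to the independent critical Galton--Watson tree below each marked child.

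Two of your side remarks are unnecessary rather than wrong. First, in this union bound the offspring number of a marked vertex is $\mu$-distributed, so no size-biasing enters. Second, there is no additional ``expected number of ancestors'' factor along a path, because the many-to-one sum over generations already equals $g$.
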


\begin{proof}
  We prove the result by induction on $n \geq 0$. If $n=0$, there is nothing to prove. Suppose now that $T$ is not reduced to its root $x_0$. Then the graph $T \backslash \{x_0\}$ is made of trees $T_1, \dots, T_k$. For $j \leq k$, let $I_j$ be the vertex set of $T_j$, and $x_{i_j}$ be the root of $T_j$. We have
  \[
    \P_{x_0}\left(X \text{ follows } T\right) = \sum_{m \geq k}\mu(m) \binom{m}{k} \prod_{j=1}^k \P_{x_0}\big(X \text{ connects } (x_i)_{i\in I_j^*} \text{ following } T^*_j \mid \nu_\varnothing = 1\big),
  \]
  where $I_j^* = I_j \cup \{x_0\}$, $T_j^* = T_j \cup \{x_0\}$ is the tree rooted at $x_0$, and in which $x_0$ is connected to the root of $T_j$.
  Let $g_*(x) = \E\Big[\sum_{\substack{v \in \mathcal{T}_c\\ v \neq \varnothing}} \indic{X_v = x} \mid \nu_\varnothing = 1\Big] = g(x) - \indic{x = 0}$. Then
  \[
    \P_{x_0}\left(X \text{ follows } T \right) \leq \sum_{m \geq k}\mu(m) \binom{m}{k} \prod_{j=1}^k g_*(x_{i_j}-x_0) \P_{x_{i_j}}\big(X \text{ connects } (x_i)_{i\in I_j} \text{ following } T_j\big).
  \]
  Recall that $\mu$ has a finite moment of order $n$. By induction and the fact that ${g_*(x) \lesssim 1/(1+\|x\|^{d-2})}$ and $\sum_{m\geq k} \mu(m)\binom{m}{k} \leq \sum_{m\geq 1} \mu(m) m^n$ for all $k \leq n$, this yields 
  \[
    \P_{x_0}\left(X \text{ follows } T\right) \lesssim \prod_{j=1}^k \frac{1}{1+\|x_0-x_{i_j}\|^{d-2}} \prod_{\{x,y\} \in E(T_j)} \frac{1}{1+\|x-y\|} = \!\!\prod_{\{x_i,x_j\} \in E(T)} \frac{1}{1+\|x_i-x_j\|^{d-2}}.
  \]
    
\end{proof}

The same proof applies for a random walk indexed by an adjoint critical tree defined in Section \ref{sec:gw} with a finite $n+1$ moment, or for other critical Galton-Watson trees with modified root laws.

We now prove a similar result for branching random walks indexed by an infinite critical tree. For $m \leq n \in \mathbb{N} \cup \{\infty\}$, let $\mathbb{T}_{m,n}$ be the set of planar rooted trees with $n$ nodes, where $m$ vertices form a nearest-neighbor path (called the \textbf{spine}) and such that the root is one of the two ends of the spine. For $T\in\mathbb{T}_{m,n}$, and $u \in T$, denote by $s_u$ the nearest vertex from $u$ on the spine of $T$.

\begin{definition}[Generated tree with a spine]
  Let $1 \leq k,m \leq n$. Let $T \in \mathbb{T}_{m,n}$ and $v_1,\dots,v_k$ distinct vertices of $T$. The \textbf{tree generated by $v_1,\dots,v_k$ in $T$} is the tree $T\langle v_1,\dots,v_k\rangle$ with vertex set
  \[
    \{v_1,\dots,v_k\} \cup \{v_i \wedge v_j, i \neq j\} \cup \{s_{v_i}, i \leq k\},
  \]
  and where two vertices are connected if the geodesic between the two in $T$ does not contain any other vertex of $T\langle v_1,\dots,v_k\rangle$.
\end{definition}

\begin{definition}[Multipoint connection following a spined tree]
  \label{def:connects_following}
  Let $X$ be a random walk indexed by an infinite critical tree $\mathcal{T}$. Let $1\leq m \leq n+1$, and $T \in \mathbb{T}_{m,n+1}$ with vertex set $x_0,\dots,x_n \in \mathbb{Z}^d$ rooted at $x_0$. We say that $X$ \textbf{connects} $x_0,\dots,x_n$ \textbf{following} $T$, if $X_\varnothing = x_0$, and there exist $v_1,\dots,v_n \in \mathcal{T}$  such that $X_{v_i} = x_i$ and $\mathcal{T}\langle \varnothing,v_1,\dots,v_n\rangle$ is isomorphic to $T$ with $v_i$ mapped to $x_i$, and preimages of elements of the spine of $T$ on the spine of $\mathcal{T}$.
\end{definition}

\begin{lemma}
  \label{lem:connection_infinite}
    Let $X$ be a random walk indexed by an infinite critical tree $\mathcal{T}$, $x_0,\dots,x_n \in \mathbb{Z}^d$, and $T \in \mathbb{T}_{m,n+1}$ a rooted planar tree with vertex set $\{x_0,\dots,x_n\}$. If $\mu$ has a finite moment of order $n+1$, then
  \[
    \P_{x_0}\big(X \text{ connects } x_0,\dots,x_n \text{ following } T\big) \lesssim \prod_{\{x_i,x_j\} \in E(T)} \frac{1}{1+\|x_i - x_j\|^{d-2}}.
  \]
\end{lemma}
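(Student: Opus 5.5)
We reduce Lemma \ref{lem:connection_infinite} to Lemma \ref{lem:connection_critical} through the spine decomposition of $\mathcal{T}$. Write the spine of $T$ as $x_0=x_{s_0},x_{s_1},\dots,x_{s_{m-1}}$, in the order of the path starting at the root. For $0\le j\le m-1$, let $T^{(j)}$ be the subtree of $T$ formed by $x_{s_j}$ together with all non-spine vertices $u$ such that $s_u=x_{s_j}$. Each $T^{(j)}$ is rooted at $x_{s_j}$.

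On the event that $X$ connects $x_0,\dots,x_n$ following $T$, there are indices $0=n_0<n_1<\dots<n_{m-1}$ such that the preimage of $x_{s_j}$ is the spine vertex $\xi_{n_j}$ (with $\xi_0=\varnothing$). Moreover, for each $j$, the trees hanging off the spine at $\xi_{n_j}$ connect the vertices of $T^{(j)}$ following $T^{(j)}$, in the sense of Definition \ref{def:connects_following_critical}. Here the relevant hanging subtree at $\xi_{n_j}$ is the bush of critical trees grafted on $\xi_{n_j}$, and this bush is a Galton--Watson tree whose root has a modified offspring law: $\mu_{sb}$ minus the special child, or $\mu(\cdot-1)$ minus one child at the root of an infinite invariant tree.

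We apply a union bound over the indices $n_1<\dots<n_{m-1}$. Conditionally on the spine, the bushes are independent, and the spine increments are i.i.d.\ with law $\theta$. Using the strong Markov property along the spine, we obtain
\[
\P_{x_0}(\cdots)\le \sum_{n_1<\dots<n_{m-1}} \prod_{j=1}^{m-1}\P\big(S_{n_j}-S_{n_{j-1}}=x_{s_j}-x_{s_{j-1}}\big)\prod_{j=0}^{m-1} p_j,
\]
where $p_j$ denotes the probability that a walk indexed by the modified-root critical tree started at $x_{s_j}$ connects $T^{(j)}$. Summing over the gaps $n_j-n_{j-1}\ge1$ factorises the first product into $\prod_j g(x_{s_{j-1}},x_{s_j})\lesssim \prod_j (1+\|x_{s_j}-x_{s_{j-1}}\|^{d-2})^{-1}$. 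These are exactly the factors for the spine edges of $T$.

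For the factors $p_j$, we invoke Lemma \ref{lem:connection_critical} in the modified-root form noted right after its proof. The only difference from the plain Galton--Watson case lies in the root: $\sum_{\ell\ge k}\binom{\ell}{k}\mu_{sb}(\ell)\lesssim \sum_\ell \ell^{k+1}\mu(\ell)$. This is finite because $k\le n$ and $\mu$ has a finite moment of order $n+1$, which explains the hypothesis of order $n+1$ rather than $n$. Deeper vertices only require moments of order $\le n$. Hence $p_j\lesssim\prod_{e\in E(T^{(j)})}(1+\|e\|^{d-2})^{-1}$.

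Since the edge set of $T$ is the disjoint union of the spine edges and the edges of the $T^{(j)}$, multiplying the two bounds gives the claim. There is one subtlety: a non-spine vertex of $T$ may itself lie on the spine of $\mathcal{T}$ only if it is a spine vertex of $T$. This holds by Definition \ref{def:connects_following}, since $s_{v_i}$ belongs to the generated tree and spine preimages are required to lie on the spine.

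The main obstacle is the bookkeeping at each $\xi_{n_j}$. The several subtrees of $T^{(j)}$ hanging from $x_{s_j}$ must be realised in distinct children of $\xi_{n_j}$ other than the special one, and on the correct side, past or future, given the planar order. We handle this by bounding crudely, dropping the planarity constraint and choosing $k$ children among the $\ell-1$ normal ones. This reduces to the root step of the proof of Lemma \ref{lem:connection_critical} with the offspring law $\mu_{sb}$. Each grafted child then contributes $g_*$ times an independent critical-tree connection probability, exactly as in that proof.
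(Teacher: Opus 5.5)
Your proposal is correct and is essentially the paper's own proof. You decompose $T$ and $\mathcal{T}$ along their spines, sum the spine increments into Green's function factors $g(x_{s_{j-1}},x_{s_j})$, and bound the realisation of each $T^{(j)}$ at its spine vertex by Lemma \ref{lem:connection_critical} with a modified root law, which is exactly where the moment of order $n+1$ enters.

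The differences are cosmetic. You use one bush with root law $\mu_{sb}$ minus the special child, where the paper uses separate left and right adjoint critical trees; your version also avoids the paper's tacit treatment of those two trees as independent at a common spine vertex. Your remark that non-spine vertices of $T$ cannot be realised on the spine of $\mathcal{T}$ relies on reading the isomorphism in Definition \ref{def:connects_following} as spine-preserving in both directions. Under the literal one-directional reading, vertices of $T^{(m-1)}$ could sit on the spine beyond $\xi_{n_{m-1}}$. That case is just $T$ with a longer spine, so it gives the same product bound.
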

\begin{proof}
  We prove this lemma by decomposing the trees $T$ and $\mathcal{T}$ according to their spine. Let $(T_j^-)_{1\leq j\leq m-1}$ and $(T_j^+)_{1\leq j\leq m-1}$ be the trees attached respectively to the left and right of the $j^{\text{th}}$ element of the spine of $T$ (starting from the root), and denote by $T_m$ the tree attached to the last vertex of the spine. For $\varepsilon \in \{-,+\}$, let $I_j^\varepsilon$ be the set of indices such that $V(T_j^\varepsilon) = \{x_i, i \in I_j^\varepsilon\}$, and let $x_{i_j}$ be the root of $T_j^\varepsilon$. Define $I_m$ and $x_{i_m}$ the same way for $T_m$. Since the random walk induced by the spine of $\mathcal{T}$ on $X$ is a simple random walk, and if $\widetilde{\P}_x$ denotes the law of a random walk indexed by an adjoint critical tree starting at $x\in\mathbb{Z}^d$, we get
  \begin{align*}
    &\P_{x_0}\left(\!\!\begin{array}{c} X \text{ connects } x_0,\dots,x_n \\ \text{ following } T \end{array}\!\!\right) \\
    &\hspace{1.5cm}\lesssim \prod_{j=1}^{m-1} g(x_{i_{j+1}}-x_{i_j}) \prod_{\varepsilon\in\{-,+\}}\widetilde{\P}_{x_{i_j}}\left(\!\!\begin{array}{c} X \text{ connects } (x_i)_{i\in I_j^\varepsilon} \\ \text{ following } T_j^\varepsilon \end{array}\!\!\right) \times \widetilde{\P}_{x_{i_m}}\left(\!\!\begin{array}{c} X \text{ connects } (x_i)_{i\in I_m} \\ \text{ following } T_m \end{array}\!\!\right).
  \end{align*}
  Finally, we apply Lemma \ref{lem:connection_critical} for an adjoint critical tree with the $n+1$ finite moment hypothesis, which gives
  \begin{align*}
    &\P_{x_0}\left(\!\!\begin{array}{c} X \text{ connects } x_0,\dots,x_n \\ \text{ following } T \end{array}\!\!\right) \\
    &\hspace{2cm}\lesssim \prod_{j=1}^{m-1} \frac{1}{1+\|x_{i_{j+1}}-x_{i_j}\|^{d-2}} \prod_{\varepsilon\in\{-,+\}}\prod_{\{x,y\} \in E(T_j^\varepsilon)} \frac{1}{1 + \|x-y\|^{d-2}} \times \prod_{\{x,y\} \in E(T_m)} \frac{1}{1 + \|x-y\|^{d-2}}\\
    &\hspace{2cm}= \prod_{\{x_i,x_j\}\in E(T)} \frac{1}{1+\|x_i-x_j\|^{d-2}}.
  \end{align*}
\end{proof}

As a corollary, we can deduce an upper bound on the probability that a branching random walk hits $n$ given points. First of all, let $\mathbb{T}_n^k$ be the set of planar rooted trees with $n$ vertices among which $k$ are distinguished, and with a spine of length smaller than or equal to $n$ starting from the root, satisfying
\begin{itemize}  \setlength\itemsep{0em}
\item the root and the leaves are distinguished vertices,
\item undistinguished vertices are all of degree at least $3$, except possibly the one at the end of the spine which may also have degree $2$.
\end{itemize}
For $\overline{x}_k = (x_1,\dots,x_k) \in \mathbb{Z}^d$ and $\overline{z}_{n-k} = (z_1,\dots,z_{n-k})$ denote by $\mathbb{T}_n^k(\overline{x}_k, \overline{z}_{n-k})$ the subset of trees in $\mathbb{T}_n^k$ with distinguished vertices $x_1,\dots,x_k$ and undistinguished vertices $z_1,\dots,z_{n-k}$.

\begin{lemma}
  \label{lem:n_points_connection}
  Let $x_1,\dots,x_k \in \mathbb{Z}^d$. Let $X$ be a random walk indexed by an infinite critical tree $\mathcal{T}$. Then under $\P_{x_1}$,
  \begin{equation}
    \{x_1,\dots,x_k \in \mathrm{Range}(X)\} = \bigcup_{m=0}^{k-1}\bigcup_{z_1,\dots,z_m \in \mathbb{Z}^d} \bigcup_{T \in \mathbb{T}_{k+m}^k(\overline{x}_k,\overline{z}_m)} \left\{\begin{array}{c} X \text{ connects } x_1,\dots,x_k, z_1,\dots,z_m\\ \text{ following } T\end{array}\right\}.
  \end{equation}
  Consequently, if $\mu$ has a finite moment of order $2k-1$,
  \begin{equation}
    \P_{x_1}\big(x_1,\dots,x_k \in \mathrm{Range}(X)\big) \lesssim \sum_{m=0}^{k-1}\sum_{z_1,\dots,z_m \in \mathbb{Z}^d} \sum_{T \in \mathbb{T}_{k+m}^k(\overline{x}_k,\overline{z}_m)} \prod_{\{u,v\}\in E(T)}\frac{1}{1+\|u-v\|^{d-2}}.
  \end{equation}
\end{lemma}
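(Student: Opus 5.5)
The plan is to prove the set identity combinatorially and then obtain the bound from it by a union bound together with Lemma \ref{lem:connection_infinite}.

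\emph{The set identity.} The inclusion ``$\supseteq$'' is immediate, since connecting $x_1,\dots,x_k,z_1,\dots,z_m$ following some $T$ requires in particular that every $x_i$ be visited.

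For ``$\subseteq$'', work on the event $\{x_1,\dots,x_k \in \mathrm{Range}(X)\}$ under $\P_{x_1}$, where $X_\varnothing = x_1$. Choose vertices $v_1=\varnothing, v_2,\dots,v_k \in \mathcal{T}$ with $X_{v_i}=x_i$; if several vertices carry the same position, keep one of them. Consider the generated spined tree $\mathcal{T}\langle v_1,\dots,v_k\rangle$. Its vertex set consists of the $v_i$, the branch points $v_i\wedge v_j$, and the spine projections $s_{v_i}$. A vertex that is not some $v_i$ has degree at least $3$: a branch point is where two geodesics separate, and $s_{v_i}$ is where the path to $v_i$ leaves the spine. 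The one exception is the last spine vertex in the generated tree, which may have degree $2$. The leaves and the root are among the $v_i$.

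Next I count the undistinguished vertices. In a tree whose leaves are distinguished and whose undistinguished vertices have degree at least $3$, there are fewer undistinguished vertices than leaves. The single allowed degree-$2$ exception adds at most one. This gives $m \le k-1$, which I will verify by the usual handshake count.

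Set $z_1,\dots,z_m$ to be the positions of the undistinguished vertices. The generated tree, relabelled by positions, is then an element $T\in\mathbb{T}^k_{k+m}(\overline{x}_k,\overline{z}_m)$, and $X$ connects the points following $T$ in the sense of Definition \ref{def:connects_following}. One point needs a little care: the definition requires the vertex set of $T$ to be indexed by points of $\mathbb{Z}^d$, which may coincide. I would treat the $z_j$ as labelled vertices whose positions may repeat, and this does not affect the probability bound.

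\emph{The bound.} Apply a union bound over $m\le k-1$, over $z\in(\mathbb{Z}^d)^m$, and over the finitely many planar shapes $T$; the number of shapes depends only on $k$. For each term, apply Lemma \ref{lem:connection_infinite} with $n+1=k+m\le 2k-1$ vertices. This is exactly where the finite moment of order $2k-1$ is needed.

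The main obstacle is the combinatorial step: checking that the generated tree really belongs to $\mathbb{T}^k_{k+m}$, with the correct degree conditions and the spine starting at the root $\varnothing$, and that $m\le k-1$. The cases where some $v_i$ is itself a branch point or a spine projection have to be handled by merging vertices, so that such a vertex counts as distinguished.
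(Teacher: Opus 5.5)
Your proposal is correct and follows the paper's proof essentially step for step. You choose preimages of the $x_i$, form the generated spined tree $\mathcal{T}\langle u_1,\dots,u_k\rangle$, note that undistinguished vertices have degree at least $3$ except possibly the last spine vertex, get $m\le k-1$ from the handshake count $2(k+m-1)\ge k+3(m-1)+2$, and conclude by a union bound and Lemma \ref{lem:connection_infinite} applied to trees with $k+m\le 2k-1$ vertices. Your side remarks (the trivial reverse inclusion, coinciding positions, merging a $v_i$ with a branch point or spine projection) are points the paper passes over silently and do not change the argument.
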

\begin{proof}
  If $x_1,\dots,x_k \in \mathrm{Range}(X)$ then for each $1 \leq i \leq k,$ there exists $u_i \in \mathcal{T}$, such that $X_{u_i} = x_i$. Let $T = \mathcal{T}\langle u_1,\dots,u_k\rangle$, $\{v_1,\dots,v_m\} = V(T) \backslash\{u_1,\dots,u_k\}$, and $z_i = X_{v_i}$. By definition, under $\P_{x_1}$, $X \text{ connects } x_1,\dots,x_k, z_1,\dots,z_m \text{ following } \widetilde{T}$, obtained from $T$ by replacing $v_i$ with $z_i$. Furthermore, observe that each $z_i$ is of degree at least $3$ except possibly the last vertex on the spine of $T$, of degree at least $2$. Thus $T \in \mathbb{T}_{k+m}^k(\overline{x}_k,\overline{z}_m)$. Since we have
  \[
    2(k+m-1) = 2 \#E(T) = \sum_{z \in V(T)} \mathrm{deg}(z) \geq k + 3(m-1) + 2,
  \]
  we deduce that $m \leq k-1$. Thus under $\P_{x_1}$,
  \[
    \{x_1,\dots,x_n \in \mathrm{Range}(X)\} \subset \bigcup_{m=0}^{k-1}\bigcup_{z_1,\dots,z_m \in \mathbb{Z}^d} \bigcup_{T \in \mathbb{T}_{k+m}^k(\overline{x}_k,\overline{z}_m)} \left\{\begin{array}{c} X \text{ connects } x_1,\dots,x_k, z_1,\dots,z_m \\\text{ following } T\end{array}\right\}.
  \]
  The last assertion of the lemma follows by a union bound and Lemma \ref{lem:connection_infinite}.
\end{proof}

\subsection{Branching interlacements}
\label{sec:branching_interlacements}
Following the idea of Sznitman in \cite{sznitman2010} for simple random walks, Zhu introduced in \cite{zhu2018} the branching interlacement model, a Poisson point process on the set of branching random walks.

Let $W = \{w : \mathbb{Z} \to \mathbb{Z}^d,\, \lim_{n\to\pm\infty} \|w(n)\| = +\infty \}$. We denote by $\mathcal{W}$ the cylinder $\sigma-$algebra on $W$. Then we consider the space,
\[
  W^* = W/_\sim, \quad \text{with } w \sim w' \Leftrightarrow \exists\,k \in \mathbb{Z},\, w(\cdot) = w'(k + \cdot).
\]
We let $\pi_*$ be the canonical projection from $W$ to $W^*$, and define the $\sigma-$algebra $\mathcal{W}^*$ as the push forward of $\mathcal{W}$ by $\pi_*$. If $K \subset \mathbb{Z}^d$ is nonempty and finite, we define $W_K$ the space of trajectories intersecting $K$ as
\[
  W_K = \{w \in W : \exists\,n\in\mathbb{Z},\ w(n) \in K\}.
\]
We define the first entrance time of an element $w \in W_K$ to be,
\[
  H_K(w) = \inf\{n \in \mathbb{Z} : w(n) \in K\},
\]
and let $W_K^0$ be the set of trajectories whose first entrance time is $0$,
\[
  W_K^0 = \{w \in W_K,\, H_K(w) = 0\}.
\]
For $K \subset \mathbb{Z}^d$ finite and nonempty, we can now define a measure $Q_K$ supported on $W_K^0$, such that
\[
  \forall A \in \mathcal{W},\quad Q_K(A) = \sum_{x\in K} \mathbb{P}\left((X^x(k))_{k\in\mathbb{Z}} \in A,\, \mathcal{T}_-^x \cap K = \emptyset\right),
\]
where $X^x$ is a random walk indexed by an infinite invariant tree $\mathcal{T}$ starting from $x$.
This family of measures is compatible, which allows to define a $\sigma$-finite measure $\nu$ on $(W^*,\mathcal{W}^*)$, by
\[
  \forall A\in\mathcal{W}^*,\quad \nu(A) = \limsup_{|K|\to\infty}Q_K(\pi_*^{-1}(A)\cap W_K).
\]

The \textit{branching interlacement} is the Poisson point process on $W^* \times \mathbb{R}_+$ with intensity measure $\nu \times \lambda$, where $\lambda$ is the Lebesgue measure. Its law is a probability measure $\sigma$ on the space of locally finite point measures on $W^* \times \mathbb{R}_+$,
\[
  \Omega = \left\{\omega = \sum_{n\in\mathbb{N}} \delta_{(w_n,u_n)} : \omega(W_K^* \times [0,u]) < \infty,\, \forall K \subset \mathbb{Z}^d \text{ finite},\, \forall u \geq 0\right\}.
\]

For $u > 0$ and $K$ a finite subset of $\mathbb{Z}^d$, we define the random variable $\sigma_{K,u}$ as the restriction of the branching interlacement to the trajectories intersecting $K$ that appear before time $u$, that is
\[
  \sigma_{K,u} \colon \omega \mapsto \sum_{n\in\mathbb{N}}\delta_{w_n}\indic{u_n \leq u}\indic{w_n \cap K \neq \varnothing},\quad \text{where } \omega = \sum_{n\in\mathbb{N}} \delta_{(w_n,u_n)} \in \Omega.
\]
Thus $\sigma_{K,u}$ is a Poisson point process on $W$ with intensity $u\cdot\mathrm{BCap}(K) \cdot \widetilde{P}_{\widetilde{e}_K}$. The following proposition gives another description of $\sigma_{K,u}$.

\begin{proposition}[{\cite[Proposition 2]{zhu2018}}]
  \label{prop:desc_interlacement_compact}
  Let $N_K$ be a Poisson random variable with parameter $u \cdot \mathrm{BCap}(K)$, and consider $(X^{(i)})_{i\geq 1}$ independent random walks indexed by infinite invariant trees with starting point chosen according to $e_K$, conditioned not to touch $K$ on negative times. Then $\sigma_{K,u}$ has the same law as $\sum_{i=1}^{N_K}\delta_{X^{(i)}}$, where the $(X^{(i)})_{i\geq 1}$ are identified with elements of $W^*$.
\end{proposition}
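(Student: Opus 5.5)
This proposition is quoted from Zhu, so the plan is to reproduce his argument. It is the branching analogue of Sznitman's Poisson description of $\mu_{K,u}$.

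\emph{Step 1: identify the intensity.} By construction of $\nu$ through the compatible family $(Q_K)$, the restriction of $\nu$ to $W_K^*$, pulled back to $W^0_K$ via the first entrance time $H_K$, is exactly $Q_K$. Hence $\sigma_{K,u}$ is a Poisson point process on $W$ with intensity $u\,Q_K$. We need the thinning/restriction property of Poisson processes: restricting $\omega$ to $W_K^*\times[0,u]$ gives a Poisson process with intensity $\nu|_{W_K^*}\otimes\lambda|_{[0,u]}$, and projecting out the time coordinate multiplies the intensity by $u$.

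\emph{Step 2: compute the intensity.} The total mass is
\[
Q_K(W)=\sum_{x\in K}\P\big(\mathcal T^x_-\cap K=\emptyset\big)=\sum_{x\in K}e_K(x)=\mathrm{BCap}(K).
\]
It is finite, since $K$ is finite and each term is at most $1$. Moreover,
\[
Q_K(\cdot)/\mathrm{BCap}(K)=\sum_{x\in K}\widetilde e_K(x)\,\P\big(\,\cdot\mid \mathcal T^x_-\cap K=\emptyset\big)=\widetilde P_{\widetilde e_K}.
\]
So the intensity is $u\,\mathrm{BCap}(K)\,\widetilde P_{\widetilde e_K}$.

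\emph{Step 3: conclude.} A Poisson point process with finite intensity $c\,P$, where $P$ is a probability measure, has the same law as $\sum_{i=1}^{N}\delta_{Y_i}$. Here $N\sim\mathrm{Poisson}(c)$, the $Y_i$ are i.i.d. with law $P$, and $N$ is independent of the $Y_i$. One checks this via Laplace functionals: both sides give $\exp\big(-c\int(1-e^{-f})\,dP\big)$. Under $\widetilde P_{\widetilde e_K}$, the $Y_i$ are walks indexed by infinite invariant trees, started from $\widetilde e_K$ and conditioned on $\mathcal T_-\cap K=\emptyset$. This is the stated description, after identifying elements of $W^0_K$ with their classes in $W^*$ through $\pi_*$, which is injective on $W_K^0$.

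\emph{Main obstacle.} The delicate point is Step 1: that $\nu$ is well defined and restricts to $Q_K$ on $W_K^*$. This needs the compatibility $Q_{K'}(\pi_*^{-1}(A)\cap W_K)=Q_K(\pi_*^{-1}(A))$ for $K\subset K'$. That in turn uses the shift invariance of the labelled infinite invariant tree, re-rooting at the first entry into $K$. Since the paper takes this compatibility as given in the construction of $\nu$, I would invoke it directly. The remaining steps are standard Poisson process facts.
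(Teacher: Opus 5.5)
Your proof is correct and is essentially the argument the paper relies on. Just before the statement, the paper records that, by the compatibility of the $Q_K$ built into the definition of $\nu$, $\sigma_{K,u}$ is a Poisson point process with intensity $u\,\mathrm{BCap}(K)\,\widetilde{P}_{\widetilde{e}_K}$. The proposition, cited from Zhu, is then the standard representation of a finite-intensity Poisson process as a Poisson number of i.i.d. points, exactly as in your Steps 1--3. You also rightly read the starting law as the normalized $\widetilde{e}_K$, not $e_K$ as literally written in the statement.
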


\noindent For $u > 0$, we denote by $\mathrm{Poi}(u,W^*)$ the law of the push forward of the branching interlacement process by the map
\[
  \sigma_{u} \colon \omega \mapsto \sum_{n\in\mathbb{N}}\delta_{w_n}\indic{u_n \leq u},\quad \text{where } \omega = \sum_{n\in\mathbb{N}} \delta_{(w_n,u_n)} \in \Omega.
\]
For a general point process $\omega = \sum_{n\in\mathbb{N}} \delta_{x_n}$, we denote by $\mathrm{Supp}(\omega)$ the support of $\omega$ that is the set $\{x_n, n\in\mathbb{N}\}$. If $\omega \in \Omega$, we denote by $\mathcal{I}(\omega)$ the subset of $\mathbb{Z}^d$ defined by $\mathcal{I}(\omega) = \bigcup_{w \in \mathrm{Supp}(\omega)}\mathrm{Range}(w)$, where $\mathrm{Range}(w)$ is the set of points of $\mathbb{Z}^d$ visited by $w$. In particular we will use the notation $\mathcal{I}^u$ for $\mathcal{I}(\sigma_u)$.

\section{Diameter of the branching interlacement graph}
\label{sec:diameter}
In this section we prove Theorem \ref{thm:2_points_branching}. First we consider the case $5 \leq d \leq 8$ in Section \ref{sec:5_d_8}, we show that the diameter is almost surely one by proving that two branching random walks indexed by infinite critical trees almost surely intersect. Then we prove that in dimension $d \geq 9$, the diameter is upper bounded by $s_d$ in Section \ref{sec:upper_bound_2}, and that it is lower bounded by $s_d$ in Section \ref{sec:lower_bound_2_pt}.

\subsection{Proof for $5 \leq d \leq 8$}
\label{sec:5_d_8}
A result of Erd\"os and Taylor \cite{et1960} states that two simple random walks in dimension less than or equal to $4$ almost surely intersect. The same result holds for branching random walks in dimension $d \leq 8$. The case $d \leq 4$ follows from the recurrence of the walk (see Le Gall and Lin  \cite{ll2016}).

\begin{proposition}
  \label{prop:2_connection_as}
  Let $5 \leq d \leq 8$.
  Consider two independent random walks indexed by infinite critical trees $\mathcal{T}$ and $\widetilde{\mathcal{T}}$ respectively. Then for every $x_1, x_2 \in \mathbb{Z}^d$,
  \[
    \P\big(\mathcal{T}^{x_1} \cap \widetilde{\mathcal{T}}^{x_2} \neq \varnothing \big) = 1.
  \]
\end{proposition}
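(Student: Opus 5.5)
We follow Lawler's argument for simple random walks, as announced in the heuristics: a second moment method over dyadic shells, concluded by the Kochen--Stone lemma. It suffices to work with the future parts $\mathcal{T}_+^{x_1}$ and $\widetilde{\mathcal{T}}_+^{x_2}$, since an intersection of these is an intersection of the full ranges. For $n\geq 1$ let $A_n=\mathrm{B}(2^{n+1})\setminus\mathrm{B}(2^n)$ and
\[
  Z_n=\sum_{y\in A_n}\indic{y\in\mathcal{T}^{x_1}}\indic{y\in\widetilde{\mathcal{T}}^{x_2}},
\]
and let $E_n=\{Z_n>0\}$. The goal is to show $\sum_n\P(E_n)=\infty$ and $\sum_{n,m\le N}\P(E_n\cap E_m)\lesssim(\sum_{n\le N}\P(E_n))^2$. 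Kochen--Stone then gives $\P(E_n \text{ i.o.})\geq c>0$. To upgrade this to probability one, we use a tail/0--1 argument: the event of infinitely many intersections in far shells is asymptotically independent of the first pieces of both trees, via truncation of the spine, since what remains is a Kesten tree. Alternatively, we apply the lower bound uniformly over the starting points reached after a stopping time, a Lévy-type conditional argument.

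\textbf{First moment.} By independence, $\E Z_n=\sum_{y\in A_n}\P(y\in\mathcal{T}^{x_1})\P(y\in\widetilde{\mathcal{T}}^{x_2})$. The lower bound in Theorem \ref{thm:bound_hitting_probability} with $K=\{y\}$ gives $\P(y\in\mathcal{T}^{x_1})\gtrsim\|y\|^{4-d}$ for $y$ far from $x_1$, and the same holds for $\widetilde{\mathcal{T}}$. Hence $\E Z_n\gtrsim 2^{nd}2^{2n(4-d)}=2^{n(8-d)}$.

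\textbf{Second moments.} We need two-point estimates for the range. Lemma \ref{lem:n_points_connection} with $k=3$ (points $x_1,y,z$) requires a finite fifth moment, which we lack since only the third moment is assumed. We therefore redo the decomposition for two target points: the relevant trees have at most one branching vertex $w$, which is either on the spine or off it. This costs only $\sum_m m^2\mu(m)<\infty$, i.e. a third moment of $\mu$, since the size-biased law appears on the spine. Using $g\lesssim\|\cdot\|^{2-d}$, Lemma \ref{lem:magic_ineq} and \eqref{eq:estim_green}, we expect
\[
  \P(y,z\in\mathcal{T}^{x_1})\lesssim G(x_1,y)G(y,z)+G(x_1,z)G(z,y).
\]
Indeed, the branching configurations sum to expressions like $\sum_w g(x_1,w)g(w,y)g(w,z)$ (spine branching) or $\sum_w G(x_1,w)g(w,y)g(w,z)$, and by Lemma \ref{lem:magic_ineq} these are bounded by the same right-hand side. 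Squaring and summing over $y\in A_n$, $z\in A_m$ with $n\le m$, the dominant contribution is
\[
  \E[Z_nZ_m]\lesssim \sum_{y\in A_n,z\in A_m}\|y\|^{2(4-d)}\|y-z\|^{2(4-d)}\lesssim \E Z_n\cdot 2^{m(8-d)}\asymp \E Z_n\,\E Z_m
\]
for $m>n+1$. For $|n-m|\le1$ we get $\E Z_n^2\lesssim\E Z_n\sum_{\|u\|\lesssim 2^n}\|u\|^{2(4-d)}$, which is at most $\E Z_n\cdot 2^{n(8-d)}$ for $d<8$, with a logarithm at $d=8$.

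\textbf{Kochen--Stone and the main difficulty.} Paley--Zygmund gives $\P(E_n)\ge(\E Z_n)^2/\E Z_n^2\gtrsim$ const for $d<8$. For $d=8$ we get only $\P(E_n)\gtrsim 1/n$, whose sum still diverges. The required bound $\P(E_n\cap E_m)\lesssim\P(E_n)\P(E_m)$ needs a conditional version: given that $E_n$ occurs, the second moment computation restarted at the intersection point. This is exactly where $d=8$ is critical, and it is handled as in Lawler by using the sums $\sum_{n\le N}Z_n$ directly. Concretely, let $S_N=\sum_{n\le N}Z_n$; then $\E S_N\asymp N$ for $d=8$ and $\E S_N^2\lesssim N^2$, by the same convolution bounds summed over shells. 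Paley--Zygmund gives $\P(S_N>0\text{ with intersections beyond shell } M)\ge c$ uniformly, which yields $\P(\text{infinitely many intersections})\ge c$.

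The main obstacle is the two-point estimate under only a third moment, in particular controlling the branching vertex located in the past part of the infinite invariant tree. The other delicate step is the upgrade from positive probability to probability one; we plan to handle it by conditioning on the first $K$ spine vertices and using the truncation remark. That remark ensures the remainder is a Kesten tree started from a point at distance $O(1)$ from the original, so the same uniform bound $c$ applies and $c$ does not depend on the starting points.
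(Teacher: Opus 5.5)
Your proposal is correct and follows essentially the paper's route: Lawler's second-moment argument built on the two-point bound $\P(y,z\in\mathcal{T}^x)\lesssim G(x,y)G(y,z)+G(x,z)G(z,y)$, which needs only a third moment; the paper's extra term $G(x,y)G(x,z)$ is dominated by these two, and using dyadic shells instead of single points is cosmetic. Your explicit upgrade from $\P(\limsup_n E_n)\ge c$ to probability one is a step the paper's proof skips, since Kochen--Stone alone only gives positive probability: you cut both spines after $K$ steps, note that the remainders are independent Kesten-indexed walks, use a lower bound $c$ uniform in the starting points, and conclude with L\'evy's $0$--$1$ law. (The remainder starts at distance of order $\sqrt{K}$, not $O(1)$, but the uniformity in the starting points makes this irrelevant.)
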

\begin{proof}
  We follow here the idea of Lawler from \cite[Theorem 2.6]{lawler80}. Without loss of generality assume that $x_1 = 0$, and relabel $x_2$ to $x \in \mathbb{Z}^d$.
  Let $X_1$ and $X_2$ be random walks indexed by infinite critical trees starting respectively from the origin and $x$. For $y \in \mathbb{Z}^d$ consider the event
  \[
    A_y = \{X_1 \text{ and } X_2 \text{ hit } y\}.
  \]
  
  We will use a version of the Borel-Cantelli lemma due to Kochen and Stone \cite{ks1964} which states that if a sequence of events $(E_k)_{k\in\mathbb{N}}$ satisfies
  \[
    \limsup_{n\to\infty} \frac{\left(\sum_{1\leq k\leq n} \P(E_k)\right)^2}{\sum_{1\leq i,j \leq n} \P(E_i \cap E_j)} > 0,
  \]
  then the events $(E_k)_{k\in\mathbb{N}}$ occur infinitely often. In our case by independence between $X_1$ and $X_2$, and \eqref{thm:bound_hitting_probability_infinite},
  \[
    \P(A_y) = \P(X_1 \text{ hits } y) \P(X_2 \text{ hits } y) \asymp G(0,y) G(y,x).
  \]
  Moreover, on the event $\{y,z \in \mathcal{T}^x\}$ there exist $u, v \in \mathcal{T}$ such that $X_{u} = y$ and $X_{v} = z$. If we distinguish between the cases $n_u < n_v$, $n_u = n_v$, and $n_u > n_v$, we get
  \[
    \P(y,z \in \mathcal{T}^x) \lesssim f(x,y,z) + f(y,z,x) + f(z,x,y),
  \]
  where $f : (x,y,z) \mapsto \sum_{w \in\mathbb{Z}^d} G(x,w)g(w,z)g(w,y)$. Let $r = \min(\|x-y\|,\|x-z\|)/2$. If $w \in \mathrm{B}(x,r)$, then $g(w,y) \asymp g(x,y)$ and $g(w,z) \asymp g(x,z)$, while if $w \notin \mathrm{B}(x,r)$, then $G(x,w) \lesssim r^{4-d}$. Altogether we get by Lemma \ref{lem:magic_ineq} and the fact that $\sum_{w \in \mathrm{B}(x,r)} G(x,w) \lesssim r^4$,
  \[
    f(x,y,z) \lesssim G(x,y) G(x,z) + G(x,y)G(y,z) + G(x,z) G(z,y).
  \]
  As a consequence,
  \begin{align*}
    \P(A_{y} \cap A_{z})
    &= \P(y,z \in \mathcal{T}) \P(y,z \in \widetilde{\mathcal{T}}^x) \\
    &\lesssim \Big(G(y)G(z) + G(y,z)G(y) + G(y,z)G(z)\Big)\Big(G(x,y)G(x,z) + G(y,z)G(y,x) + G(y,z)G(z,x)\Big).
  \end{align*}
  \noindent Now, as long as we choose $\|y\|$ large enough, we can approximate $G(x,y)$ by $G(0,y)$. More precisely, there exists $r_0 > 0$, such that for $\|y\| \geq r_0$,
  $G(0,y)/2 \leq G(x,y) \leq 3 G(0,y) /2$.
  If $\|y\|, \|z\| \geq r_0$, we get
  \begin{align}
    \P(A_{y} \cap A_{z})
    &\lesssim (G(y)G(y,z) + G(y)G(z) + G(z)G(y,z))^2 \nonumber \\
    &\lesssim (G(y)G(y,z))^2 + (G(y)G(z))^2 + (G(z)G(y,z))^2, \label{eq:A1_A2_three}
  \end{align}
  and likewise, if $\|y\| \geq r_0$,
  \[
    \P(A_y) \asymp G(0,y)^2.
  \]
  Consider the annulus $C_{r_0,R} = \{y \in \mathbb{Z}^d : r_0 \leq \|y\| \leq R\}$. Then, as $R \to +\infty$, 
  \[
    \sum_{y \in C_{r_0,R}} \P(A_y) \asymp \int_{r_0}^R \frac{r^{d-1}}{r^{2d-8}}\, \mathrm{d}r = \int_{r_0}^R \frac{1}{r^{d-7}}\,\mathrm{d}r \asymp f(R) - f(r_0),
  \]
  where
  \[
    f(R) = \begin{cases}R^{8-d} & \text{ if } d \leq 7, \\ \ln\left(R\right) & \text{ if } d = 7.\end{cases}
  \]
  We bound each term of the sum in \eqref{eq:A1_A2_three} separately. For the first term, we get
  \[
    \sum_{y,z\in C_{r_0,R}} (G(0,y)G(y,z))^2 \leq \sum_{y \in C_{r_0,R}} \sum_{z \in \mathrm{B}(2R)} \frac{1}{(1 + \|y\|^{d-4}) (1 + \|z\|^{d-4})} \lesssim f(R)f(2R) \asymp f(R)^2.
  \]
  By symmetry, the same bound is true for the third term in \eqref{eq:A1_A2_three}. For the second one, we just note that,
  \[
    \sum_{y,z\in C_{r_0,R}} (G(0,y)G(0,z))^2 = \bigg(\sum_{y \in C_{r_0,R}} G(0,y)^2\bigg)^2 \lesssim f(R)^2.
  \]

  \noindent Altogether, we infer
  \[
    \lim_{R \to \infty} \frac{\left(\sum_{y\in C_{r_0,R}}\P(A_y)\right)^2}{\sum_{y,z\in C_{r_0,R}}\P(A_{y}\cap A_{z})} \gtrsim 1.
  \]
  Hence, we can apply the version of the Borel-Cantelli lemma alluded to earlier, and we deduce that the events $(A_y)_{y\in\mathbb{Z}^d}$ occur infinitely often, which in turn proves the proposition.
\end{proof}

Theorem \ref{thm:2_points_branching} follows from this result for dimensions $5$ to $8$, as it implies that the diameter of the graph $G$ is exactly $1$ in these cases, and we can also verify that $s_d = 1$.

\subsection{Proof of the upper bound $\mathrm{diam}(G) \leq s_d$}
\label{sec:upper_bound_2}
In this section, we prove the upper bound over the diameter of the graph in the case $d\geq 9$. To do so we adapt the strategy of R\'ath and Sapozhnikov from \cite{rs2012}. Recall the definition of $s_d$ in \eqref{def:s_d} and let $X$ and $X^*$ be two fixed branching random walks in the branching interlacement process. We want to find a family of branching random walks in the interlacement, say $X^1,\dots,X^k$ with $k \leq s_d-1$, such that $X^i$ and $X^{i+1}$ intersect each other for $1\leq i \leq k-1$, and such that $X$ and $X^*$ also intersect one of the branching random walks in this family.

In order to find such walks, we recursively construct sets $(A^{(s)})_{s \geq 1}$ (see Subsection \ref{sec:visible_sets}) which are built using an application $\Phi$ (see \eqref{eq:def_phi}) and the branching random walk $X$, such that if a point lies in $A^{(s)}$, there exist $s-1$ branching random walks in the interlacement that link it to $X$. With branching capacity estimates of the sets $A^{(s)}$ (see Lemmas \ref{lem:phi_capacity_bound}, \ref{lem:lower_bound_psi}, \ref{lem:bcap_moments_bound} and \ref{lem:lower_bound_capacity_final}), we show that the probability that a branching random walk hits the set $A^{(s_d)}$ is bounded from below by a positive constant. Combining this with a Borel-Cantelli lemma (see Lemma \ref{lem:bc_cond}), we are able to conclude.

\subsubsection{Trace sets}
\label{sec:trace_sets}

In this subsection, we introduce an application $\Phi$ that selects parts of traces of walks that are not too far from their starting point both spatially and temporally.

Let $(X^i)_{1 \leq i \leq N}$ be a sequence of random walk trajectories indexed by infinite critical trees $(\mathcal{T}^{(i)})_{1 \leq i \leq N}$. For $R > 0$, let $\overline{X}_N = (X^1,\dots,X^N)$, and
\begin{equation}
  \label{eq:def_phi}
  \Phi(\overline{X}_N,R) = \bigcup_{i=1}^N \left(\left\{X^i_u,\, u \in \mathcal{T}_-^{(i)} : n_u \leq R^2\right\}\ \cap\ \mathrm{B}(X^i_\varnothing, R)\right).
\end{equation}

\begin{figure}[H]
  \centering
  \begin{tikzpicture}[scale=1.5,use Hobby shortcut]
    \draw (0,0)  circle (2);
    \draw[above right] (1,1.73) node {$R$};
    \draw[below] (0,0) node {\small $X_{\varnothing}$};
    \draw[very thick] (0,0) .. (0.1,0.2) .. (0.5,-0.9) .. (-0.3,0.4) .. (-0.5,0) .. (-1.73,-1);
    \draw[very thick, dotted] (-1.73,-1) .. (-2.5,-1.2) .. (-2.6, 0) .. (-2.2,0.3) .. (-3,-0.4) .. (-2.8,-0.8) .. (-1.5,-2) .. (-0.35,-1.97);
    \draw[very thick] (-0.35,-1.97) .. (-0.15,-1.8) .. (0,-1.7) .. (-0.3,-1.4) .. (-0.2,-1) .. (0.2, -1.2) .. (1.29, -1.53);
    \draw[very thick, dotted] (1.29, -1.53) .. (2,-1.3) .. (2.5,-1.6) .. (2.6, -2);
    \draw[right] (2.6,-2) node {\small$X_{R^2}$};
    \draw (0,0.8) node {$\Phi(X,R)$};
  \end{tikzpicture}

  \caption{Example of a set $\Phi(X,R)$ for a single random walk $X$. We only draw the spine of the branching random walk for simplicity.}
  \label{fig:ex_phi}
\end{figure}

The following lemma provides an upper bound on the number of vertices of a branching random walk which fall in a ball of radius $R$. We need this result in order to control the capacity of the set $\Phi$ for $N$ independent branching random walks.

\begin{lemma}
  \label{lem:bound_z_moments}
  Let $R > 0$, and let $X$ be a random walk on $\mathbb{Z}^d$ indexed by an infinite critical tree starting from the origin, whose reproduction law $\mu$ has finite third moment.
  Let
  \begin{equation}
    \label{eq:def_jX}
    J_X(R) =  \sum_{u \in \mathcal{T}_-} \indic{X_u \in \mathrm{B}(R)}.
  \end{equation}
  There exist positive constants $c$ and $C$ such that,
  \[
    \textit{(i)}\ cR^4 \leq \E[J_X(R)] \leq CR^4, \quad\quad\quad \textit{(ii)}\ \E[J_X(R)^2] \leq CR^8.
  \]
\end{lemma}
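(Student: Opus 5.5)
We treat the first and second moments separately, and in both cases reduce to the Green's function estimates \eqref{eq:estim_green} and the two-point connection bounds of Section~\ref{sec:n_pts_connection_walk}.

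\emph{First moment.} By linearity and the definition of $G$,
\[
  \E[J_X(R)] = \sum_{x\in \mathrm{B}(R)} G(0,x).
\]
By \eqref{eq:estim_green}, $G(0,x)\asymp (1+\|x\|^{d-4})^{-1}$. Comparing with the integral $\int_0^R r^{d-1} r^{4-d}\,\mathrm{d}r\asymp R^4$ gives both $cR^4\le \E[J_X(R)]\le CR^4$. For the lower bound we may restrict to $R$ larger than a fixed constant; small $R$ is trivial because $G(0,0)>0$. For Kesten's tree the same computation applies, since its past Green's function is also $\asymp g*g$.

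\emph{Second moment.} Write
\[
  \E[J_X(R)^2] = \sum_{x,y\in \mathrm{B}(R)} \E\Big[\sum_{u,v\in\mathcal{T}_-}\indic{X_u=x,\,X_v=y}\Big].
\]
We bound the inner expectation by decomposing along the spine, as in the proof of Proposition~\ref{prop:2_connection_as}. Given $u,v$, look at the position $w$ of the branching point of the generated tree $\mathcal{T}\langle\varnothing,u,v\rangle$. There are two kinds of configuration.
\begin{itemize}
  \item $n_u\neq n_v$, or $n_u=n_v$ with the two geodesics leaving the spine at the spine vertex itself. Then $w$ is a spine vertex, and the spine walk contributes $g(0,w)$. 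From $w$ one geodesic runs down an adjoint critical tree to one of the points, while the further spine vertex reaches the other point, contributing a factor comparable to $G$ or $g$.
  \item $n_u=n_v$ and $u\wedge v$ lies strictly inside an adjoint critical tree $\widetilde{\mathcal{T}}_c^{(n)}$.
\end{itemize}
Summing over the possible tree shapes yields
\[
  \E\Big[\sum_{u,v\in\mathcal{T}_-}\indic{X_u=x,\,X_v=y}\Big]\lesssim \sum_{w\in\mathbb{Z}^d} G(0,w)\,g(w,x)\,g(w,y) + G(0,x)\,G(x,y) + G(0,y)\,G(y,x).
\]
In the second kind of configuration, the expected number of pairs of descendants in the adjoint tree at prescribed positions is a sum over the branching vertex $w'$ of $g(w,w')g(w',x)g(w',y)$. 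Bounding it requires the second moment of the offspring number of the branching vertex. For the size-biased spine vertices, $\mu_{sb}$ having finite second moment is exactly finiteness of the third moment of $\mu$, which is where the hypothesis enters. Finally, $\sum_{w}g(0,w)g(w,w')\asymp G(0,w')$ absorbs this case into the first term. This is essentially the content of Lemma~\ref{lem:connection_infinite}, summed over the position of the internal vertex.

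\emph{Summation.} We sum each term over $x,y\in\mathrm{B}(R)$.
\begin{itemize}
  \item For the product terms, $\sum_{y\in\mathrm{B}(R)}G(x,y)\le\sum_{y\in\mathrm{B}(2R)}G(0,y)\lesssim R^4$ for every $x\in \mathrm{B}(R)$. Hence
  \[
    \sum_{x,y\in\mathrm{B}(R)} G(0,x)\,G(x,y)\lesssim R^4\cdot R^4 = R^8 .
  \]
  \item For the branching term, we split according to whether $w\in\mathrm{B}(2R)$.
  \begin{itemize}
    \item If $w\in \mathrm{B}(2R)$, then $\sum_{x\in\mathrm{B}(R)}g(w,x)\le\sum_{x\in\mathrm{B}(3R)}g(0,x)\lesssim R^2$, and likewise for $y$. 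Together with $\sum_{w\in\mathrm{B}(2R)}G(0,w)\lesssim R^4$, this contributes $\lesssim R^4\cdot R^2\cdot R^2=R^8$.
    \item If $\|w\|\ge 2R$, then $g(w,x)\lesssim \|w\|^{2-d}$ and $G(0,w)\lesssim\|w\|^{4-d}$. The contribution is at most
    \[
      R^{2d}\sum_{\|w\|\ge 2R}\|w\|^{8-3d}\lesssim R^{2d}R^{8-2d}=R^8 ,
    \]
    the sum converging because $3d-8>d$.
  \end{itemize}
\end{itemize}
This proves (ii).

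The main obstacle is the second step: making rigorous the combinatorial decomposition of pairs $(u,v)$ in $\mathcal{T}_-$ into spine and adjoint-tree parts with the correct Green's function factors. In particular, the case where $u\wedge v$ is a size-biased spine vertex is where the third moment is needed. The final summations are routine given Lemma~\ref{lem:magic_ineq}.
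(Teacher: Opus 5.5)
Your proof is correct and follows the paper's route: the same spine decomposition of $\mathcal{T}_-$ into adjoint critical trees, with pairs in a common adjoint tree and pairs in different adjoint trees ($n_u\neq n_v$) treated separately. The third moment enters, as in the paper, through the $\widetilde{\mu}$-distributed (equivalently size-biased) offspring of the spine vertex, and the proof ends by summing Green's function products over $\mathrm{B}(R)$.

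The differences are minor. First, the $n_u\neq n_v$ contribution is really the star sum $\sum_w g(0,w)g(w,x)G(w,y)$, not literally $G(0,x)G(x,y)$. It is easiest to sum it directly, using $\sup_w\sum_{y\in\mathrm{B}(R)}G(w,y)\lesssim R^4$ and $g*g\asymp G$, rather than going through your pointwise product bound. Second, your near/far split of the branching term works for every $d\geq 5$, whereas the paper's appeal to $(G*g)(y)\asymp\|y\|^{6-d}$ needs $d\geq 7$.
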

\begin{proof}
  We only prove the result for a random walk indexed by an infinite invariant tree, but the proof works the same when the walk is indexed by Kesten's tree.
  
  \noindent\textit{Proof of (i).} The first part of the lemma boils down to the following computation,
  \[
    \E[J_X(R)] = \E\Bigg[\sum_{u\in\mathcal{T}_-} \indic{X_u \in \mathrm{B}(R)}\Bigg] = \sum_{x\in\mathrm{B}(R)} \E\Bigg[\sum_{u\in\mathcal{T}_-} \indic{X_u=x}\Bigg] = \sum_{x\in\mathrm{B}(R)} G(x) \stackrel{\eqref{eq:estim_green}}{\asymp} R^4.
  \]

  \medskip

  \noindent\textit{Proof of (ii).}
  Denote by $(S_n)_{n\in\mathbb{N}}$ the random walk induced by the spine. We use the decomposition of the past of the infinite tree using the spine and the adjoint critical trees located on the past. Letting $\varphi : x \mapsto \indic{x \in \mathrm{B}(R)}$, we have by using \eqref{eq:green_adjoint} for the last step,
  \begin{align}
    & \E\Bigg[\Bigg(\sum_{u\in\mathcal{T}_-}\indic{X_u \in \mathrm{B}(R)}\Bigg)^2\Bigg] = \E\Bigg[\Bigg(\sum_{u\in\mathcal{T}_-}\varphi(X_u)\Bigg)^2\Bigg] = \E\Bigg[\Bigg(\sum_{n\in\mathbb{N}}\sum_{u\in\widetilde{\mathcal{T}}_c^{(n)}} \varphi(S_n+X_u^{(n)})\Bigg)^2\Bigg] \nonumber \\
    =&\ \E\Bigg[\sum_{n\in\mathbb{N}}\Bigg(\sum_{u\in\widetilde{\mathcal{T}}_c^{(n)}} \varphi(S_n+X_u^{(n)})\Bigg)^2\Bigg]
       + \E\Bigg[\sum_{n_1 \neq n_2}\Bigg(\sum_{u\in\widetilde{\mathcal{T}}_c^{(n_1)}} \varphi(S_{n_1}+X_u^{(n_1)})\Bigg)\Bigg(\sum_{v\in\widetilde{\mathcal{T}}_c^{(n_2)}} \varphi(S_{n_2}+X_v^{(n_2)})\Bigg)\Bigg]. \label{eq:proof_bound_Jx_ii}
  \end{align}

  \noindent Concerning the first term in \eqref{eq:proof_bound_Jx_ii}, we note that,
  \begin{align*}
    \E\Bigg[\sum_{n\in\mathbb{N}}\Bigg(\sum_{u\in\widetilde{\mathcal{T}}_c^{(n)}} \varphi(S_n+X_u^{(n)})\Bigg)^2\Bigg]
    = \E\Bigg[\sum_{n\in\mathbb{N}} \E_{S_n}\Bigg[\Bigg(\sum_{u\in\widetilde{\mathcal{T}}_c} \varphi(X_u)\Bigg)^2\Bigg]\Bigg]
    = \sum_{x\in\mathbb{Z}^d} \E_x\Bigg[\Bigg(\sum_{u\in\widetilde{\mathcal{T}}_c} \varphi(X_u)\Bigg)^2\Bigg] g(x).
  \end{align*}
  Moreover,
  \begin{align*}
    \E_x\Bigg[\Bigg(\sum_{u\in\widetilde{\mathcal{T}}_c} \varphi(X_u)\Bigg)^2\Bigg]
    = \E_x\Bigg[\sum_{u,v \in \widetilde{\mathcal{T}}_c} \varphi(X_u)\varphi(X_v)\Bigg]
    = \E_x\Bigg[\sum_{w \in \widetilde{\mathcal{T}}_c}\ \sum_{\substack{u\neq v\in\widetilde{\mathcal{T}}_c \\ u\wedge v = w}} \varphi(X_u) \varphi(X_v) + \sum_{u\in\widetilde{\mathcal{T}}_c} \varphi(X_u)^2\Bigg].
  \end{align*}
  On the one hand, since $\mu$ has a finite third moment, recalling the notation $\nu$ from Section \ref{sec:notations},
  \begin{align*}
    \E_x\Bigg[\sum_{w \in \widetilde{\mathcal{T}}_c}\ \sum_{\substack{u\neq v\in\widetilde{\mathcal{T}}_c \\ u\wedge v = w}} \varphi(X_u) \varphi(X_v)\Bigg]
    &\leq \sum_{y_1,y_2\in\mathrm{B}(R)} \E_x\Bigg[\sum_{w\in\widetilde{\mathcal{T}}_c} \nu_w (\nu_w-1) g(y_1,X_w)g(y_2,X_w)\Bigg] \\
    &\lesssim \sum_{y_1,y_2\in\mathrm{B}(R)} \sum_{z \in \mathbb{Z}^d} g(y_1,z)g(y_2,z) \E_x\Bigg[\sum_{w\in\widetilde{\mathcal{T}}_c}\indic{X_w = z}\Bigg],
  \end{align*}
  which by \eqref{eq:green_adjoint} gives,
  \[
    \E_x\Bigg[\sum_{w \in \widetilde{\mathcal{T}}_c}\ \sum_{\substack{u\neq v\in\widetilde{\mathcal{T}}_c \\ u\wedge v = w}} \varphi(X_u) \varphi(X_v)\Bigg] \lesssim \sum_{y_1,y_2\in\mathrm{B}(R)} \sum_{z\in\mathbb{Z}^d} g(y_1,z)g(y_2,z)g(x,z).
  \]
  On the other hand, again by \eqref{eq:green_adjoint},
  \[
    \E_x\Bigg[\sum_{u\in \widetilde{\mathcal{T}}_c} \varphi(X_u)^2\Bigg] \lesssim \sum_{y\in\mathrm{B}(R)} g(x,y),
  \]
  which gives us for the first term of \eqref{eq:proof_bound_Jx_ii}, using Lemma \ref{lem:magic_ineq} at the end,
  \begin{align*}
    \E\Bigg[\sum_{n\in\mathbb{N}}\Bigg(\sum_{u\in\widetilde{\mathcal{T}}_c^{(n)}} \varphi(S_n+X_u^{(n)})\Bigg)^2\Bigg]
    &\lesssim \sum_{y\in\mathrm{B}(R)}\sum_{x\in\mathbb{Z}^d} g(x) g(x,y) + \sum_{y,z\in\mathrm{B}(R)} \sum_{x,t\in\mathbb{Z}^d} g(y,t)g(z,t)g(x,t)g(x) \\
    &\overset{\eqref{eq:G_g_convolution}}{\lesssim} \sum_{y\in\mathrm{B}(R)} G(y) + \sum_{y,z\in\mathrm{B}(R)}\sum_{t\in\mathbb{Z}^d} g(y,t)g(z,t)G(t) \\
    &\lesssim R^4 + R^2 \sum_{y\in\mathrm{B}(R)} (G * g)(0,y) \lesssim R^4 + R^2 \cdot R^6
      \lesssim R^8.
  \end{align*}

  \noindent Now for the second term of \eqref{eq:proof_bound_Jx_ii},
  \begin{align*}
    \E\Bigg[&\sum_{n_1 \neq n_2}\Bigg(\sum_{u\in\widetilde{\mathcal{T}}_c^{(n_1)}} \varphi(S_{n_1}+X_u^{(n_1)})\Bigg)\Bigg(\sum_{v\in\widetilde{\mathcal{T}}_c^{(n_2)}} \varphi(S_{n_2}+X_v^{(n_2)})\Bigg)\Bigg] \\
            &\asymp 2\E\Bigg[\sum_{x,y\in \mathrm{B}(R)} \sum_{w,w'\in\mathbb{Z}^d} g(w) g(w,w') g(w,x) g(w',y)\Bigg] \asymp \sum_{w\in\mathbb{Z}^d} g(w) \sum_{x\in\mathrm{B}(R)} g(x,z) \sum_{y\in\mathbb{Z}^d} G(w,y) \\
            &\lesssim R^8,
  \end{align*}
  using \eqref{eq:G_g_convolution} and $\sum_{y\in\mathrm{B}(R)} G(w,y) \lesssim R^4$ twice for the last inequality.
\end{proof}

\begin{lemma}
  \label{lem:phi_capacity_bound}
  There exist positive constants $c,C > 0$, such that for $N \geq 1$, $R > 0$, $(x_i)_{1 \leq i \leq N} \in \mathbb{Z}^d$, and $(X^i)_{1\leq i \leq N}$ a sequence of independent random walks indexed by infinite critical trees starting respectively from $(x_i)_{1\leq i\leq N}$,
  \begin{enumerate}[(i)]
  \item \label{bound_1_i} \[
      \E\bigg[\mathrm{BCap}\big(\Phi(\overline{X}_N, R)\big)\bigg] \leq CNR^4.
    \]
  \item \label{bound_1_ii} For $d \geq 9$,
    \[
      \E\bigg[\mathrm{BCap}\big(\Phi(\overline{X}_N, R)\big)\bigg] \geq\ c \min\{NR^4, R^{d-4}\}.
    \]
  \end{enumerate}
\end{lemma}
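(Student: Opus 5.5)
For \eqref{bound_1_i} I will use subadditivity and monotonicity of the branching capacity. Since $\mathrm{BCap}(A\cup B)\le \mathrm{BCap}(A)+\mathrm{BCap}(B)$, which follows from $e_{A\cup B}(x)\le e_A(x)$ for $x\in A$, it suffices to treat a single walk. I bound $\mathrm{BCap}(K)\le |K|$, because each equilibrium weight is at most $1$. The set $\{X_u : u\in\mathcal{T}_-,\, n_u\le R^2\}\cap \mathrm{B}(X_\varnothing,R)$ has cardinality at most $J_X(R)$ as defined in \eqref{eq:def_jX}, after translating the walk so it starts from the origin. Lemma \ref{lem:bound_z_moments}(i) then gives $\E[\mathrm{BCap}]\le \E[J_X(R)]\le CR^4$ for each walk, and summing over $i$ gives $CNR^4$.

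For \eqref{bound_1_ii} I will use a second moment (energy) lower bound for the branching capacity, of the form
\[
\mathrm{BCap}(K)\gtrsim \frac{(\sum_{x}\lambda(x))^2}{\sum_{x,y}\lambda(x)\lambda(y)G(x,y)}
\]
for any nonnegative measure $\lambda$ supported on $K$. This is the usual variational characterization; in Zhu's setting it follows from the hitting estimate by last-exit decomposition. I take $\lambda$ to be the occupation measure of the selected vertices, $\lambda(z)=\sum_i\sum_{u\in\mathcal{T}^{(i)}_-,\,n_u\le R^2}\indic{X^i_u=z\in \mathrm{B}(x_i,R)}$. With $Z=\sum_z\lambda(z)$ and $Y=\sum_{z,z'}\lambda(z)\lambda(z')G(z,z')$, I get $\mathrm{BCap}(\Phi)\gtrsim Z^2/Y$. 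Jensen's inequality, or Cauchy--Schwarz in the form $\E[Z^2/Y]\ge \E[Z]^2/\E[Y]$ (valid by convexity of $(a,b)\mapsto a^2/b$), then yields $\E[\mathrm{BCap}]\gtrsim \E[Z]^2/\E[Y]$.

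The lower bound on $\E[Z]$ requires showing that the truncated occupation count $\E\big[\sum_{u\in\mathcal{T}_-,\,n_u\le R^2}\indic{X_u\in\mathrm{B}(R)}\big]\gtrsim R^4$. Via the spine decomposition this count equals $\sum_{n\le R^2}\E[\#\{u\in\widetilde{\mathcal{T}}^{(n)}_c: S_n+X_u\in \mathrm{B}(R)\}]$. For $n\le R^2/4$, say, $S_n$ lies in $\mathrm{B}(R/2)$ with probability bounded below, and from there the adjoint tree contributes $\sum_{y\in \mathrm{B}(R)} g(S_n,y)\gtrsim R^2$. This gives $R^2\cdot R^2=R^4$, so $\E[Z]\gtrsim NR^4$.

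For $\E[Y]$, I split it into diagonal terms $i=j$ and cross terms $i\ne j$. For the cross terms I use independence and the fact that $\E[\lambda_i(z)]\lesssim G(x_i,z)\indic{z\in\mathrm{B}(x_i,R)}$. Crudely, $\sum_{z,z'}\E\lambda_i(z)\E\lambda_j(z')G(z,z')\lesssim R^4\cdot\sup_{z}\sum_{z'\in \mathrm{B}(R)}G(z,z')G(x_j,z')$, and I bound this by $R^4\cdot R^4\cdot R^{4-d}$ using the uniform estimate $\E\sum_{z'}\lambda_j(z')G(z,z')\lesssim R^{8-d}\cdot R^{0}$… More carefully, I plan to bound $\sum_{z'}\lambda_j(z')G(z,z')$ uniformly in expectation by $\sum_{z'\in \mathrm{B}(R)}G(z,z')\lesssim R^4$ times the typical density, giving a cross total $\lesssim N^2R^8\cdot R^{4-d}$. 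The key heuristic is that the occupation mass $R^4$ spread over a ball of volume $R^d$ interacts through $G\sim R^{4-d}$ at typical distances. The diagonal term is a triple-Green convolution, handled as in Lemma \ref{lem:bound_z_moments}(ii) with an extra factor $G$. Using Lemma \ref{lem:magic_ineq} repeatedly (this needs $d\ge9$ so that the exponent sums exceed $d$), I expect $\E[Y_{ii}]\lesssim R^4$, giving $N R^4$ in total. Hence $\E[Y]\lesssim NR^4+N^2R^{12-d}$, and
\[
\E[\mathrm{BCap}]\gtrsim \frac{N^2R^8}{NR^4+N^2R^{12-d}}\asymp\min\{NR^4,R^{d-4}\}.
\]

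The main obstacle is the diagonal estimate $\E[Y_{ii}]\lesssim R^4$. It involves pairs of vertices in the same infinite tree, so I must split according to the relative spine positions $n_u,n_v$ and whether $u,v$ lie in the same adjoint tree, as was done in Lemma \ref{lem:bound_z_moments}(ii). This split produces sums like $\sum g\,g\,g\,G$ whose convergence requires $d\ge 9$. The third moment of $\mu$ enters through the branching factor $\nu_w(\nu_w-1)$. I would first try to bound each configuration by a product of Green's functions and then collapse the product with Lemma \ref{lem:magic_ineq}.
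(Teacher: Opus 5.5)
Part (i) is exactly the paper's argument. In (ii), combining the energy bound with $\E[Z^2/Y]\ge \E[Z]^2/\E[Y]$ is a valid and slightly cleaner substitute for the paper's Paley--Zygmund step, and your diagonal estimate $\E[Y_{ii}]\lesssim R^4$ is fine. The gap is in the cross terms $i\neq j$, where your ``typical density'' heuristic is false for your choice of $\lambda$.

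The occupation measure of all of $\{u\in\mathcal{T}_-: n_u\le R^2\}$ has expected density $\E[\lambda_j(z)]$ of order $G(x_j,z)$ at distances $\ll R$ from $x_j$. This is of order $1$ near the starting point, not $R^{4-d}$. Correspondingly, your ``crude'' quantity $\sup_z\sum_{z'}G(z,z')G(x_j,z')$ is of order $1$ (attained at $z=x_j$), not $R^{8-d}$.

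Suppose all the $x_i$ coincide. Then each cross term is of order $\sum_{z,z'\in\mathrm{B}(R)}G(0,z)G(z,z')G(z',0)$. This is $\asymp R^{12-d}$ only for $d\le 11$; it is $\asymp\log R$ for $d=12$ and $\asymp 1$ for $d\ge 13$. Already the spine vertex adjacent to the root, which lies in $\mathcal{T}_-$ one step from $x_j$, gives $\E[Y_{ij}]\gtrsim 1$. So $\E[Y]\gtrsim N^2$, and your bound yields at best $N^2R^8/(NR^4+N^2)\asymp\min\{NR^4,R^8\}$. For $d\ge 13$ this misses the saturation value $R^{d-4}$ precisely in the regime $N\gtrsim R^{d-8}$ on which Lemma~\ref{lem:lower_bound_capacity_final} relies at $s=s_d$. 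The lemma is stated for arbitrary starting points, and in $\Psi$ the walks start at first-entrance points of $A$, which may be clustered. So this is a genuine failure of the argument, not a technicality.

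The missing idea is to discard the part of each walk close to its starting point. The paper keeps only the adjoint trees grafted on the spine at times $t\in[n+1,2n]$ with $n=\lfloor R^2/2\rfloor$ and $S^{(i)}_t\in\mathrm{B}(x_i,R)$. By \eqref{eq:ineq_sum_green_srw}, $\sum_{t=n+1}^{2n}\E[g(S_t,x)]\lesssim n^{2-d/2}\asymp R^{4-d}$ uniformly in $x$. So the expected density is uniformly $\lesssim R^{4-d}$, and each cross term is $\lesssim R^{8-2d}\sum_{x\in\mathrm{B}(x_i,R),\,y\in\mathrm{B}(x_j,R)}G(x,y)\lesssim R^{12-d}$, whatever the positions of the $x_i$. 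Meanwhile each walk still carries expected mass $\asymp nR^2\asymp R^4$ by the central limit theorem. With this measure in place of yours, your Jensen argument goes through.

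A smaller point: the energy lower bound for $\mathrm{BCap}$ is not a routine last-exit computation in the branching setting. The paper imports it from Asselah, Schapira and Sousi (their Corollary~1.4), under the third-moment assumption on $\mu$.
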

\begin{proof}[Proof of (i)]
  Fix a family of random walks $(X^i)_{1\leq i \leq N}$ starting respectively from $(x_i)_{1\leq i\leq N} \in \mathbb{Z}^d$, and indexed by infinite critical trees $(\mathcal{T}^{(i)})_{1\leq i\leq N}$.

  \noindent By applying Lemma \ref{lem:bound_z_moments} \textit{(i)} to each of these walks and a union bound, we get,
  \[
    \E\big[\#\Phi(\overline{X}_N,R)\big] \leq CNR^4,
  \]
  \noindent which implies, by subadditivity of the branching capacity,
  \[
    \E\big[\mathrm{BCap}(\Phi(\overline{X}_N,R))\big] \leq CNR^4.
  \]

  \bigskip

  \noindent\textit{Proof of (ii).} Fix a positive integer $n$, that we will choose later, and let $(S_t^{(i)})_{t\in\mathbb{N}}$ be the random walk induced by the spine of $\mathcal{T}^{(i)}$. We define the probability measure
  \begin{equation}
    \label{eq:def_nu}
    \nu(x) = \frac{1}{Y} \sum_{i=1}^N \sum_{t = n+1}^{2n} \indic{S^{(i)}_t\in\mathrm{B}(x_i,R)} \sum_{u \in \widetilde{\mathcal{T}}_c^{(t,i)}} \indic{X^i_u = x} \indic{x\in\mathrm{B}(x_i,R)},
  \end{equation}
  where $\widetilde{\mathcal{T}}_c^{(t,i)}$ denotes the $t$-th adjoint critical tree in the spinal decomposition of $\mathcal{T}^{(i)}_-$, and where the normalization factor $Y$ is defined by
  \[
    Y = \sum_{i=1}^N Y_i, \quad \text{with } Y_i = \sum_{t=n+1}^{2n}\indic{S^{(i)}_t\in\mathrm{B}(x_i,R)} \sum_{u\in\widetilde{\mathcal{T}}_c^{(t,i)}} \indic{X^i_u \in \mathrm{B}(x_i,R)}.
  \]
  Observe that the definition of this measure does not depend on the type of trees that we choose, since the attached trees at the root are not taken into account.

  We use Corollary 1.4 from Asselah, Schapira and Sousi \cite{ass2023} which provides a general lower bound on the branching capacity in terms of the inverse of an energy, under a third moment assumption on $\mu$. In particular, it shows that, for any probability measure $\rho$ supported on $\Phi(\overline{X}_N,R)$,
  \begin{equation}
    \label{eq:lower_bound_energy}
    \E[\mathrm{BCap}(\Phi(\overline{X}_N,R))] \gtrsim \E[\mathcal{E}(\rho)^{-1}],
  \end{equation}
  where the energy of the probability measure $\rho$ is defined to be
  \[
    \mathcal{E}(\rho) = \sum_{x,y\in\mathbb{Z}^d} G(x,y) \rho(x)\rho(y).
  \]
  Specifying to $\nu$ as defined in \eqref{eq:def_nu} it becomes,
  \begin{equation}
    \label{eq:energy_definition}
    \mathcal{E}(\nu) = \frac{1}{Y^2} \sum_{1\leq i,j \leq N} \sum_{n+1 \leq s,t \leq 2n} \indic{\substack{S^{(i)}_t\in\mathrm{B}(R) \\ S^{(j)}_s\in\mathrm{B}(R)}} \sum_{\substack{X^i_u \in \widetilde{\mathcal{T}}_c^{(t,i)} \\ X^j_v \in \widetilde{\mathcal{T}}_c^{(s,j)}}} G(X^i_u, X^j_v) \indic{\substack{X^i_u \in \mathrm{B}(x_i,R) \\ X^j_v \in \mathrm{B}(x_j,R)}}.
  \end{equation}
  In order to bound this quantity, we work on a particular event where $Y$ is large enough to offer an interesting lower bound. We will use the Paley-Zigmund inequality, which states that for any $\theta \in (0,1)$,
  \[
    \P(Y \geq \theta \E[Y]) \geq (1-\theta)^2 \frac{\E[Y]^2}{\E[Y^2]}.
  \]
  Let $A = \big\{Y \geq \E[Y]/2\big\}$. By Cauchy-Schwarz inequality,
  \begin{equation}
    \label{eq:bound_cauchy_schwarz}
    \E[\mathcal{E}(\nu)^{-1}] \geq \E[\mathcal{E}(\nu)^{-1}\sindic{A}] \geq \P(A)^2  \ \E[\mathcal{E}(\nu)\sindic{A}]^{-1}.
  \end{equation}

  \noindent It amounts to compute the first two moments of $Y$. Using \eqref{eq:green_adjoint} and the central limit theorem, we get
  \[
    \E[Y_i]
    = \sum_{t=n+1}^{2n}\ \E\Bigg[\indic{S^{(i)}_t\in\mathrm{B}(R)}\sum_{u\in\widetilde{\mathcal{T}}_c^{(t,i)}} \indic{X^i_u \in \mathrm{B}(x_i,R)}\Bigg]
    \asymp \sum_{t = n+1}^{2n}\P(S^{(i)}_t \in \mathrm{B}(R))\ \E\Bigg[\sum_{x\in\mathrm{B}(x_i,R)}g(S^{(i)}_t,x)\Bigg] 
    \asymp nR^2.
  \]
  For the second moment, we have
  \begin{align*}
    \E[Y_i^2]
    &\leq \sum_{n+1 \leq s,t \leq 2n} \E\Bigg[\sum_{\ \ u \in \widetilde{\mathcal{T}}_c^{(t,i)}} \sum_{\ v \in \widetilde{\mathcal{T}}_c^{(s,i)}}\indic{\substack{X^i_u \in \mathrm{B}(x_i,R) \\ X^i_v\in \mathrm{B}(x_i,R)}} \Bigg] \\
    &\lesssim \sum_{n+1\leq s,t \leq 2n}\E\Bigg[\Bigg(\sum_{x\in\mathrm{B}(x_i,R)} g(S^{(i)}_t,x)\Bigg)\Bigg(\sum_{x\in\mathrm{B}(x_i,R)}g(S_s^{(i)},y)\Bigg)\Bigg] 
      \lesssim R^4 n^2.
  \end{align*}
  Those calculations allow us to apply the Paley-Zigmund inequality. Indeed,
  \[
    \E[Y] \asymp nNR^2 \quad\text{and}\quad \E[Y^2] \lesssim N\E[Y_1^2] + \E[Y]^2 \lesssim Nn^2R^4 + \E[Y]^2 \lesssim \E[Y]^2.
  \]
  We deduce that there exists a constant $c > 0$ which does not depend on $R$ such that,
  \[
    \P(A) = \P\left(Y \geq \frac{1}{2}\E[Y]\right) \geq c > 0.
  \]
  Thus,
  \begin{equation}
    \label{eq:ineq_energy}
    \E\Big[\mathcal{E}(\nu)^{-1}\Big] \geq c^2 \E\Big[\mathcal{E}(\nu)\sindic{A}\Big]^{-1}.
  \end{equation}

  \noindent Concerning the expectation on the right-hand side of \eqref{eq:ineq_energy}, first note that for all $i \neq j$,
  \begin{align*}
    \sum_{n+1 \leq s,t \leq 2n} \E&\Bigg[\sum_{\ \ u \in \widetilde{\mathcal{T}}_c^{(t,i)}}\sum_{\ \ v \in \widetilde{\mathcal{T}}_c^{(s,j)}} G(X^i_u, X^j_v) \indic{\substack{X^i_u \in \mathrm{B}(x_i,R) \\ X^j_v \in \mathrm{B}(x_j,R)}}\Bigg]\\ 
    &= \sum_{n+1 \leq s,t \leq 2n}\sum_{\substack{x\in\mathrm{B}(x_i,R)\\y\in\mathrm{B}(x_j,R)}} G(x,y)\ \E\Bigg[\sum_{\ u \in \widetilde{\mathcal{T}}_c^{(t,i)}} \indic{X^i_u = x}\Bigg]\ \E\Bigg[\sum_{\ v \in \widetilde{\mathcal{T}}_c^{(s,j)}} \indic{X^i_v = y}\Bigg] \\
    &\asymp\sum_{\substack{x\in\mathrm{B}(x_i,R)\\y\in\mathrm{B}(x_j,R)}} G(x,y)\ \E\Bigg[\Bigg(\sum_{t=n+1}^{2n}g(S_t^{(i)},x)\Bigg) \Bigg(\sum_{s=n+1}^{2n}g(S_s^{(j)},y)\Bigg)\Bigg].
  \end{align*}
  Furthermore, according to \cite{rs2012} (4.2), we have uniformly in $x \in\mathbb{Z}^d$, for $d \geq 3$,
  \begin{equation}
    \label{eq:ineq_sum_green_srw}
    \sum_{t=n+1}^{2n} \E\big[g(S_t,x)\big] \leq Cn^{2-\frac{d}{2}}.
  \end{equation}
  Thus, for $i\neq j$,
  \begin{align}
    \sum_{n+1 \leq s,t \leq 2n} \E\Bigg[\indic{\substack{S_t^{(i)}\in\mathrm{B}(R) \\ S_s^{(j)}\in\mathrm{B}(R)}}\sum_{\substack{u \in \widetilde{\mathcal{T}}_c^{(t,i)} \\ v \in \widetilde{\mathcal{T}}_c^{(s,j)}}} G(X^i_u, X^j_v) \indic{\substack{X^i_u \in \mathrm{B}(x_i,R) \\ X^j_v \in \mathrm{B}(x_j,R)}}\Bigg]
    &\lesssim \ \ n^{4-d}\sum_{\substack{x\in\mathrm{B}(x_i,R)\\y\in\mathrm{B}(x_j,R)}} G(x,y) \nonumber\\
    &\lesssim  n^{4-d}R^{4+d}, \label{eq:ineqjlast}
  \end{align}
  using that $\sum_{\substack{x\in\mathrm{B}(x_i,R)\\y\in\mathrm{B}(x_j,R)}} G(x,y) \lesssim \sum_{x\in\mathrm{B}(x_i,R)} R^4 \lesssim R^{d+4}$.

  \noindent Now if $i = j$, we consider two cases. First, if $s \neq t$, assume without loss of generality that $t < s$, and denote by $\widetilde{X}$ and $\widetilde{X}'$ two independent random walks starting from $0$, independent from $(S_n)_{n\in\mathbb{N}}$, and indexed by adjoint critical trees, respectively $\widetilde{\mathcal{T}}_c^{(t)}$ and $\widetilde{\mathcal{T}}_c^{(s)}$. Let also $S'_n = S_{m+n} - S_m$. Then we have,
  \begin{align*}
    \E\Bigg[\indic{\substack{S_t \in \mathrm{B}(R) \\ S_s \in \mathrm{B}(R)}}\sum_{\substack{u \in \widetilde{\mathcal{T}}_c^{(t)} \\ v\in\widetilde{\mathcal{T}}_c^{(s)}}} G(X_u,X_v) \indic{\substack{X_u \in \mathrm{B}(R) \\ X_v \in \mathrm{B}(R)}}\Bigg]
    &=\ \E\Bigg[\indic{\substack{S_t \in \mathrm{B}(R) \\ S_s \in \mathrm{B}(R)}}\sum_{\substack{u \in \widetilde{\mathcal{T}}_c^{(t)} \\ v\in\widetilde{\mathcal{T}}_c^{(s)}}} G(\widetilde{X}_u + S_t,\widetilde{X}'_v + S_s)\indic{\substack{\widetilde{X}_u+S_t \in \mathrm{B}(R) \\ \widetilde{X}'_v + S_s \in \mathrm{B}(R)}}\Bigg] \\
    &\leq\ \E\Bigg[\indic{S'_{s-t} \in\mathrm{B}(2R)}\sum_{\substack{u \in \widetilde{\mathcal{T}}_c^{(t)} \\ v\in\widetilde{\mathcal{T}}_c^{(s)}}} G(\widetilde{X}_u-\widetilde{X}'_v, S'_{s-t}) \indic{\substack{\widetilde{X}_u \in \mathrm{B}(2R) \\ \widetilde{X}'_v \in \mathrm{B}(2R)}}\Bigg] \\
    &\lesssim \E\Bigg[\sum_{x,y \in \mathrm{B}(2R)}\sum_{z \in \mathbb{Z}^d} g(x) g(y) g(z-y)G(x-y,z)\Bigg].
  \end{align*}
  \noindent By translating $z$ by $y$ and using the fact that $g * g \asymp G$ by \eqref{eq:G_g_convolution},
  \begin{align*}
    \E\Bigg[\sum_{x,y \in \mathrm{B}(2R)}\sum_{z \in \mathbb{Z}^d} g(x) g(y) g(z-y)G(x-y,z)\Bigg]
    &\lesssim \ \E\Bigg[\sum_{x \in \mathrm{B}(2R)} \sum_{y,z \in \mathbb{Z}^d} g(x) g(y) g(z-y)G(x,z)\Bigg] \\
    &\lesssim \ \E\Bigg[\sum_{x \in \mathrm{B}(2R)} \sum_{z \in \mathbb{Z}^d} g(x) G(z)G(x,z)\Bigg].
  \end{align*}
  \noindent Since $d \geq 9$,
  \[
    \sum_{\substack{z \in \mathbb{Z}^d}} G(x,z)G(z) \lesssim \sum_{z \in \mathbb{Z}^d} \frac{1}{1 + \|x-z\|^{d-4}}\frac{1}{1 + \|z\|^{d-4}} \lesssim \sum_{r \in \mathbb{N}^*} \frac{r^{d-1}}{r^{2d-8}} = \sum_{r\in\mathbb{N}^*} \frac{1}{r^{d-7}} < +\infty.
  \]
  Thus,
  \begin{equation}
    \label{eq:ieqjsneqt}
    \E\Bigg[\sum_{n+1\leq t < s \leq 2n}\indic{\substack{S_t \in \mathrm{B}(R) \\ S_s \in \mathrm{B}(R)}}\sum_{\substack{u \in \widetilde{\mathcal{T}}_c^{(t)} \\ v\in\widetilde{\mathcal{T}}_c^{(s)}}} G(X_u,X_v) \indic{\substack{X_u \in \mathrm{B}(R) \\ X_v \in \mathrm{B}(R)}}\Bigg]
    \lesssim\ n\E\Bigg[\sum_{x\in\mathrm{B}(2R)} g(x)\Bigg] \lesssim nR^2.
  \end{equation}

  \noindent Now consider the case $s = t$ in \eqref{eq:ineq_energy}. If we sum over the most recent common ancestor of $u$ and $v$ in $\widetilde{\mathcal{T}}_c^{(t)}$,
  \[
    \E\Bigg[\sum_{u \in \widetilde{\mathcal{T}}_c^{(t,i)}} \sum_{v \in \widetilde{\mathcal{T}}_c^{(t,i)}}\indic{\substack{X^i_u = x\\X^i_v = y}}\Bigg] \asymp \E\Bigg[\sum_{z\in\mathbb{Z}^d} g(S_t,z) g(y,z) g(x,z)\Bigg],
  \]
  and consequently,
  \begin{align}
    \sum_{x,y\in\mathrm{B}(x_i,R)}&\sum_{t=n+1}^{2n} G(x,y)\ \E\Bigg[\sum_{u,v \in \widetilde{\mathcal{T}}_c^{(t,i)}}\indic{\substack{X^i_u = x\\X^i_v = y}}\Bigg]
    \asymp \sum_{x,y\in\mathrm{B}(x_i,R)} G(x,y)\ \E\Bigg[\sum_{z \in \mathbb{Z}^d}\Bigg(\sum_{t=n+1}^{2n}g(S_t,z)\Bigg) g(y,z) g(x,z)\Bigg] \nonumber\\
                                  & \stackrel{\eqref{eq:ineq_sum_green_srw}}{\lesssim} n^{2-\frac{d}{2}} \Bigg(\sum_{x,y\in\mathrm{B}(x_i,R)} G(x,y)^2\Bigg)
                                    \stackrel{d\geq 6}{\lesssim} n^{2-\frac{d}{2}} \Bigg(\sum_{x,y\in\mathrm{B}(x_i,R)} g(x,y)\Bigg)
                                    \lesssim n^{2-\frac{d}{2}} R^{d+2}.\label{eq:ieqjseqt}
  \end{align}

  \noindent Altogether, with \eqref{eq:ineqjlast}, \eqref{eq:ieqjsneqt} and \eqref{eq:ieqjseqt}, we get
  \[
    \sum_{1\leq i,j \leq N} \sum_{n+1 \leq s,t \leq 2n} \sum_{\substack{u \in \widetilde{\mathcal{T}}_c^{(t,i)} \\ v \in \widetilde{\mathcal{T}}_c^{(s,j)}}} G(X^i_u, X^j_v) \indic{\substack{u \in \mathrm{B}(x_i,R) \\ v \in \mathrm{B}(x_j,R)}} \lesssim N^2 R^{4+d}n^{4-d} + N(nR^2 + R^{d+2}n^{2-\frac{d}{2}}).
  \]
  Setting $n = \left\lfloor R^2/2\right\rfloor$ yields
  \[
    \sum_{1\leq i,j \leq N} \sum_{n+1 \leq s,t \leq 2n} \sum_{\substack{u \in \widetilde{\mathcal{T}}_c^{(t,i)} \\ v \in \widetilde{\mathcal{T}}_c^{(s,j)}}} G(X^i_u, X^j_v) \indic{\substack{u \in \mathrm{B}(x_i,R) \\ v \in \mathrm{B}(x_j,R)}} \lesssim N^2 R^{12-d} + NR^4,
  \]
  and finally,
  \[
    \E\Big[\mathrm{BCap}(\Phi(\overline{X}_N,R))\Big] \gtrsim \frac{N^2R^8}{N^2R^{12-d} + NR^4} \gtrsim \min\{NR^4, R^{d-4}\}.
  \]
\end{proof}

Next we define a map $\Psi$ as follows. Let $A$ be a finite set of $\mathbb{Z}^d$. For a point measure $\omega = \sum_{i\in\mathbb{N}} \delta_{w_i}$ with $w_i \in W^*$, denote by $N_A(\omega)$ the number of trajectories of $\mathrm{Supp}(\omega)$ that intersect $A$. Let ${\overline{X}_A(\omega)=(X^i)_{1\leq i \leq N_A(\omega)}}$ be the family of branching random walks associated to the trajectories $(w_i)_{1\leq i\leq N_A(\omega)}$ parametrized in such a way that $w_i(0) \in A$ and for all $j < 0$, $w_i(j) \notin A$. Recall the definition of $\Phi$ in \eqref{eq:def_phi} and let now
\begin{equation}
  \label{eq:def_psi}
  \Psi(\omega, A, R) = \Phi(\overline{X}_A(\omega), R) = \bigcup_{i=1}^{N_A(\omega)}\left(\left\{X^i_u,\, u\in\mathcal{T}_-^{(i)},\, n_u \leq R^2\right\}\cap\mathrm{B}\left(X^i_\varnothing,R\right)\right).
\end{equation}

\begin{figure}[H]\label{fig:ex_psi}
  \centering
  \begin{tikzpicture}[scale=1.5,use Hobby shortcut]
    \draw[left] (-0.1,0.5) node {$A$};
    \draw ([closed]0,0) .. (1,-1) .. (2,-0.5) .. (2.25,0) .. (3,2);
    
    \draw[very thick, dotted] (-2.8,1.5) .. (-2.5,1.5) .. (-2,2) .. (-1.5,1.8) .. (-1,2.3) .. (0,3.5);

    \draw (2.65,2.5) circle (1);
    \draw[very thick, dotted] (1,1) .. (1,1.5) .. (1.2,1.6) .. (2.65,2.5);
    \draw[very thick] (2.65, 2.5) .. (2.8, 3.2) .. (2.4, 3.3) .. (2.1,3.35);
    \draw[very thick, dotted] (2.1,3.35) .. (2, 3.4) .. (2.4, 3.8) .. (2.8,3.7) .. (3.3,3.5) .. (4,3.2) .. (3.5,3);
    \draw[very thick] (3.5,3) .. (3.2,2.8) .. (3.4,2.5) .. (3.2,2.3);
    \draw[above] (2.65, 2.5) node {\scriptsize$X^{(1)}_\varnothing$};
    \draw (3.3,2.1) node {\scriptsize$X_{R^2}^{(1)}$};

    \draw (2,-0.5) circle (1);
    \draw[very thick] (2,-0.5) .. (2.3, -0.9) .. (2.2, -1) .. (2.5,-1.2) .. (2.7,-0.5) .. (2.8,0.1);
    \draw[very thick, dotted] (2.8,0.1) .. (3,0.6) .. (3.7,0.4) .. (3.6,-0.6) .. (4.2, -0.2);
    \draw[very thick, dotted] (2,-0.5) .. (1.5,-0.6) .. (1.3,-0.4) .. (0,-1.5);
    \draw(1.8,-0.3) node {\scriptsize$X^{(2)}_\varnothing$};

    \draw[left] (2,3.3) node {\scriptsize$R$};
    \draw[right] (2.5,-1.37) node {\scriptsize$R$};
  \end{tikzpicture}
  \caption{Example of a set $\Psi(\omega, A, R)$.}
\end{figure}

\begin{lemma}
  \label{lem:lower_bound_psi}
  Let $\sigma_u$ be a random point measure with distribution $\mathrm{Poi}(u,W^*)$. Then there exists $c > 0$, such that for all finite subsets $A$ of $\mathbb{Z}^d$, and for all positive $R \geq 1$, we have
  \[
    \E\Big[\mathrm{BCap}(\Psi(\sigma_u,A,R))\Big] \geq c\min\{u\mathrm{BCap}(A)R^4, R^{d-4}\}.
  \]
\end{lemma}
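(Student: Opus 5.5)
The plan is to condition on the number $N_A(\sigma_u)$ of trajectories hitting $A$ and then apply Lemma \ref{lem:phi_capacity_bound} (ii). By Proposition \ref{prop:desc_interlacement_compact}, $\sigma_{A,u}$ has the law of $\sum_{i=1}^{N}\delta_{X^{(i)}}$. Here $N$ is Poisson with parameter $u\,\mathrm{BCap}(A)$, and the $X^{(i)}$ are i.i.d.\ walks indexed by infinite invariant trees, started from $\widetilde e_A$ and conditioned on $\mathcal{T}_-^{x}\cap A=\emptyset$. With the parametrization in \eqref{eq:def_psi}, $\Psi(\sigma_u,A,R)=\Phi(\overline{X}_N,R)$.

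The first step is to apply Lemma \ref{lem:phi_capacity_bound} (ii) conditionally on $N$ and on the starting points. The difficulty is that the $X^{(i)}$ are conditioned walks, not free ones. I would handle this as follows. The lower-bound proof of Lemma \ref{lem:phi_capacity_bound} (ii) only uses the adjoint trees hanging off the spine at times $t\in[n+1,2n]$, together with the spine walk itself. I would rerun that argument with the same measure $\nu$ from \eqref{eq:def_nu} for the conditioned walks. The conditioning event $\{\mathcal{T}_-^x\cap A=\emptyset\}$ has probability $e_A(x)$, and the law $\widetilde P_{\widetilde e_A}$ weights it so that
\[
\sum_x \widetilde e_A(x)\,\E_x[F\,;\,\mathcal{T}_-\cap A=\emptyset]/e_A(x)=\mathrm{BCap}(A)^{-1}\sum_{x\in A}\E_x[F\,;\,\mathcal{T}_-\cap A=\emptyset]\le \mathrm{BCap}(A)^{-1}\sum_{x\in A}\E_x[F].
\]
For a Poisson number of walks with intensity $u\,\mathrm{BCap}(A)\widetilde P_{\widetilde e_A}$, first moments of additive functionals are therefore bounded by $u\sum_{x\in A}\E_x[F]$. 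In particular they are bounded by those of free walks weighted by $u\,\mathrm{BCap}(A)$ times a uniform bound in $x$. This gives the upper bounds on $\E[Y^2]$ and on $\E[\mathcal E(\nu)\mathbf 1_A]$, since the cross terms $i\ne j$ factorize for Poisson processes by Mecke's formula, and the diagonal terms are first-moment quantities of a single walk.

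The lower bound $\E[Y]\gtrsim u\,\mathrm{BCap}(A)nR^2$ needs care, because the conditioning could in principle kill the contribution of the spine at times $t\in[n+1,2n]$. I would use that
\[
\E_x\bigl[Y_1;\mathcal{T}_-\cap A=\emptyset\bigr]\ge \E_x[Y_1]-\E_x\bigl[Y_1;\mathcal{T}_-\cap A\ne\emptyset\bigr].
\]
A cleaner route is to run the spine argument from time $n$ onward using the Markov property at the spine. The conditioned event depends on the whole past, so instead I would condition on the first $n$ spine steps and the attached trees. Given these, the bulk $[n+1,2n]$ has the conditional law of a walk started at $S_n$ whose remaining past still avoids $A$. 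By transience in $d\ge 9$ and the spread of $S_n$ at scale $R$, this has probability bounded below uniformly, so $\E[Y\mid N]\gtrsim N nR^2$ still holds. This is the step I expect to be the main obstacle. If $\mathrm{diam}(A)\gg R$ it needs the estimate \eqref{eq:hit_T_past} summed over $A$ near $S_n$, and a cruder alternative is to work with $\sum_{x\in A}\E_x[\cdot\,;\,\cdot]$ directly and note $\sum_x e_A(x)=\mathrm{BCap}(A)$.

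Granting these moment bounds, I would repeat the Paley--Zygmund plus Cauchy--Schwarz steps \eqref{eq:bound_cauchy_schwarz}--\eqref{eq:ineq_energy} with $n=\lfloor R^2/2\rfloor$, now with $N$ replaced by $\lambda=u\,\mathrm{BCap}(A)$. Poisson second moments contribute $\lambda n^2R^4+\lambda^2n^2R^4$. This gives
\[
\E[\mathrm{BCap}(\Psi)]\gtrsim \frac{\lambda^2R^8}{\lambda^2R^{12-d}+\lambda R^4}\gtrsim\min\{\lambda R^4,R^{d-4}\},
\]
using \eqref{eq:lower_bound_energy}. The Paley--Zygmund ratio stays bounded below only when $\lambda\gtrsim1$. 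For $\lambda\lesssim 1$, I would instead restrict to the event $\{N\ge1\}$, which has probability $\asymp\lambda$, and use a single walk. Lemma \ref{lem:phi_capacity_bound} (ii) with $N=1$, adapted as above, gives $\gtrsim R^4$, and hence $\E[\mathrm{BCap}(\Psi)]\gtrsim\lambda R^4$, which matches the claim.
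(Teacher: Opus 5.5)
Your skeleton matches the paper's: condition on $N_A$, use Lemma~\ref{lem:phi_capacity_bound}(ii), and handle $\lambda=u\,\mathrm{BCap}(A)\lesssim1$ through $\{N_A\ge1\}$. The paper, however, never reruns the energy argument at the Poisson level. Conditionally on $N_A$, and using only the independence of $N_A$ from the walks, it applies Lemma~\ref{lem:phi_capacity_bound}(ii) to get $\E[\mathrm{BCap}(\Psi(\sigma_u,A,R))]\ge c\,\E[\min(N_AR^4,R^{d-4})]$. It then averages over the Poisson law. If $\lambda\le1$ it uses $\P(N_A\ge1)=1-e^{-\lambda}\ge\lambda/2$. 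If $\lambda>1$ it applies Paley--Zygmund to $N_A$ itself, $\P(N_A\ge\lambda/2)\ge\lambda^2/(4(\lambda^2+\lambda))\ge1/8$. In doing so the paper says nothing about the conditioning on $\{\mathcal{T}_-\cap A=\emptyset\}$, so you are right that this is a real subtlety. But the whole extra layer of your proposal exists to handle it, and that is where the proposal breaks.

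There are two concrete problems. First, the claim that first moments are bounded by those of free walks weighted by $u\,\mathrm{BCap}(A)$ times a bound uniform in $x$ is false. Dropping the indicator gives $u\sum_{x\in A}\E_x[F]$. For functionals defined relative to the starting point, such as $Y_1$, this equals $u|A|\,\E_0[F]$. Since $|A|$ can exceed $\mathrm{BCap}(A)$ by an arbitrary factor ($\rho^d$ versus $\rho^{d-4}$ for $A=\mathrm{B}(\rho)$), you have discarded exactly the factor $e_A(x)$ you need. With $u|A|$ in place of $\lambda$, both your Paley--Zygmund ratio for $Y$ and your energy bound lose powers of $\mathrm{BCap}(A)/|A|$, and $\min\{\lambda R^4,R^{d-4}\}$ no longer follows. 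What is actually needed is control of $\E_x[F\mid\mathcal{T}_-\cap A=\emptyset]$ averaged against $\widetilde e_A$, that is, proof that conditioning on avoidance does not inflate the occupation and energy functionals.

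Second, the lower bound $\E[Y\mid N]\gtrsim NnR^2$ is left open. The probability that the remaining past from $S_n$ avoids $A$ is not bounded below uniformly in $A$: for $A=\mathrm{B}(\rho)$ with $S_n$ at distance $O(R)$ from $A$, it tends to $0$ as $\rho/R\to\infty$. Even where such a bound holds it does not suffice. $Y_1$ is large only if the spine lingers in $\mathrm{B}(x,R)$ next to $A$, which is negatively correlated with avoidance, so you need a lower bound on a conditional expectation, not on an avoidance probability. In fact no estimate uniform in the starting point can hold. If $x$ can escape only through a one-site-wide channel in $A$, the conditioned piece in $\mathrm{B}(x,R)$ lies in that channel and has capacity $O(R)$. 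Any correct argument must therefore use the averaging over $\widetilde e_A$. As written, your proposal replaces the paper's direct appeal to Lemma~\ref{lem:phi_capacity_bound}(ii) with an argument containing one incorrect step and one unproved step.
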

\begin{proof}
  The independence between $N_A = N_A(\sigma_u)$ and $(X_i)_{1\leq i \leq N_A}$, and Lemma \ref{lem:phi_capacity_bound} imply that
  \[
    \E\Big[\mathrm{BCap}(\Psi(\sigma_u, A, R))\Big] \geq c \E\Big[\min(N_AR^4,R^{d-4})\Big].
  \]
  Recall that $N_A$ follows a Poisson law with parameter $u\mathrm{BCap}(A)$. Consider two cases, either $u\mathrm{BCap}(A) \leq 1$, then
  \begin{align*}
    \E\Big[\min(N_AR^4,R^{d-4})\Big]
    &\geq \E\Big[\indic{N_A\geq 1} \min(N_AR^4,R^{d-4})\Big] \geq R^4 \P(N_A\geq 1) = R^4 (1-e^{-u\mathrm{BCap}(A)}) \\
    &\hspace{-0.75cm}\stackrel{u\mathrm{BCap}(A)\leq 1}{\leq} R^4 u\mathrm{BCap}(A)/2,
  \end{align*}
  or $u\mathrm{BCap}(A) > 1$, and then
  \[
    \E\Big[\min(N_AR^4,R^{d-4})\Big] \geq \min(R^4u\mathrm{BCap}(A)/2, R^{d-4}) \P(N_A \geq u\mathrm{BCap}(A)/2).
  \]
  Let $\lambda = \E[N_A] = u\mathrm{BCap}(A)$. Then $\E[N_A^2] = \lambda^2 + \lambda$, and we conclude by using the Paley-Zigmund inequality,
  \[\
    \P(N_A \geq u\mathrm{BCap}(A)/2) \geq \frac{\lambda^2}{4(\lambda^2+\lambda)} \geq \frac{1}{8}.
  \]
\end{proof}

Given $0 < r < R \leq \infty$, and $\sigma_u$ a Poisson point process with distribution $\mathrm{Poi}(u,W^*)$, we write $\sigma^u_r$ for the restriction of $\sigma_u$ to the set of trajectories that intersect $\mathrm{B}(r)$, and $\sigma^u_{r,R}$ for the restriction of $\sigma_u$ to the set of trajectories that intersect $\mathrm{B}(R)$ but not $\mathrm{B}(r)$. By general properties of Poisson point processes, $\sigma_r^u$ and $\sigma_{r,R}^u$ are independent, and we have $\sigma_u = \sigma_r^u + \sigma_{r,\infty}^u$.

\begin{lemma}
  \label{lem:lower_bound_psi_r_inf}
  There exist positive $c$ and $C$ such that for every finite subset $A$ of $\mathbb{Z}^d$, and for all $r,R$ with $0 < r < R$,
  \[
    \E\Big[\mathrm{BCap}(\Psi(\sigma^u_{r,\infty},A,R))\Big] \geq c\min\left(u\mathrm{BCap}(A)R^4, R^{d-4}\right) - Cur^{d-4}R^4.
  \]
\end{lemma}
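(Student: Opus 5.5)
The plan is to compare $\Psi(\sigma^u_{r,\infty},A,R)$ with $\Psi(\sigma_u,A,R)$. The gap between them is controlled by the trajectories hitting $\mathrm{B}(r)$, in the same way as in Rath and Sapozhnikov. Since $\sigma_u = \sigma^u_r + \sigma^u_{r,\infty}$, and $\Psi$ is defined as a union over the trajectories of the point measure that hit $A$, we have
\[
  \Psi(\sigma_u,A,R) \subset \Psi(\sigma^u_{r,\infty},A,R) \cup \Psi(\sigma^u_r,A,R).
\]
This inclusion is exact. Each trajectory of $\sigma_u$ hitting $A$ is either in $\sigma^u_r$ or in $\sigma^u_{r,\infty}$. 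Its parametrization (first entrance in $A$ at time $0$) and the resulting piece $\{X_u : u\in\mathcal{T}_-,\, n_u\le R^2\}\cap \mathrm{B}(X_\varnothing,R)$ depend only on the trajectory itself.

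The first step uses subadditivity of the branching capacity:
\[
  \mathrm{BCap}(\Psi(\sigma^u_{r,\infty},A,R)) \geq \mathrm{BCap}(\Psi(\sigma_u,A,R)) - \mathrm{BCap}(\Psi(\sigma^u_r,A,R)).
\]
Taking expectations and applying Lemma \ref{lem:lower_bound_psi} to the first term gives the main part $c\min(u\mathrm{BCap}(A)R^4,R^{d-4})$.

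The second step bounds the error term. I use $\mathrm{BCap}(K)\le \#K$, so the error term is at most the total number of points contributed by the trajectories of $\sigma^u_r$ that hit $A$. The number $M$ of such trajectories is dominated by the number of trajectories of $\sigma_u$ hitting $\mathrm{B}(r)$. By Proposition \ref{prop:desc_interlacement_compact} this is Poisson with parameter $u\mathrm{BCap}(\mathrm{B}(r))\lesssim ur^{d-4}$, using Proposition \ref{res:bcap_ball}. Each such trajectory contributes at most the $J$-type count from \eqref{eq:def_jX}, that is, the number of past vertices of the tree whose image lies in the ball of radius $R$ around the root.

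The main obstacle is that these walks are re-rooted at their first entrance in $A$, not in $\mathrm{B}(r)$. One must check that, conditionally on the trajectory, the walk seen from its entrance point in $A$ is still a walk indexed by an infinite invariant tree conditioned on $\mathcal{T}_-\cap A=\emptyset$. Then a uniform bound $\E[\#\text{piece}]\lesssim R^4$ is needed under this conditioning.

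I would handle this step as follows. Write $\E[\sum_{\text{traj}} \#\text{piece}]$ through the intensity measure: it equals $u\,\mathrm{BCap}(A)$ times the expectation under $\widetilde{P}_{\widetilde e_A}$ of $\#\text{piece}\cdot\mathbf 1\{\text{traj hits }\mathrm{B}(r)\}$. Then bound $\#\text{piece}\cdot \mathbf 1\{\cdots\}$ pointwise, rather than factorizing. If this is delicate, the alternative is to bound directly via the compatibility of the measures $Q_K$. One views the trajectories as a Poisson process rooted at their entrance into $K=A\cup\mathrm{B}(r)$; the piece is a subset of the range of the walk within distance $R$ of a point of $A$, which Lemma \ref{lem:bound_z_moments}(i) controls by $\lesssim R^4$ in expectation uniformly in the starting point. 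Either way the goal is $\E[\mathrm{BCap}(\Psi(\sigma^u_r,A,R))]\le C u r^{d-4} R^4$, which gives the stated inequality.
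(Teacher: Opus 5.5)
Your decomposition is the paper's. You split off $\Psi(\sigma^u_r,A,R)$ by subadditivity, apply Lemma~\ref{lem:lower_bound_psi} to the main term, and bound the error term by $R^4\,\E[N_{\mathrm{B}(r)}]\asymp u\,r^{d-4}R^4$. The paper does the last step in one line, applying part (i) of Lemma~\ref{lem:phi_capacity_bound} to the $N_{\mathrm{B}(r)}$ trajectories. In effect it treats the pieces as if they came from unconditioned walks independent of the count. That is exactly the step you flag, so your concern is legitimate and the paper does not address it. However, neither of your two repairs closes it as written.

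\textbf{First route.} The Campbell identity is correct, but no pointwise bound produces the factor $r^{d-4}$. After $\#\text{piece}\le J$ you are left with $u\sum_{x\in A}\E_x[J\,\mathbf{1}_{F_x}]$, where $F_x=\{\mathcal{T}^x_-\cap A=\varnothing,\ \mathrm{Range}(X^x)\cap\mathrm{B}(r)\neq\varnothing\}$.
\begin{itemize}
\item Dropping the hitting indicator gives at best $u\,\mathrm{BCap}(A)R^4$, which is the order of the main term.
\item Dropping all of $\mathbf{1}_{F_x}$ gives $u\,\#A\,R^4$.
\end{itemize}
The factor $r^{d-4}$ enters only through $\sum_{x\in A}\P_x(F_x)$. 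This is the $\nu$-mass of trajectories hitting both $A$ and $\mathrm{B}(r)$, hence at most $\mathrm{BCap}(\mathrm{B}(r))$.

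So the missing ingredient is a decoupling estimate of the form $\sum_{x\in A}\E_x[J\,\mathbf{1}_{F_x}]\lesssim R^4\sum_{x\in A}\P_x(F_x)$, uniformly in $A$, $r$, $R$. In words, conditioning on the global event $F_x$ must not inflate the local count near the entrance point by more than a constant factor. This needs a genuine argument, for instance via the spine decomposition. The local count and $F_x$ are not independent, and they can be positively correlated when $\mathrm{B}(r)$ lies within distance $R$ of $x$.

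\textbf{Second route.} It does not supply this estimate either.
\begin{itemize}
\item Rooting at the first entrance into $A\cup\mathrm{B}(r)$ does not by itself give $r^{d-4}$, because restricting to trajectories that hit $\mathrm{B}(r)$ reintroduces the same conditioning.
\item The piece is defined through the tree re-rooted at the first entrance in $A$, with past and spine indices $n_u\le R^2$ taken relative to that vertex. Lemma~\ref{lem:bound_z_moments}(i) counts past vertices near the root, so it does not control the piece.
\item Bounding the piece instead by the range inside the $R$-neighbourhood of $A$ gives a bound that depends on $A$.
\end{itemize}
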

\begin{proof}
  By subadditivity of the capacity,
  \[
    \E\Big[\mathrm{BCap}(\Psi(\sigma^u_{r,\infty},A,R))\Big] \geq \E\Big[\mathrm{BCap}(\Psi(\sigma_u,A,R))\Big] - \E\Big[\mathrm{BCap}(\Psi(\sigma^u_r,A,R))\Big].
  \]
  For the first term we use Lemma \ref{lem:lower_bound_psi}, and for the second one note that $|\mathrm{Supp}(\sigma_r)| = N_{\mathrm{B}(r)}$ follows a Poisson law with parameter $u\mathrm{BCap}(\mathrm{B}(r)) \asymp u r^{d-4}$, and thus by Lemma \ref{lem:phi_capacity_bound}\eqref{bound_1_i} and \eqref{eq:bcap_ball},
  \[
    \E\Big[\mathrm{BCap}(\Psi(\sigma_r,A,R))\Big] \lesssim R^4\E[N_{\mathrm{B}(r)}] \asymp u R^4 r^{d-4},
  \]
  which concludes the proof of the lemma.
\end{proof}

\subsubsection{Construction of visible sets}
\label{sec:visible_sets}
We now iterate the application $\Psi$ introduced in \eqref{eq:def_psi}. Starting from $\Phi(X,R)$ with a random walk $X$, indexed by an infinite critical tree, we obtain a sequence of subsets $(A^{(s)})_{s\geq 1}$ of $\mathbb{Z}^d$ whose capacity increases by Lemma \ref{lem:lower_bound_psi}. The study of the branching capacity of these sets shows a change of behavior and a saturation phenomenon: when $s \geq s_d$ the set $A^{(s)}$ typically fills a ball, and thus its hitting probability is lower bounded by a positive constant (see Lemma \ref{lem:lower_bound_capacity_final}).

Let $X$ be a random walk on $\mathbb{Z}^d$ starting from $x \in \mathbb{Z}^d$ and indexed by an infinite invariant tree $\mathcal{T}$. We think of it as an element in the support of the branching interlacement. Let $\sigma^{(2)}, \sigma^{(3)},\dots$ be independent random point measures with distribution $\mathrm{Poi}(u,W^*)$, and also independent from $X$. Denote by $\P_x$ their joint law.

Let $r < R$ with $R > \|x\|$.  Let $\xi_{\tau_R}$ be the vertex on the spine of $\mathcal{T}$ at the exit time $\tau_R$ of $\mathrm{B}(R)$ as defined in \eqref{eq:exit_time_br}, and let $Y$ be the random walk indexed by the tree $\mathcal{T}^*$ of descendants of $\xi_{\tau_R}$ in $\mathcal{T}$. Recall the definition of $\Psi$ in \eqref{eq:def_psi} and $\Phi$ in \eqref{eq:def_phi}. We define the following sequence $\big(A^{(s)}(r,R)\big)_{s\geq 1}$ of random subsets of $\mathbb{Z}^d$,
\begin{equation}
  \label{eq:def_A_1}
  A^{(1)}(r,R) = A^{(1)}(R) = \Phi(Y,R/2) = \left\{Y_u,\, u\in\mathcal{T}^*_-,\, n_u \leq R^2/4\right\} \cap \mathrm{B}\left(Y_\varnothing,R/2\right),
\end{equation}
and for $s \geq 2$,
\begin{equation}
  \label{eq:def_A_s}
  A^{(s)}(r,R) = \Psi\left(\sigma_{r,\infty}^{(s)},\, A^{(s-1)}(r,R), R/2\right) = \Psi\left(\sigma_{r,sR}^{(s)},\, A^{(s-1)}(r,R), R/2\right).
\end{equation}
Note that $A^{(1)}(r,R)$ does not depend on $r$, we only make it appear for consistency in the notation, to not distinguish between the cases $s = 1$ and $s \geq 2$ later. We consider parts of ranges of walks in the interlacement that do not hit $\mathrm{B}(r)$ (to guarantee some independence later), bounded in space and time (to efficiently control their capacity, thanks to the previous lemmas). The factor $1/2$ in front of $R$ will only be useful in Section \ref{sec:upper_bound_k} to prove the upper bound on the $k$ points connection probability.

\begin{figure}[H]
  \centering
  \begin{subfigure}{0.49\textwidth}
    \centering
    \begin{tikzpicture}[scale=1.3,use Hobby shortcut]
      \draw[above] (0,0) node {\scriptsize$0$};
      \draw (0,0) node[cross=2pt,rotate=45]{};
      \draw (0,0) circle (2);
      \draw[above left] (-1.4,1.4) node {\scriptsize$R$};
      \draw[very thick] (-1.29,-1.53) .. (-1.7,-1.3) .. (-1.8, -1.2) .. (-2.3, -1.5) .. (-2.7, -1.6) .. (-3, -2.2) .. (-3.1, -2.35);
      \draw[very thick, dotted] (-3.1, -2.35) .. (-3.2, -2.9) .. (-3, -3.4) .. (-2.7, -3.2) .. (-2.5, -3.4) .. (-2.3, -3.2);
      \draw[very thick] (-2.3, -3.2) .. (-2.2, -3) .. (-2, -2.7) .. (-1.8, -2.4) .. (-1.6, -2.7) .. (-1.2, -2.4) .. (-1, -2.7) .. (-0.5, -3.2) .. (-0.3, -3) .. (-0.4, -2.8);
      \draw[dotted] (-1.29,-1.53) circle (2);
      \draw[above left] (-3.17,-0.85) node {\scriptsize$R$};
      \draw (-1.8,-2) node {\scriptsize$A^{(1)}(r,R)$};
    \end{tikzpicture}
    \caption{$A^{(1)}(r,R)$}
  \end{subfigure}
  \hfill
  \begin{subfigure}{0.49\textwidth}
    \centering
    \begin{tikzpicture}[scale=1.3,use Hobby shortcut]
      \draw[above] (0,0) node {\scriptsize$0$};
      \draw (0,0) node[cross=2pt,rotate=45]{};
      \draw (0,0) circle (0.75);
      \draw[above right] (0.525,0.525) node {\scriptsize$r$};

      \draw[very thick,dashed] (-1.29,-1.53) .. (-1.7,-1.3) .. (-1.8, -1.2) .. (-2.3, -1.5) .. (-2.7, -1.6) .. (-3, -2.2) .. (-3.1, -2.35);
      \draw[very thick,dashed] (-2.3, -3.2) .. (-2.2, -3) .. (-2, -2.7) .. (-1.8, -2.4) .. (-1.6, -2.7) .. (-1.2, -2.4) .. (-1, -2.7) .. (-0.5, -3.2) .. (-0.3, -3) .. (-0.4, -2.8);

      \draw[dotted] (-1.8,-1.2) circle (2);
      \draw[very thick] (-1.8,-1.2) .. (-1.5,-1) .. (-1.3, -0.5) .. (-1.7, 0) .. (-1.9,-0.4) .. (-2.5,-0.6) .. (-2.7, -1) .. (-3.2, -0.6) .. (-3.3, -0.1) .. (-3,0.2) .. (-2.9,0.5);
      \draw[very thick, dotted] (-2.9,0.5) .. (-3,0.8) .. (-3.4, 0.7) .. (-3.4, 0.4) .. (-3.9, 0.6) .. (-4,0.8);

      \draw[dotted] (-1.2,-2.4) circle (2);
      \draw[very thick] (-1.2,-2.4) .. (-1.2,-2) .. (-0.8,-2.2) .. (-0.6, -2.5) .. (-0.2, -2) .. (0.2, -2.1) .. (0.3, -2.4) .. (0.2, -2.7) .. (-0.1, -3.1) .. (-0.3, -3.5) .. (-0.3, -3.9) .. (-0.6, -3.8) .. (-1, -4);

      \draw[dotted] (-2,-2.7) circle (2);
      \draw[very thick] (-2,-2.7) .. (-2.3,-2.7) .. (-2.5,-3) .. (-2.8, -3.3) .. (-3.2, -3) .. (-3,-3.5) .. (-3, -4) .. (-3.2, -4.3);
      \draw[very thick, dotted] (-3.2,-4.3) .. (-3.5, -4.7) .. (-3, -4.8) .. (-2.6, -4.8);

      \draw[very thick, dotted] (-0.5,1) .. (-0.7,0.5) .. (-0.5, -1) .. (-1,-1.3) .. (-1.4, -1.3) .. (-1.6, -1.5) .. (-1.9, -1.5) .. (-2.3, -1.4) .. (-2.8, -1.5) .. (-3.4, -1.8) .. (-3.7, -1.5) .. (-4,-1) .. (-4.4, -0.8);

      \draw (-2.2,-2) node {\scriptsize$A^{(1)}(r,R)$};
      \draw (0.2,-4) node {\scriptsize$A^{(2)}(r,R)$}; 
    \end{tikzpicture}
    \caption{$A^{(2)}(r,R)$}
  \end{subfigure}
  \caption{Example of the construction of $A^{(s)}(r,R)$.}
  \label{fig:A_s}
\end{figure}

\begin{lemma}
  \label{lem:bcap_moments_bound}
  Let $s$ be a positive integer. There exists a finite constant $C_s = C(u,d,s)$ such that for all positive $r < R$, and $x \in \mathrm{B}(R)$,
  \[
    \begin{array}[H]{c l}
      \textit{(i)} & \E_x\Big[\mathrm{BCap}\left(A^{(s)}(r,R)\right)\Big] \leq C_s R^{\min(d-4,4s)},\\[0.5cm]
      \textit{(ii)} & \E_x\Big[\mathrm{BCap}\left(A^{(s)}(r,R)\right)^2\Big] \leq C_s R^{2\min(d-4,4s)}.
    \end{array}
  \]
\end{lemma}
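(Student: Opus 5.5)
The plan is to argue by induction on $s$, bounding the capacity by cardinality. The cap $R^{d-4}$ will come from containment in a ball together with Proposition~\ref{res:bcap_ball}.

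\emph{Containment in a ball.} Since $x\in\mathrm{B}(R)$, the exit point $Y_\varnothing$ lies in $\mathrm{B}(R+c)$, where $c$ is the range of the jump law. Hence $A^{(1)}(r,R)\subset\mathrm{B}(2R)$. Each later set is made of balls of radius $R/2$ centred at points of the previous set, so by induction $A^{(s)}(r,R)\subset \mathrm{B}((s+2)R)$. Monotonicity of $\mathrm{BCap}$ and \eqref{eq:bcap_ball} then give the deterministic bound
\[
\mathrm{BCap}(A^{(s)})\le C(s+2)^{d-4}R^{d-4}.
\]
This already yields both (i) and (ii) whenever $4s\ge d-4$. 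It remains to prove the bounds $\E[\mathrm{BCap}(A^{(s)})]\lesssim R^{4s}$ and $\E[\mathrm{BCap}(A^{(s)})^2]\lesssim R^{8s}$.

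\emph{Induction using cardinality-type bounds.} For $s=1$, Lemma~\ref{lem:phi_capacity_bound}(i) with $N=1$ gives the first moment. For the second moment, I bound $\mathrm{BCap}(A^{(1)})\le \#A^{(1)}\le J_Y(R/2)$ (shifted to start at $Y_\varnothing$); this is legitimate since $\mathrm{BCap}(\{z\})\le 1$ and capacity is subadditive. Lemma~\ref{lem:bound_z_moments}(ii) then gives $\E[J^2]\lesssim R^8$. Here I use the fact that the subtree $\mathcal{T}^*$ of descendants of a spine vertex is a Kesten tree, independent of the stopping time by the strong Markov property along the spine.

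For the induction step, let $N_s$ be the number of trajectories of $\sigma^{(s)}_{r,\infty}$ hitting $A^{(s-1)}$. Conditionally on $A^{(s-1)}$, $N_s$ is dominated by a Poisson variable with parameter $u\,\mathrm{BCap}(A^{(s-1)})$. The restriction to $\sigma_{r,\infty}$ only thins the process, and the walks after their entrance point are independent infinite-tree walks (Proposition~\ref{prop:desc_interlacement_compact}), independent of $N_s$. By subadditivity,
\[
\mathrm{BCap}(A^{(s)})\le \sum_{i=1}^{N_s} J_{X^i}(R/2).
\]
Taking conditional expectations with Lemma~\ref{lem:bound_z_moments}(i) gives $\E[\mathrm{BCap}(A^{(s)})\mid A^{(s-1)}]\lesssim u\,\mathrm{BCap}(A^{(s-1)})R^4$. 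For the second moment I use the compound Poisson identity
\[
\E\Big[\big(\textstyle\sum_{i\le N}J_i\big)^2\Big]\le \E[N]\,\E[J^2]+\E[N^2]\,\E[J]^2 .
\]
Combined with Lemma~\ref{lem:bound_z_moments}(ii), this gives
\[
\E[\mathrm{BCap}(A^{(s)})^2\mid A^{(s-1)}]\lesssim u\,\mathrm{BCap}(A^{(s-1)})R^8+u^2\,\mathrm{BCap}(A^{(s-1)})^2R^8 .
\]

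\emph{Closing the induction.} Taking expectations and applying the induction hypothesis gives $\E[\mathrm{BCap}(A^{(s)})]\lesssim R^{4(s-1)+4}=R^{4s}$. For the second moment it gives $\E[\mathrm{BCap}(A^{(s)})^2]\lesssim R^{4(s-1)+8}+R^{8(s-1)+8}\lesssim R^{8s}$, using $R\ge1$ (for small $R$ everything is bounded by constants). Finally I combine with the ball bound: each quantity is at most the minimum of the two estimates, so the $s$-th moments are $\lesssim R^{\min(d-4,4s)}$ and $R^{2\min(d-4,4s)}$. One subtlety is that the induction must carry the unrestricted bound $R^{4(s-1)}$, not the minimum; this is harmless because the unrestricted bounds hold for every $s$.

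The step I expect to be the main obstacle is justifying the conditional independence and domination in the induction step. One must check three things. First, the walks of $\sigma^{(s)}_{r,\infty}$ hitting $A^{(s-1)}$, parametrized at their first entrance, have forward and backward parts distributed like infinite-invariant-tree walks conditioned on avoiding $A^{(s-1)}$ in the past. Second, this conditioning does not inflate $\E[J]$ and $\E[J^2]$ beyond constants, which follows from $\P(\mathcal{T}_-\cap K=\emptyset)$ being weighted by $e_K$ in $Q_K$. Third, the restriction to trajectories avoiding $\mathrm{B}(r)$ only removes points.

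For the second of these, I would write $\E[\cdot\mid \mathcal{T}_-\cap K=\emptyset]\,e_K(x)=\E[\cdot\,;\mathcal{T}_-\cap K=\emptyset]\le\E_x[\cdot]$. Summing over $x$ then bounds the intensity-weighted moments directly by $u\,\mathrm{BCap}(K)$ times the unconditioned moments, with no division by small probabilities.
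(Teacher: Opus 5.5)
Your overall route is the paper's. The deterministic ball bound gives the $R^{d-4}$ cap, subadditivity gives $\mathrm{BCap}(A^{(s)})\le\sum_{i\le N_s}J_{X^i}$, and Poisson moments together with Lemma~\ref{lem:bound_z_moments} close an induction in $s$. The paper applies Lemma~\ref{lem:bound_z_moments} to the $X^i$ directly and never addresses that they are conditioned on $\mathcal{T}^x_-\cap A^{(s-1)}=\emptyset$ (and on avoiding $\mathrm{B}(r)$). You are right to flag this, but your resolution contains an error. Let $\mu\le uQ_K$ be the intensity of the walks of $\sigma^{(s)}_{r,\infty}$ hitting $K=A^{(s-1)}$. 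Your argument gives
\[
\int f\,d\mu\;\le\; u\sum_{x\in K}\E_x\big[f\,\mathbf{1}_{\{\mathcal{T}^x_-\cap K=\emptyset\}}\big]\;\le\; u\sum_{x\in K}\E_x[f]\;\le\; u\,\#K\,\sup_x\E_x[f],
\]
not $u\,\mathrm{BCap}(K)\sup_x\E_x[f]$. Dropping the event also discards the factor $e_K(x)$ that was supposed to produce the capacity. So your displayed conditional bounds, $\E[\mathrm{BCap}(A^{(s)})\mid A^{(s-1)}]\lesssim u\,\mathrm{BCap}(A^{(s-1)})R^4$ and its second-moment analogue, do not follow from this argument. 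They would require $\sup_x\E_x[J\mid\mathcal{T}^x_-\cap K=\emptyset]\lesssim R^4$ and the analogue for $J^2$. That is an estimate about conditioned walks which Lemma~\ref{lem:bound_z_moments} does not provide.

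The fix is cheap, because your upper bounds only ever use $\mathrm{BCap}\le\#$ and subadditivity: run the induction on cardinalities instead. You have $\#A^{(s)}\le\sum_{i\le N_s}J_{X^i}(R/2)$. The Poisson moment formulas are $\E[\sum f]=\int f\,d\mu$ and $\E[(\sum f)^2]=\int f^2\,d\mu+(\int f\,d\mu)^2$. Combined with the bound above and Lemma~\ref{lem:bound_z_moments}, they give
\[
\E\big[\#A^{(s)}\mid A^{(s-1)}\big]\lesssim uR^4\,\#A^{(s-1)},\qquad \E\big[(\#A^{(s)})^2\mid A^{(s-1)}\big]\lesssim uR^8\,\#A^{(s-1)}+u^2R^8\,(\#A^{(s-1)})^2 .
\]
Your base case $\#A^{(1)}\le J_Y(R/2)$ already gives the orders $R^4$ and $R^8$. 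Hence $\E[\#A^{(s)}]\lesssim R^{4s}$ and $\E[(\#A^{(s)})^2]\lesssim R^{8s}$ for $R\ge1$. Combining with $\mathrm{BCap}(A^{(s)})\le\min(\#A^{(s)},CR^{d-4})$ yields (i) and (ii). With this change your proof is complete, and on this point it is more careful than the paper's.
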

\begin{proof}[Proof of (i)]
  We need to show that $\E_x\Big[\mathrm{BCap}\left(A^{(s)}(r,R)\right)\Big]$ is bounded by both $R^{d-4}$ and $R^{4s}$ up to a multiplicative constant. On the one hand, since $A^{(s)}(r,R)$ is a subset of $\mathrm{B}((s+1)R)$, we have by the monotonicity of the branching capacity, almost surely
  \begin{equation}
    \label{eq:bcap_monotonicity}
    \mathrm{BCap}\left(A^{(s)}(r,R)\right) \lesssim R^{d-4}.
  \end{equation}
  To simplify the notation, we write $N$ for $N_{A^{(s-1)}(r,R)}(\sigma_{r,\infty}^{(s)})$.
  Let $(X^i)_{1 \leq i \leq N}$, be the random walks of $\sigma_{r,\infty}^{(s)}$ that intersect $A^{(s-1)}(r,R)$.
  On the other hand, by the subadditivity of the capacity and the definition of $J_X$ in \eqref{eq:def_jX}, almost surely,
  \begin{equation}
    \label{eq:subadditivity_capacity}
    \mathrm{BCap}(A^{(s)}(r,R)) \leq \mathrm{BCap}(\{0\}) \sum_{i=1}^{N} J_{X^i}(R).
  \end{equation}
  Recall also that $N$ is almost surely smaller than the number of walks hitting $A^{(s-1)}(r,R)$ in $\sigma^{(s)}$ which follows a Poisson law with parameter $u\mathrm{BCap}({A^{(s-1)}(r,R)})$. Therefore, by independence between $N$ and the $(X^i)_{1\leq i \leq N}$, and Lemma \ref{lem:bound_z_moments},
  \[
    \E_x\Big[\mathrm{BCap}\left(A^{(s)}(r,R)\right)\Big] \lesssim R^4\E[N] \lesssim uR^4\ \E_x\Big[\mathrm{BCap}\left(A^{(s-1)}(r,R)\right)\Big].
  \]
  The result follows by induction.

  \medskip

  \noindent\textit{Proof of (ii).} On the one hand, by \eqref{eq:bcap_monotonicity},
  \[
    \mathrm{BCap}\left(A^{(s)}(r,R)\right)^2 \lesssim R^{2(d-4)}.
  \]
  On the other hand, if we write as before $N$ for $N_{A^{(s-1)}(r,R)}(\sigma_{r,\infty}^{(s)})$, then by \eqref{eq:subadditivity_capacity},
  \[
    \E_x\left[\mathrm{BCap}(A^{(s)}(r,R))^2\right] \leq \E_x\Bigg[\Bigg(\sum_{i=1}^NJ_{X^i}(R)\Bigg)^2\Bigg].
  \]
  Using the fact that if $U$ follows a Poisson law with parameter $\lambda$ then $\E[U^2] = \lambda^2 + \lambda$, we get
  \[
    \E_x\left[N^2\right] = \E_x\left[u^2 \mathrm{BCap}(A^{(s-1)}(r,R))^2\right] + \E_x\left[u\mathrm{BCap}(A^{(s-1)}(r,R))\right].
  \]
  By induction and the first part of the lemma, we deduce from this and from Lemma \ref{lem:bound_z_moments} \textit{(ii)}, that
  \[
    \E_x\left[\mathrm{BCap}(A^{(s)}(r,R))^2\right] \lesssim R^{8s}.
  \]
\end{proof}

\begin{lemma}
  \label{lem:lower_bound_capacity_final}
  Let $s$ be a positive integer. There exist $c_s = c(u,d,s)$ and $\varepsilon = \varepsilon(u,d,s)$, such that for all positive integers $r$ and $R$ with,
  \begin{equation}
    \label{eq:condition_r_R}
    r^{d-4} \leq \varepsilon R,
  \end{equation}
  and for all $x \in \mathrm{B}(R)$,
  \[
    \E_x\left[\mathrm{BCap}\left(A^{(s)}(r,R)\right)\right] \geq c_s R^{\min(d-4,4s)}.
  \]
\end{lemma}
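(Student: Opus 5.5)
We argue by induction on $s$, following the scheme of R\'ath and Sapozhnikov. Condition on $A^{(s-1)}(r,R)$ and use Lemma \ref{lem:lower_bound_psi_r_inf}, with $A^{(s-1)}$ as the set and $R/2$ as the radius. Then feed in the lower bound on the first moment of $\mathrm{BCap}(A^{(s-1)})$ from the induction hypothesis, together with the second moment upper bound of Lemma \ref{lem:bcap_moments_bound}(ii), via a Paley--Zygmund argument.

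\emph{Base case $s=1$.} We have $A^{(1)}(R)=\Phi(Y,R/2)$, where $Y$ is the walk indexed by the descendants of $\xi_{\tau_R}$. By the remark in Section \ref{sec:gw}, this tree is a Kesten tree, and by the strong Markov property at $\tau_R$ it is independent of the walk up to $\tau_R$. Lemma \ref{lem:phi_capacity_bound}(ii) with $N=1$ then gives $\E_x[\mathrm{BCap}(A^{(1)})]\ge c\min\{R^4,R^{d-4}\}=cR^{\min(4,d-4)}$, uniformly in the starting point $Y_\varnothing$. This lemma is stated for any starting point, and for Kesten's tree as well, since $\nu$ ignores the root trees.

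\emph{Induction step.} Since $\sigma^{(s)}$ is independent of $A^{(s-1)}$, conditioning on $A^{(s-1)}$ and applying Lemma \ref{lem:lower_bound_psi_r_inf} gives
\[
\E_x\big[\mathrm{BCap}(A^{(s)})\,\big|\,A^{(s-1)}\big]\ \ge\ c\min\big(u\,\mathrm{BCap}(A^{(s-1)})R^4,\,R^{d-4}\big)-C u r^{d-4}R^4 .
\]
Write $m=\E_x[\mathrm{BCap}(A^{(s-1)})]$. By induction, $m\ge c_{s-1}R^{\min(d-4,4(s-1))}$, and by Lemma \ref{lem:bcap_moments_bound}(ii), $\E_x[\mathrm{BCap}(A^{(s-1)})^2]\le C_{s-1}R^{2\min(d-4,4(s-1))}\lesssim m^2$. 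Paley--Zygmund therefore gives $\P_x(\mathrm{BCap}(A^{(s-1)})\ge m/2)\ge c'>0$, with $c'$ independent of $r$, $R$ and $x$. On this event, the minimum is at least $c''\min(R^{4+\min(d-4,4(s-1))},R^{d-4})=c''R^{\min(d-4,4s)}$. Taking expectations yields
\[
\E_x[\mathrm{BCap}(A^{(s)})]\ \ge\ c'c''R^{\min(d-4,4s)}-Cur^{d-4}R^4 .
\]
The $\min$ inside the expectation is nonnegative, so restricting to the event is harmless. The correction term is applied unconditionally.

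\emph{Main obstacle: absorbing the error term $Cur^{d-4}R^4$.} By \eqref{eq:condition_r_R}, this term is at most $Cu\varepsilon R^5$. It is negligible compared to $R^{\min(d-4,4s)}$ when $\min(d-4,4s)\ge5$ and $\varepsilon$ is small, which holds for $s\ge2$ and $d\ge9$. So we choose $\varepsilon(u,d,s)$ small enough that $Cu\varepsilon\le c'c''/2$, which gives $c_s=c'c''/2$. The constants must be tracked so that $\varepsilon$ is fixed after $c'c''$, which depends only on $(u,d,s)$.

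One further point needs checking: the uniformity in $x\in\mathrm{B}(R)$ and the independence used in the conditioning. The sets $A^{(s-1)}$ are built from $X$ and from $\sigma^{(2)},\dots,\sigma^{(s-1)}$ only, so $\sigma^{(s)}$ is independent of them. This is what justifies applying Lemma \ref{lem:lower_bound_psi_r_inf} conditionally, with a deterministic set $A$.
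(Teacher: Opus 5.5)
Your proposal is correct and follows the paper's proof essentially step for step. The base case comes from Lemma~\ref{lem:phi_capacity_bound}(ii). The induction step combines Paley--Zygmund (using the induction hypothesis and Lemma~\ref{lem:bcap_moments_bound}(ii)) with Lemma~\ref{lem:lower_bound_psi_r_inf} applied conditionally on $A^{(s-1)}$, and absorbs the error $Cur^{d-4}R^4\le Cu\varepsilon R^5$ using $\min(d-4,4s)\ge 5$ for $s\ge 2$, $d\ge 9$. The only bookkeeping to add is that $\varepsilon_s$ must also be taken $\le \varepsilon_{s-1}$, so that the induction hypothesis is available under condition \eqref{eq:condition_r_R}.
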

\begin{proof}
  We prove the result by induction. If $s = 1$, we have by Lemma \ref{lem:phi_capacity_bound},
  \[
    \E_x\left[\mathrm{BCap}\left(A^{(1)}(r,R)\right)\right] \geq c_1 R^{\min(d-4,4)}.
  \]
  Let $s \geq 2$, and assume that,
  \[
    \E_x\left[\mathrm{BCap}\left(A^{(s-1)}(r,R)\right)\right] \geq c_{s-1} R^{\min(d-4,4s-4)}.
  \]
  With this lower bound and the bound on the moments established in Lemma \ref{lem:bcap_moments_bound}, we can use the Paley-Zigmund inequality, and obtain the existence of a constant $c' = c'(u,d,s)$, such that
  \begin{equation}
    \label{eq:lower_bound_proba_cap_min}
    \P\left(\mathrm{BCap}(A^{(s-1)}(r,R)) \geq c'R^{\min(d-4,4s-4)}\right) \geq c'.
  \end{equation}
  Note that since $d \geq 9$ and $s \geq 2$, we have $R^{\min(d-4,4s)} \geq R^5$. Then by Lemma \ref{lem:lower_bound_psi_r_inf} if $r^{d-4} \leq \varepsilon R$ and $\varepsilon$ is small enough,
  \begin{align*}
    \E\Bigg[\mathrm{BCap}(A^{(s)}(r,R))\Big]
    & \geq c\ \E\bigg[\min\left(u\mathrm{BCap}(A^{(s-1)}(r,R))R^4, R^{d-4}\right)\bigg] - Cur^{d-4}R^4 \\
    & \stackrel{\eqref{eq:lower_bound_proba_cap_min}}{\geq} cc'^2 \min(R^{\min(d-4,4s-4)}R^4,R^{d-4}) - Cur^{d-4}R^4 \\
    & \geq R^{\min(d-4,4s)} - C\varepsilon R^5 \\
    & \gtrsim R^{\min(d-4,4s)}.
  \end{align*}
\end{proof}

Recall the definition of $s_d$ from \eqref{def:s_d}, and the one of $\tau_R$ from \eqref{eq:exit_time_br}. We consider the event that the past of a branching random walk hits the set $A^{(s_d)}(r,R)$ at a vertex $u$ such that $n_u < \tau_{R^2}$, that is before the spine exits the ball of radius $R^2$.

\begin{lemma}
  \label{lem:probability_lower_bound}
  Let $X$ and $X^*$ be two independent random walks both indexed by infinite critical trees $\mathcal{T}$ and $\mathcal{T}^*$ respectively and independent from the branching interlacement. Let $\P_x$ denote the joint law of $X$ and $A^{(s_d)}(r,R)$, and $\P_z^*$ the law of $X^*$. Then there exist constants $c = c(u,d) > 0$, $\varepsilon = \varepsilon(u,d) > 0$ and $R' = R'(u,d)$, such that, for all $r$ and $R \geq R'$, satisfying $r^{d-4} \leq \varepsilon R$, and $r \geq \max(\|x\|,\|z\|)$,
  \[
    \P_z^* \otimes \P_x(\exists u \in \mathcal{T}^{*}_-:\, X^*_u \in A^{(s_d)}(r,R), n^*_u < \tau^*_{R^2}) \geq c,
  \]
  where the quantities with a * refer to $X^*$.
\end{lemma}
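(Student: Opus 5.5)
Here $A = A^{(s_d)}(r,R)$. The plan is a second moment argument conditional on $A$. By Lemma \ref{lem:lower_bound_capacity_final}, Lemma \ref{lem:bcap_moments_bound} and the Paley--Zygmund inequality there is $c'>0$ with
\[
  \P_x\bigl(\mathrm{BCap}(A)\geq c' R^{d-4}\bigr)\geq c'.
\]
We use that $4s_d\geq d-4$, which follows from $s_d=\lceil d/4\rceil-1$, so that $\min(d-4,4s_d)=d-4$. We also use that $A\subset \mathrm{B}((s_d+1)R)$ and that $A$ is independent of $X^*$. It then suffices to show that for every deterministic $K\subset\mathrm{B}(CR)$ with $\mathrm{BCap}(K)\geq c'R^{d-4}$, and every $z$ with $\|z\|\le r\le R$,
\[
  \P^*_z\bigl(\exists u\in\mathcal{T}^*_-: X^*_u\in K,\ n^*_u<\tau^*_{R^2}\bigr)\geq c''.
\]
The case where the spine starting point lies inside the region is harmless. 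The condition $d(x,K)\ge\varepsilon\,\mathrm{diam}(K)$ in Theorem \ref{thm:bound_hitting_probability} fails for $z$ near $K$, so we first move the walk out.

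\emph{Step 1: get far away.} Let $S^*$ be the spine walk and let $\sigma=\tau^*_{MR}$ be its exit time from $\mathrm{B}(MR)$, with $M$ a large constant. By the invariance principle, with probability bounded below, $\sigma<\tau^*_{R^2}$, the spine exits near a given point at distance about $MR$, and the spine then travels to distance about $2MR$ within time much less than $R^4$, for $R\geq R'$. Below we only use the adjoint critical trees hanging off the spine at indices in $[\sigma,\sigma+\delta R^2]$; all of these indices are below $\tau^*_{R^2}$ on this event.

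\emph{Step 2: hitting from distance $MR$.} By the remark after the definition of infinite critical trees, the tree cut at $\xi_\sigma$ is a Kesten tree. Its past part started at $S^*_\sigma$, with $d(S^*_\sigma,K)\asymp MR\ge \mathrm{diam}(K)$, hits $K$ with probability at least $c_1\mathrm{BCap}(K)(MR)^{4-d}\gtrsim M^{4-d}$ by Theorem \ref{thm:bound_hitting_probability}, which holds for $\mathcal{T}_-$ of Kesten's tree. The main obstacle is to ensure this hit occurs through a tree attached at a spine index below $\tau^*_{R^2}$, rather than late. We handle it with a truncated second moment computation, not the theorem directly. Let
\[
  Z=\sum_{t=\sigma}^{\sigma+\delta R^2}\sum_{u\in\widetilde{\mathcal{T}}^{(t)}_c}\widetilde e_K\text{-weighted indicator of }X^*_u\in K,
\]
that is, $Z=\sum \rho(X^*_u)/\text{(const)}$ with $\rho$ the equilibrium measure, as in the proof of Lemma \ref{lem:phi_capacity_bound}. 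Using \eqref{eq:green_adjoint} and the spine Green's function restricted to time $\delta R^2$, we get $\E[Z]\gtrsim \mathrm{BCap}(K)R^{4-d}$ times a constant depending on $M,\delta$. The second moment is bounded by $\E[Z^2]\lesssim \E[Z]\cdot\sup_{y\in K}\sum_{y'}G(y,y')e_K(y')\lesssim \E[Z]$, since $\sum_{y'} G(y,y')e_K(y')\le 1$ by the last exit decomposition. Hence $\P(Z>0)\ge \E[Z]^2/\E[Z^2]\gtrsim \E[Z]\gtrsim c(M,\delta)$.

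\emph{Conclusion.} By the strong Markov property at $\sigma$, the events of Step 1 and Step 2 combine to give $c''>0$. Integrating over the event $\{\mathrm{BCap}(A)\ge c'R^{d-4}\}$ then yields the bound $c=c'c''$, uniformly in $x,z,r$ satisfying the hypotheses.
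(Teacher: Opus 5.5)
\textbf{Gap: the second-moment bound in Step 2.} The estimate $\E[Z^2]\lesssim\E[Z]$ rests on the claim $\sup_{y\in K}\sum_{y'}G(y,y')e_K(y')\le 1$ ``by the last exit decomposition''. That is not justified for tree-indexed walks.

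For the simple random walk, $\sum_{y'}g(y,y')e_K(y')=\P_y(H_K<\infty)$ is an exact identity, because the Markov property factorizes at the last visit. For the branching walk, decompose according to the vertex of $\mathcal{T}_-$ with the most negative label that visits $K$, and use shift invariance. This gives
\[
  \P_y(\mathcal{T}_-\cap K\neq\varnothing)=\sum_{y'\in K}\E_{y'}\big[N^+_y\,;\,\mathcal{T}_-\cap K=\varnothing\big],
\]
where $N^+_y$ is the number of vertices with positive label located at $y$.

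The future count $N^+_y$ and the escape event of the past share the spine, so the right-hand side does not factor into $G(y',y)e_K(y')$. To get your bound you would need the decorrelation $\E_{y'}[N^+_y;\mathcal{T}_-\cap K=\varnothing]\gtrsim\E_{y'}[N^+_y]\,e_K(y')$ for $y\in K$. This goes the unfavourable way, since conditioning the past to escape pushes the spine away from $K$. Theorem \ref{thm:bound_hitting_probability} only covers the regime $\mathrm{d}(w,K)\geq\varepsilon\,\mathrm{diam}(K)$, so it does not help for $y\in K$.

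The crude alternative is to split $K$ into dyadic shells $K_j$ around $y$ and use $e_K\le e_{K_j}$ on $K_j$ and $\mathrm{BCap}(K_j)\lesssim 2^{j(d-4)}$. This only gives $O(\log R)$, hence $\P(Z>0)\gtrsim 1/\log R$, which is not a constant lower bound. The bounded-potential statement may well be true, but it is a separate fact that needs its own proof.

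\textbf{The second moment is not needed.} You already observed that the past of the Kesten tree cut at $\xi_\sigma$ hits $K$ with probability at least $c_1\mathrm{BCap}(K)(MR)^{4-d}\gtrsim M^{4-d}$.

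The hits you want to exclude come through trees at spine indices $\geq\tau^*_{R^2}$. These are hits by the past of the Kesten tree cut at $\xi_{\tau^*_{R^2}}$, which starts at distance at least $R^2-CR$ from $K$. By the upper bound in Theorem \ref{thm:bound_hitting_probability} and $\mathrm{BCap}(K)\lesssim R^{d-4}$ (monotonicity), they have probability $\lesssim R^{d-4}(R^2-CR)^{4-d}\to 0$. Subtracting gives a constant lower bound for $R\geq R'$.

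This is exactly the paper's proof. The paper stops the spine at the first time $\widetilde\tau_R$ it is at distance $R$ from $A$ instead of at $\tau^*_{MR}$. Since the lower bound in the theorem is linear in $\mathrm{BCap}$, it averages directly over $A$ using Lemma \ref{lem:lower_bound_capacity_final}, so your Paley--Zygmund reduction to deterministic $K$ is also unnecessary.
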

\begin{proof} Let $(S_n)_{n\in\mathbb{N}}$ (resp.~$(S_n^*)_{n\in\mathbb{N}}$) be the random walk induced by the spine of $\mathcal{T}$ (resp.~$\mathcal{T}^*$). Let $A = A^{(s_d)}(r,R)$ and $\widetilde{\tau}_R = \inf\{t \in \mathbb{N} : \mathrm{d}(S^{*}_t, A) \geq R\}$.
  Then by \eqref{thm:bound_hitting_probability_infinite}, conditioning on $\widetilde{\tau}_R$, and considering the random walk indexed by a Kesten's tree and starting at the random point $U = S^*_{\widetilde{\tau}_R}$.
  \[
    \P_z^* \otimes \P_x(\mathcal{T}^*_- \cap A \neq \emptyset) \geq \E_{z}^*\otimes \E_x \Big[\overline{\P}^*_{U} \otimes \P_x(\mathcal{T}^* \cap A \neq \emptyset)\Big] \geq c_1 \E_z^*\otimes \E_x\left[\frac{\mathrm{BCap}(A)}{\mathrm{d}\left(U,A\right)^{d-4}}\right].
  \]
   Since the jump law of our walks has finite range, by definition of $\widetilde{\tau}_R$, $\mathrm{d}(U,A) \asymp R$. Moreover, by Lemma \ref{lem:lower_bound_capacity_final}, $\mathrm{BCap}(A) \geq c_{s_d}R^{d-4}$, and there exists $C > 0$ only depending on the jump law, such that
  \[
    \P^*_z \otimes \P_x(\mathcal{T}^*_- \cap A \neq \emptyset) \geq Cc_1c_{s_d}.
  \]
  On the other hand, using again \eqref{thm:bound_hitting_probability_infinite}, if we let $V$ be the point of $\mathbb{Z}^d$ located at time $\tau_{R^2}$ on the spine of $\mathcal{T}^*$, the fact that $\mathrm{d}(V, A) \geq R^2 - s_dR$, entails
  \begin{align*}
    \E_z^*\otimes \E_x\Big[ \overline{\P}^*_V \otimes \P_x\big(\mathcal{T}^*_- \cap A \neq \emptyset\big)\Big]
    &\leq c_2\E_z^*\otimes\E_x\left[\frac{\mathrm{BCap}(A)}{\mathrm{d}\left(V,A\right)^{d-4}}\right] \\
    &\leq c_2c' \frac{(s_dR)^{d-4}}{(R^2-s_dR)^{d-4}} = \frac{c_2c's_d^{d-4}}{(R-s_d)^{d-4}}.
  \end{align*}
  By choosing $R$ large enough, we obtain
  \[
    \E_z^*\otimes \E_x\Big[ \overline{\P}^*_V \otimes \P_x\big(\mathcal{T}^*_- \cap A \neq \emptyset\big)\Big] \leq \frac{Cc_1c_{s_d}}{2}.
  \]
  Consequently,
  \begin{align*}
    \P_z^* \otimes \P_x&(\exists u \in \mathcal{T}^*_- : X^*_u \in A,\, n^*_u < \tau^*_{R^2}) \\
                    & \geq \P_z^* \otimes \P_x\big(\mathcal{T}^*_- \cap A \neq \emptyset\big) - \E_z^*\otimes \E_x\Big[ \overline{\P}^*_V \otimes \P_x\big(\mathcal{T}_- \cap A \neq \emptyset\big)\Big]
                      \geq \frac{Cc_1c_{s_d}}{2} > 0.
  \end{align*}
\end{proof}

\subsubsection{Construction of recurrent sets}
\label{sec:recurrent_sets}

Now that we know that the hitting probability of $A^{(s_d)}$ is lower bounded by a constant, we want to apply a Borel-Cantelli lemma to get an almost sure property. Until now, we have left the choices of $r$ and $R$ free (up to condition \eqref{eq:condition_r_R}). We will now construct sequences of integers $(r_k)_{k\geq 0}$ and $(R_k)_{k\geq 0}$ ensuring that the sequence $(A^{(s_d)}(r_k,R_k))_{k\geq 0}$ satisfies interesting measurable properties that allow us to apply a conditional Borel-Cantelli lemma (Lemma \ref{lem:bc_cond}).

Fix $\varepsilon > 0$ and $R'$ as given in Lemma \ref{lem:probability_lower_bound}. 
We define two growing sequences of radii $(r_k)_{k\in\mathbb{N}^*}$ and $(R_k)_{k\in\mathbb{N}^*}$, such that $r_0 = \max(|x|,|z|)$, $R_0 = \max\big(r_0^{d-4}/\varepsilon, R'\big)$, and for $k \geq 1$,
\begin{equation}
  \label{eq:def_rk_Rk}
  r_k = dR^2_{k-1}, \quad R_k = \frac{r_k^{d-4}}{\varepsilon}.
\end{equation}

They satisfy the properties we need to iterate Lemma \ref{lem:probability_lower_bound} with a conditional Borel-Cantelli lemma, that we state here in the version of Durrett (\cite[p.240]{durrett2019}) for completeness.

\begin{lemma}
  \label{lem:bc_cond}
  Let $(\Omega, \mathcal{F}, \P)$ be a probability space, $(\mathcal{F}_n)_{n\geq 0}$ be a filtration such that $\mathcal{F}_0 = \{\varnothing, \Omega\}$, and $(\Delta_n)_{n\geq 1}$ be a sequence of events such that $\Delta_n \in \mathcal{F}_n$. Then, almost surely,
  \[
    \{\Delta_n \text{ occurs infinitely often}\} = \Bigg\{\sum_{n=1}^\infty \P(\Delta_n\mid\mathcal{F}_{n-1}) = \infty \Bigg\}.
  \]
\end{lemma}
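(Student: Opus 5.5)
This is the conditional (Lévy) Borel--Cantelli lemma. The plan is to reduce it to a convergence dichotomy for martingales with bounded increments. Set $p_k = \P(\Delta_k \mid \mathcal{F}_{k-1})$ and
\[
  M_n = \sum_{k=1}^n \big(\indic{\Delta_k} - p_k\big), \qquad M_0 = 0.
\]
Since $\Delta_k \in \mathcal{F}_k$ and $p_k$ is $\mathcal{F}_{k-1}$-measurable, $(M_n)_{n\geq 0}$ is an $(\mathcal{F}_n)$-martingale. Its increments satisfy $|M_n - M_{n-1}| \leq 1$. The target is to show that almost surely one of the following holds:
\begin{enumerate}[(a)]
\item $M_n$ converges to a finite limit;
\item $\limsup_n M_n = +\infty$ and $\liminf_n M_n = -\infty$.
\end{enumerate}

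The key step is to establish this dichotomy. Fix $K > 0$ and let $T_K = \inf\{n : M_n \leq -K\}$. The stopped process $M_{n \wedge T_K}$ is a martingale. Because the increments are bounded by $1$, it stays above $-K-1$. Hence $M_{n\wedge T_K} + K + 1$ is a nonnegative martingale, and by the martingale convergence theorem it converges almost surely. On the event $\{\inf_n M_n > -K\}$ we have $T_K = \infty$, so $M_n$ itself converges there. Taking the union over $K \in \mathbb{N}$ shows that $M_n$ converges almost surely on $\{\liminf_n M_n > -\infty\}$. Applying the same argument to $-M_n$ shows that $M_n$ converges almost surely on $\{\limsup_n M_n < +\infty\}$. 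Outside these two events, (b) holds, which proves the dichotomy. I expect this to be the only nontrivial step; the bounded-increment hypothesis is exactly what keeps the stopped process bounded below by $-K-1$.

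It remains to compare $\sum_k \indic{\Delta_k}$ and $\sum_k p_k$ in each case. On the event in (a), the two partial sums differ by the convergent sequence $M_n$. So they are either both finite or both infinite, and $\{\Delta_n \text{ i.o.}\}$ coincides with $\{\sum_n p_n = \infty\}$ there. On the event in (b), we use $\sum_{k\leq n}\indic{\Delta_k} \geq M_n$ together with $\limsup M_n = +\infty$, which gives $\sum_k \indic{\Delta_k} = \infty$, so $\Delta_n$ occurs infinitely often. Likewise $\sum_{k\leq n} p_k \geq -M_n$ together with $\liminf M_n = -\infty$ gives $\sum_k p_k = \infty$. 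Thus both events hold simultaneously on (b).

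Combining the two cases yields the almost sure equality of $\{\Delta_n \text{ occurs infinitely often}\}$ and $\{\sum_n \P(\Delta_n\mid\mathcal{F}_{n-1}) = \infty\}$. The hypothesis $\mathcal{F}_0 = \{\varnothing,\Omega\}$ is only used to make $p_1 = \P(\Delta_1)$ well defined as a constant and plays no further role.
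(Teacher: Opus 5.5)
Your argument is correct and matches the proof the paper relies on. The paper gives no proof of its own and cites Durrett (p.~240), who proves the lemma exactly as you do: he applies the converge-or-oscillate dichotomy for martingales with bounded increments to $M_n=\sum_{k\le n}\big(\indic{\Delta_k}-\P(\Delta_k\mid\mathcal{F}_{k-1})\big)$ and then compares the two partial sums in each case.
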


\begin{corollary}
  \label{lem:prob_limsup}
  Let $X, X^*, \mathcal{T}, \mathcal{T}^*, \P, \P^*$ be as defined in Lemma \ref{lem:probability_lower_bound}. For $x,z \in \mathbb{Z}^d$,
  \[
    \P_z^* \otimes \P_x\left(\limsup_{k}\left\{\mathcal{T}_-^* \cap \mathrm{A}^{(s_d)}(r_k,R_k) \neq \varnothing\right\}\right) = 1.
  \]
\end{corollary}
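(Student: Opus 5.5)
We apply the conditional Borel--Cantelli lemma (Lemma \ref{lem:bc_cond}) to the events
\[
  \Delta_k = \left\{\exists u \in \mathcal{T}^*_- : X^*_u \in A^{(s_d)}(r_k,R_k),\ n^*_u < \tau^*_{R_k^2}\right\},
\]
which are contained in $\{\mathcal{T}_-^* \cap A^{(s_d)}(r_k,R_k) \neq \varnothing\}$. Then it suffices to show that $\sum_k \P(\Delta_k \mid \mathcal{F}_{k-1}) = \infty$ almost surely. We will in fact show the stronger statement that each conditional probability is bounded below by the constant $c$ of Lemma \ref{lem:probability_lower_bound}.

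\textbf{Filtration.} We take $\mathcal{F}_k$ to be the $\sigma$-algebra generated by the following objects:
\begin{itemize}
\item the spine walk $S^*$ up to time $\tau^*_{R_k^2}$, together with the adjoint trees attached to the spine of $\mathcal{T}^*_-$ at indices $n\le \tau^*_{R_k^2}$;
\item the spine walk $S$ of $X$ up to the exit time $\tau_{R_k}$, together with the portion of $\mathcal{T}$ used to build $A^{(1)}(R_j)$ for $j \leq k$;
\item the restrictions $\sigma^{(s)}_{r_j, sR_j}$ for $j \leq k$ and $2\le s\le s_d$.
\end{itemize}
Then $\Delta_k \in \mathcal{F}_k$ by construction, since the sets $A^{(s)}(r_k,R_k)$ only use walks of $\sigma^{(s)}_{r_k,\infty}$ that hit $A^{(s-1)}\subset \mathrm{B}((s+1)R_k)$, and only pieces of them inside balls of radius $R_k/2$.

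\textbf{Conditional bound.} Conditionally on $\mathcal{F}_{k-1}$, all the information collected lives in $\mathrm{B}(d R_{k-1}^2)=\mathrm{B}(r_k)$, up to the constant $d$ absorbing $s_d$ and the diameters of the sets involved. Three observations give the required independence.
\begin{itemize}
\item The spines satisfy $S^*_{\tau^*_{R_{k-1}^2}}, S_{\tau_{R_{k-1}}} \in \mathrm{B}(r_k)$. By the Markov property along the spine, what remains of $\mathcal{T}^*_-$ after $\tau^*_{R_{k-1}^2}$ is a walk indexed by a Kesten tree started at a point of $\mathrm{B}(r_k)$ (truncating between spine vertices yields a Kesten tree, as remarked in Section \ref{sec:gw}). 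The same holds for the future part of $X$ used to define $A^{(1)}(R_k)$.
\item The process $\sigma^{(s)}_{r_k,\infty}$ is independent of the trajectories hitting $\mathrm{B}(r_k)$. In particular it is independent of $\sigma^{(s)}_{r_j,sR_j}$ for $j<k$, because $sR_j< r_k$.
\end{itemize}
Hence $\P(\Delta_k\mid\mathcal{F}_{k-1})$ equals the probability in Lemma \ref{lem:probability_lower_bound}, with starting points $x', z' \in \mathrm{B}(r_k)$, Kesten trees in place of infinite critical trees (which the lemma allows), and radii $r_k, R_k$ satisfying $r_k^{d-4}=\varepsilon R_k$ and $R_k\ge R'$. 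Lemma \ref{lem:probability_lower_bound} therefore gives $\P(\Delta_k\mid\mathcal{F}_{k-1})\ge c$. The series diverges, and Lemma \ref{lem:bc_cond} concludes.

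\textbf{Main obstacle.} The delicate point is the measurability and independence bookkeeping, for two reasons. First, the adjoint trees hanging off the spine of $\mathcal{T}^*$ before $\tau^*_{R_{k-1}^2}$ can wander far away. Since we only look at $u$ with $n^*_u$ in a given window of spine indices, these trees are still $\mathcal{F}_{k-1}$-measurable, and the event $\Delta_k$ should be restricted to $\tau^*_{R_{k-1}^2} \le n^*_u<\tau^*_{R_k^2}$, which is a subset and is harmless. Second, we must check that the starting point of the residual spine walk lies in $\mathrm{B}(r_k)$, which holds by the finite range of the jumps. I would handle both points by defining $\Delta_k$ with this restricted index window from the outset, and then rerunning the proof of Lemma \ref{lem:probability_lower_bound} with the start at the spine vertex at time $\tau^*_{R_{k-1}^2}$.
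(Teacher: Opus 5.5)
Your proposal is correct and follows essentially the same route as the paper. Both arguments apply the conditional Borel--Cantelli lemma (Lemma \ref{lem:bc_cond}) to the events $\Delta_k$, with a filtration generated by the spine-truncated walks and the annular restrictions of the interlacements, use the spine Markov property to turn the unrevealed parts into Kesten-tree walks, and invoke Lemma \ref{lem:probability_lower_bound} for a uniform lower bound $c$ on the conditional probabilities.

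Two of your choices are in fact more careful than the paper's write-up: restricting $\Delta_k$ to the spine-index window $[\tau^*_{R_{k-1}^2},\tau^*_{R_k^2})$, and taking the filtration cumulative over $j\le k$. The paper's stated identity $\P(\Delta_{k+1}\mid\mathcal{F}_k)=\overline{\P}^*_{z'}\otimes\P_{x'}(\Delta_{k+1})$ only holds as a lower bound, and your window restriction is exactly what justifies it.
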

\begin{proof}
  \noindent Recall the definition of $\sigma_{r,R}$ before Lemma \ref{lem:lower_bound_psi_r_inf}. Consider the filtration $\left(\mathcal{F}_k\right)_{k\geq 0}$ defined by
  \[
    \mathcal{F}_k = \sigma\left(\{X_u, n_u < \tau_{r_{k+1}}\} \cup \{X^*_u: n_u^* < \tau^*_{r_{k+1}}\} \cup \sigma_{r_k,r_{k+1}}\right), \quad k \geq 0.
  \]
  Furthermore, note that for every simple random walk $(S_n)_{n\in\mathbb{N}}$ by equation \eqref{eq:def_rk_Rk}, we have that $\{S(t),\, t \leq \tau_{R_k^2} + R_k^2\} \subset \mathrm{B}(r_{k+1})$. Consequently for every $k \geq 1$ the event
  \[
    \Delta_k = \{\exists u \in \mathcal{T}^*_-:\, X^*_u \in A^{(s_d)}(r_k,R_k),\, n^*_u < \tau_{R_k^2}\},
  \]
  is measurable with respect to $\mathcal{F}_k$.

  \noindent To use Lemma \ref{lem:bc_cond}, we prove that there exists $c > 0$, such that for every $k \geq 1$,
  \begin{equation}
    \label{eq:conditional_eq}
    \P_z^* \otimes \P_x(\Delta_{k+1} \mid \mathcal{F}_k) \geq c.
  \end{equation}

  Knowing the positions of $X$ and $X^*$ up to a certain time allows us to apply two Markov properties. First of all for $X^*$, the random walk indexed by an infinite invariant random tree conditioned by $\mathcal{F}_k$ becomes a random walk indexed by a Kesten's tree that starts at a certain point. However for $X$, the conditioning does not alter the law of $A^{(s_d)}$. Indeed, recall that we already cut the branching random walk $X$ and defined the set $A^{(1)}$ using Kesten's branching random walk $Y$. Thus,
  \[
    \P_z^* \otimes \P_x(\Delta_{k+1} \mid \mathcal{F}_k) = \overline{\P}^*_{z'} \otimes \P_{x'}(\Delta_{k+1}),
  \]
  where $z' = S^*_{\tau^*_{r_k}}$ and $x' = S_{\tau_{r_k}}$. \\
  We can apply Lemma \ref{lem:probability_lower_bound} and deduce \eqref{eq:conditional_eq}. Finally, we conclude by applying Lemma \ref{lem:bc_cond}.
\end{proof}

\begin{proof}[Proof of the upper bound in Theorem \ref{thm:2_points_branching}.]
  Fix $u > 0$ and let $(u_N)_{N\geq 1}$ be an increasing sequence converging to $u$. Let $N \geq 1$, and $w, w^* \in \sigma_{u_n}$ intersecting $B_N = \mathrm{B}(0,N)$. By Proposition \ref{prop:desc_interlacement_compact}, they can respectively be described as trajectories of branching random walks $X$ and $X^*$, starting from the boundary of $B_N$. Consider the interlacement $\sigma_{u_N,u}$, restriction of $\sigma_u$ to the trajectories appearing between $u_N$ and $u$, and independent from $\sigma_{B_N,u_N}$ and thus from $X$ and $X^*$. Apply Corollary \ref{lem:prob_limsup} with $X,X^*$ and $\sigma_{u_N,u}$. Then there exist $s_d-1$ trajectories of $\sigma_{u_N,u}$ that connect $X$ and $X^*$. Therefore, $w$ and $w^*$ are at distance at most $s_d-1$ in $\sigma_u$. We conclude by observing that $\mathrm{Supp}(\sigma_u) = \bigcup_N \mathrm{Supp}(\sigma_{B_N,u_N})$.
\end{proof}
\subsection{Proof of the lower bound $\mathrm{diam}(G) \geq s_d$}
\label{sec:lower_bound_2_pt}
We want to prove that almost surely there exists a pair of trajectories of the interlacement which are not connected in less than $s_d-1$ steps. Recall the construction of the graph $G$ underneath the interlacement: the vertex set is $\mathrm{Supp}(\sigma_u)$, and two trajectories are neighbours if they intersect. Let
\begin{equation}
  \label{def:event_E}
  E = \{\mathrm{d}(w,w^*) \leq s_d-1,\, \forall w,w^*\in \mathrm{Supp}(\sigma_u)\},
\end{equation}
where $\mathrm{d}(w,w^*)$ denotes the graph distance between $w$ and $w^*$ in $G$.
Firstly, we prove that the probability that two points of the random interlacement are connected with less than $s_d-1$ trajectories decreases at least as the inverse of their distance. Afterwards, we suppose that the probability of $E$ is nonzero, and we reach a contradiction.

\noindent Before stating the next lemma, remember the definition of the measure $\nu$ from Section \ref{sec:branching_interlacements}, and let $S(x,y)$ be the set of trajectories that hit both $x$ and $y$,
\[
  S(x,y) = \{w \in W^*,\ x,y \in \mathrm{Range}(w)\}.
\]

\begin{lemma}
  \label{lem:product_decomposition}
  For any $x,y \in \mathbb{Z}^d$,
  \[
    \P(\{x,y \in \mathcal{I}\} \cap E) \leq \sum_{n=0}^{s_d-1} \sum_{z_1,\dots,z_n \in \mathbb{Z}^d} \prod_{i=0}^n \big(u\nu(S(z_i,z_{i+1}))\big).
  \]
  where $z_0 = x$ and $z_{n+1} = y$.
\end{lemma}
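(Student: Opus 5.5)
On the event $\{x,y\in\mathcal{I}\}\cap E$, the plan is to produce a chain of distinct trajectories and then bound the probability of such a chain with the multivariate Mecke (Slivnyak) formula. On this event, $x\in\mathrm{Range}(w)$ and $y\in\mathrm{Range}(w^*)$ for some $w,w^*\in\mathrm{Supp}(\sigma_u)$, and $\mathrm{d}(w,w^*)\le s_d-1$. So there is a geodesic $w=w_0,w_1,\dots,w_n=w^*$ in $G$ with $n\le s_d-1$. Along it, consecutive trajectories intersect; we pick $z_{i+1}\in\mathrm{Range}(w_i)\cap\mathrm{Range}(w_{i+1})$ for $0\le i\le n-1$, and set $z_0=x$, $z_{n+1}=y$. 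Then $w_i\in S(z_i,z_{i+1})$ for every $0\le i\le n$. Because the path is a geodesic (we may take a shortest one), the trajectories $w_0,\dots,w_n$ are pairwise distinct points of the Poisson process. This gives the inclusion
\[
  \{x,y\in\mathcal{I}\}\cap E\subset\bigcup_{n=0}^{s_d-1}\ \bigcup_{z_1,\dots,z_n\in\mathbb{Z}^d}\Big\{\exists\, w_0,\dots,w_n\in\mathrm{Supp}(\sigma_u)\ \text{pairwise distinct},\ w_i\in S(z_i,z_{i+1})\ \forall i\Big\}.
\]

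For each fixed $n$ and $(z_1,\dots,z_n)$, a union bound followed by Markov's inequality bounds the probability of the corresponding event by
\[
  \E\Bigg[\sum_{\substack{w_0,\dots,w_n\in\mathrm{Supp}(\sigma_u)\\ \text{pairwise distinct}}}\prod_{i=0}^n \indic{w_i\in S(z_i,z_{i+1})}\Bigg].
\]
Since $\sigma_u$ is a Poisson point process on $W^*$ with intensity $u\nu$, the factorial moment measure of order $n+1$ equals $(u\nu)^{\otimes(n+1)}$ (multivariate Mecke formula). The expectation above therefore equals $\prod_{i=0}^n u\nu(S(z_i,z_{i+1}))$. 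Summing over $n$ and the $z_i$ gives the claimed bound.

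The only points requiring care are measurability of the sets $S(z_i,z_{i+1})$ in $\mathcal{W}^*$ and finiteness of the quantities $\nu(S(z_i,z_{i+1}))$. Measurability is clear, since $S(a,b)$ is defined by the cylinder conditions of hitting $a$ and hitting $b$, which are shift-invariant. Finiteness follows because $S(a,b)\subset W^*_{\{a\}}$, whence $\nu(S(a,b))\le\mathrm{BCap}(\{a\})<\infty$. I expect the main (mild) obstacle to be justifying distinctness of the $w_i$. The fallback, if one does not want to invoke a geodesic, is to extract a shortest sub-chain: any repetition $w_i=w_j$ with $i<j$ lets one shortcut to $w_0,\dots,w_i,w_{j+1},\dots$, keeping the intersection property and decreasing $n$. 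The resulting shorter chain is still counted in the sum, because the sum ranges over all $n\le s_d-1$.
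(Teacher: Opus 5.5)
Your proposal is correct and follows the same route as the paper. Both arguments write the event as a countable union over chains of intersecting trajectories with linking points $z_1,\dots,z_n$, then apply a union bound and the multivariate Mecke (Palm) formula for the Poisson process with intensity $u\nu$. Two points that the paper's short proof passes over are handled explicitly in yours: you take a geodesic so that $w_0,\dots,w_n$ are pairwise distinct, which the product formula requires, and you index $n+1$ trajectories, whereas the paper writes $w_1,\dots,w_n$ against a product with $n+1$ factors.
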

\begin{proof}
  Observe that
  \[
    \{x,y \in \mathcal{I}\} \cap E = \bigcup_{n=0}^{s_d-1} \bigcup_{z_1,\dots,z_n\in\mathbb{Z}^d} \bigcup_{w_1,\dots,w_n \in \sigma_u} \bigcap_{i=0}^n \{w_i \text{ hits } z_i \text{ and } z_{i+1}\}.
  \]
  Thus, by the Palm-Mecke's Theorem for general Poisson point processes (see \cite[Chapter 13.1]{dvj2008} for a proof)
  \[
    \P(\{x,y \in \mathcal{I}\} \cap E) \leq \sum_{n=0}^{s_d-1}\sum_{z_1,\dots,z_n\in\mathbb{Z}^d} \E\Bigg[\sum_{w_1,\dots,w_n\in\sigma_u} \prod_{i=0}^n \indic{w_i \text{ hits } z_i \text{ and } z_{i+1}}\Bigg] = \sum_{n=0}^{s_d-1}\sum_{z_1,\dots,z_n\in\mathbb{Z}^d} \prod_{i=0}^n \big(u\nu(S(z_i,z_{i+1}))\big),
  \]
  which concludes the proof of the lemma.
\end{proof}

\begin{lemma}
  \label{lem:2_points_lower_bound_probability}
  There exists a constant $C > 0$, such that, for every $x \neq y \in \mathbb{Z}^d$,
  \[
    \P(\{x,y \in \mathcal{I}\} \cap E) \leq \frac{C}{\|x-y\|}.
  \]
\end{lemma}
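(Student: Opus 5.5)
Starting from Lemma \ref{lem:product_decomposition}, the plan is to bound each factor $u\nu(S(z_i,z_{i+1}))$ by a constant times $G(z_i,z_{i+1}) \asymp (1+\|z_i-z_{i+1}\|^{d-4})^{-1}$, and then to sum the resulting chain of Green's functions by iterated convolution with Lemma \ref{lem:magic_ineq}.

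\textbf{Step 1: two-point bound.} We first show $\nu(S(a,b)) \lesssim G(a,b)$ for all $a,b\in\mathbb{Z}^d$. Take $K=\{a\}$. Since every trajectory of $S(a,b)$ hits $K$, and the measures $Q_K$ are compatible with $\nu$, we have $\nu(S(a,b)) = Q_{\{a\}}(b \in \mathrm{Range}) \le \P_a(b \in \mathcal{T}^a)$. Then \eqref{eq:hit_T} gives the bound $\lesssim G(a,b)$. Alternatively, Lemma \ref{lem:n_points_connection} with $k=2$ gives a bound of the form $g(a,b)$ plus $\sum_z g(a,z)g(z,b)$, which is $\lesssim G(a,b)$ by \eqref{eq:G_g_convolution}. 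With $z_0=x$ and $z_{n+1}=y$, this yields
\[
\P(\{x,y\in\mathcal{I}\}\cap E) \lesssim \sum_{n=0}^{s_d-1} u^{n+1} \sum_{z_1,\dots,z_n} \prod_{i=0}^{n} G(z_i,z_{i+1}).
\]
The inner sum is the $(n+1)$-fold convolution $G^{*(n+1)}(x,y)$.

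\textbf{Step 2: iterated convolution.} We show by induction on $m$ that $G^{*m}(x) \lesssim (1+\|x\|)^{-(d-4m)}$ as long as $4m<d$.
\begin{itemize}
\item For $m=1$ this is \eqref{eq:estim_green}.
\item For the inductive step, apply Lemma \ref{lem:magic_ineq} with exponents $a = d-4$ and $b = d-4(m-1)$, taking the larger of the two as $a$ (so $a=d-4$ when $m\ge 2$). We have $a<d$ and $a+b = 2d-4m > d$ exactly when $4m<d$. Hence $G^{*m}(x)\lesssim \|x\|^{d-(a+b)}=\|x\|^{4m-d}$.
\end{itemize}
Since $n+1 \le s_d = \lceil d/4\rceil -1 < d/4$, we get $4(n+1)<d$ for every $n$ in the sum. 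So each term is $\lesssim \|x-y\|^{-(d-4(n+1))}$.

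\textbf{Step 3: the exponent.} Now $d-4(n+1) \ge d-4s_d = d-4\lceil d/4\rceil+4 \ge 1$. Since $\|x-y\|\ge 1$ for $x\ne y$, every term is bounded by $C\|x-y\|^{-1}$. Summing the finitely many values of $n$ gives the lemma, with a constant depending on $u$ and $d$.

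\textbf{Main obstacle.} The delicate point is Step 1: making rigorous that $\nu$ restricted to $S(a,b)$ is dominated by the law of a single branching walk started at $a$. This needs the compatibility of the family $Q_K$. One also has to make sure that hitting $b$ in the past, not only in the future, is accounted for. Conditioning on $\mathcal{T}_-^a\cap\{a\}=\emptyset$ only decreases the relevant mass, since we bound $Q_K$ of the event without dividing by the conditioning probability. So the bound should go through.

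One further check is the boundary case in Lemma \ref{lem:magic_ineq} when $b$ is close to $d$: for $m=2$, $b=d-4<d$, so the hypotheses of the first case are satisfied.
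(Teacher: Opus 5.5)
Your proof is correct and takes essentially the same route as the paper. The paper also bounds $\nu(S(z_i,z_{i+1}))\le \P(z_{i+1}\in\mathcal{T}^{z_i})\lesssim G(z_i,z_{i+1})$ via $Q_{\{z_i\}}$ and \eqref{eq:hit_T}. It then iterates Lemma \ref{lem:magic_ineq} along the chain with exponents $d-4$ and $d-4(m-1)$, and uses $4s_d\le d-1$ to obtain the final exponent $\|x-y\|^{-1}$.
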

\begin{proof}
  \noindent Let $A = \{w \in W^* : y \in \mathrm{Range}(w)\}$. Then by \eqref{eq:hit_T},
  \[
    \nu(S(x,y)) = \P\left((X_{\mathcal{T}}^x(n))_{n\in\mathbb{Z}} \in A,\, x \notin \mathcal{T}_-^x\right) \leq \P(y \in \mathcal{T}^x) \lesssim G(x,y).
  \]

  \noindent Hence, according to Lemma \ref{lem:product_decomposition} and \eqref{eq:estim_green},
  \[
    \P(\{x,y\in \mathcal{I}\} \cap E) \leq \sum_{n=0}^{s_d-1} (uC')^{n+1}\hspace{-10pt} \sum_{z_1,\dots,z_n\in\mathbb{Z}^d} \prod_{i=0}^n \frac{1}{1 + \|z_{i+1}- z_i\|^{d-4}}.
  \]
  From Lemma \ref{lem:magic_ineq}, we have the following estimate for $4 \leq k < d-4$,
  \begin{equation}
    \label{eq:ineq_convo}
    \sum_{z\in\mathbb{Z}^d} \frac{1}{(1+\|x-z\|^{d-4})(1+\|z-y\|^{d-k})} \leq \frac{C''}{1+\|x-y\|^{d-4-k}},
  \end{equation}
  and then by induction, it follows that (recall that $4s_d \leq d-1$),
  \[
    \P(\{x,y\in \mathcal{I}\} \cap E) \leq \sum_{n=0}^{s_d-1} (uC')^{n+1}(C'')^n \frac{1}{1+\|x-y\|^{d-4(n+1)}} \leq C \|x-y\|^{-1}.
  \]
\end{proof}

\begin{proof}[Proof of lower bound in Theorem \ref{thm:2_points_branching}.]
  Suppose that $\P(E) > 0$ with $E$ as in \eqref{def:event_E}. Since,
  \[
    \P(\mathcal{I}\cap\mathrm{B}(R) \neq \varnothing) = \P(N_{\mathrm{B}(R)} \geq 1) = 1 - e^{-u\mathrm{BCap}(\mathrm{B}(R))},
  \]
  and $\mathrm{BCap}(\mathrm{B}(R)) \to \infty$ as $R \to \infty$, we can choose $R$ large enough, such that
  \begin{equation}
    \label{eq:lower_bound_intersect_ball}
    \P(\mathcal{I}\cap\mathrm{B}(R) \neq \varnothing) \geq 1 - \frac{\P(E)}{3}.
  \end{equation}
  Then for any $z \in \mathbb{Z}^d$,
  \[
    \P(\{\mathcal{I}\cap\mathrm{B}(R)\neq\varnothing\}\cap\{\mathcal{I}\cap\mathrm{B}(z,R)\neq\varnothing\} \cap E) \geq \P(E) - 2\P(\mathcal{I}\cap\mathrm{B}(R)=\varnothing) \geq \frac{\P(E)}{3}.
  \]
  However, for every $z \in\mathbb{Z}^d$ with $|z| > 3R$,
  \[
    \P(\{\mathcal{I} \cap \mathrm{B}(R) \neq \varnothing\} \cap \{\mathcal{I} \cap \mathrm{B}(z,R) \neq \varnothing\} \cap E) \leq \sum_{x \in \mathrm{B}(R)} \sum_{y\in\mathrm{B}(z,R)} \P(\{x,y\in\mathcal{I},\, E) \leq \frac{CR^{2d}}{\|z\|}.
  \]
  By sending $\|z\|$ to the infinity, we reach a contradiction. As a consequence, $\P(E) = 0$ and $\P$-a.s.~the diameter of $G$ is at least $s_d$.
\end{proof}

\section{Connecting $k$ points}
\label{sec:k_points_connection}
We prove Theorem \ref{thm:k_points_branching} here. Although the general strategy is similar to Lacoin and Tykesson \cite{lt2013} in the simple random walk case, the details are quite different and more involved here due to the branching structure of the trajectories.

Remember the definition of $n(k,d)$ from \eqref{def:n_k_d}. In order to prove the upper bound in the theorem, we fix $k$ points in the interlacement and we want to find $n(k,d)$ trajectories that connect them. We first provide a lemma (Lemma \ref{lem:t_connection} below) which controls the number of trajectories needed to connect any given set of $n\leq 5$ other trajectories. When applied successively to overlapping groups of $5$ walks (see Figure \ref{fig:sketch_proof_upper_bound}) this lemma proves the upper bound in the theorem. The main difficulty in proving Lemma \ref{lem:t_connection} is to circumvent the lack of Markov property for a branching random walk. This is why we prove Lemma \ref{lem:t_A_tau_alphaR}, in order to control the position of the walk induced by the spine at the time one of its descendent trees hits a certain set.

Similarly to what has been done in Section \ref{sec:lower_bound_2_pt}, we prove the lower bound by contradiction. We suppose that we can connect every $k$ points of the interlacement using less than $n(k,d)$ trajectories, and show that the probability for $k$ points to belong to the interlacement decreases as the inverse of the minimum of their mutual distances. Two new difficulties arise. First of all, whereas in Section \ref{sec:lower_bound_2_pt} the only way to connect $2$ points was by a path of pairwise intersecting trajectories of the interlacement, here the underlying graph is more intricate. Second, branching random walks do not connect points one after the other but simultaneously. This increases the complexity of this graph and is the reason why we introduce the set of trees $\mathbb{A}_{k,n,m}$ in Section \ref{sec:lower_bound_k}. The interest is that we can link the probability of connecting $k$ points to a certain weight on those trees (see Definition \ref{def:tree_weight}). We successively apply two operations (\textit{cut} and \textit{merge}) to reduce a tree of $\mathbb{A}_{k,n,m}$ to a single edge. By controlling the structure of the tree throughout the process, we also control the weight of the final edge and thus the probability of connecting $k$ points.

\medskip

In this whole section, we let $d \geq 9$ and $k \geq 3$, and suppose that the reproduction law $\mu$ has mean $1$ and a finite moment of order $2\lceil d(k-1)/4\rceil$. This moment assumption is used in the proof of the lower bound, whereas we only need a finite third moment for the upper bound.

\subsection{Proof of the upper bound}
\label{sec:upper_bound_k}
The goal of this subsection is to prove the upper bound in Theorem \ref{thm:k_points_branching}, i.e.~that for any $k$ points of the interlacement, one can find $n(k,d)$ branching random walks that connect the $k$ points. First we show how to connect $n\leq 5$ walks using a certain number of trajectories of the interlacement.

\begin{lemma}
  \label{lem:t_connection}
  Let $n \leq 5$, let $X^{(1)},\dots,X^{(n)}$ be $n$ independent random walks indexed by infinite invariant trees. Then almost surely there exist
  \[
    \left\lceil\frac{(n-1)d}{4}\right\rceil - 2(n-1)
  \]
  trajectories from an independent interlacement that connect $X^{(1)},\dots,X^{(n)}$.
\end{lemma}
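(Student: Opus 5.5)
The idea is to combine the trace-set construction of Section~\ref{sec:visible_sets} with one final connecting walk. Around each $X^{(i)}$ I grow a cluster $A_i = A^{(s_i)}(r,R)$, using independent copies $\sigma^{(j)}$ of the interlacement for different $i$. Each cluster costs $s_i-1$ intermediate trajectories. I then look for a single extra trajectory of an independent interlacement that hits all $n$ clusters inside a ball of radius of order $R$. The total number of trajectories used is
\[
  t = \sum_{i=1}^n (s_i - 1) + 1 .
\]
By Lemmas~\ref{lem:bcap_moments_bound} and \ref{lem:lower_bound_capacity_final}, with probability bounded below we have $\mathrm{BCap}(A_i) \asymp R^{4s_i}$ as long as $4s_i \le d-4$, and $A_i \subset \mathrm{B}(x_i', CR)$, where $x_i'$ is the exit point of the spine of $X^{(i)}$ at scale $R$. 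The number of interlacement trajectories meeting $\mathrm{B}(CR)$ is Poisson with mean $\asymp uR^{d-4}$ by \eqref{eq:bcap_ball}. Heuristically, each such trajectory hits $A_i$ with probability $\asymp R^{4s_i}/R^{d-4}$ by Theorem~\ref{thm:bound_hitting_probability}. The expected number of trajectories hitting every $A_i$ is therefore of order
\[
  R^{d-4}\prod_{i=1}^n R^{4s_i-(d-4)} .
\]
This is bounded below exactly when $4\sum_i s_i \ge (n-1)(d-4)$. Taking $\sum_i s_i = \lceil (n-1)(d-4)/4\rceil$, split as evenly as possible so that each $s_i \le s_d$, gives $t = \lceil (n-1)d/4\rceil - 2(n-1)$, which is the claimed number. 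The restriction $n\le 5$ is what keeps such a split with $4s_i\le d-4$ possible together with the second-moment estimate below.

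The key steps are as follows.
\begin{enumerate}
  \item Condition on the clusters on the event that their capacities are of the right order. Let $Z$ be the number of trajectories of the independent interlacement $\sigma_u$ that hit $\mathrm{B}(CR)$ and meet every $A_i$ through vertices whose spine index is at most $R^2$.
  \item Lower bound: show $\E[Z\mid A_1,\dots,A_n]\gtrsim 1$. By Proposition~\ref{prop:desc_interlacement_compact}, a trajectory hitting the ball is a branching random walk started from $e_{\mathrm{B}(CR)}$. Decompose it along its spine, and require that distinct adjoint critical trees hanging from the spine at times of order $R^2$ hit the distinct sets $A_i$. Each such event is estimated by the critical bound \eqref{thm:bound_hitting_probability_critical}, summed over spine times. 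Combining these with the occupation estimate of the spine gives the product of the factors $R^{4s_i-(d-4)}$.
  \item Upper bound: show $\E[Z^2\mid A_1,\dots,A_n]\lesssim 1$. This follows from Poisson independence for distinct trajectories, and for a single trajectory from $\E[Z]$ itself.
  \item Apply Paley--Zygmund to get $\P(Z\ge 1)\ge c>0$ uniformly in the scale.
  \item Iterate over the scales $(r_k,R_k)$ of \eqref{eq:def_rk_Rk}, using the filtration of Corollary~\ref{lem:prob_limsup} with all $n$ walks and the interlacement restricted to $\sigma_{r_k,r_{k+1}}$, and conclude by Lemma~\ref{lem:bc_cond}.
\end{enumerate}
The localisation of all objects inside $\mathrm{B}(r_{k+1})$ works exactly as before. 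For the conditional step, I need to control where the spine of the connecting walk is at the time one of its side trees hits some $A_i$, since there is no Markov property at that time. This is the role of an auxiliary statement in the spirit of Lemma~\ref{lem:t_A_tau_alphaR}: with probability bounded below, the spine is still within distance $\alpha R$ of the clusters at that time, so the remaining sets can still be reached.

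I expect the main obstacle to be the lower bound in step 2. This is the simultaneous hitting of $n$ well-separated sets by one branching walk: the hits happen in different subtrees, and the positions of the $A_i$ relative to one another and to the spine are random. The first thing I would try is to force the hits into a fixed window of spine times $[R^2,2R^2]$, so that the spine positions are spread over a ball of radius $\asymp R$. Then the union bound of Lemma~\ref{lem:connection_infinite} runs in reverse: a connection pattern in which all branch points lie on the spine has probability comparable to the product of the Green's function factors. The second moment should also be checked with Lemma~\ref{lem:connection_infinite}, since it yields the needed product bounds once the moment assumption on $\mu$ is in force.
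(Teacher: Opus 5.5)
Your plan has the paper's architecture: clusters $A^{(s_i)}$ grown with independent interlacements, one further trajectory meeting all of them with probability bounded below at each scale, then Lemma~\ref{lem:bc_cond} along $(r_k,R_k)$. However, the exponent count fails in some dimensions.

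\textbf{The split.} You need $\mathrm{BCap}(A_i)\asymp R^{4s_i}$, i.e.\ $4s_i\le d-4$, and you assert that $n\le 5$ allows an even split of $\sum_i s_i=\lceil (n-1)(d-4)/4\rceil$ with this property. It does not. For $d=10$, $n=5$ you need $\sum_i s_i=6$ over five clusters, while $4s_i\le 6$ forces $s_i\le 1$. Your other constraint $s_i\le s_d$ is weaker, since $4s_d\ge d-4$ with equality only when $4\mid d$. Once $4s_i>d-4$, the inclusion $A^{(s_i)}\subset\mathrm{B}((s_i+1)R)$ caps $\mathrm{BCap}(A_i)$ at order $R^{d-4}$. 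The exponent then becomes $\sum_i\min(4s_i,d-4)-(n-1)(d-4)$.

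For $n\le 4$ the ceiling leaves enough slack to absorb this loss. Your claim also fails for $d=11,n=3$ and $d\in\{10,11,15\},n=4$, but harmlessly. For $n=5$ there is no slack, and for $d\in\{10,11,15\}$ the best split gives $R^{-2},R^{-2},R^{-1}$. So step~2 cannot produce a constant there, and these cases need a different connection pattern. For $d=10$ you can apply the case $n=3$ twice, with two connectors sharing the middle walk. For $d\in\{11,15\}$, no combination of smaller cases reaches the claimed count.

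The paper's proof has the same defect in a stronger form. \eqref{eq:cond_ki} only ensures $4k_i\le d$, so \eqref{eq:lower_bound_pz} is false whenever $4k_i>d-4$. With $A_n$ frozen at depth one, this breaks \eqref{eq:bound_E1} for $n=5$ whenever $d\equiv 2,3\pmod 4$. Your even split over all $n$ clusters repairs, e.g., $d=14$ (take all $s_i=2$), but not $d\in\{10,11,15\}$.

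\textbf{Step 2.} You leave this step open, and it is where the paper has a clean device. Your sketch does not work as set up. By Proposition~\ref{prop:desc_interlacement_compact}, a trajectory counted through $\mathrm{B}(CR)$ is rooted at its first entrance and conditioned on $\mathcal{T}_-\cap\mathrm{B}(CR)=\varnothing$. So if the clusters lie in that ball, no past adjoint tree can hit them.

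The paper instead roots the connector on $\widetilde e_{A_n}$. The past then only has to avoid $A_n$, and the candidates are Poisson with mean $u\,\mathrm{BCap}(A_n)$. It then forces sequential hits $t_{A_1}<\tau_{\rho_1}<t_{A_2}<\tau_{\rho_2}<\cdots$ with $\rho_j=\alpha^j dR$. Each $\tau_{\rho_j}$ is a spine stopping time, after which the walk is Kesten-indexed from $S_{\tau_{\rho_j}}$. Applying Lemma~\ref{lem:t_A_tau_alphaR} at each stage gives the product $\prod_{i<n}\mathrm{BCap}(A_i)/R^{d-4}$ directly. This is the auxiliary statement you allude to. It replaces both the time-window idea and the \emph{reversed} Lemma~\ref{lem:connection_infinite}, which only provides upper bounds.

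It also makes step~3 unnecessary. Given the clusters, the number of good connectors is Poisson, so $\P(Z\ge 1\mid A_1,\dots,A_n)=1-\exp(-\E[Z\mid A_1,\dots,A_n])$, and only the first moment matters.
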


We defer the proof after Lemma \ref{lem:t_connection_partial}, since it is a consequence of the latter. First of all, let $n\leq 5$ and fix $(n-1)$ integers $(k_i)_{1\leq i \leq {n-1}}$ (implicitely depending on $d$), such that
\[
  k_i = q + \indic{i \leq m},
\]
where $q$ and $m$ are defined by $\left\lceil (n-1)d/4 \right\rceil - n = q(n-1) + m$, and $m < n-1$. Note that they satisfy the following properties
\begin{equation}
  \label{eq:cond_k1_kt}
  k_1 + \dots + k_{n-1} = \left\lceil \frac{(n-1)d}{4} \right\rceil - n,
\end{equation}
\begin{equation}
  \forall 1 \leq i \leq n-1,\quad 4k_i \leq d. \label{eq:cond_ki}
\end{equation}

\noindent Then we need the following result. Remember the definition of the visible sets from \eqref{eq:def_A_1} and \eqref{eq:def_A_s}.
\begin{lemma}
  \label{lem:t_connection_partial}
  Let $n \leq 5$. Let $X^{(1)},\dots,X^{(n)}$ be $n$ independent random walks indexed by infinite critical trees starting respectively from $x_1, \dots, x_n$. Let $\sigma_u, \sigma_u^{(1)},\dots,\sigma_u^{(n)}$ be $n+1$ independent interlacements with law $\mathrm{Poi}(u,W^*)$ and independent from $X^{(1)},\dots,X^{(n)}$. For $1 \leq i \leq n$, denote by $(A_i^{(j)})_{j\geq 1}$ the visible sets associated to $X^{(i)}$ and constructed with the interlacement $\sigma_u^{(i)}$. There exist constants $c > 0$, $R_0 < \infty$ and $\varepsilon > 0$, such that, for any $r > \max(\|x_1\|,\dots,\|x_n\|)$ and $R > R_0$, satisfying $r^{d-4} \leq \varepsilon R$, we have
  \[
    \P\left(\exists\,\gamma \in \mathrm{Supp}(\sigma^u_{r,R}) : \gamma \text{ connects } A_1^{(k_1)}(r,R), \dots, A_{n-1}^{(k_{n-1})}(r,R) \text{ and } A_n^{(1)}(r,R)\right) \geq c.
  \]
\end{lemma}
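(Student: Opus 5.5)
The plan is a second moment argument on the number of good trajectories, conditionally on the visible sets. Write $A_i = A_i^{(k_i)}(r,R)$ for $i\le n-1$ and $A_n = A_n^{(1)}(r,R)$. These are independent, because they are built from independent walks and independent interlacements. By Lemmas \ref{lem:lower_bound_capacity_final} and \ref{lem:bcap_moments_bound}, together with Paley--Zygmund, there is an event $\mathcal{G}$ of probability at least $c'>0$ on which
\[
\mathrm{BCap}(A_i) \geq c' R^{4k_i}, \qquad \mathrm{BCap}(A_n) \geq c' R^4 .
\]
Here condition \eqref{eq:cond_ki} guarantees $\min(d-4,4k_i)=4k_i$, except possibly when $4k_i = d$, where we get $R^{d-4}$; that case only helps up to constants. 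I will also keep on $\mathcal{G}$ the information that every $A_i \subset \mathrm{B}(CR)$. Since the sets are built from $\sigma_{r,\infty}$, the interlacement $\sigma^u_{r,R}$ used for $\gamma$ must be taken independent of them; the statement uses a separate $\sigma_u$, so this is fine.

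Conditionally on the sets, let $M$ be the number of trajectories of $\sigma^u_{r,R}$ whose range meets all of $A_1,\dots,A_n$. To bound $\E[M]$ from below, I restrict to trajectories of $\sigma_u$ hitting a fixed ball $\mathrm{B}(C'R)$. By Proposition \ref{prop:desc_interlacement_compact} these are Poisson with mean $\asymp uR^{d-4}$, each a walk started from the equilibrium measure of the ball. For such a walk $X$, I lower bound the probability that it hits each $A_i$ by an independent-hitting heuristic along the spine. Using Theorem \ref{thm:bound_hitting_probability} for each descendant Kesten tree hanging off the spine while the spine stays in $\mathrm{B}(C'R)$ (time of order $R^2$), each $A_i$ is hit with probability $\gtrsim \mathrm{BCap}(A_i)R^{4-d}$. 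I want these events to be roughly independent, so that
\[
\E[M] \gtrsim R^{d-4}\prod_{i=1}^{n} \mathrm{BCap}(A_i) R^{4-d} \gtrsim R^{(d-4) + 4\sum k_i + 4 - n(d-4)} .
\]
By \eqref{eq:cond_k1_kt}, $4\sum k_i \ge (n-1)d - 4n$, so the exponent is $\ge 0$. To make the independence rigorous, I would decompose the past of $X$ into $n$ disjoint time windows of the spine, each of length $\asymp R^2$ and each with the spine inside $\mathrm{B}(C'R)$. Disjoint subtrees then give conditionally independent hits, and the Markov property of the spine handles the windows. I also subtract the contribution of trajectories meeting $\mathrm{B}(r)$, which is negligible since $r^{d-4}\le\varepsilon R$, exactly as in Lemma \ref{lem:lower_bound_psi_r_inf}.

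For the second moment, distinct trajectories of the Poisson process are independent, so $\E[M^2] \le \E[M]^2 + \E[M]$. If $\E[M]\gtrsim 1$ on $\mathcal{G}$, Paley--Zygmund gives $\P(M\ge 1\mid\mathcal{G}) \ge c''$, which concludes. The real work therefore lies in the first-moment lower bound, in particular in controlling where the spine is when a descendant tree hits a set. This is the lack of Markov property that the introduction mentions.

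The main obstacle is this lower bound for one walk hitting all $n$ sets simultaneously, with the correct product form. It needs:
\begin{itemize}
\item a lemma controlling the spine position $S_{n_u}$ at the moment a bush hits $A_i$ (the announced Lemma \ref{lem:t_A_tau_alphaR});
\item making sure that after hitting $A_i$ the spine stays within distance $O(R)$ with positive probability, so that the next window still sees $A_{i+1}$ at distance $\asymp R$.
\end{itemize}
I would first try to show that with probability bounded below the spine remains in $\mathrm{B}(C'R)$ during the first $nR^2$ steps. On that event, the hitting bound of Theorem \ref{thm:bound_hitting_probability} applies window by window, uniformly in the starting point.
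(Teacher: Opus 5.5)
You have the exponent count right, but your first-moment step has a concrete flaw, and the piece you leave open is exactly the paper's key idea.

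By Proposition \ref{prop:desc_interlacement_compact}, the trajectories hitting $\mathrm{B}(C'R)$ are walks started from $\widetilde{e}_{\mathrm{B}(C'R)}$ \emph{conditioned on} $\mathcal{T}_-\cap\mathrm{B}(C'R)=\varnothing$. Their whole past, spine included, stays outside the ball. So the time windows ``with the spine inside $\mathrm{B}(C'R)$'' on which your lower bound rests do not exist. The paper avoids this by taking $A_n$ itself as the reference set. The trajectories hitting $A_n$ are Poisson with mean $u\,\mathrm{BCap}(A_n)$, the conditioning only asks their past to avoid $A_n$, and only $A_1,\dots,A_{n-1}$ remain to be hit.

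For those remaining hits, Theorem \ref{thm:bound_hitting_probability} alone does not do what you want. It concerns the whole unconditioned past and requires $\mathrm{d}(x,K)\ge\varepsilon\,\mathrm{diam}(K)$. It says nothing about hits within a time window jointly with a confinement event for the spine. The paper never confines the spine. Instead, Lemma \ref{lem:t_A_tau_alphaR} gives $\inf_{x\in\mathrm{B}(\rho)}\mathbb{P}_x(t_A<\tau_{\alpha\rho})\gtrsim\mathrm{BCap}(A)\rho^{4-d}$ for $A\subset\mathrm{B}(\rho)$: start from $\partial\mathrm{B}(2\rho)$ and subtract the hits after $\tau_{\alpha\rho}$ using the upper bound, with $\alpha$ large. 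This is chained over the radii $\rho_j=\alpha^j dR$, with $\rho_0=dR$. Conditionally on the tree up to the spine vertex at $\tau_{\rho_{j-1}}$, what remains is a Kesten-tree walk started next to $\partial\mathrm{B}(\rho_{j-1})$, and $A_j\subset\mathrm{B}(\rho_{j-1})$. So the $n-1$ hits occur on disjoint spine segments, which gives the product lower bound $\prod_{i<n}\mathrm{BCap}(A_i)R^{4-d}$.

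Separately, your claim that the case $4k_i>d-4$ ``only helps up to constants'' is backwards. Condition \eqref{eq:cond_ki} only gives $4k_i\le d$, whereas $\min(d-4,4k_i)=4k_i$ needs $4k_i\le d-4$. When $4k_i>d-4$, capacity saturates at $\mathrm{BCap}(A_i)\lesssim R^{d-4}$, and you lose a factor $R^{4k_i-d+4}$. With unsaturated capacities your exponent equals $4\lceil (n-1)d/4\rceil-(n-1)d$, which is $0$ for $n=5$, so there is no room for any loss.

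For example, $d=10$, $n=5$ gives $(k_1,\dots,k_4)=(2,1,1,1)$ and an exponent of $-2$. Directly: about $R^4$ trajectories hit $A_5$, and each hits the three independent sets $A_2,A_3,A_4$ with probability of order $(R^4/R^6)^3$. So the probability in the statement is $O(R^{-2})$ there.

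The paper's proof relies on the same claim (``remember that $4k_i\le d$''), so following it would not repair this. For $n=5$ and $d\equiv 2,3\pmod 4$, the choice of the $k_i$ itself has to be changed; with these $k_i$ no uniform $c>0$ is possible.
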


We deduce now Lemma \ref{lem:t_connection} from Lemma \ref{lem:t_connection_partial}, the same way we deduced Corollary \ref{lem:prob_limsup} from Lemma \ref{lem:probability_lower_bound}, and prove Lemma \ref{lem:t_connection_partial} afterwards.

\begin{proof}[Proof of Lemma \ref{lem:t_connection}.]
  \noindent Fix $\varepsilon$ and $R_0$ as in Lemma \ref{lem:t_connection_partial}. Fix $\sigma_u$ with law $\mathrm{Poi}(u,W^*)$, and decompose it into $n+1$ independent random interlacements $\sigma'$ and $(\sigma^{(i)})_{1\leq i\leq n}$ with law $\mathrm{Poi}(u/(n+1),W^*)$. Let $r_1 = \max(\|x_1\|,\dots,\|x_n\|)$, $R_1 = \max(r_1^{d-4}/\varepsilon, R_0)$, and define recursively for $k \geq 2$,
  \[
    r_k = dR^2_{k-1}, \quad R_k = \frac{r_k^{d-4}}{\varepsilon}.
  \]
  Then for $k \geq 1$, we consider the event
  \[ 
    \Delta_k = \{\exists \gamma \in \mathrm{Supp}(\sigma'_{r_k,R_k}) : \gamma \text{ connects } A_1^{(k_1)}(r_k,R_k), \dots, A_{n-1}^{(k_{n-1})}(r_k,R_k) \text{ and } A_n^{(1)}(r_k,R_k)\},
  \]
  which is measurable with respect to the $\sigma-$algebra
  \[
    \mathcal{F}_k = \sigma\bigg(\{X_u^{(1)} : n_u^{(1)} < R_k^2/4\} \cup \dots \cup \{X_u^{(n)} : n_u^{(n)} < R_k^2/4\} \cup \sigma'_{r_k,R_k} \cup \bigcup_{i=1}^n \sigma^{(i)}_{r_k,r_{k+1}}\bigg),
  \]
  where $n_u^{(i)}$ refers to $X^{(i)}$. By Lemma \ref{lem:t_connection_partial} applied to $X^{(1)},\dots,X^{(n)}$ and $\sigma',\sigma^{(1)},\dots,\sigma^{(n)}$, we have that ${\P(\Delta_k \mid \mathcal{F}_{k-1}) \geq c}$, and we conclude the proof by applying Lemma \ref{lem:bc_cond}.
\end{proof}

The following lemma will be used in the proof of Lemma \ref{lem:t_connection_partial}. Recall the definition of the exit time $\tau_R$ of the ball of radius $R$ by the spine from \eqref{eq:exit_time_br}, and define
\[
  t_A = \inf\{n_u : u \in \mathcal{T}_-, X_u \in A\}.
\]

\begin{lemma}
  \label{lem:t_A_tau_alphaR}
  There exists $\alpha > 2$, such that for $R > 1$, $A \subseteq \mathrm{B}(R)$ and $X$ a random walk indexed by an infinite critical tree $\mathcal{T}$, 
  \[
    \inf_{x \in \mathrm{B}(R)} \P_x(t_A < \tau_{\alpha R}) \gtrsim \frac{\mathrm{BCap}(A)}{R^{d-4}}.
  \]
\end{lemma}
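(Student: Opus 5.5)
We will bound $\P_x(t_A < \tau_{\alpha R})$ from below by subtracting from the full hitting probability of $A$ by $\mathcal{T}_-$ the contribution coming from subtrees attached to the spine after it has left $\mathrm{B}(\alpha R)$. Let $H = \{\mathcal{T}_-^x \cap A \neq \varnothing\}$. Then
\[
  \P_x(t_A < \tau_{\alpha R}) \geq \P_x(H) - \P_x(t_A \geq \tau_{\alpha R},\, H).
\]

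\emph{Lower bound on $\P_x(H)$.} Since $x$ may lie close to or inside $A$, Theorem \ref{thm:bound_hitting_probability} cannot be applied directly from $x$. Instead, we run the spine until the first time $\widetilde\tau$ at which $S_{\widetilde\tau} \notin \mathrm{B}(2R)$. Because the jump law has finite range, $\|S_{\widetilde\tau}\| \asymp 2R$, hence $\mathrm{d}(S_{\widetilde\tau},A) \geq R \geq \mathrm{diam}(A)/2$. The part of the tree beyond $\xi_{\widetilde\tau}$ is a Kesten tree, and its past contains a copy of the past of an infinite critical tree started at $U = S_{\widetilde\tau}$; this is the same cutting argument as in the proof of Lemma \ref{lem:probability_lower_bound}. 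Applying \eqref{thm:bound_hitting_probability_infinite} with $\varepsilon = 1/2$ gives
\[
  \P_x(H) \geq \P_x\bigl(\mathcal{T}^{U}_- \text{ hits } A,\ \text{via a vertex } u \text{ with } n_u \geq \widetilde\tau\bigr) \geq c_1 \,\frac{\mathrm{BCap}(A)}{(3R)^{d-4}}.
\]
This event uses only vertices with $n_u \geq \widetilde\tau$. We must still check that $\widetilde\tau < \tau_{\alpha R}$, which holds since $\alpha > 2$. We must also handle the labeling subtlety: vertices attached after $\widetilde\tau$ have to count as part of the ``past'' of the original tree. This follows from the remark in Section \ref{sec:gw} that truncating at a spine vertex leaves a Kesten tree whose past corresponds to the original past.

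\emph{Upper bound on the bad term.} On $\{t_A \geq \tau_{\alpha R}\} \cap H$, the set $A$ is hit by a subtree grafted on the spine at an index $n \geq \tau_{\alpha R}$. By the strong Markov property of the spine at $\tau_{\alpha R}$, this is bounded by $\E_x\bigl[\overline{\P}_V(\mathcal{T}_- \cap A \neq \varnothing)\bigr]$ with $V = S_{\tau_{\alpha R}}$. Since $\mathrm{d}(V,A) \geq (\alpha-1)R \geq \mathrm{diam}(A)$, the upper half of Theorem \ref{thm:bound_hitting_probability} (for Kesten's tree, adapting as in Lemma \ref{lem:probability_lower_bound}) gives the bound
\[
  c_2\,\frac{\mathrm{BCap}(A)}{((\alpha-1)R)^{d-4}}.
\]
Alternatively, one can bound this term directly by a union bound using $G$ and $g$ as in \eqref{eq:hit_T}. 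Choosing $\alpha$ large enough that $c_2(\alpha-1)^{4-d} \leq c_1 3^{4-d}/2$, we obtain
\[
  \P_x(t_A < \tau_{\alpha R}) \geq \frac{c_1}{2\cdot 3^{d-4}}\,\frac{\mathrm{BCap}(A)}{R^{d-4}},
\]
uniformly in $x \in \mathrm{B}(R)$.

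\emph{Main obstacle.} The delicate step is the first one: making rigorous that the hits counted by the Kesten tree from $U$ are genuinely hits by $\mathcal{T}_-$ of the original walk, at spine index at least $\widetilde\tau$. This requires careful bookkeeping of the past/future decomposition and of which side of the spine the adjoint trees hang. We will use that the past of the Kesten tree rooted at $\xi_{\widetilde\tau}$ consists of adjoint trees on spine vertices beyond $\widetilde\tau$, all of which belong to $\mathcal{T}_-$. If the branching-capacity estimate is needed for Kesten's tree rather than the invariant tree, we can note that the two differ only by the root's offspring, which only helps for a lower bound. For the upper bound, the extra root subtree is a critical tree and contributes $\lesssim \mathrm{BCap}(A)R^{2-d}$ by \eqref{thm:bound_hitting_probability_critical}, which is negligible.
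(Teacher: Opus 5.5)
Your argument is correct and is essentially the paper's proof. It uses the same ingredients: a lower bound from Theorem \ref{thm:bound_hitting_probability} applied at the exit point of $\mathrm{B}(2R)$, minus an upper bound obtained via the Markov property of the spine at $\tau_{\alpha R}$, with $\alpha$ chosen large so the difference stays of order $\mathrm{BCap}(A)R^{4-d}$. The paper proves the estimate for starting points on $\partial\mathrm{B}(2R)$ first and then transfers it to $x\in\mathrm{B}(R)$ by the Markov property at $\tau_{2R}$, whereas you subtract directly at $x$; this is only a reordering, and your explicit handling of the Kesten-versus-invariant-tree past and of the distance bound $3R$ is welcome extra care.
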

\begin{proof}
    Recall the constants $c_1$ and $c_2$ from Theorem \ref{thm:bound_hitting_probability}, and let $\alpha = 1 + 2(2c_2/c_1)^{1/(d-4)} > 2$. For $x \in \partial \mathrm{B}(2R)$, observe that
  \[
    \P_x(t_A < \infty) = \P_x(t_A \leq \tau_{\alpha R}) + \P_x(\tau_{\alpha R} < t_A < \infty).
  \]
  Since $x\in\partial\mathrm{B}(2R)$ and $A \subset \mathrm{B}(R)$, one has $R \leq \mathrm{dist}(x,A) \leq 2R$. Then Theorem \ref{thm:bound_hitting_probability} yields that,
  \[
    \P_x(t_A < \infty) \leq \frac{c_2\mathrm{BCap}(A)}{(2R)^{d-4}}.
  \]
  Moreover, the Markov property at time $\tau_{\alpha R}$ yields
  \[
    \P_x(\tau_{\alpha R} < t_A < \infty) \leq \sup_{y \in \partial\mathrm{B}(\alpha R)} \P_y(t_A < \infty) \leq c_2\frac{\mathrm{BCap}(A)}{(\alpha R-R)^{d-4}}.
  \]
  Therefore by the definition of $\alpha$,
  \begin{equation}
    \label{eq:lower_bound_ta_talphaR_boundary}
    \P_x(t_A \leq \tau_{\alpha R}) \geq \frac{\mathrm{BCap}(A)}{R^{d-4}} \left(\frac{c_1}{2^{d-4}} - \frac{c_2}{(\alpha-1)^{d-4}}\right) \geq \frac{c_1\mathrm{BCap}(A)}{2(2R)^{d-4}} \gtrsim \frac{\mathrm{BCap}(A)}{R^{d-4}}.
  \end{equation}
  Next, observe that for all $x\in\mathrm{B}(R)$,
  \[
    \P_x(t_A < \tau_{\alpha R}) \geq \P_x(\tau_{2R} < t_A < \tau_{\alpha R}) \geq \inf_{y \in \partial\mathrm{B}(2R)} \P_y(t_A < \tau_{\alpha R}).
  \]
  We conclude by applying \eqref{eq:lower_bound_ta_talphaR_boundary} to the previous equation.
\end{proof}

\begin{figure}[H]
  \centering
  \begin{tikzpicture}[scale = 0.7, use Hobby shortcut]
    \draw[left] (-0.1,0.5) node {$A_1$};
    \draw ([closed]0,0) .. (1,-1) .. (2,-0.5) .. (2.25,0) .. (3,2);

    \draw[below right] (6,-1.5) node {$A_2$};
    \draw ([closed]4,-2) .. (4.3,-2.5) .. (6,-1.5) .. (4.7,-0.75) .. (3.6,0);

    \draw[below] (9,3) node {$A_n$};
    \draw ([closed]7.5,2.5) .. (8,3) .. (9, 3) .. (8,4.5);

    \draw[above] (-0.3,3) node {$\gamma$};
    \draw (-0.3, 3) .. (1,2) .. (1.2,1) .. (1.5,0) .. (3,-0.5) .. (3.5,-1.5) .. (5, -1) .. (6,0);
    \draw[dotted] (6,0) .. (7,1);
    \draw (7,1) .. (7.5,1.5) .. (8,3) .. (7.5,3.5) .. (7,3.25) .. (6.5,2.5) .. (5.5, 3);
  \end{tikzpicture}
  \caption{Example of a connection of $n$ visible sets.}
  \label{fig:ex_lemma_n_connection_partial}
\end{figure}

\begin{proof}[Proof of Lemma \ref{lem:t_connection_partial}.]
  For simplicity, we will write $A_i$ for $A_i^{(k_i)}(r,R)$ if $1 \leq i \leq n-1$, and $A_n$ for $A_n^{(1)}(r,R)$. \\
  Let $E = \left\{\exists\,\gamma \in \sigma^u_{r,2R} : \gamma \text{ connects } A_1, \dots, A_n\right\}$. We want to prove that we can find $c = c(u,d) > 0$ such that $\P(E) > c$. We define
  \[
    E_1 = \left\{\exists\,\gamma \in \sigma^u_{2R} : \gamma \text{ connects } A_1, \dots, A_n\right\}, \quad\quad\quad
    E_2 = \left\{\exists\,\gamma \in \sigma^u_{r} : \gamma \text{ intersects } A_n\right\},
  \]
  and note that $E_1 \backslash E_2 \subset E$. We show that for $R$ large enough, $\P(E_1)$ is lower bounded by a positive constant and that $\P(E_1) \geq 2\P(E_2)$, which in turn proves that $\P(E) \geq \P(E_1) - \P(E_2) \geq \P(E_1)/2 \geq c$.

   By definition of the branching interlacement, if we let $N$ be a Poisson random variable with parameter $u\mathrm{BCap}(A_n)$, and $\gamma$ be a random walk indexed by an infinite invariant tree independent from $N$ whose starting point is chosen according to $\tilde{e}_{A_n}$, then
  \[
    \P(E_1) = \E\left[1 - \Big(1 - \P\big(\gamma \text{ hits } A_1 \dots A_{n-1}\big)\Big)^N\right].
  \]
  Observe that if we let $\rho_k = \alpha^kdR$, where $\alpha$ is as in Lemma \ref{lem:t_A_tau_alphaR}, we have
  \[
    \P(\gamma \text{ hits } A_1,\dots,A_{n-1}) \geq \P(t_{A_1} < t_{A_2} < \dots < t_{A_{n-1}}) \geq \P(t_{A_1} < \tau_{\rho_1} < t_{A_2} < \tau_{\rho_2} < \dots < t_{A_{n-1}} < \tau_{\rho_{n-1}}).
  \]
  Then by conditioning with respect to $\sigma(\{u, X_u : n_u \leq \tau_{\rho_{n-2}}\})$, we get
  \[
    \P(t_{A_1} < \tau_{\rho_1} < t_{A_2} < \tau_{\rho_2} < \dots < t_{A_{n-1}} < \tau_{\rho_{n-1}}) = \E\big[\indic{t_{A_1} < \tau_{\rho_1}}\dots \indic{t_{A_{n-2}} < \tau_{\rho_{n-2}}} \overline{\P}_{S_{\tau_{\rho_{n-2}}}}(t_{A_{n-1}} < \tau_{\rho_{n-1}})\big],
  \]
  where $\overline{\P}$ denotes the law of a random walk indexed by Kesten's tree.
  Since for $i \leq n-1$, $A^{(i)} \subset \mathrm{B}(\rho_i)$, by applying Lemma \ref{lem:t_A_tau_alphaR} and iterating, it follows that
  \[
    \P(\gamma \text{ hits } A_1,\dots,A_{n-1}) \gtrsim \E\bigg[\frac{\mathrm{BCap}(A_1)}{R^{d-4}} \dots \frac{\mathrm{BCap}(A_{n-1})}{R^{d-4}}\bigg].
  \]
  Furthermore, remember that $N$ is a Poisson random variable with parameter $u\mathrm{BCap}(A_n)$, and thus we get for some constant $c > 0$,
  \[
    \P(E_1) \geq 1 - \E\bigg[\sum_{p=0}^\infty (u\mathrm{BCap}(A_n))^p \Big(1 - \frac{\E\big[\mathrm{BCap}(A_1)\dots \mathrm{BCap}(A_{n-1})\big]}{R^{(n-1)(d-4)}}\Big)^p\exp(-u\mathrm{BCap}(A_n))\bigg]. \\
  \]
  Finally, Jensen's inequality gives
  \[
    \P(E_1) \geq 1 - \exp\bigg(-c \frac{\E\big[\mathrm{BCap}(A_1)\dots\mathrm{BCap}(A_n)\big]}{R^{(n-1)(d-4)}}\bigg).
  \]
  Then using the Paley-Zigmund inequality together with Lemmas \ref{lem:bcap_moments_bound} and \ref{lem:lower_bound_capacity_final} (remember that $4k_i \leq d$ for all $i \leq n-1$), there exist positive $\delta$, $C$ and $\varepsilon$ such that if $r^{d-4} \leq \varepsilon R$,
  \begin{equation}
    \label{eq:lower_bound_pz}
    \forall\, i \leq n-1,\ \P\big(\mathrm{BCap}(A_i) \geq CR^{4k_i}\big) \geq \delta, \quad\text{and}\quad \P\big(\mathrm{BCap}(A_n) \geq CR^{4}\big) \geq \delta.
  \end{equation}
  Recalling from \eqref{eq:cond_k1_kt} that $4(k_1+\dots+k_{n-1}) \geq (n-1)(d-4)-4$, for some smaller constants $c' > 0$ and $c''>0$, we deduce
  \begin{equation}
    \label{eq:bound_E1}
    \P(E_1) \geq 1 - \exp\bigg(-c' \frac{R^{4(k_1+\dots+k_{n-1})+4}}{R^{(n-1)(d-4)}}\bigg) > c'' > 0.
  \end{equation}

  \noindent On the other hand, if we let again $\gamma$ be a random walk indexed by an infinite invariant tree whose starting point is chosen according to $\tilde{e}_{A_n}$,
  \[
    \P(E_2) = 1 - \E\bigg[\big(1 - \P(\mathrm{Range}(\gamma) \cap \mathrm{B}(r) \neq \varnothing)\big)^N\bigg].
  \]
  Remember that in the definition of $A^{(1)}$ in \eqref{eq:def_A_1}, we have chosen the origin of the branching walk so that it belongs to the boundary of the ball $\mathrm{B}(R)$. Consequently, almost surely $\mathrm{d}(A^{(1)}, \mathrm{B}(r)) \geq R/2-r$, and in particular, $\mathrm{d}(\gamma_\varnothing, \mathrm{B}(r)) \geq R/2-r$. This allows us to apply Theorem \ref{thm:bound_hitting_probability}, and deduce that
  \begin{align*}
    \P(\mathrm{Range}(\gamma) \cap \mathrm{B}(r) \neq \varnothing) &\leq \P(\exists\, u \in \mathcal{T}_- : \gamma_u \in \mathrm{B}(r)) + \P(\exists\, u \in \mathcal{T}_+, \gamma_u \in \mathrm{B}(r)) \\ &\lesssim \frac{\mathrm{BCap}(\mathrm{B}(r))}{(R/2-r)^{d-4}} \lesssim \frac{r^{d-4}}{(R/2-r)^{d-4}}.
  \end{align*}
  It follows, using again $r^{d-4} \leq \varepsilon R$, that
  \[
    \P(\mathrm{Range}(\gamma)\cap\mathrm{B}(r) \neq \varnothing) \lesssim \frac{\varepsilon}{R^{d-5}},
  \]
  which gives by \eqref{eq:lower_bound_pz}
  \[
    \P(E_2) \leq 1 - \E\Bigg[\exp\bigg(-\frac{C \varepsilon}{R^{d-5}} u\mathrm{BCap}(A_n)\bigg)\Bigg] \leq 1 - \exp\bigg(-\frac{C\varepsilon}{R^{d-9}}\bigg).
  \]
  Thus if we choose $\varepsilon$ small enough,
  \begin{equation}
    \label{eq:bound_E2}
    \P(E_2) \leq \frac{\P(E_1)}{2}.
  \end{equation}

  \noindent Finally, from \eqref{eq:bound_E1} and \eqref{eq:bound_E2}, we get that
  \[
    \P(E) \geq \frac{c''}{2} > 0.
  \]
\end{proof}

\begin{figure}[H]
  \centering
  \begin{tikzpicture}[use Hobby shortcut]
    \node at (0,0) [circle,fill,inner sep=1.5pt]{};
    \node at (1,0) [circle,fill,inner sep=1.5pt]{};
    \node at (2,0) [circle,fill,inner sep=1.5pt]{};
    \node at (3,0) [circle,fill,inner sep=1.5pt]{};
    \node at (4,0) [circle,fill,inner sep=1.5pt]{};
    \node at (5,0) [circle,fill,inner sep=1.5pt]{};
    \node at (8,0) [circle,fill,inner sep=1.5pt]{};
    \node at (9,0) [circle,fill,inner sep=1.5pt]{};

    \draw[below left] (0,0) node {$x_1$};
    \draw[below left] (1,0) node {$x_2$};
    \draw[below left] (2,0) node {$x_3$};
    \draw[below left] (3,0) node {$x_4$};
    \draw[below left] (4,0) node {$x_5$};
    \draw[below left] (5,0) node {$x_6$};
    \draw[below left] (8,0) node {$x_{k-1}$};
    \draw[below left] (9,0) node {$x_k$};

    \draw (0,-1) node {$X_1$};
    \draw (1,-1) node {$X_2$};
    \draw (2,-1) node {$X_3$};
    \draw (3,-1) node {$X_4$};
    \draw (4,-1) node {$X_5$};
    \draw (5,-1) node {$X_6$};
    \draw (7.8,-1) node {$X_{k-1}$};
    \draw (9,-1) node {$X_k$};
    
    \draw (0.1,0.5) .. (0,0) .. (0.3,-1) .. (-0.1,-2);
    \draw (1.1,0.5) .. (1,0) .. (1.3,-1) .. (0.9,-2);
    \draw (2.1,0.5) .. (2,0) .. (2.3,-1) .. (1.9,-2);
    \draw (3.1,0.5) .. (3,0) .. (3.3,-1) .. (2.9,-2);
    \draw (4.1,0.5) .. (4,0) .. (4.3,-1) .. (3.9,-2);
    \draw (5.1,0.5) .. (5,0) .. (5.3,-1) .. (4.9,-2);
    \draw (8.1,0.5) .. (8,0) .. (8.3,-1) .. (7.9,-2);
    \draw (9.1,0.5) .. (9,0) .. (9.3,-1) .. (8.9,-2);

    \draw[decorate,decoration=brace,thick] (-0.5,0.75) -- (4.5,0.75);
    \draw[decorate,decoration=brace,thick] (6,-2.25) -- (3.5,-2.25);
    \draw[decorate,decoration=brace,thick] (6.5,0.75) -- (9.5,0.75);

    \draw (6.3,-1) node {{\Huge...}};
  \end{tikzpicture}
  \caption{Sketch of proof of the upper bound in Theorem \ref{thm:k_points_branching}}
  \label{fig:sketch_proof_upper_bound}
\end{figure}

\begin{proof}[Proof of the upper bound in Theorem \ref{thm:k_points_branching}]
  We prove that we can connect any familly of $k$ trajectories with $n(k,d)$ trajectories of the interlacement, which proves the upper bound in Theorem \ref{thm:k_points_branching}. Fix $u > 0$ and let $(u_N)_{N\geq 1}$ be an increasing sequence converging to $u$. Let $w_1,\dots,w_k \in \sigma_{u_N}$ intersecting $B_N = \mathrm{B}(0,N)$. By Proposition \ref{prop:desc_interlacement_compact}, they can respectively be described as trajectories of branching random walks $X_1,\dots,X_k$, starting from the boundary of $B_N$. Consider the interlacement $\sigma_{u_N,u}$, restriction of $\sigma_u$ to the trajectories appearing between $u_N$ and $u$, which is independent from $\sigma_{B_N,u_N}$ and thus from $X_1,\dots,X_k$.
  
  Let $m \geq 1$ and $0 \leq n \leq 3$, such that $k = 4(m-1)+n+1$. Cut $\sigma_{u_N,u}$ into independent interlacements $(\sigma^{(i)})_{0\leq i\leq m-1}$ with distribution $\mathrm{Pois}((u-u_N)/m)$. For each $0 \leq i \leq m-2$, apply Lemma \ref{lem:t_connection} to the $5$ branching random walks $X_{4i+1},X_{4i+2},\dots,X_{4i+5}$, and the interlacement $\sigma^{(i)}$. Then there exist $d-8$ trajectories of $\sigma^{(i)}$ that connect $X_{4i+1},\dots,X_{4i+5}$. Since two groups of $5$ walks labeled by consecutive indices share a common walk, we can connect $X_1,\dots,X_{4(m-1)+1}$ with $(m-1)(d-8)$ walks of the interlacement.

  Finally, in the case where $n \geq 1$, we can connect the last $n+1$ walks $X_{4(m-1)+1},\dots,X_{4(m-1)+n+1}$ using again Lemma \ref{lem:t_connection} with $\lceil nd/4\rceil - 2n$ additional trajectories from $\sigma^{(m-1)}$. In total to connect $w_1,\dots,w_k$, we need at most
  \[
    (m-1)(d-8) + \left\lceil\frac{nd}{4}\right\rceil - 2n
  \]
  trajectories of $\sigma_{u_N,u}$. Then observe that,
  \begin{align*}
    (m-1)(d-8) + \left\lceil\frac{nd}{4}\right\rceil - 2n
    &= 2k - 2n - 2 + (m-1)(d-8) + \left\lceil\frac{nd}{4}\right\rceil - (k-2) -k \\
    &= 8(m-1) + (m-1)(d-8) + \left\lceil\frac{nd}{4}\right\rceil - (k-2) -k \\
    &= (m-1)d + \left\lceil\frac{nd}{4}\right\rceil - (k-2) -k = n(k,d) -k.
  \end{align*}
  Hence almost surely any $k$ trajectories of $\sigma_{B_N, u_N}$ are connected using at most $n(k,d)$ trajectories of $\sigma_u$. The proof follows by observing that $\mathrm{Supp}(\sigma_u) = \bigcup_N \mathrm{Supp}(\sigma_{B_N, u_N})$.
\end{proof}

\subsection{Proof of the lower bound}
\label{sec:lower_bound_k}

We prove here the lower bound in Theorem \ref{thm:k_points_branching}. Let $E$ be the event that every family of $k$ points of the interlacement are connected by less than $n(k,d)$ trajectories:
\[
  E = \bigcap_{x_1,\dots,x_k\in\mathcal{I}^u}\left\{\!\!\begin{array}{c} x_1,\dots,x_k \text{ are connected by less than }\\ n(k,d) \text{ trajectories of the interlacement}\end{array}\!\!\!\right\}.
\]

The main goal of this section is to prove the following proposition, from which we directly deduce the theorem.

\begin{proposition}
  \label{prop:lower_bound_k}
  Let $k \geq 2$, $\varepsilon \in (0,1)$. There exists $C = C(k,\varepsilon)$, such that for any ${x_1,\dots,x_k\in\mathbb{Z}^d}$,
  \[
    \P\left(\{x_1,\dots,x_k \in \mathcal{I}\} \cap E\right) \leq C\max_{1\leq i\neq j \leq k} \|x_i-x_j\|^{-1+\varepsilon}.
  \]
\end{proposition}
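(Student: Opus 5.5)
We follow the scheme of Lemmas \ref{lem:product_decomposition} and \ref{lem:2_points_lower_bound_probability}, now for a general connection graph. On $\{x_1,\dots,x_k\in\mathcal{I}\}\cap E$ there are $n\le n(k,d)-1$ trajectories $w_1,\dots,w_n$ of $\sigma_u$ whose union is connected and contains $x_1,\dots,x_k$. We take a minimal such family and record, for each pair of intersecting trajectories used in a spanning tree of the intersection graph, one intersection point $y_j\in\mathbb{Z}^d$. Each trajectory $w_i$ then has to visit a finite set $P_i$ of prescribed points (some $x$'s and some $y$'s). The bipartite incidence structure (trajectories versus points) is a tree with leaves among the $x_i$, and it has at most $n-1$ internal points $y_j$.

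\textbf{Step 1: union bound.} By the Palm--Mecke formula as in Lemma \ref{lem:product_decomposition}, we bound the probability by a sum over combinatorial shapes and over positions $\overline y$ of
\[
\prod_i u\,\nu(\{w: P_i\subset \mathrm{Range}(w)\}).
\]
Each factor is at most $\P_{p}(P_i\subset\mathrm{Range}(X))$ for some $p\in P_i$. Lemma \ref{lem:n_points_connection} bounds this by a sum over branching points $\overline z$ and spined trees of $\prod_{\text{edges}}(1+\|u-v\|)^{2-d}$. This uses the moment assumption of order $2|P_i|-1\le 2\lceil d(k-1)/4\rceil$.

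We also have to handle the spine correctly. Spine edges carry $g$ rather than $G$. After summing over the (undistinguished) spine endpoint, consecutive spine edges convolve to give a decay of order $d-4$ rather than $d-2$, exactly as $g*g\asymp G$ in \eqref{eq:G_g_convolution}. So it is cleaner to keep the exponent $d-2$ on every edge and allow the degree-two spine vertex, which is what $\mathbb{T}^k_n$ already does. The result is a finite sum over weighted trees $\mathbb{A}_{k,n,m}$ with:
\begin{itemize}
\item fixed leaves $x_1,\dots,x_k$;
\item free vertices $\overline y,\overline z$, each edge weighted by $(1+\|u-v\|)^{2-d}$;
\item every free vertex summed over $\mathbb{Z}^d$.
\end{itemize}

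\textbf{Step 2: reduction of the tree.} We then bound each such tree sum by $\max_{i\ne j}\|x_i-x_j\|^{-1+\varepsilon}$, where the free vertices may be taken summed. A counting of exponents shows what to expect. Each free vertex contributes $+d$ from its summation, each edge contributes $-(d-2)$, and the number of trajectories is tied to the number of $y$ vertices. The condition $n<n(k,d)$ should translate exactly into a total scaling exponent of at most $-1$ at scale $\max\|x_i-x_j\|$.

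To make this rigorous we use two operations:
\begin{itemize}
\item \emph{cut}: a free leaf, or a free vertex of degree two, is summed out using Lemma \ref{lem:magic_ineq}. For a leaf, the sum $\sum_z (1+\|z-v\|)^{2-d}$ diverges, so leaves must be fixed $x$'s; free vertices have degree at least $2$. For degree two, two edges of exponents $a,b$ merge into one edge of exponent $a+b-d$.
\item \emph{merge}: a free vertex of degree at least $3$ is attached to a fixed leaf or to another vertex. Here we use $\|z-x_i\|+\|z-x_j\|\ge\|x_i-x_j\|$ to split the sum into regions where $z$ is closest to one neighbour, and replace all edges from $z$ to its neighbours by edges from that nearest neighbour. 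This reduces the number of free vertices while controlling the exponents.
\end{itemize}
We iterate these operations, keeping as invariant a lower bound on the total exponent, until a single edge $x_i$–$x_j$ remains with weight $\|x_i-x_j\|^{-1+\varepsilon}$ times constants.

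\textbf{Main obstacle.} The hard step is Step 2. When merging, Lemma \ref{lem:magic_ineq} needs $a<d$ and $a+b>d$. Boundary cases (exponent sum exactly $d$, or an edge exponent reaching $0$) produce logarithms, which is where the $\varepsilon$ loss comes from: we will lower exponents by $\varepsilon/(\text{number of steps})$ whenever a borderline convolution appears. We also need to show that the exponent count, namely the ceiling in $n(k,d)$ with the $-(k-2)$ correction, survives the merges. We plan to prove it by induction on the number of free vertices, tracking the quantity
\[
\sum_{\text{edges}}(\text{exponent}) - d\cdot\#\{\text{free vertices}\},
\]
and verifying that each operation does not increase it beyond $-1+\varepsilon$ once every vertex has been processed.
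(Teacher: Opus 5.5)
\textbf{The gap is in Step~2.} You claim that every tree produced by the union bound of Step~1 has a finite sum over its free vertices, bounded by $\max_{i\neq j}\|x_i-x_j\|^{-1+\varepsilon}$, and that a global exponent count (your invariant $\sum(\text{exponents})-d\cdot\#\{\text{free vertices}\}$) controls it. This is false.

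Take $d=13$ and $k=3$, so $n(3,13)=6$, and take $n=5$. Let $w_5$ contain $x_2$ and $x_3$, and let $x_1\in w_1$ be linked to $w_5$ by the chain $w_1,\dots,w_4$ with intersection points $y_1,\dots,y_4$. The global count is $4(n-1)-(d-4)(k-1)=-2\le -1$. Yet consider the term with a degree-two spine vertex on each of $w_1,\dots,w_4$. After summing those vertices ($g*g\asymp G$), it contains $\sum_{y_1,y_2,y_3}G(x_1,y_1)G(y_1,y_2)G(y_2,y_3)G(y_3,y_4)$. Since $G^{*3}\asymp\|\cdot\|^{12-d}=\|\cdot\|^{-1}$ and $G\asymp\|\cdot\|^{-9}$, the $y_3$-sum is of order $\sum_{y\in\mathbb{Z}^{13}}\|y\|^{-10}=\infty$.

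The event itself is rare only because $x_2,x_3$ already lie on one trajectory: a sub-configuration of $k'=2$ points uses $1<n(2,13)=4$ trajectories. Joining $x_1$ to $w_5$, by contrast, is essentially free, since two trajectories are a.s.\ at graph distance at most $s_{13}=3$. What blows up is the expected number of such chains. No ordering of cuts and merges and no $\varepsilon$-lowering of exponents can repair this. Lowering exponents only increases the sum, and a convolution with $a+b\le d$ on $\mathbb{Z}^d$ is infinite. So the borderline logarithmic cases you anticipate are not the real difficulty.

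\textbf{The missing idea} is the induction on $k$ combined with the subtree condition $(*)$ of \eqref{eq:strict_subtree}. The paper splits the trees $T\in\mathfrak{T}_{k,n}$ into two classes:
\begin{itemize}
\item Trees containing a strict subtree with $k'<k$ leaves and $n'<n(k',d)$ trajectories. For these, $\P(E_T)$ is bounded directly by the induction hypothesis applied to those $k'$ points, with no union bound over the rest of the tree.
\item Trees satisfying $(*)$. Only these are fed into the tree sums. This is why the definition of $\mathbb{A}_{k,n,m}$ carries its last condition, which the trees coming out of your Step~1 need not satisfy.
\end{itemize}

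In the reduction, $(*)$ is then used locally at each pair of consecutive edges $\vec f,\vec e$. Applied to the descendant tree $\mathcal{A}(\vec f)$, it forces $\alpha_N(\vec e)<d$. Applied to the complementary part, together with $n<n(k,d)$, it gives $\alpha_N(\vec e)+\alpha_N(\vec f)\ge d+1-\varepsilon$. Hence every merge is a legitimate use of Lemma~\ref{lem:magic_ineq}, and no borderline convolution ever occurs. The $\varepsilon/k$ shift on leaf edges serves only to make $\alpha_N(\vec e)<d$ strict. Turning the counts $x(\vec f),y(\vec f),z(\vec f)$ into the weight of each newly created edge also requires bookkeeping of the branching vertices through $\delta$ (Lemma~\ref{lem:bound_z_delta}). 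A single global invariant does not provide this per-edge control.
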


\noindent We now briefly conclude the proof of our main result and prove the proposition afterwards.

\begin{proof}[Proof of the lower bound in Theorem \ref{thm:k_points_branching}]
  By contradiction, suppose that $\P(E) > 0$. Remember from \eqref{eq:lower_bound_intersect_ball} that there exists $R_0 > 0$, such that for all $R \geq R_0$,
\[
  \P(\mathcal{I}\cap \mathrm{B}(0,R) \neq \varnothing) \geq 1 - \frac{\P(E)}{k+1}.
\]
Then if $z_1,\dots,z_k\in\mathbb{Z}^d$,
\[
  \P\left(E \cap \{\mathcal{I}\cap \mathrm{B}(z_1,R) \neq \varnothing\} \cap \dots \cap \{\mathcal{I}\cap \mathrm{B}(z_k,R) \neq \varnothing\}\right) \geq \P(E) - k\P(\mathcal{I}\cap\mathrm{B}(0,R)=\varnothing) \geq \frac{\P(E)}{k+1}.
\]
However, for every $N > 3R$, we can choose $z_1,\dots,z_k \in \mathbb{Z}^d$ such that $\min_{1\leq i\neq j\leq k}\|z_i - z_j\| > N$. Then we have, by applying Proposition \ref{prop:lower_bound_k} with $\varepsilon = 1/2$ in the last inequality,
\end{proof}

A vertex is said to be \textit{external} if its degree is $1$, conversely, a vertex is \textit{internal} if its degree is at least $2$.
For $k,n \geq 1$, denote by $\mathfrak{T}_{k,n}$ the set of planar trees rooted at an external vertex with $k$ external nodes and $n$ internal nodes. If $T\in\mathfrak{T}_{k,n}$, $x_1,\dots,x_k \in \mathbb{Z}^d$, $w_1,\dots,w_n \in W^*$, and $y_1,\dots,y_{n-1} \in \mathbb{Z}^d$, we write $T(\overline{w}_n,\overline{x}_k, \overline{y}_{n-1})$ for the copy of $T$ where the internal nodes have been replaced by $w_1,\dots,w_n$ and the external ones by $x_1,\dots,x_k$ (in a depth-first search order from the root). Moreover, an edge to an external vertex $x_i$ is labeled by $x_i$, whereas edges between internal vertices are labeled by $(y_i)_{1\leq i\leq n-1}$ (in a depth-first search order from the root).

\begin{lemma}
  \label{lem:lower_bound_event_inclusion}
  Let $k \geq 2$, $x_1,\dots,x_k\in\mathbb{Z}^d$, and $\sigma_u$ with law $\mathrm{Poi}(u, W^*)$. Then
  \begin{align*}
    \left\{\!\!\begin{array}{c} x_1,\dots,x_k \text{ are connected} \\ \text{ with less than } n(k,d) \\ \text{ trajectories of } \sigma_u \end{array}\!\!\right\} 
     = \bigcup_{n=1}^{n(k,d)-1}\!\!\!\bigcup_{w_1,\dots,w_n\in\sigma_u} \bigcup_{T \in \mathfrak{T}_{k,n}} \bigcup_{y_1,\dots,y_{n-1}} \bigcap_{i=1}^n \left\{\!\!\begin{array}{c} w_i \text{ connects the labels}\\ \text{of its adjacent edges } \\ \text{in } T(\overline{w}_n,\overline{x}_k,\overline{y}_{n-1}) \end{array}\!\!\right\}.
  \end{align*}
\end{lemma}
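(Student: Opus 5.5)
We prove the two inclusions separately. The inclusion "$\supseteq$" is immediate. Fix $n\le n(k,d)-1$, trajectories $w_1,\dots,w_n$, a tree $T\in\mathfrak{T}_{k,n}$ and points $y_1,\dots,y_{n-1}$ such that every $w_i$ visits the labels of its adjacent edges in $T(\overline{w}_n,\overline{x}_k,\overline{y}_{n-1})$.
\begin{itemize}
\item Each $x_j$ is the label of the edge joining the leaf $x_j$ to some internal vertex $w_i$, so $x_j\in\mathrm{Range}(w_i)$.
\item Each internal edge $\{w_i,w_{i'}\}$ carries a label $y_l$ lying in both ranges, so $w_i$ and $w_{i'}$ intersect.
\end{itemize}
Since $T$ is connected, the union of the ranges of $w_1,\dots,w_n$ is connected in the intersection-graph sense and contains $x_1,\dots,x_k$. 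Hence the $x_j$ are connected using $n<n(k,d)$ trajectories of $\sigma_u$.

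For "$\subseteq$", suppose $x_1,\dots,x_k$ are connected by trajectories $w_1,\dots,w_n\in\mathrm{Supp}(\sigma_u)$ with $n\le n(k,d)-1$. This means the graph $H$ on $\{w_1,\dots,w_n\}$, in which two trajectories are adjacent when their ranges intersect, is connected, and every $x_j$ lies in the range of at least one $w_i$.

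We build the tree in four steps.
\begin{enumerate}
\item Take a spanning tree $T_0$ of $H$.
\item For each $j$, choose one index $i(j)$ with $x_j\in\mathrm{Range}(w_{i(j)})$ and attach a new leaf labelled $x_j$ to $w_{i(j)}$.
\item Label each edge $\{w_i,w_{i'}\}$ of $T_0$ by a point $y\in\mathrm{Range}(w_i)\cap\mathrm{Range}(w_{i'})$.
\item Root the resulting tree at the leaf $x_1$, fix a planar embedding, and rename vertices and edge labels in depth-first order so that the notation $T(\overline{w}_n,\overline{x}_k,\overline{y}_{n-1})$ applies. This reindexing of the $w$'s and $x$'s is harmless: the union runs over all families of trajectories and all trees, and the event is symmetric in the labels.
\end{enumerate}
By construction each $w_i$ connects the labels of its adjacent edges.

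It remains to check that the tree lies in $\mathfrak{T}_{k,n}$, i.e.\ that its external vertices are exactly the $x_j$ and its internal vertices exactly the $w_i$. This is the only delicate point.

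\emph{A trajectory may end up as a leaf.} If some $w_i$ has degree $1$ and carries no $x_j$, we delete it; this only decreases $n$, which keeps $n\le n(k,d)-1$. Iterating leaves a minimal connecting family in which every leaf of $T_0$ carries at least one $x_j$, so every $w_i$ has degree at least $2$ in the final tree. In the degenerate case $n=1$, $w_1$ has degree $k\ge2$.

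\emph{The points $x_j$ may coincide.} In that case the $x_j$ should be treated as labelled leaves, i.e.\ as a multiset.

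With these conventions the tree has $k$ external nodes $x_1,\dots,x_k$, $n$ internal nodes, and $n-1$ internal edges labelled $y_1,\dots,y_{n-1}$. This gives the inclusion and hence the equality.
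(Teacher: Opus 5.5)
Your proof is correct and is essentially the paper's argument. You take a spanning tree of the trajectory intersection graph and hang each $x_j$ as a leaf on a trajectory containing it; your explicit two-stage construction is exactly what the paper's triangle remark encodes. You then use minimality of $n$ (your pruning) to guarantee that the external vertices are exactly $x_1,\dots,x_k$.

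One point deserves more careful phrasing: the reindexing of the $x_j$. A planar embedding cannot always place them in a prescribed depth-first order, because the leaves below any vertex form a contiguous block in that order. So this step, and your claim that the event is symmetric in the labels, really relies on reading $T(\overline{w}_n,\overline{x}_k,\overline{y}_{n-1})$ as a tree with labelled leaves. The paper implicitly uses it that way too.
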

\begin{proof}
  Fix $x_1,\dots,x_k \in \mathbb{Z}^d$. Suppose that these points are connected by a minimal number $n$ of trajectories $w_1,\dots,w_n \in \sigma_u$. For each pair of intersecting trajectories $w_i,w_j$ denote by $y_{i,j}\in\mathbb{Z}^d$ a point in their intersection. Consider the graph $G(\overline{w}_n, \overline{x}_k, \overline{y}_{n^2})$ with $k$ \textit{target} vertices $x_1,\dots,x_k$ and $n$ \textit{trajectory} vertices $w_1,\dots,w_n$, and with the following edge set,
  \[
    \left\{\{w_i,w_j\} : \mathrm{Range}(w_i)\cap \mathrm{Range}(w_j) \neq \varnothing\} \cup \{\{w_i,x_j\} : x_j \in \mathrm{Range}(w_i)\right\}.
  \]
  Label each edge $\{w_i,w_j\}$ by the corresponding point $y_{i,j}$, and each edge $\{w_i,x_j\}$ by $x_j$. This graph is connected under our hypothesis. This yields the following equality of events,
  \[
    \left\{\!\!\begin{array}{c} x_1,\dots,x_k \text{ are connected with less} \\ \text{than } n(k,d) \text{ trajectories of } \sigma_u \end{array}\!\!\right\} = \bigcup_{n<n(k,d)} \bigcup_{w_1,\dots,w_n} \bigcup_{(y_{i,j})_{i,j}} \bigcap_{i=1}^n \left\{\!\!\begin{array}{c} w_i \text{ connects the labels of its} \\ \text{adjacent edges in } G(\overline{w}_n,\overline{x}_k, \overline{y}_{n^2})\end{array}\right\}.
  \]
  If two trajectories $w_i$, $w_j$ intersect in a point $x_\ell$, then $w_i,w_j,x_\ell$ form a triangle in $G$. Therefore, we can choose a spanning tree $T$ of $G(\overline{w}_n, \overline{x}_k, \overline{y}_{n^2})$, such that each point $x_i$ is an external vertex. Moreover, by minimality of $n$, external vertices are exactly $x_1,\dots,x_k$. Since the tree $T\backslash\{x_1,\dots,x_k\}$ has $n$ vertices and thus $n-1$ edges, there are $n-1$ edges labelled by $y_1,\dots,y_{n-1}$ in $T$. This concludes the proof since $T \in \mathfrak{T}_{k,n}$.
\end{proof}

We say that a tree $T'$ is a \textit{strict subtree} of $T$, if it is different from $T$ and its external vertices are external vertices of $T$. A tree $T \in \mathfrak{T}_{k,n}$ \textit{satisfies condition} $(*)$, if for each strict subtree $T'$ of $T$, the number $n' \leq n$ of internal vertices and $k' < k$ of external vertices satisfy
\begin{equation}
  \label{eq:strict_subtree}
  n' \geq n(k',d).
\end{equation}
We denote by $\mathfrak{T}^*_{k,n}$ the set of trees of $\mathfrak{T}_{k,n}$ which satisfy $(*)$.

Remember the definition of the set $\mathbb{T}_n^k$ from Section \ref{sec:n_pts_connection_walk}, and let $\mathbb{T}_\circ^\bullet = \bigcup_{n\geq 1}\bigcup_{k\leq n} \mathbb{T}_n^k$. For $k \geq 2$, $n \geq 1$ and $m \leq k + n - 2$, let $\mathbb{A}_{k,n,m}$ be the set of rooted planar trees with the following properties,
\begin{itemize}
\item the root has degree $1$,
\item the internal nodes are decomposed into two subsets called \textit{intersection} and \textit{branching} vertices with respective cardinality $n-1$ and $m$,
\item there are $k$ external vertices,
\item each edge has a color in $\{1,\dots,n\}$,
\item for each $i \in \{1,\dots,n\}$, the set of edges with color $i$ forms a rooted planar tree $\mathcal{A}_i \in \mathbb{T}_\circ^\bullet$,
\item for $1 \leq i \neq j \leq n$, the intersection of $\mathcal{A}_i$ and $\mathcal{A}_j$ is either empty or an intersection vertex,
\item each strict subtree with $n' \leq n$ different edge colors and $k' < k$ external vertices satisfies \eqref{eq:strict_subtree}.
\end{itemize}

For $\mathcal{A} \in \mathbb{A}_{k,n,m}$, $\overline{x}_k \in (\mathbb{Z}^d)^k$, $\overline{y}_{n-1}\in (\mathbb{Z}^d)^{n-1}$, and $\overline{z}_m \in (\mathbb{Z}^d)^m$, we denote by $\mathcal{A}(\overline{x}_k, \overline{y}_{n-1}, \overline{z}_m)$ the copy of $\mathcal{A}$ where the external vertices have been replaced by $x_1,\dots,x_k$, the intersection vertices by $y_1,\dots,y_{n-1}$, and the branching vertices by $z_1,\dots,z_m$ (replace the vertices in the order of a depth-first search from the root). We also denote by $\mathcal{A}_i(\overline{x}_k, \overline{y}_{n-1}, \overline{z}_m)$ the subtree of color $i$ in $\mathcal{A}(\overline{x}_k, \overline{y}_{n-1}, \overline{z}_m)$.

\begin{definition}[Weight]
  \label{def:tree_weight}
  Given a tree $T$ equiped with functions $\varphi : V(T) \to \mathbb{Z}^d$, and $\alpha : E(T) \to \mathbb{R}$, we define the \textit{weight} of $T$, as
  \[
    p^{\alpha}_\varphi(T) = \prod_{(u,v) \in E(T)} \frac{1}{1 + \|\varphi(u)-\varphi(v)\|^{\alpha(u,v)}}.
  \]
  For a tree $T$ with vertices in $\mathbb{Z}^d$, we write $p^{\alpha}(T)$ for $p^{\alpha}_{\mathrm{Id}}(T)$. When there is no possible ambiguity, we omit the exponent $\alpha$. Furthermore, if the weight is constant equal to $c$, we simply write $p^c_\varphi$.
\end{definition}

\begin{lemma}
  \label{lem:link_T_A}
  Let $n < n(k,d)$. Then, for every $x_1,\dots,x_k \in\mathbb{Z}^d$ and $y_1,\dots,y_{n-1}\in\mathbb{Z}^d$,
  \[
    \sum_{T \in \mathfrak{T}^*_{k,n}} \P\left(\bigcup_{w_1,\dots,w_n\in\sigma_u} \bigcap_{i=1}^n \left\{\!\!\begin{array}{c} w_i \text{ connects the labels} \\ \text{of its adjacent edges} \\ \text{in } T(\overline{w}_n,\overline{x}_k,\overline{y}_{n-1}) \end{array}\!\!\right\} \right) \lesssim \sum_{m=1}^{k+n-2} \sum_{\mathcal{A}\in\mathbb{A}_{k,n,m}}\sum_{z_1,\dots,z_m} p^{d-2}\big(\mathcal{A}(\overline{x}_k,\overline{y}_{n-1},\overline{z}_m)\big).
  \]
\end{lemma}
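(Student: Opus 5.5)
Fix $T \in \mathfrak{T}^*_{k,n}$. The plan is to bound each term in three steps: Palm--Mecke to factorize over trajectories, Lemma~\ref{lem:n_points_connection} to expand each trajectory into a spined tree, and then to glue these trees into an element of $\mathbb{A}_{k,n,m}$.

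\emph{Step 1 (factorization).} The union over $w_1,\dots,w_n\in\sigma_u$ is bounded by the expectation of the number of $n$-tuples of trajectories realising the event. The Palm--Mecke formula for Poisson processes, used exactly as in Lemma~\ref{lem:product_decomposition}, then gives
\[
\P(\cdots)\le \prod_{i=1}^n u\,\nu\big(w \text{ hits every label adjacent to } w_i \text{ in } T(\overline{w}_n,\overline{x}_k,\overline{y}_{n-1})\big).
\]
If two internal vertices of $T$ were forced to be the same trajectory, the diagonal terms would only give a smaller configuration. One can ignore them, or note that condition $(*)$ excludes them. Let $L_i$ be the set of labels adjacent to $w_i$, so that $|L_i|=\deg_T(w_i)$.

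\emph{Step 2 (one trajectory).} By the definition of $\nu$ through $Q_K$ with $K=\{\ell\}$ for some $\ell\in L_i$, we have
\[
\nu(\text{hit all of } L_i)\le \P_\ell(L_i\subset \mathrm{Range}(X)).
\]
Here $X$ is a walk indexed by the infinite invariant tree. Lemma~\ref{lem:n_points_connection} then bounds this probability by
\[
\sum_{m_i\le |L_i|-1}\ \sum_{z^{(i)}}\ \sum_{\mathcal A_i\in\mathbb{T}^{|L_i|}_{|L_i|+m_i}} p^{d-2}(\mathcal A_i).
\]
This requires a moment of order $2|L_i|-1$. Since $|L_i|\le k+n-1\le \lceil d(k-1)/4\rceil$, the moment assumption covers it.

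\emph{Step 3 (gluing).} Take the product over $i$ and expand the sums. Each term corresponds to choosing, for each $i$, a spined tree $\mathcal A_i$ whose distinguished vertices are the labels $L_i$ and whose undistinguished vertices are new points $z^{(i)}$. Now glue these trees along the shared labels. An intersection label $y_j$ lies in exactly two of the $\mathcal A_i$, and an external label $x_j$ lies in exactly one, since $x_j$ is a leaf of $T$. Colour the edges of $\mathcal A_i$ by $i$. Because $T$ restricted to its internal vertices is a tree, the glued object $\mathcal A$ is again a tree, and two colour classes meet only at a single intersection vertex. Root $\mathcal A$ at the external vertex corresponding to the root of $T$.

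It remains to check that $\mathcal A\in\mathbb{A}_{k,n,m}$ with $m=\sum_i m_i$.
\begin{itemize}
\item The external vertices of $\mathcal A$ are exactly $x_1,\dots,x_k$. Indeed, the $y$'s have degree at least $2$ after gluing, and the $z$'s have degree at least $2$ inside their own $\mathcal A_i$.
\item The bound $m\le k+n-2$ follows from the degree count in the proof of Lemma~\ref{lem:n_points_connection}, which gives $m_i\le |L_i|-1$, together with
\[
\sum_i (|L_i|-1)=(k+2(n-1))-n=k+n-2.
\]
\item For the strict-subtree condition: a strict subtree of $\mathcal A$ using $n'$ colours and $k'$ external vertices projects onto a strict subtree of $T$ with $n'$ internal vertices and $k'$ leaves. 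Condition $(*)$ for $T$ then gives $n'\ge n(k',d)$.
\end{itemize}
The weight of $\mathcal A$ is exactly $\prod_i p^{d-2}(\mathcal A_i)$. Finally, the map from pairs $(T,(\mathcal A_i)_i)$ to $\mathcal A$ is finite-to-one, with multiplicity bounded in terms of $k$ and $n$ only. Summing over $T$ therefore gives the claimed bound, with a constant absorbing $u^n$ and these multiplicities.

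The main obstacle is this combinatorial verification in Step 3: that the glued tree really satisfies all the axioms of $\mathbb{A}_{k,n,m}$, in particular the strict-subtree condition, and how its planar structure and root are inherited. The analytic steps are direct applications of Palm--Mecke and Lemma~\ref{lem:n_points_connection}.
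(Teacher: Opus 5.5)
Your proposal is correct and follows the paper's proof essentially step for step. The steps are: Palm--Mecke factorization; bounding $\nu(S(L_i))$ by a hitting probability for the invariant-tree walk (you take $K=\{\ell\}$ where the paper takes $K=L_i$ and sums over starting points, which is immaterial); Lemma~\ref{lem:n_points_connection}; and substituting each star of $T$ by the resulting spined tree coloured $i$, with the same count $m\le k+n-2$. Your checks of the moment condition, of the strict-subtree axiom by projection onto $T$, and of the bounded multiplicity of $(T,(\mathcal{A}_i)_i)\mapsto\mathcal{A}$ fill in details the paper leaves implicit.

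Two minor remarks. First, your aside that $(*)$ excludes coinciding trajectories is unjustified. It is also unnecessary, since the $w_i$ may be taken distinct, as in Lemma~\ref{lem:lower_bound_event_inclusion}. Second, as in the paper, your degree argument only shows that every leaf of $\mathcal{A}$ is some $x_j$. A distinguished vertex of a tree in $\mathbb{T}^{k}_{n}$ (the root, or a label on the path to another label) can be internal. This is repaired by placing a branching vertex at the location of such an $x_j$ and attaching $x_j$ to it by a pendant edge, whose factor $1/(1+0)$ equals $1$ there. The same degree count still gives $m_i\le |L_i|-1$.
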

\begin{proof}
  Let $n < n(k,d)$, $x_1\dots,x_k\in\mathbb{Z}^d$, $y_1,\dots,y_{n-1}\in\mathbb{Z}^d$ and $T\in\mathfrak{T}_{k,n}^*$. For a subset $V$ of $\mathbb{Z}^d$, let $S(V) = \{w \in W^* : V \subset \mathrm{Range}(w)\}$. Denote by $v^i_1,\dots,v^i_{d_i}$ the labels of the adjacent edges of $w_i$ in $T(\overline{w}_n,\overline{x}_k,\overline{y}_{n-1})$. By the Palm-Mecke's Theorem for general Poisson point processes (from \cite[Chapter 13.1]{dvj2008}), recalling the definition of the measure $\nu$ from Section \ref{sec:branching_interlacements},
  \begin{equation}
    \label{eq:palm_mecke_T}
    \E\Bigg[\sum_{w_1,\dots,w_n\sigma_u}\prod_{i=1}^n \indic{w_i \text{ connects } v^i_1,\dots,v^i_{d_i}}\Bigg] = u^n\prod_{i=1}^n \nu\big(S(\{v^i_1,\dots,v^i_{d_i}\})\big).
  \end{equation}
  If $X$ denotes a random walk indexed by an infinite invariant tree $\mathcal{T}$, observe that by definition of $\nu$ one has,
  \[
    \nu\big(S(V)\big) = \sum_{x\in V} \P_x\big(V\subset \mathrm{Range}(X), \mathcal{T}_-^x \cap V = \varnothing\big) \leq \sum_{x\in V} \P_x\big(V \subset \mathrm{Range}(X)\big).
  \]
  Then
  $\nu\big(S(\{v^i_1,\dots,v^i_{d_i}\})\big) \leq \sum_{j=1}^{d_i} \P_{v^i_j}\big(v^i_1,\dots,v^i_{d_i} \in \mathrm{Range}(X)\big)$, thus by Lemma \ref{lem:n_points_connection},
  \begin{equation}
    \label{eq:nu_to_t}
    \prod_{i=1}^n \nu\big(S(\{v^i_1,\dots,v^i_{d_i}\})\big) \lesssim  \prod_{i=1}^n \sum_{m=0}^{d_i-1} \sum_{z_1,\dots,z_m} \sum_{t \in \mathbb{T}_{d_i+m}^{d_i}(\overline{v}^i_{d_i}, \overline{z}_m)} \prod_{\{u,u'\} \in E(t)} \frac{1}{1+\|u-u'\|^{d-2}}.
  \end{equation}
  Given, for each $i \in \{1,\dots,n\}$, $m_i \in \{0,\dots,d_i-1\}$, $z^i_1,\dots,z_{m_i}^i \in \mathbb{Z}^d$, and $t_i \in \mathbb{T}_{d_i+m_i}^{m_i}(\overline{v}^i_{d_i}, \overline{z}_{m_i}^i)$, we replace each vertex $w_i$ in $T(\overline{w}_n,\overline{x}_k,\overline{y}_{n-1})$ and its adjacent edges (which form a star-shaped graph) by $t_i$. We get a planar tree $\mathcal{A}$ with vertices $x_1,\dots,x_k, y_1,\dots,y_{n-1},z_1,\dots,z_m$. Call the points $x_1,\dots,x_{k}$ \textit{target} vertices, $y_1,\dots y_{n-1}$ \textit{intersection} vertices and $z_1,\dots,z_m$ \textit{branching} vertices. Color each tree $t_i$ of $\mathcal{A}$ with color $i$. The number of total branching vertices is
  \[
    m = \sum_{i=1}^n m_i \leq \sum_{i=1}^n (d_i-1) = 2\#E(T) - \sum_{i=1}^k\mathrm{deg}_T(x_i) - n \leq 2(n+k-1) - k - n = n+k-2.
  \]
  Hence $\mathcal{A} \in \mathbb{A}_{k,n,m}(\overline{x}_k,\overline{y}_{n-1},\overline{z}_m)$. Furthermore, since the edges of $\mathcal{A}$ are the edges of the trees $t_i$,
  \begin{equation}
    \label{eq:T_to_A}
    \prod_{i=1}^n \prod_{\{u,u'\} \in E(t_i)} \frac{1}{1+\|u-u'\|^{d-2}} = \prod_{\{u,u'\}\in E(\mathcal{A})} \frac{1}{1+\|u-u'\|^{d-2}} = p^{d-2}(\mathcal{A}).
  \end{equation}
  We conclude by combining \eqref{eq:palm_mecke_T}, \eqref{eq:nu_to_t} and \eqref{eq:T_to_A}.
\end{proof}

\begin{lemma}
  \label{lem:upper_bound_A_star}
  Let $k \geq 2$, $n < n(k,d)$, $m \leq k+n-2$ and $\varepsilon \in (0,1)$. There exists $C > 0$, such that for every $\mathcal{A} \in \mathbb{A}_{k,n,m}$ and $x_1,\dots,x_k \in \mathbb{Z}^d$,
  \[
    \sum_{y_1,\dots,y_{n-1}} \sum_{z_1,\dots,z_{m}} \!\!\!p^{d-2}(\mathcal{A}(\overline{x}_k,\overline{y}_{n-1},\overline{z}_m)) \leq C\max_{1 \leq i \neq j \leq k}\|x_i-x_j\|^{-1+\varepsilon}.
  \]
\end{lemma}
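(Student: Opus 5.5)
We would argue by induction on the number of vertices of $\mathcal{A}$, using two reduction operations (\emph{cut} and \emph{merge}). Each one sums out one or more non-target vertices ($y$'s or $z$'s) and replaces the tree by a smaller weighted tree on the same targets. Throughout, the weight is a product $p^{\alpha}$ whose edge exponents $\alpha$ change during the reduction. The basic tool is Lemma \ref{lem:magic_ineq}.

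\textbf{The two operations.} \emph{Merge} applies to an internal vertex $v$ of degree $2$ with incident exponents $a\ge b$. When $a<d$ and $a+b>d$, summing over $\varphi(v)$ replaces the two edges by a single edge of exponent $a+b-d$. When the sum $a+b-d$ would be $\le 0$, or lies in a degenerate regime, we lower an exponent slightly first; this monotone loss is the origin of the $\varepsilon$. \emph{Cut} applies to a leaf-free internal vertex $v$ of degree $\ge3$, or to a pending subtree. In the latter case we bound $\sum_{\varphi(v)}\prod_j(1+\|\varphi(v)-u_j\|)^{-a_j}$ by keeping the decay only along the edges toward the rest of the tree, splitting space according to which $u_j$ is closest to $\varphi(v)$. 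Each such region contributes a weight where $v$ is identified with $u_j$, with $d$ subtracted from the sum of the exponents. This produces finitely many smaller trees.

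\textbf{Exponent bookkeeping.} We track the quantity $\Sigma(\mathcal{A})=\sum_{e}\alpha(e)-d\cdot\#\{\text{summed vertices}\}$. Initially every edge has exponent $d-2$. Each colour class $\mathcal{A}_i\in\mathbb{T}_\circ^\bullet$ behaves, after summing its branching vertices, like a connection of its $d_i$ labelled points with total exponent about $(d_i-1)(d-4)$, consistent with the Green function $G\asymp g*g$ of decay $d-4$. After all $y$'s are summed, the chain of $n$ trajectories leaves a total decay of roughly $\sum_i (d_i-1)(d-4)-d(n-1)$ spread over a tree on the $k$ targets. The hypothesis $n<n(k,d)$ is exactly what makes this at least $k-2+1$ in excess: the final tree on $x_1,\dots,x_k$ has $k-1$ edges, and after further merging down to a single edge between the two farthest points we want exponent $\ge 1-\varepsilon$. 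Condition $(*)$ on strict subtrees serves a specific purpose. It guarantees that during the intermediate steps no sub-configuration has a nonpositive exponent, since such an exponent would make the sum over a vertex diverge or fall outside the regime $a+b>d$. So we apply the induction to strict subtrees, using $n'\ge n(k',d)$ to show their pending contributions are bounded by constants.

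\textbf{Order of steps and main obstacle.} First, we sum all branching vertices $z$ inside each colour class, leaf-to-root, turning each $\mathcal{A}_i$ into a star-like object with exponents $d-4$ per effective edge. Second, we sum the intersection vertices $y$ with cut/merge, ordering them by depth-first search from the leaves. Third, we reduce the target tree to a single edge by bounding every edge factor by $1$ except on the geodesic between the pair realising the maximum distance. The main obstacle is the second step, where intersection vertices have degree $\ge3$ and several exponents may be small. Showing that some ordering of cuts keeps every exponent in the admissible range of Lemma \ref{lem:magic_ineq}, while losing at most $\varepsilon$ per step in total, is where condition $(*)$ and the precise arithmetic of $n(k,d)=\lceil d(k-1)/4\rceil-(k-2)$ must be used. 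I would first try a potential-function argument: assign each edge a ``budget'' and verify via $(*)$ that the budget of every subtree stays positive.
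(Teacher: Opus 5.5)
There is a genuine gap at exactly the step that constitutes the lemma.

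\textbf{What is correct, and what is missing.} Your global count is right. Summing out the $n-1+m$ internal vertices of a tree with $k+n+m-2$ edges of exponent $d-2$ leaves total exponent $(k-1)(d-2)-2(n+m-1)\ge (k-1)(d-4)-4(n-1)\ge 1$ when $m\le k+n-2$ and $n<n(k,d)$. But a global budget does not control the individual summations. Each convolution $\sum_y (1+\|a-y\|^{\alpha})^{-1}(1+\|b-y\|^{\beta})^{-1}$ requires $\alpha+\beta>d$, and $\alpha,\beta<d$ to gain the full $d$. A subtree carrying too much decay forces some other summation to have too little. You explicitly leave this admissibility question open (``I would first try a potential-function argument'').

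\textbf{Why the proposed remedies fail.}
Lowering an exponent when $a+b-d\le 0$ only makes things worse, since that sum diverges.

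Bounding pending subtrees ``by constants'' throws away decay you cannot afford. The next convolution against an untouched edge of exponent $d-2$ converges only if the pending part still carries exponent $>2$.

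Your first step is unjustified. Summing a degree-$\ge 3$ branching vertex whose neighbours are still free intersection vertices gives a sum of trees with uneven exponent splits (e.g.\ $d-4$ and $d-2$), not a star with $d-4$ on each edge. You give no way to carry these through the later $y$-summations.

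In your last step, setting factors to $1$ off a geodesic (and singling out the farthest pair) discards budget. The right-hand side concerns the closest pair, so every remaining factor should be bounded using $\|x_a-x_b\|\ge\min_{a'\neq b'}\|x_{a'}-x_{b'}\|$.

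\textbf{What the paper does instead.} It needs a mechanism guaranteeing local admissibility, and it never sums over a vertex of degree $\ge 3$. It works from the leaves with two operations:
a pointwise cut (Lemma \ref{lem:cut}), which at a vertex with two external children adds their exponents and keeps one target;
a merge (Lemma \ref{lem:merge}), a two-factor convolution at a degree-two vertex adjacent to a target.

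It proves the closed formula \eqref{eq:weight_induction} for the exponent of every created edge, in terms of the numbers $x(\vec a),y(\vec a),z(\vec a)$ of targets, intersection and branching vertices below the original edges. Correction terms $\delta$ are subtracted in advance, through $\alpha'_{\mathcal{A}}$ and the $-2\delta(\vec b)$ at each merge. The combinatorial Lemma \ref{lem:bound_z_delta}, $x+y-1\le z+\sum\delta\le x+y$ on every descendant tree, turns this into the two-sided bound \eqref{eq:bound_alpha}, which depends only on $x(\vec f)$ and $y(\vec f)$.

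Condition $(*)$ is then used in two directions, not merely to prevent nonpositive exponents:
applied to the descendant tree $\mathcal{A}(\vec f)$, it gives $4y(\vec f)\ge(d-4)x(\vec f)-d+4$, hence $\alpha(\vec e)<d$ (this is where the $\varepsilon/k$ shift is needed);
applied to the complementary tree together with $n<n(k,d)$, it gives $4y(\vec f)\le(d-4)x(\vec f)-1$, hence $\alpha(\vec e)\ge 3-\varepsilon$ and $\alpha(\vec e)+(d-2)\ge d+1-\varepsilon$.

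So every merge is admissible, the procedure ends at a single edge between two targets, and its exponent is at least $1-\varepsilon$. Without an equivalent per-subtree bookkeeping, your argument does not go through.
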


We now have all we need to prove Proposition \ref{prop:lower_bound_k}. As the proof of Lemma \ref{lem:upper_bound_A_star} is long and technical, we postpone it to the end of the section.

\begin{proof}[Proof of Proposition \ref{prop:lower_bound_k}]
  Fix $\varepsilon \in (0,1)$ for the whole proof. We show the proposition by induction on $k$. If $k = 2$, the result has already been proved in Lemma \ref{lem:2_points_lower_bound_probability}. Let now $k \geq 3$, $x_1,\dots,x_k \in \mathbb{Z}^d$, and suppose that the proposition holds for every $k' < k$.

  For $n < n(k,d)$ and $T \in \mathfrak{T}_{k,n}$, let $E_T$ be the event
  \begin{equation}
    \label{def:E_A}
    E_T = \bigcup_{w_1,\dots,w_n \in \sigma_u} \bigcup_{y_1,\dots,y_{n-1}} \bigcap_{i=1}^n \left\{\!\!\begin{array}{c} w_i \text{ connects the labels of its} \\ \text{adjacent edges in } T(\overline{w}_n,\overline{x}_k,\overline{y}_{n-1}) \end{array}\!\!\right\}.
  \end{equation}
  Observe that thanks to Lemma \ref{lem:lower_bound_event_inclusion}, for any $n$ as above, it all boils down to proving that,
  \begin{equation}
    \label{eq:E_A_total}
    \sum_{T \in \mathfrak{T}_{k,n}}\P(E_T) \lesssim \max_{i\neq j} \|x_i-x_j\|^{-1+\varepsilon}.
  \end{equation}

  On the one hand, if $T \notin \mathfrak{T}_{k,n}^*$, then $T$ contains a strict subtree $T'$ that does not satisfy $(*)$. Denote by $n'$ (resp.~$k'$) the number of edge colors (resp.~external vertices) of $T'$. Then by \eqref{eq:strict_subtree}, this means that $n' < n(k',d)$ trajectories connect $k'$ points among $\{x_1,\dots,x_k\}$. We deduce from the induction hypothesis that
  \begin{equation}
    \label{eq:E_A_not_A_star}
    \forall T \notin \mathfrak{T}_{k,n}^*,\quad \P(E_T) \lesssim \max_{i\neq j}\|x_i-x_j\|^{-1+\varepsilon}.
  \end{equation}
  On the other hand, by Lemmas \ref{lem:link_T_A} and \ref{lem:upper_bound_A_star},
  \begin{equation}
    \label{eq:E_A_A_star}
    \sum_{T \in \mathfrak{T}^*_{k,n}} \P(E_T) \lesssim \sum_{m=1}^{k+n-2} \sum_{\mathcal{A}\in\mathbb{A}_{k,n,m}} \sum_{y_1,\dots,y_{n-1}}\sum_{z_1,\dots,z_m} p^{d-2}(\mathcal{A}(\overline{x}_k,\overline{y}_{n-1},\overline{z}_m)) \lesssim \max_{i\neq j} \|x_i-x_j\|^{-1+\varepsilon}.
  \end{equation}

  \noindent Combining \eqref{eq:E_A_not_A_star} and \eqref{eq:E_A_A_star}, we deduce \eqref{eq:E_A_total}. This concludes the proof of the proposition.
\end{proof}

We now prove Lemma \ref{lem:upper_bound_A_star}. We start by introducing two operations (cut and merge), that will serve to reduce a tree $\mathcal{A} \in \mathbb{A}_{k,n,m}$ to a single edge.
For a tree $T$, a mapping $\varphi$  from $V(T)$ to $\mathbb{Z}^d$, and $\partial\notin V(T)$, we denote by $\varphi_{y}$ the extension of $\varphi$ that maps $\partial$ to $y \in \mathbb{Z}^d$. Likewise given $\partial,\partial',y,y'$, we write $\varphi_{y,y'}$ for the extension of $\varphi$ that maps $\partial$ to $y$ and $\partial'$ to $y'$.

\begin{lemma}[Cut operation, see Figure \ref{fig:cut}]
  \label{lem:cut}
  Let $T$ be a tree, and $\alpha_T : E(T) \to \mathbb{R}$. Let $y$ be an internal node of $T$ with at least two external vertices $\partial$ and $\partial'$ as neighbours. Let $T' = T \backslash\{\partial'\}$, and $\alpha_{T'} : E(T') \to \mathbb{R}$ which coincides with $\alpha_T$ except on $\{y,\partial\}$, where
  \[
    \alpha_{T'}(y,\partial) = \alpha_T(y,\partial) + \alpha_T(y,\partial').
  \]
  Then for every function $\varphi : V(T)\backslash\{\partial,\partial'\} \to \mathbb{Z}^d$, and $x,x'\in\mathbb{Z}^d$,
  \[
    p_{\varphi_{x,x'}}(T) \lesssim p_{\varphi_x}(T') + p_{\varphi_{x'}}(T') \lesssim \max_{a\in\{x,x'\}} p_{\varphi_a}(T').
  \]
\end{lemma}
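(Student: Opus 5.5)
The weight $p_{\varphi_{x,x'}}(T)$ factorises as a common part, coming from the edges not adjacent to $\partial,\partial'$, times the two factors attached to the leaves. Write $a=\alpha_T(y,\partial)$, $a'=\alpha_T(y,\partial')$, and $Y=\varphi(y)$. Then
\[
  p_{\varphi_{x,x'}}(T) = P\cdot \frac{1}{1+\|Y-x\|^{a}}\cdot\frac{1}{1+\|Y-x'\|^{a'}},
\]
where $P=\prod_{e\in E(T)\setminus\{\{y,\partial\},\{y,\partial'\}\}} (1+\|\cdot\|^{\alpha_T(e)})^{-1}$. This $P$ is also exactly the common part of $p_{\varphi_x}(T')$ and $p_{\varphi_{x'}}(T')$, because $T'$ has the same remaining edges with the same exponents. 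So the statement reduces to a one-line scalar inequality: for $s=\|Y-x\|$ and $t=\|Y-x'\|$,
\[
  \frac{1}{(1+s^{a})(1+t^{a'})} \lesssim \frac{1}{1+s^{a+a'}}+\frac{1}{1+t^{a+a'}}.
\]

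I split into two cases according to which of $s,t$ is larger. Here I assume the exponents are nonnegative, which is the case throughout (they are $d-2$ and sums of such). If $s\ge t$, then $1+t^{a'}\ge 1$ and $1+s^{a'}\le 2(1+t^{a'})$ fails in general, so I use instead
\[
  (1+s^{a})(1+t^{a'})\ \ge\ (1+s^{a})\cdot 1
\]
only when it suffices. More precisely, when $s\ge t$, use $1+t^{a'}\ge$ … this does not give $s^{a+a'}$. So the roles must be reversed: when $t\ge s$, we have $1+t^{a'}\ge 1+s^{a'}$, hence
\[
  (1+s^{a})(1+t^{a'})\ \ge\ (1+s^{a})(1+s^{a'})\ \ge\ 1+s^{a+a'},
\]
which bounds the product by the first term. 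Symmetrically, when $s\ge t$, $(1+s^a)(1+t^{a'})\ge(1+t^{a})(1+t^{a'})\ge 1+t^{a+a'}$, which bounds it by the second term. In either case the product is at most the sum, with constant $1$.

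Multiplying by $P$ gives $p_{\varphi_{x,x'}}(T)\le p_{\varphi_x}(T')+p_{\varphi_{x'}}(T')$, and the sum is at most twice the maximum. The only care needed is the bookkeeping that $T'$ keeps every edge of $T$ except $\{y,\partial'\}$, with the exponent of $\{y,\partial\}$ updated, so that the common factor $P$ matches exactly. The implicit constant is absolute, equal to $2$.
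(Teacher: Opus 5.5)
Your proof is correct and is essentially the paper's argument: isolate the only factor involving $\partial,\partial'$, split according to which of $\|x-\varphi(y)\|$ and $\|x'-\varphi(y)\|$ is smaller, and combine monotonicity with $(1+s^{a})(1+s^{a'})\ge 1+s^{a+a'}$, which gives the explicit constant $1$ for the sum and $2$ for the maximum. Like the paper, you use nonnegativity of the two exponents in the monotonicity step. Delete the abandoned false start in your second paragraph before writing it up.
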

\begin{proof}
  Fix a function $\varphi : V(T)\backslash\{\partial,\partial'\} \to \mathbb{Z}^d$, and $x,x'\in\mathbb{Z}^d$. Observe that the only factor with $\partial$ and $\partial'$ in $p_{\varphi_{x,x'}}(T)$ is $\big((1+\|x-\varphi(y)\|^{\alpha(\partial,y)})(1+\|x'-\varphi(y)\|^{\alpha(\partial',y)})\big)^{-1}$. Since either $\|x-\varphi(y)\| \leq \|x'-\varphi(y)\|$ or $\|x'-\varphi(y)\| \leq \|x-\varphi(y)\|$, we have
\[
  \frac{1}{(1+\|x-\varphi(y)\|^{\alpha(\partial,y)})(1+\|x'-\varphi(y)\|^{\alpha(\partial',y)})} \lesssim \frac{1}{1+ \|x-\varphi(y)\|^{\alpha(\partial,y)+\alpha(\partial',y)}} + \frac{1}{1+\|x'-\varphi(y)\|^{\alpha(\partial,y)+\alpha(\partial',y)}}.
\]
The result follows.
\end{proof}

\begin{figure}[H]
  \centering
  \begin{tikzpicture}
    \begin{scope}
      \draw (-3,0) node {\phantom{ }};
      \coordinate (y) at (0,0);
      \coordinate (x1) at (-0.75,-1.5);
      \coordinate (x2) at (0.75,-1.5);

      \node at (y) [circle,fill,inner sep=1.5pt]{};
      \draw[above right] (y) node {$y$};
      \node at (x1) [circle,fill,inner sep=1.5pt]{};
      \draw[left] (x1) node {$\partial$};
      \node at (x2) [circle,fill,inner sep=1.5pt]{};
      \draw[right] (x2) node {$\partial'$};
      
      \draw[thin, dashed] (y) -- (0,1);
      \draw (y) -- (x1) node [midway, left] {\scriptsize$d-2$};
      \draw (y) -- (x2) node [midway, right] {\scriptsize$d-2$};
    \end{scope}
    \draw[->] (2,-0) -- (5,0) node [midway,above] {$\mathrm{cut}$};
    \begin{scope}[xshift = 7cm]
      \coordinate (y1) at (0,0);
      \coordinate (y2) at (3,0);
      \coordinate (x1) at (0,-1.5);
      \coordinate (x2) at (3,-1.5);

      \node at (y1) [circle,fill,inner sep=1.5pt]{};
      \draw[above right] (y1) node {$y$};
      \node at (x1) [circle,fill,inner sep=1.5pt]{};
      \draw[left] (x1) node {$\partial$};
      \node at (y2) [circle,fill,inner sep=1.5pt]{};
      \draw[above right] (y2) node {$y$};
      \node at (x2) [circle,fill,inner sep=1.5pt]{};
      \draw[right] (x2) node {$\partial'$};

      \draw (1.5,0) node {\Large$+$};
      
      \draw (y1) -- (x1) node [midway, left] {\scriptsize$2(d-2)$};
      \draw[thin, dashed] (y1) -- (0,1);
      \draw (y2) -- (x2) node [midway, left] {\scriptsize$2(d-2)$};
      \draw[thin, dashed] (y2) -- (3,1);
    \end{scope}
  \end{tikzpicture}
  \caption{An example of a cut operation, where the weights of the two initial edges are $d-2$.}
  \label{fig:cut}
\end{figure}

\begin{lemma}[Merge operation, see Figure \ref{fig:merge}]
  \label{lem:merge}
    Let $T$ be a tree with an internal node $\partial$ only attached to an external vertex $x$ and another node $z$. Denote by $T'$ the copy of $T$ without the vertex $\partial$ and where the edges $\{x,\partial\}$ and $\{z,\partial\}$ have been merged into a single edge $\{x,z\}$. Let $\alpha_{T'}$ be a weight function for $T'$, which coincides with $\alpha_T$ except that
  \[
    \alpha_{T'}(x,z) = \alpha_T(x,\partial) + \alpha_T(z,\partial) - d.
  \]
  Suppose that $\alpha_T(x,\partial)$ and $\alpha_T(\partial,z) < d$ and $\alpha_T(x,\partial) + \alpha_T(z,\partial) > d$. Then for any function $\varphi : V(T)\backslash\{\partial\} \to \mathbb{Z}^d$,
  \[
    \sum_{y\in\mathbb{Z}^d} p_{\varphi_y}(T) \lesssim p_\varphi(T').
  \]
\end{lemma}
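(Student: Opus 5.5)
The plan is to reduce the claim to a single convolution estimate and then apply Lemma \ref{lem:magic_ineq}. Fix $\varphi : V(T)\backslash\{\partial\} \to \mathbb{Z}^d$. The variable $y = \varphi_y(\partial)$ appears in exactly two factors of $p_{\varphi_y}(T)$: the one for the edge $\{x,\partial\}$ and the one for $\{\partial,z\}$, because $\partial$ has degree two. All other factors do not depend on $y$, and they coincide with the corresponding factors of $p_\varphi(T')$, since $\alpha_{T'}$ agrees with $\alpha_T$ off the merged edge. The sum therefore factorises as
\[
  \sum_{y\in\mathbb{Z}^d} p_{\varphi_y}(T) = \Bigg(\prod_{e \in E(T)\backslash\{\{x,\partial\},\{\partial,z\}\}} w_e\Bigg)\sum_{y\in\mathbb{Z}^d} \frac{1}{1+\|\varphi(x)-y\|^{a}}\,\frac{1}{1+\|y-\varphi(z)\|^{b}},
\]
where $w_e$ denotes the factor attached to $e$, $a = \alpha_T(x,\partial)$ and $b = \alpha_T(\partial,z)$. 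It then suffices to bound the remaining sum by a constant times $(1+\|\varphi(x)-\varphi(z)\|^{a+b-d})^{-1}$.

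This sum is the convolution $f*g$ evaluated at $\varphi(x)-\varphi(z)$, with $f(w) = (1+\|w\|^{a})^{-1}$ and $g(w) = (1+\|w\|^{b})^{-1}$. By symmetry, assume $a \geq b$. The hypotheses give $0<b\leq a<d$ and $a+b>d$, which is exactly the first case of Lemma \ref{lem:magic_ineq}. That case yields the bound $C\|\varphi(x)-\varphi(z)\|^{d-(a+b)}$.

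Positivity of $b$ is implicit, since $a<d$ and $a+b>d$ force $b>0$. The merged exponent $a+b-d$ is also positive, so the target weight is a genuine decaying factor.

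The only point needing care is the regime where $\|\varphi(x)-\varphi(z)\|$ is small, for instance when $\varphi(x)=\varphi(z)$, since the lemma is stated with $\|x\|^{d-(a+b)}$ rather than $(1+\|x\|^{a+b-d})^{-1}$. There I would bound the sum directly: it is at most $\sum_y (1+\|y\|^{a+b})^{-1}$ up to constants, which is finite because $a+b>d$. The resulting bound is uniform, and so comparable to $(1+\|\varphi(x)-\varphi(z)\|^{a+b-d})^{-1}$ when the distance is bounded. Combining the two regimes gives the desired factor with $\alpha_{T'}(x,z)=a+b-d$. Multiplying back the untouched factors then gives $\sum_y p_{\varphi_y}(T) \lesssim p_\varphi(T')$, with a constant depending only on $a$, $b$ and $d$.
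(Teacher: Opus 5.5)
Your proposal is correct and matches the paper's proof: since $\partial$ has degree two, you isolate the two factors involving $y$ and bound their sum with the convolution estimate of Lemma \ref{lem:magic_ineq} in the case $a<d$, $a+b>d$. Your separate treatment of the case $\varphi(x)=\varphi(z)$, where the lemma's bound $\|\cdot\|^{d-(a+b)}$ gives nothing, is a small detail the paper leaves implicit.
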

\begin{proof}
  The vertex $y\in\mathbb{Z}^d$ appears only once in $p_{\varphi_y}(T)$. Moreover, with the assumptions made on $\alpha_T$, we get using Lemma \ref{lem:magic_ineq},
  \[
    \sum_{y \in \mathbb{Z}^d} \frac{1}{\big(1+\|\varphi(x)-y\|^{\alpha_T(x,\partial)}\big)\big(1+\|\varphi(z)-y\|^{\alpha_T(z,\partial)}\big)} \lesssim \frac{1}{1+\|\varphi(x)-\varphi(z)\|^{\alpha_T(x,\partial)+\alpha_T(z,\partial)-d}},
  \]
  and the result follows.
\end{proof}

\begin{figure}[H]
  \centering
  \begin{tikzpicture}
    \begin{scope}
      \node (y2) at (0,0) [circle,fill,inner sep=1.5pt]{};
      \draw[above right] (y2) node {$z$};
      \node (y1) at (-0.5,-1) [circle,fill,inner sep=1.5pt]{};
      \draw[left] (y1) node {$\partial$};
      \node (x) at (0,-2) [circle,fill,inner sep=1.5pt]{};
      \draw[right] (x) node {$x$};
      
      \draw[thin, dashed] (y2) -- (0,1);
      \draw[->] (y2) -- (y1) node [midway, right] {\scriptsize$d-2$} node[left, pos=0.75] {\scriptsize$\vec{b}$};
      \draw[->] (y1) -- (x) node [midway, right] {\scriptsize$d-2$} node[left, pos=0.75] {\scriptsize$\vec{a}$};
    \end{scope}
    \draw[->] (2,-1) -- (5,-1) node [midway,above] {$\mathrm{merge}$};
    \begin{scope}[xshift = 7cm]
      \node (y2) at (0,0) [circle,fill,inner sep=1.5pt]{};
      \draw[above right] (y2) node {$z$};
      \node (x) at (0,-1.5) [circle,fill,inner sep=1.5pt]{};
      \draw[right] (x) node {$x$};
      
      \draw[thin, dashed] (y2) -- (0,1);
      \draw[->] (y2) -- (x) node [midway, right] {\scriptsize$d - 4$} node[left, pos=0.75] {\scriptsize$\vec{e}$};
    \end{scope}
  \end{tikzpicture}
  \caption{An example of a merge operation, where the weights of the two initial edges are $d-2$.}
  \label{fig:merge}
\end{figure}

Remember that a tree $\mathcal{A}$ in $\mathbb{A}_{k,n,m}$ is a rooted tree. As such, its edges are naturally oriented. Then, for an oriented edge $\vec{e}$ of $\mathcal{A}$, let $\mathcal{A}(\vec{e})$ be the descendent tree of $\vec{e}$, $x(\vec{e})$ the number of external vertices of $\mathcal{A}(\vec{e})$ and $y(\vec{e})$ (resp.~$z(\vec{e})$) be the number of intersection (resp.~branching) vertices in $\mathcal{A}(\vec{e})$. Denote by $V^b$ and $V^i$ the sets of branching and intersection vertices of $\mathcal{A}$ respectively.

\begin{lemma}
  \label{lem:bound_z_delta}
  Let $\mathcal{A} \in \mathbb{A}_{k,n,m}$. For each $\vec{a} = (u,v) \in \mathcal{A}_i$, let
  \[
    \delta(\vec{a}) = \begin{cases}
      0 & \text{if } \vec{a} \text{ has no parent of if they have different colors}, \\
      1 - \frac{\indic{u \in V^b}}{\mathrm{deg}_{\mathcal{A}_i}(u) - 1}(2 - \indic{\mathrm{deg}_{\mathcal{A}_i}(u) = 2}) &\text{otherwise.}
      \end{cases}
  \]
  Then for $\vec{e} \in \mathcal{A}$,
  \begin{equation}
    \label{eq:bound_z_delta}
    x(\vec{e}) + y(\vec{e}) - 1 \leq z(\vec{e}) + \sum_{\vec{a} \in \mathcal{A}(\vec{e})} \delta(\vec{a}) \leq x(\vec{e}) + y(\vec{e}).
  \end{equation}
\end{lemma}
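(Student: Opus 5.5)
We argue by strong induction on the size of the descendant tree $\mathcal{A}(\vec{e})$, that is, from the leaves of $\mathcal{A}$ upwards. Write $\vec{e} = (u,v)$ and let $\vec{e}_1,\dots,\vec{e}_r$ be the children edges of $\vec{e}$, which are the edges leaving $v$. Then
\[
\sum_{\vec a\in\mathcal{A}(\vec e)}\delta(\vec a)=\delta(\vec e)+\sum_{j=1}^r\sum_{\vec a\in\mathcal{A}(\vec e_j)}\delta(\vec a).
\]
The counts $x,y,z$ are additive over the subtrees $\mathcal{A}(\vec e_j)$, except for the contribution of $v$ itself. I take $\mathcal{A}(\vec e)$ to contain the lower endpoint $v$, so $v$ counts once, as an external, intersection or branching vertex. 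Note that $\delta(\vec{e})$ depends on the parent edge of $\vec{e}$ and on the type of $u$, not on $v$. It is therefore cleaner to introduce
\[
F(\vec e) = z(\vec e) + \sum_{\vec a\in\mathcal{A}(\vec e),\,\vec a\neq\vec e}\delta(\vec a) - x(\vec e)-y(\vec e),
\]
and first prove a sharper two-sided bound on $F$. The term $\delta(\vec e)\in[0,1]$ then only shifts $F$ inside the window $[-1,0]$. Indeed, $\delta(\vec a)\in[0,1]$ always: for a branching vertex $u$ of degree $\deg\ge 3$ in $\mathcal{A}_i$, we get $1-2/(\deg-1)\in[0,1)$, and for degree $2$ we get $1-1=0$.

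The core step is a local computation at $v$ according to its type. If $v$ is external, then $r=0$, $x=1$, $y=z=0$, and the sum over the other edges is empty, so $F=-1$. If $v$ is an intersection vertex, the children $\vec e_j$ have colours different from that of $\vec e$, or are children of $\vec e$ of the same colour with $v\notin V^b$, in which case $\delta(\vec e_j)=1$. I claim that each child contributes $F(\vec e_j)+\delta(\vec e_j)$ and that the sum, minus $1$ for $y(v)$, stays in $[-1,0]$. If $v$ is a branching vertex of colour $i$ with $r=\deg_{\mathcal{A}_i}(v)-1$ children, all of colour $i$, then each child has $\delta(\vec e_j)=1-2/r$, or $0$ when $r=1$. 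In that case $+1$ is added for $z$, and the children's $\delta$'s total $r-2$ (or $0$). The point of the strange normalisation in $\delta$ is precisely that these contributions telescope: $1+(r-2)+\sum_j F(\vec e_j)$ should land in $[-1,0]$. I expect the recursion for $F$ to be $F(\vec e)=c_v+\sum_j\big(F(\vec e_j)+\delta(\vec e_j)\big)$, where $c_v\in\{-1,+1\}$ according to the type of $v$.

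The main obstacle is that the children $F(\vec e_j)$ lie in $[-1,0]$, so a naive sum of $r$ children gives a range of width $r$ rather than $1$. The induction hypothesis therefore has to be strengthened: the correct invariant is likely that $F(\vec e_j)+\delta(\vec e_j)$ is nearly determined. Concretely, I would prove the exact identity $F(\vec e)=-1+(\text{error})$, where the error is $0$ or comes only from the end-of-spine vertex of degree $2$. To do this, I would use the structural constraints on $\mathcal{A}_i\in\mathbb{T}_\circ^\bullet$: leaves of each colour are distinguished, meaning external or intersection vertices, and undistinguished vertices have degree at least $3$ apart from one exceptional degree-$2$ spine end. Each colour class $\mathcal{A}_i$ is then handled as a tree in its own right, via the handshake count $2\#E=\sum\deg$ already used in Lemma \ref{lem:n_points_connection}, and the classes are glued along intersection vertices. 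The lower slack of $-1$ in \eqref{eq:bound_z_delta} should absorb exactly the single degree-$2$ branching vertex permitted in each colour class intersecting $\mathcal{A}(\vec e)$. If the local induction fails, I would fall back on this global double counting per colour class.
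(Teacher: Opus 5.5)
Your recursion $F(\vec e)=c_v+\sum_j\big(F(\vec e_j)+\delta(\vec e_j)\big)$ is correct. The proposal stops where the proof has to happen: that the sum ``stays in $[-1,0]$'' and that $F(\vec e)=-1+(\text{error})$, with the error coming only from one degree-$2$ vertex, are stated as expectations. They cannot be proved, because they are false, and the lower bound in \eqref{eq:bound_z_delta} fails as stated.

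Solving your recursion inside one colour class gives exactly $z^i(\vec e)+\sum_{\vec a\in\mathcal A_i(\vec e)}\delta(\vec a)=x^i(\vec e)+y^i(\vec e)-1+\indic{u_i\in\mathcal A_i(\vec e)}$. This is the paper's identity \eqref{eq:z_plus_delta}, and it is what your fallback ``treat each colour class as a tree'' produces. Every other colour $j$ occurring in $\mathcal A(\vec e)$ is entered at an intersection vertex through some number $s_j\ge 1$ of colour-$j$ top edges, all with $\delta=0$. By the same identity, colour $j$ therefore contributes $x^j+y^j-s_j+\indic{u_j\text{ exists}}$. Summing, with $\vec e\notin\mathcal A(\vec e)$,
\[
z(\vec e)+\sum_{\vec a\in\mathcal A(\vec e)}\delta(\vec a)=x(\vec e)+y(\vec e)-1+\indic{u_i\in\mathcal A_i(\vec e)}-\sum_{j\neq i}\big(s_j-\indic{u_j\text{ exists}}\big),
\]
where the last sum runs over those colours. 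The $-1$'s of distinct colour classes do not telescope, and one unit of slack cannot absorb them. So the upper bound holds, but the lower bound fails as soon as a colour below $\vec e$ has no degree-$2$ branching vertex.

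Concretely, take $d=12$, $k=3$, $n=4<n(3,12)=5$ and $m=1$. Colour $O$ has edges $\{y_b,z\},\{z,y_a\},\{z,y_c\}$, with $z$ branching of degree $3$. Colours $a,b,c$ are the single edges $\{y_a,x_a\},\{y_b,x_b\},\{y_c,x_c\}$, and $x_b$ is the root. Every pair of external vertices is joined through $3=n(2,12)$ colours, so $\mathcal A\in\mathbb A_{3,4,1}$. For $\vec f=(y_b,z)$, every $\delta$ in $\mathcal A(\vec f)$ vanishes, and $\delta(\vec f)=0$. The middle term is therefore $z(\vec f)=1$, while $x(\vec f)+y(\vec f)-1=3$; in your notation, $F(\vec f)=-3$.

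The paper's proof has the same gap. In the gluing step it replaces $z^j(\vec f_j)+\sum_{\vec a\in\mathcal A_j(\vec f_j)}\delta(\vec a)$ by $x^j(\vec f_j)+y^j(\vec f_j)$. This drops the term $-1+\indic{u_j\in\mathcal A_j(\vec f_j)}$ that \eqref{eq:z_plus_delta} actually gives, which is exactly the ``absorption'' you were hoping for. What is provable is the identity above, hence only the upper bound in \eqref{eq:bound_z_delta}.

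This matters downstream. In the example, the procedure in the proof of Lemma \ref{lem:upper_bound_A_star} merges at $y_a$ and $y_c$, then cuts at $z$. This produces below $\vec f$ an edge of weight $2d-8-2\varepsilon/3>d$, so the hypothesis of Lemma \ref{lem:merge} fails. Excluding this is exactly what the lower bound was for, so that step needs a separate argument, for instance lowering exponents that exceed $d$.

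Two smaller points about your write-up:
\begin{itemize}
\item The paper's convention excludes $\vec e$ from $\mathcal A(\vec e)$: its proof uses $\mathcal A_i(\vec e)=\bigcup_j(\mathcal A_i(\vec e_j)\cup\{\vec e_j\})$, and \eqref{eq:weight_induction} lists $\delta(\vec a)$ separately. With your convention, the extra $\delta(\vec e)$ equals $1$ when the tail of $\vec e$ is an intersection vertex continuing colour $i$, and this can break even the upper bound.
\item ``$\delta(\vec e)\in[0,1]$ only shifts $F$ inside $[-1,0]$'' does not follow, since adding $[0,1]$ to $[-1,0]$ gives $[-1,1]$.
\end{itemize}
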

\begin{proof}
  Let $i$ be an edge color of the tree. For an edge $\vec{e}$ of $\mathcal{A}_i$, let $x^i(\vec{e}), y^i(\vec{e}), z^i(\vec{e})$ be respectively the number of external vertices, intersection and branching vertices in $\mathcal{A}_i(\vec{e}) = \mathcal{A}(\vec{e}) \cap \mathcal{A}_i$. Note that since $\mathcal{A}_i \in \mathbb{T}_\circ^\bullet$, there exists at most one branching vertex $u_i$ in $\mathcal{A}_i$ with degree $2$. We prove by induction, that
  \begin{equation}
    \label{eq:z_plus_delta}
    z^i(\vec{e}) + \sum_{\vec{a} \in \mathcal{A}_i(\vec{e})} \delta(\vec{a}) = x^i(\vec{e}) + y^i(\vec{e}) - 1 + \indic{u_i \in \mathcal{A}_i(\vec{e})},
  \end{equation}
  where if the vertex $u_i$ does not exist, the indicator function above equals $0$. If $\vec{e} = (u,v)$ is connected to an external vertex of $\mathcal{A}_i$, then $\mathcal{A}_i(\vec{e})$ is reduced to $v$, which is either an external vertex of $\mathcal{A}$ or an intersection vertex, thus $\indic{u_i \in \mathcal{A}_i(\vec{e})} = 0$, $x^i(\vec{e}) + y^i(\vec{e}) = 1$ and $z^i(\vec{e}) = 0$. Thus \eqref{eq:z_plus_delta} holds. Suppose now that $\vec{e}$ has $r \geq 1$ descendent edges $(\vec{e}_j)_{1\leq j\leq r}$ in $\mathcal{A}_i$. Note that $\mathcal{A}_i(\vec{e}) = \bigcup_{j=1}^r\big(\mathcal{A}_i(\vec{e}_j) \cup \{\vec{e}_j\}\big)$, and $z^i(\vec{e}) = \sum_{j=1}^r z^i(\vec{e}_j) + \indic{v \in V^b}$, then
  \begin{equation}
    \label{eq:expansion_z_delta}
    z^i(\vec{e}) + \sum_{\vec{a} \in \mathcal{A}_i(\vec{e})} \delta(\vec{a}) = \sum_{j=1}^r \Big(z^i(\vec{e}_j) + \sum_{\vec{a} \in \mathcal{A}_i(\vec{e}_j)}\delta(\vec{a})\Big) + \sum_{j=1}^r\delta(\vec{e}_j) + \indic{v \in V^b}.
  \end{equation}
  Note that $\sum_{j=1}^r \indic{u_i \in \mathcal{A}(\vec{e}_j)} = \indic{u_i \in \mathcal{A}(\vec{e})} - \indic{v = u_i}$. Then, plugging the induction hypothesis into \eqref{eq:expansion_z_delta}, we get
  \begin{align*}
    z^i(\vec{e}) + \sum_{\vec{a} \in \mathcal{A}_i(\vec{e})} \delta(\vec{a})
    &= \sum_{j=1}^r \Big(x^i(\vec{e}_j)+y^i(\vec{e}_j)-1 + \indic{u_i \in \mathcal{A}_i(\vec{e})}\Big) + \sum_{j=1}^r\delta(\vec{e}_j) + \indic{v \in V^b} \\
    &= x^i(\vec{e}) + y^i(\vec{e}) - \indic{v\in V^i} - r + \indic{u_i \in \mathcal{A}_i(\vec{e})} - \indic{v=u_i} + \sum_{j=1}^r\delta(\vec{e}_j) + \indic{v \in V^b}. \\
  \end{align*}
  Since $v$ is an internal node, $v$ is either an intersection or a branching vertex. Thus,
  \[
    z^i(\vec{e}) + \sum_{\vec{a} \in \mathcal{A}_i(\vec{e})} \delta(\vec{a}) = x^i(\vec{e}) + y^i(\vec{e}) - 1 + \indic{u_i \in \mathcal{A}_i(\vec{e})} + \Big(\sum_{j=1}^r\delta(\vec{e}_j) - r + 2 \indic{v\in V^b} - \indic{v = u_i}\Big).
  \]
  Recall that if $u_i$ exists, it is the only branching vertex with degree $2$ in $\mathcal{A}_i$, thus $\indic{v = u_i} = \indic{v \in V^b} \indic{\mathrm{deg}_{\mathcal{A}_i}(v) = 2}$ and $r = \mathrm{deg}_{\mathcal{A}_i}(v)-1$. Consequently $\sum_{j=1}^r \delta(\vec{e}_j) = r - 2\indic{v\in V^b} + \indic{v = u_i}$. This proves \eqref{eq:z_plus_delta}.
  
  Now, fix an edge $\vec{e}$ and let $i$ be its color. Furthermore, for $i\neq j$, let $\vec{f}_j$ be the closest edge from $\vec{e}$ of color $j$ in $\mathcal{A}(\vec{e})$. Note that if $u_j$ exists, it is in $\mathcal{A}_j(\vec{f}_j)$. Then, since $\mathcal{A}(\vec{e}) = \mathcal{A}_i(\vec{e}) \cup \bigcup_{j\neq i} \left(\mathcal{A}_j(\vec{f}_j) \cup \{\vec{f}_j\}\right)$ and $\delta(\vec{f}_j) = 0$ (since the parent of $\vec{f}_j$ has a different color), we have
  \begin{align*}
    z(\vec{e}) + \sum_{\vec{a} \in \mathcal{A}(\vec{e})}\delta(\vec{e})
    &\stackrel{\phantom{\eqref{eq:z_plus_delta}}}{=} \sum_{j \neq i} \Big( z^j(\vec{f}_j) + \sum_{\vec{a} \in \mathcal{A}_j(\vec{f}_j)}\delta(\vec{a})\Big) + z^i(\vec{e}) + \sum_{\vec{a} \in \mathcal{A}_i(\vec{e}_i)} \delta(\vec{a}) \\
    &\stackrel{\eqref{eq:z_plus_delta}}{=}\sum_{j\neq i} \Big(x^j(\vec{f}_j) + y^j(\vec{f}_j)\Big) + x^i(\vec{e}) + y^i(\vec{e}) - 1 + \indic{u_i \in \mathcal{A}_i(\vec{e})} \\
    &\stackrel{\phantom{\eqref{eq:z_plus_delta}}}{=}x(\vec{e}) + y(\vec{e}) - 1 + \indic{u_i \in \mathcal{A}_i(\vec{e})}.
  \end{align*}
  This concludes the proof.
\end{proof}

We now have all we need to prove Lemma \ref{lem:upper_bound_A_star}. The idea is to successively apply cut and merge operations on a tree $\mathcal{A}$ of $\mathbb{A}_{k,n,m}$, while controlling the structure of the modified tree with Lemma \ref{lem:bound_z_delta}. At the end of the process, we obtain a single edge with known weight, which allows us to conclude.

\begin{proof}[Proof of Lemma \ref{lem:upper_bound_A_star}]
  Fix $\varepsilon \in (0,1), \mathcal{A} \in \mathbb{A}_{k,n,m}$, $\alpha_{\mathcal{A}}$ constant equal to $d-2$, and $x_1,\dots,x_k\in\mathbb{Z}^d$. We need to show that
  \[
    \sum_{y_1,\dots,y_{n-1}} \sum_{z_1,\dots,z_m} p\big(\mathcal{A}(\overline{x}_k,\overline{y}_{n-1},\overline{z}_m)\big) \lesssim \max_{1\leq i\neq j\leq k} \|x_i - x_j\|^ {-1+\varepsilon},
  \]
  with a constant only depending on $k,n,m$ and $\varepsilon$.
  First, remember the definition of $\delta$ in Lemma \ref{lem:bound_z_delta} and consider the weight function $\alpha'_{\mathcal{A}}$ coinciding with $\alpha_{\mathcal{A}}$ except on the edges connected to external vertices, where
  \[
    \alpha'_{\mathcal{A}}(u,v) = \alpha_{\mathcal{A}}(\vec{e}) - 2\delta(\vec{e}) - \frac{\varepsilon}{k}.
  \]
  Since $\alpha'_{\mathcal{A}}(\vec{e}) \leq \alpha_{\mathcal{A}}(\vec{e})$ for each edge $\vec{e}$, it follows that
  \begin{equation}
    \label{eq:inequality_p_A_Ap}
    \forall y_1,\dots,y_{n-1}, z_1,\dots,z_m\in\mathbb{Z}^d,\quad p^{\alpha_{\mathcal{A}}}\big(\mathcal{A}(\overline{x}_k,\overline{y}_{n-1},\overline{z}_m)\big) \leq p^{\alpha'_{\mathcal{A}}}\big(\mathcal{A}(\overline{x}_k,\overline{y}_{n-1},\overline{z}_m)\big),
  \end{equation}
  and hence it suffices to prove the result for $\alpha_A'$ instead of $\alpha_A$.

  We recursively construct a finite sequence of trees $(\mathcal{A}^i)_{1\leq i\leq N}$ and weight functions $(\alpha_i)_{1\leq i\leq N}$. Simultaneously, we also define an application $\mathcal{E} : \bigcup_{i=1}^N E(\mathcal{A}^i) \to \mathcal{P}(E(\mathcal{A}))$. Let $\mathcal{A}^0 = \mathcal{A}$, $\alpha_0 = \alpha'_{\mathcal{A}}$ and for $\vec{e} \in E(\mathcal{A})$, $\mathcal{E}(\vec{e}) = \{\vec{e}\}$.

  Fix $i \in \mathbb{N}$, and suppose that $\mathcal{A}^i$ and $\alpha_i$ have been defined. If $\mathcal{A}^i$ has an internal node $\partial$ only connected to one external vertex with an edge $\vec{a}$, and to another vertex with an edge $\vec{b}$, parent of $\vec{a}$ (see Figure \ref{fig:merge}), merge the two into an edge $\vec{e}$ with Lemma \ref{lem:merge}. Let $\mathcal{A}^{i+1}$ and $\alpha$ be the results of this operation. Set $\mathcal{E}(\vec{e}) = \{\vec{b}\}$, and let $\alpha_{i+1}$ be the weight function which coincides with $\alpha$ except on $\vec{e}$, where $\alpha_{i+1}(\vec{e}) = \alpha(\vec{e}) - 2 \delta(\vec{b})$. Recall the notation $\varphi_y$ from before Lemma \ref{lem:cut}. Then for any $\varphi : V(\mathcal{A}^i)\backslash \{\partial\} \to \mathbb{Z}$,
  \begin{equation}
    \label{eq:inequality_Ai_merge}
    \sum_{y}p^{\alpha_i}_{\varphi_y}(\mathcal{A}^i) \lesssim p^{\alpha_{i+1}}_\varphi(\mathcal{A}^{i+1}).
  \end{equation}
  Otherwise, $\mathcal{A}^i$ has a vertex connected to at least two external vertices $\partial, \partial'$ through two edges $\vec{a}$ and $\vec{b}$, with the same orientation. Apply Lemma \ref{lem:cut} to cut them into an edge $\vec{e}$, and let $\mathcal{A}^{i+1}$ and $\alpha_{i+1}$ be the result of the operation. Set $\mathcal{E}(\vec{e}) = \mathcal{E}(\vec{a}) \sqcup \mathcal{E}(\vec{b})$. Then for any $\varphi : V(\mathcal{A}^i)\backslash\{\partial,\partial'\}$, and $x,x'\in\mathbb{Z}^d$,
  \begin{equation}
    \label{eq:inequality_Ai_cut}
    p_{\varphi_{x,x'}}^{\alpha_i}(\mathcal{A}^i) \lesssim \max_{a \in \{x,x'\}} p_{\varphi_a}^{\alpha_{i+1}}(\mathcal{A}^{i+1}).
  \end{equation}
  Stop when no cut or merge operations are anymore possible, and denote by $N$ the total number of steps. If $\mathcal{A}^N$ is reduced to a single edge $\vec{e}$, by \eqref{eq:inequality_p_A_Ap}, \eqref{eq:inequality_Ai_merge} and \eqref{eq:inequality_Ai_cut},
  \begin{equation}
    \label{eq:chain_A_sequence}
    \sum_{y_1,\dots,y_{n-1}} \sum_{z_1,\dots,z_m} p(\mathcal{A}(\overline{x}_k, \overline{y}_{n-1},\overline{z}_m)) \lesssim \max_{1\leq i\neq j\leq k} p^{\alpha_N}_{\varphi_{x_i,x_j}}(\mathcal{A}^N) = \max_{1\leq i\neq j\leq k} \|x_i-x_j\|^{-\alpha_{N}(\vec{e})}.
  \end{equation}
  Hence, provided that $\mathcal{A}^N$ is a single edge and that $\alpha_{N}(e) \geq 1 - \varepsilon$, the lemma is proved. Showing both statements amounts to get an explicit expression of $\alpha_{i}$ for $1 \leq i \leq N$.

  We prove by induction that, for $1\leq i\leq N$, the weight of a newly created edge $\vec{e}$ satisfies
  \begin{equation}
    \label{eq:weight_induction}
    \alpha_{i}(\vec{e}) = \sum_{\vec{a}\in \mathcal{E}(\vec{e})} \bigg((d-2)x(\vec{a})- 2\big(y(\vec{a}) + z(\vec{a})\big) - 2\sum_{\vec{b}\in\mathcal{A}(\vec{a})} \delta(\vec{b}) - 2 \delta(\vec{a}) - \frac{\varepsilon}{k} x(\vec{a})\bigg).
  \end{equation}
  If $\vec{e}$ has been created by cutting two edges $\vec{e}_1$ and $\vec{e}_2$ at step $j \leq i$, then $\mathcal{E}(\vec{e}) = \mathcal{E}(\vec{e}_1) \sqcup \mathcal{E}(\vec{e}_2)$, and $\alpha_{i}(\vec{e}) = \alpha_{j}(\vec{e}) = \alpha_{{j-1}}(\vec{e}_1) + \alpha_{{j-1}}(\vec{e}_2)$. Hence by the induction hypothesis
  \begin{align*}
    \alpha_{i}(\vec{e})
    & = \sum_{\vec{a}\in \mathcal{E}(\vec{e}_1)} \bigg((d-2)x(\vec{a})- 2\big(y(\vec{a}) + z(\vec{a})\big) - 2\sum_{\vec{b}\in\mathcal{A}(\vec{a})} \delta(\vec{b}) - 2 \delta(\vec{a}) - \frac{\varepsilon}{k} x(\vec{a})\bigg)\\
    &\quad\quad\quad+ \sum_{\vec{a}\in \mathcal{E}(\vec{e}_2)} \bigg((d-2)x(\vec{a})- 2\big(y(\vec{a}) + z(\vec{a})\big) - 2\sum_{\vec{b}\in\mathcal{A}(\vec{a})} \delta(\vec{b}) - 2 \delta(\vec{a}) - \frac{\varepsilon}{k} x(\vec{a})\bigg) \\
    & = \sum_{\vec{a}\in \mathcal{E}(\vec{e})} \bigg((d-2)x(\vec{a})- 2\big(y(\vec{a}) + z(\vec{a})\big) - 2\sum_{\vec{b}\in\mathcal{A}(\vec{a})} \delta(\vec{b}) - 2 \delta(\vec{a}) - \frac{\varepsilon}{k} x(\vec{a})\bigg).
  \end{align*}
  If $\vec{e}$ has been created by merging two edges $\vec{e}_1$ and $\vec{e}_2$ with $\vec{e}_1$ being the parent of $\vec{e}_2$ at step $j \leq i$, then $\mathcal{E}(\vec{e}) = \{\vec{e}_1\}$, and for each $\vec{a} \in \mathcal{E}(\vec{e}_2)$, $\vec{a}$ is a child of $\vec{e}_1$ in $\mathcal{A}$. Thus, since the internal vertices of $\mathcal{A}$ are branching or intersection vertices,
  \begin{equation}
    \label{eq:sum_x_y_z_parent}
    x(\vec{e}_1) = \sum_{\vec{a}\in\mathcal{E}(\vec{e}_2)} x(\vec{a}), \quad\quad y(\vec{e}_1) + z(\vec{e}_1) = \sum_{\vec{a}\in\mathcal{E}(\vec{e}_2)} \big(y(\vec{a}) + z(\vec{a})\big) + 1,
  \end{equation}
  \begin{equation}
    \label{eq:sum_delta_parent}
    \sum_{\vec{a}\in\mathcal{E}(\vec{e}_2)}\bigg(\sum_{\vec{b} \in \mathcal{A}(\vec{a})} \delta(\vec{b}) + \delta(\vec{a})\bigg) = \sum_{\vec{b}\in\mathcal{A}(\vec{e}_1)} \delta(\vec{b}).
  \end{equation}
  Observe that $\alpha_{i}(\vec{e}) = \alpha_{j}(\vec{e}) = \alpha_{{j-1}}(\vec{e}_1) + \alpha_{{j-1}}(\vec{e}_2) - d - 2\delta(\vec{e}_1)$, and since $\vec{e}_1$ is an internal edge of $\mathcal{A}^{j-1}$, it belongs to $\mathcal{A}$ and its weight has been unmodified, hence $\alpha_{{j-1}}(\vec{e}_1) = d-2$. Together with \eqref{eq:sum_x_y_z_parent} and \eqref{eq:sum_delta_parent}, we get
  \begin{align*}
    \alpha_{i}(\vec{e})
    &= \sum_{\vec{a}\in \mathcal{E}(\vec{e}_2)} \bigg((d-2)x(\vec{a})- 2\big(y(\vec{a}) + z(\vec{a})\big) - 2\sum_{\vec{b}\in\mathcal{A}(\vec{a})} \delta(\vec{b}) - 2 \delta(\vec{a}) - \frac{\varepsilon}{k} x(\vec{a})\bigg) + d - 2 - d - 2\delta(\vec{e}_1) \\
    &= (d-2)x(\vec{e}_1)- 2\big(y(\vec{e}_1) + z(\vec{e}_1)\big) - 2\sum_{\vec{b}\in\mathcal{A}(\vec{e}_1)} \delta(\vec{b}) - 2 \delta(\vec{e}_1) - \frac{\varepsilon}{k} x(\vec{e}_1).
  \end{align*}
  This concludes the proof of \eqref{eq:weight_induction}.
  
  We now want to prove that $\mathcal{A}^N$ is reduced to a single edge. By contradiction, suppose that this is not the case. Since a cut operation has no condition on weights, this means that we stopped because there exist two consecutive edges $\vec{e}$ and $\vec{f}$, whose common vertex does not have any other adjacent edge, that cannot be merged. We want to prove that this is not the case. Assume that $\vec{f}$ is the parent of $\vec{e}$. Note that by construction $\vec{f}$ has not been modified yet, thus one has $\alpha_N(\vec{f}) = d-2$. Now, observe that \eqref{eq:sum_x_y_z_parent} and \eqref{eq:sum_delta_parent} hold with $\vec{f}$ and $\vec{e}$ in place of $\vec{e}_1$ and $\vec{e}_2$ respectively. Therefore with \eqref{eq:weight_induction}, we get
  \[
    \alpha_N(\vec{e}) = (d-2)x(\vec{f})- 2\big(y(\vec{f}) + z(\vec{f}) - 1\big) - 2\sum_{\vec{b}\in\mathcal{A}(\vec{f})} \delta(\vec{b}) - \frac{\varepsilon}{k} x(\vec{f}).
  \]
  Then applying Lemma \ref{lem:bound_z_delta} with $\vec{f}$, yields
  \begin{equation}
    \label{eq:bound_alpha}
    (d-4)x(\vec{f}) - 4y(\vec{f}) + 2  - \frac{\varepsilon}{k} x(\vec{f}) \leq \alpha_{N}(\vec{e}) \leq (d-4)x(\vec{f})- 4y(\vec{f}) + 4  - \frac{\varepsilon}{k} x(\vec{f}).
  \end{equation}
    
  Since $\vec{e}$ and $\vec{f}$ cannot be merged, it means that
  
    \begin{equation}
      \label{eq:upper_bound_alpha_e}
      \alpha_N(\vec{e}) \geq d,
      \qquad \text{or} \qquad
      \alpha_N(\vec{e}) + \alpha_N(\vec{f}) \leq d.
    \end{equation}
  
  We start by showing that the first condition of \eqref{eq:upper_bound_alpha_e} cannot hold. Note that if $\vec{e}$ has been created by a merge, its weight is smaller than $d$. Thus, $\vec{e}$ has been created by a cut. Consider the descendent tree $\mathcal{A}(\vec{f})$ of $\vec{f}$ in $\mathcal{A}$. Since $\vec{e}$ comes from a cut, $\mathcal{A}(\vec{f})$ is a strict subtree of $\mathcal{A}$. Let $n_f$ be the number of different colors in $\mathcal{A}(\vec{f})$. By definition, an intersection vertex lies at a change of colors, and thus $y(\vec{f}) \geq n_f - 1$. Furthermore, since $\mathcal{A} \in \mathbb{A}_{k,n,m}$, by applying \eqref{eq:strict_subtree} with $\mathcal{A}(\vec{f})$, we get
  \[
    y(\vec{f}) \geq n_f -1 \geq n(x(\vec{f}),d) - 1 = \left\lceil\frac{(x(\vec{f})-1)d}{4}\right\rceil - (x(\vec{f})-2) - 1.
  \]
  This yields $4y(\vec{f}) \geq (d-4)x(\vec{f}) - d + 4$. With \eqref{eq:bound_alpha}, we deduce that $\alpha_N(\vec{e}) \leq d - \frac{\varepsilon}{k} x(\vec{a}) < d$. Note that this is the point where the use of $\alpha_{\mathcal{A}}'$ insted of $\alpha_{\mathcal{A}}$ is important, to make sure that the last inequality is strict.
  
  Now, we show that the second condition of \eqref{eq:upper_bound_alpha_e} cannot hold either. Let $T$ be the parent tree of $\vec{f}$ in $\mathcal{A}$. Denote by $n_T$ the number of different colors in $T$, and by $k_T$ the number of external vertices of $\mathcal{A}$ in $T$. We claim that $k_T + x(\vec{f}) = k$, and $y(\vec{f}) \leq n - n_T$. Indeed, if $y_T$ is the number of intersection vertices in $T$, $y_T \geq n_T-1$ and $y(\vec{f}) + y_T = n-1$. Even if $T$ is not a strict subtree of $\mathcal{A}$, it contains a strict subtree of $\mathcal{A}$ with $k_T$ external vertices, and which satisfies \eqref{eq:strict_subtree}. Thus $n_T \geq n(k_T,d)$. Moreover, recall that $n < n(k,d)$, and thus
  \[
    y(\vec{f}) \leq n - n_T \leq \left\lceil \frac{(k-1)d}{4} \right\rceil - (k-2) - 1 - \left\lceil \frac{(k_T-1)d}{4} \right\rceil - (k_T-2) \leq \frac{(k-1)d}{4} - \frac{(k_T-1)d}{4} - x(\vec{f}) - \frac{1}{4}.
  \]
  We deduce that $4y(\vec{f}) \leq (d-4) x(\vec{f}) - 1$. Injecting this in \eqref{eq:bound_alpha} yields
  \[
    \alpha_N(\vec{e}) \geq 3  - \frac{\varepsilon}{k} x(\vec{f}) \geq 3 - \varepsilon.
  \]
  Since $\alpha_N(\vec{f}) = d-2$, we finally get that $\alpha_N(\vec{e}) + \alpha_N(\vec{f}) \geq d + 1 - \varepsilon > d$, contradicting \eqref{eq:upper_bound_alpha_e} and thus the fact that $\vec{e}$ and $\vec{f}$ cannot be merged. This proves that $\mathcal{A}_N$ is reduced to a single edge. It remains to see that its weight is larger than $1 - \varepsilon$.

  Observe that at step $N-1$, the tree has two consecutive edges $\vec{e}$ and $\vec{f}$, say with $\vec{f}$ the parent of $\vec{e}$ and by construction $\alpha_{N-1}(\vec{f}) = d-2$. They are merged into an edge $\vec{g}$ with weight $\alpha_{N-1}(\vec{e}) + \alpha_{N-1}(\vec{f}) - d$. The previous computation shows that $\alpha_{N-1}(\vec{e}) + \alpha_{N-1}(\vec{f}) \geq d + 1 - \varepsilon$. Thus $\alpha_N(\vec{g}) \geq 1 - \varepsilon$, as wanted.
\end{proof}

\printbibliography

\end{document}